\newtheorem{theorem}{Theorem}[chapter]
\newtheorem{lemma}[theorem]{Lemma}
\newtheorem{prop}[theorem]{Proposition}
\newtheorem{proposition}[theorem]{Proposition}
\newtheorem{cor}[theorem]{Corollary} 
\newtheorem{corollary}[theorem]{Corollary}
\theoremstyle{definition}
\newtheorem{definition}[theorem]{Definition}
\newtheorem{example}[theorem]{Example}
\theoremstyle{remark}
\newtheorem{remark}[theorem]{Remark}
\numberwithin{section}{chapter}
\numberwithin{equation}{chapter}
\newcommand{\cA}{\mathcal{A}}
\newcommand{\cF}{\mathcal{F}}
\newcommand{\cM}{\mathcal{M}}
\newcommand{\cN}{\mathcal{N}}
\newcommand{\cO}{\mathcal{O}}
\newcommand{\cR}{\mathcal{R}}
\newcommand{\cS}{{\mathcal{S}}}
\newcommand{\cU}{{U}}
\newcommand{\dsone}{\mathds{1}}
\newcommand{\bB}{\mathbb{B}}
\newcommand{\bC}{\mathbb{C}}
\newcommand{\bE}{\mathbb{E}}
\newcommand{\bF}{\mathbb{F}}
\newcommand{\bG}{\mathbb{G}}
\newcommand{\bN}{\mathbb{N}}
\newcommand{\bR}{\mathbb{R}}
\newcommand{\bZ}{\mathbb{Z}}
\newcommand{\xpsi}[2]{\psi^{[#1]}_{#2}(\omega)}
\newcommand{\xpsii}[2]{\psi^{[#1]}_{#2}}
\newcommand{\xPsi}[2]{\Psi^{[#1]}_{#2}}
\newcommand{\dt}[1]{\partial_t^{#1}}
\newcommand{\vN}{VN(\Gamma)}
\newcommand{\add}[1]{\quad \text{ #1 } \quad}
\newcommand{\uu}[2]{u_{#1}^{(#2)}}
\newcommand{\Lplcr}[1]{{L_p(#1;\ell_2^{cr})}}
\newcommand{\Lplc}[1]{{L_p(#1;\ell_2^{c})}}
\newcommand{\Lplr}[1]{{L_p(#1;\ell_2^{r})}}
\newcommand{\Lpinfty}[1]{{L_p(#1;\ell_\infty)}}
\newcommand{\Lpinftyc}[1]{{L_p(#1;\ell_\infty^c)}}
\newcommand{\Lpli}[1]{{L_p(#1;\ell_1)}}
\newcommand{\LpR}{{L_p(\bR^d;L_p(\cN))}}
\newcommand{\pp}{{p^\prime}}
\newcommand{\hatphi}{{\widehat{\varphi}}}
\newcommand{\hlp}[1]{{L_{#1}(\cM, \varphi)}}
\newcommand{\hlpr}[1]{{L_{#1}(\cR, \hatphi)}}
\newcommand{\ddp}[1]{D_{\widehat{\varphi}}^{1/2p} #1 D_{\widehat{\varphi}}^{1/2p}}
\newcommand{\aap}[1]{e^{\frac{a_k}{2p}}#1e^{\frac{a_k}{2p}}}
\DeclareMathOperator{\id}{id}
\DeclareMathOperator{\supp}{supp}
\DeclareMathOperator{\Pol}{Pol}
\DeclareMathOperator{\Tr}{Tr}
\DeclareMathOperator{\Irr}{Irr}
\DeclareMathOperator{\real}{Re}
\DeclareMathOperator{\col}{Col}
\DeclareMathOperator{\row}{Row}
\DeclareMathOperator*{\Sup}{{sup}^+}
\newcommand{\fancyot}{\mathbin{\text{\footnotesize\textcircled{\tiny \sf T}}}}
    \DeclareMathOperator{\ot}{\fancyot}
\begin{document}

\frontmatter

\title{Pointwise convergence of noncommutative Fourier series}


\author{Guixiang Hong}

\address{School of Mathematics and Statistics, Wuhan University, 430072 Wuhan,
	China and Hubei Key Laboratory of Computational Science, Wuhan University, 430072 Wuhan, China.}
\email{guixiang.hong@whu.edu.cn}
\curraddr{}
\thanks{}

\author{Simeng Wang}
\address{Institute for Advanced Study in Mathematics, Harbin Institute of Technology, Harbin, 150001, China}
\email{simeng.wang@hit.edu.cn}
\curraddr{}
\thanks{}

\author{Xumin Wang}
\address{Laboratoire de math\'ematiques de Besan\c{c}on, Universit\'e de Bourgogne Franche-Comt\'e, 16, route de Gray, 25030 Besan\c{c}on cedex, France.}
\curraddr{Laboratoire de Mathématiques Nicolas Oresme,
Université de Caen - Normandie, 14032 Caen, France.}
\email{xumin.wang1124@gmail.com}

\date{17 Jun 2021}

\subjclass[2020]{Primary 46L52, 46L51, 43A55; Secondary 46L67, 47L25}

\keywords{Maximal inequalities, noncommutative $L_p$-spaces, almost uniform convergence, Fourier multipliers, convergence of Fourier series, compact quantum groups}

\begin{abstract}
    This paper is devoted to the study of pointwise convergence of Fourier series for group von Neumann algebras and quantum groups. It is well-known that a number of approximation properties of groups can be interpreted as  summation methods and mean convergence of the associated noncommutative Fourier series. Based on this framework, this paper studies the refined counterpart of pointwise convergence of these Fourier series. As a key ingredient, we develop a noncommutative bootstrap method and establish a general criterion of maximal inequalities for approximate identities of noncommutative Fourier multipliers. Based on this criterion, we prove that for any countable discrete amenable group, there exists a sequence of finitely supported positive definite functions  tending to $1$ pointwise, so that the associated Fourier multipliers on noncommutative $L_p$-spaces satisfy the pointwise convergence for all $p>1$. In a similar fashion, we also obtain results for a large subclass of groups (as well as quantum groups) with the Haagerup property and the weak amenability. We also consider the analogues of Fej\'{e}r  and Bochner-Riesz means in the noncommutative setting. Our approach heavily relies on the noncommutative ergodic theory in conjunction with abstract constructions of Markov semigroups, inspired by quantum probability and geometric group theory. 
Finally, we also obtain as a byproduct the dimension free bounds of the noncommutative Hardy-Littlewood maximal inequalities associated with convex bodies.
\end{abstract}

\maketitle

\tableofcontents


\mainmatter
\chapter*{Introduction}

The study of convergence of Fourier series goes back to the very beginning  of  Fourier analysis. Recall that for an integrable function $f$ on the unit circle $\mathbb T $, the Dirichlet summation method is defined as
	$$({D}_N f ) (z) =  \sum_{k=-N}^{N} \hat f (k) z^k,\quad z\in \mathbb T , \quad N\in \mathbb N,$$
	where $\hat f$ denotes the Fourier transform of $f$. 
	This summation method is quite intuitive, but very intricate to deal with. Indeed, the mean convergence of these sums is equivalent to the boundedness of the Hilbert transform, which is a typical example of Calder\'on-Zygmund singular integral operators; the corresponding pointwise convergence problem is much more complicated and was solved by Carleson and Hunt, which is now well-known as the Carleson-Hunt theorem. In order to study the Dirichlet means and their higher-dimensional versions, there have appeared numerous related problems together with other summation methods, which have always been motivating the development of harmonic analysis. For instance, as averages of Dirichlet means, the Fej\'{e}r means stand out
	$$({F}_N f ) (z) =  \sum_{k=-N}^{N} \left( 1-\frac{|k|}{N} \right) \hat f (k) z^k,\quad z\in \mathbb T , \quad N\in \mathbb N.$$
	It is well-known that ${F}_N$ defines a positive and contractive operator on $L_p (\mathbb T)$, and ${F}_N f $ converges almost everywhere to $f$ for all $1\leq p \leq \infty$ (see e.g. \cite{grafakos08classical}). In the case of higher dimensions, people consider the Bochner-Riesz means
		$$({B}^\delta_N f ) (z) =  \sum_{k\in\mathbb Z^d:|k|\leq N} \left( 1-\frac{|k|^2}{N^2} \right)^\delta \hat f (k) z^k,\quad z\in \mathbb T^d , \quad N\in \mathbb N,$$
		where $|k|=\sqrt{|k_1|^2+\dotsm+|k_d|^2}$ and $\delta>0$, which can be viewed as fractional averages of ball multipliers.
		When $\delta>\frac{d-1}{2}$, via verifying the rapid decay of the kernels, it is easy to obtain the weak $L_1$ estimate and thus all the $L_p$ estimates of the maximal Bochner-Riesz means; in this case, it is trivial that the Bochner-Riesz means is uniformly $L_p$-bounded for all $1\leq p\leq\infty$. When $\delta=\frac{d-1}{2}$, it had been a conjecture that the Bochner-Riesz means is of weak type $(1,1)$, which was solved by Christ \cite{Chr88}. 
		Given $0<\delta<\frac{d-1}{2}$, the largest possible scale of $p$ depending on $\delta$ was given by Herz \cite{Her54}  such that the Bochner-Riesz means is $L_p$-bounded. It was verified in two dimensions,  but remains one of the famous open problems in three and higher dimensions,  which is closely related to many other open problems in harmonic analysis, PDEs, additive combinatorics, number theory etc, see e.g. \cite{katztao02kakeya, tao99restriction, tao99restrictionbochnerriesz, tao04restrictionconj} and the references therein. And the $L_p$-boundedness of the maximal Bochner-Riesz means for $p<2$ is even open in two dimensions, see e.g. \cite{liwu2019bochnerriesz}. These problems have been stimulating the further development of analysis and beyond.


	
	
	
	
	
	In recent decades, similar topics have been fruitfully developed in the setting of operator algebras and geometric group theory. The study was initiated in the groundbreaking work of Haagerup \cite{haagerup78map}, motivated by the approximation properties of group von Neumann algebras. Indeed, let $\Gamma$ be a countable discrete group with left regular representation $\lambda : \Gamma \to  B(\ell_2 (\Gamma))$ given by $\lambda(g) \delta_h = \delta_{gh}$, where the $\delta_g$'s form the unit vector basis of $\ell_2 (\Gamma)$. The corresponding group von Neumann algebra $\vN$ is defined to be the weak operator closure of the linear span of
	$\lambda (\Gamma)$. For $f \in \vN$ we set $\tau (f) = \langle \delta_ e ,f\delta_ e \rangle$
	where $e$ denotes the identity of $\Gamma$. 
	Then $\tau$ is a faithful normal tracial state on $\vN$.
	Any such $f$ admits a formal Fourier series
	$$\sum_{g\in \Gamma} \hat f (g) \lambda(g)\quad \text{with } \quad  \hat f (g) = \tau (f\lambda(g^{-1})).$$
	The convergence and summation methods of these Fourier series at the operator algebraic level (i.e. at the $L_\infty(\vN)$ level) are deeply linked with the geometric and analytic properties of $\Gamma$, and in the noncommutative setting they are usually interpreted as various \emph{approximation properties} for groups (see e.g. \cite{brownozawa08bookC*,cherixetal01haagerup}). More precisely, for a function $m:\Gamma\to \mathbb C$ we may formally define the corresponding Fourier multiplier by
	\begin{equation}\label{eq:multiplierdef}
	T_m : \sum_{g\in \Gamma} \hat f (g) \lambda(g) \mapsto \sum_{g\in \Gamma} m(g) \hat f (g) \lambda(g).
	\end{equation} 
	We may consider among others the following approximate properties:
	\begin{enumerate}
		\item $\Gamma$ is \emph{amenable}  if there exists a family of finitely supported functions  $(m_N)_{N\in \bN}$  on $\Gamma$ so that $T_{m_N}$ defines a unital completely positive map on $\vN$ and $T_{m_N} f$ converges to $f$  in the w*-topology for all $f\in \vN$ (equivalently, $m_N$ converges pointwise to $1$). 
		\item $\Gamma$  has the \emph{Haagerup property} if there exists a family of  $\, c_0$-functions $(m_N)_{N\in \bN}$ on $\Gamma$ so that  $T_{m_N}$ defines a unital completely positive map on $\vN$ and $m_N$ converges pointwise to $1$. 
		\item $\Gamma$ is \emph{weakly amenable} if there exists a family of finitely supported functions $(m_N)_{N\in \bN}$ on $\Gamma$ so that $T_{m_N}$ defines a completely bounded map on $\vN$ with $\sup_N \|T_{m_N}\|_{cb} <\infty$ and $m_N$ converges pointwise to $1$.
	\end{enumerate}	
	If we take $\Gamma=\mathbb Z$ (in this case $VN(\mathbb Z)=L_\infty (\mathbb T)$) and $m_N (k) = (1 - |k|/N)_+$,  then $T_{m_N }$ recovers the Fej\'{e}r means $F_N$ and obviously satisfies the above conditions. On one hand, these approximation properties provide a natural framework of  noncommutative Fourier analysis, and on the other hand they also play an essential role in the modern theory of von Neumann algebras and in geometric group theory, such as Cowling-Haagerup's solution \cite{cowlinghaagerup89cb} of the rigidity problem of various group von Neumann algebras, Popa's deformation/rigidity theory \cite{popa07deformation}, and the study of strong solidity and uniqueness of Cartan subalgebras \cite{ozawapopa10cartan,chifansinclair13strongsolidhyperbolic,popavaes14cartanii1}.
	
	Despite the remarkable progress in this field, it is worthy mentioning that only the convergence of $T_{m_N } f$  in the w*-topology was studied in the aforementioned work. A standard argument also yields the convergence in  norm in the corresponding noncommutative $L_p$-spaces $L_p(\vN)$ for $1\leq p <\infty$.  On the other hand, the analogue of almost everywhere convergence in the noncommutative setting was introduced by Lance in his study of noncommutative ergodic theory \cite{lance76ergodic}; this type of convergence is usually called the \emph{(bilaterally) almost uniform} (abbreviated as \emph{b.a.u.} and \emph{a.u.} correspondingly) convergence ; see Section~\ref{sec:max and point}. Keeping in mind the aforementioned impressive results in both classical and noncommutative analysis, it is natural to develop a refined theory of pointwise convergence of noncommutative Fourier series. More precisely, it is known that for the previous maps $T_{m_N}$ and for $f\in L_p(\vN)$, there exists a subsequence $(N_k )_k$ (possibly depending on $f$ and $p$) such that $T_{m_{N_k} } f$ converges a.u. to $f$. From the viewpoint of analysis, the following problem naturally arises: \emph{can we choose $N_k$ to be independent of $f$, or even can we  choose $N_k$ to be $k$}? If $G$ is abelian, this is exactly the classical pointwise convergence problem. As  mentioned previously, the study of the pointwise convergence problem is much more difficult than the mean convergence one; it is  one of the major subjects of harmonic analysis, for instance the study of Bochner-Riesz means and maximal Schr\"odinger operators  \cite{tao03bochnerrieszplane, leeseeger15radial, duzhang19sharpl2schrodinger,liwu2019bochnerriesz}. So the above problem should be regarded as one of the initial steps to develop  Fourier analysis on noncommutative $L_p$-spaces.   
	
	However, compared to the classical setting, the pointwise convergence problem on noncommutative $L_p$-spaces remains essentially unexplored, up to sporadic contributions \cite{jungexu07ergodic,chenxuyin13harmonic}. The reason for this lack of development  might be explained by numerous difficulties one may encounter when dealing with maximal inequalities for noncommutative Fourier multipliers. Indeed,  in the commutative setting, the pointwise convergence problem almost amounts to the validity of maximal inequalities \cite{stein61limit}, and  the arguments for maximal inequalities depend in their turn on the explicit expressions or the pointwise estimates of the kernels. However, the kernels of noncommutative Fourier multipliers are only formal elements in a noncommutative $L_1$-space, which are in general no longer related to classical functions and cannot be compared pointwise, so the usual methods for classical maximal inequalities do  not apply to the noncommutative setting any more. Although the notion of noncommutative maximal inequality has been formulated successfully thanks to the theory of vector-valued noncommutative $ L_p $ spaces \cite{pisier98noncommutativeLp,junge02doob}, the  approaches to these inequalities are very limited, except the noncommutative Doob inequality in martingale theory \cite{junge02doob} and its analogue in ergodic theory \cite{jungexu07ergodic,hongwangliao2017noncommutative,condegonzalezparcet20sigma}, where some additional nice properties of the underlying operators are available; in particular, there is no maximal inequality in the literature for the summation methods such as Fej\'er means and Bochner-Riesz means \emph{etc.} of Fourier expansion of elements in noncommutative $L_p$-spaces associated to von Neumann algebra generated by noncommutative groups.
	
	In this paper we will  provide a first approach to the maximal inequalities and pointwise convergence theorems for noncommutative Fourier series. To our best knowledge, the current trend of investigation on noncommutative Fourier multipliers mainly relies on various transference methods and quantum probability theory (see e.g. \cite{neuwirthricard11transfer,chenxuyin13harmonic,jungemeiparcet14smooth,jungemeiparcet18riesz}). The method presented in this paper is completely independent of all the preceding work, so is entirely new. The strategy turns out to be efficient in a very general setting; roughly speaking, it allows us to deal with all Fourier-like structures including quantum groups, twisted crossed products and free Gaussian systems. In many cases, we may give an explicit answer to the pointwise convergence problem raised previously. 	
	In the following part of this section we will describe some of our main results. 
	\section*{Criteria for maximal inequalities of Fourier multipliers}
	\addtocontents{toc}{\protect\setcounter{tocdepth}{1}}
	Our key technical results give two criteria for maximal inequalities of noncommutative Fourier multipliers. These criteria only focuses on the regularity and decay information of symbols of multipliers in terms of length functions.  Hence it is relatively easy to verify.  As mentioned previously, the criteria can be extended to \emph{all Fourier-like expansions in general von Neumann algebras}.  For simplicity we only present the ones for group von Neumann algebras $\vN$ as  illustration, and we refer to Theorem~\ref{theorem:criterion1}, Theorem \ref{theorem:criterion2} and Theorem \ref{theorem:criterion2+: Haagerup case} for a complete statement. 
	
	Let $\Gamma$ be a discrete group and let $\ell:\Gamma\to [0,\infty)$ be a conditionally negative definite function on it. We consider a family of real valued unital positive definite functions $({m_t})_{t\in \mathbb R _+}$. 
	It is known that the associated operators $(T_{m_t})_{t\in \mathbb R _+}$ defined as in \eqref{eq:multiplierdef} extend to contractive maps on $L_p(\vN)$ for all $1\leq p \leq \infty$ (see Section \ref{sec:maximal} for more details). In this framework we present the following result. We refer to Section~\ref{sec:preliminaries} for the notions of noncommutative $L_p$-spaces $L_p(\vN)$ and 
	noncommutative maximal norms $\|\sup_n^+ x_n \|_p$ for $(x_n)_n\subset L_p(\vN)$. 

\smallskip
	
{\bf Criterion 1:} 
		Let $\Gamma,\ell$ and $(T_{m_t})_{t\in \mathbb R _+}$ be as above. Assume that there exist $\alpha ,\beta>0$ and $\eta\in \mathbb N _+$ such that for all $g\in \Gamma$ and $1\leq k\leq \eta$ we have
		$$|1-m_t (g)|\leq \beta \frac{\ell(g)^\alpha}{t},\quad |m_t (g)|\leq\beta \frac{ t}{\ell(g)^\alpha},\quad \left|\frac{d^k m_t (g)}{dt^k} \right|\leq\beta \frac{1}{t^k},$$	
		then for all $1+\frac{1}{2\eta}< p\leq \infty$ there exists a constant $c>0$ such that for all $f\in L_p (\vN)$,
		$$\|{\sup_{t\in\mathbb R _+} }^+ T_{m_t} f \|_p \leq c \|f\|_p  \quad\text{and}\quad  T_{m_t} f\to f \text{ b.a.u. (a.u. if $p\geq 2$) as }t\to\infty,$$
		and for all $1< p\leq \infty$ there exists a constant $c>0$ such that for all $f\in L_p (\vN)$,
		$$\|{\sup_{N\in\mathbb N} }^+ T_{m_{2^N}} f \|_p \leq c \|f\|_p  \quad\text{and}\quad  T_{m_{2^N}} f\to f \text{ b.a.u. (a.u. if $p\geq 2$) as }N\to\infty.$$ 
\smallskip

	Similar results hold for uniformly bounded (but not necessarily positive) Fourier multipliers $(T_{m_t})$ if we restrict ourselves to the case $p\geq 2$. The study of Criterion 1 relies on the analysis of lacunary subsequences $(T_{m_{2^N}})_{N\in \mathbb N}$. This type of lacunarity seems to be insufficient in the further study of abstract analysis on groups. The criterion  below is more suitable for the abstract setting, which applies to other sequences without being of the form $(T_{m_{2^N}})_{N\in \mathbb N}$ in Criterion 1 and will play a prominent role in the remaining part of this work.   
	\smallskip
	
{\bf Criterion 2:} 
		Let $\Gamma$ and $\ell $ be as above. Let $({m_N})_{N\in \mathbb N}$ be a sequence of real valued unital positive definite functions.  If there exist $\alpha,\beta >0$ such that for all $g\in \Gamma$,
		$$|1-m_N (g)|\leq \beta \frac{\ell(g)^\alpha}{2^N},\quad |m_N (g)|\leq  \beta\frac{ 2^N}{\ell(g)^\alpha},$$
		then for all $1<p\leq \infty$ there exists a constant $c>0$ such that for all ${f\in L_p (\vN)}$,
		$$\|{\sup_{N\in\mathbb N} } ^+ T_{m_N} f \|_p \leq c \|f\|_p  \quad\text{and}\quad  T_{m_N} f\to f \text{ b.a.u. (a.u. if $p\geq 2$) as }N\to\infty.$$
\smallskip

	The main idea of the proof will be to compare the Fourier multipliers with certain quantum Markov semigroups, and then apply the ergodic theory of the latter developed by \cite{jungexu07ergodic}. This is first loosely inspired by the study of variational inequalities (in particular the comparison between averaging operators and martingales in \cite{chendinghongxiao17somejump, dinghongliu17jump, hongma17qvariation}), and then by Bourgain’s approach to the dimension-free bounds of Hardy-Littlewood maximal inequalities \cite{bourgain86onhigh,bourgain86ontheLp,bourgain87ondimensionfree, deleavalguedonmaurey18dimfreesurvey}.  Bourgain's work is based on a careful study of the $L_p (\ell_\infty)$-norm estimate of differences between ball averaging operators and Poisson semigroups on Euclidean spaces. In this paper we will develop similar techniques for noncommutative Fourier multipliers and abstract quantum Markov semigroups. {This method based on ergodic theory seems to be new even for the study of commutative Fourier series. Indeed, this approach yields new results, insights and problems in classical harmonic analysis. We will carry out all this in a forthcoming paper \cite{hongwangwang21inprogress}.}  
	
	As a key point of the proof, we will develop a \emph{bootstrap} argument in the noncommutative setting for the first time.  The so-called bootstrap methods have had a deep impact on classical harmonic analysis since the original work \cite{nagelsteinwainger78difflacunary}; see e.g. \cite{duoandikoetxearubio86max,carbery89convexbody,bourgainmireksteinwrobel18dimfreevar}.
	Though not explicitly mentioned in the original papers, the aforementioned work by Bourgain \cite{bourgain86onhigh,bourgain86ontheLp,bourgain87ondimensionfree} can be essentially compared with the previous ones and recognized with hindsight as a certain bootstrap argument with independent techniques. Motivated by Bourgain's method, we will develop a bootstrap argument based on the almost orthogonality principle: deducing the desired $L_p$-estimates for $p<2$ from the $L_2$-estimates by a delicate study of suitable decompositions of $(T_{m_N} f - T_{e^{-t_N \sqrt{\ell}}} f )_N $ and certain differences of $(T_{m_N} f)_N$. It is relevant to remark that there is no straightforward way to extend the classical bootstrap arguments directly to the noncommutative setting. In particular, as a noncommutative variant of the vector-valued $L_p$-space $L_p (\Omega;\ell_2  )$ for the study of noncommutative square functions, the space $L_p (\mathcal{M};\ell_2 ^{cr} )$ (to be defined in Chapter~\ref{sec:order}) does not coincide with the interpolation space of the form  $(L_p(\mathcal{M};\ell_{q_1}),L_p(\mathcal{M};\ell_{q_2}))_\theta$ unless the underlying von Neumann algebra $\mathcal M$ is commutative, but the corresponding interpolation method for the commutative case plays a key role in realizing classical bootstrap arguments; also, as a fundamental tool, the Littlewood-Paley-Stein square function estimate of sharp growth order for $p < 2$ is itself a quite involved topic in noncommutative analysis. Our proof is consequently more intricate than the classical ones, and involves more modern techniques or ideas from operator space theory, maximal inequalities and noncommutative square function estimates. 
	
	\section*{Pointwise convergence of Fourier series}
	Based on the preceding criteria, we may provide answers to the pointwise convergence problems for a wide class of noncommutative Fourier multipliers. We prove that any countable discrete amenable group $\Gamma$ admits a sequence of finitely supported unital positive definite functions $(m_N)_{N\in \bN}$ such that for all $f\in L_p (\vN)$ with $1<p\leq  \infty$,
	$$T_{m_N} f\to f \text{ b.a.u. (a.u. if $p\geq 2$), as }N\to\infty.$$ 
	The result also holds for general groups $\Gamma$ with the ACPAP (see Chapter~\ref{sec:multipliers on CQG}) when ${f\in L_p (\vN)}$ with $2\leq p\leq  \infty$; these groups form a large subclass of groups with the Haagerup property and the weak amenability. In Chapter~\ref{sec:multipliers on CQG} we will present the result and its proof for general \emph{quantum groups}, and we refer to Theorem \ref{lemma: def of conditionally negative definite function on G} and Corollary \ref{prop:general case for ACPAP case subsequence} for more details.
	
	The reader might wonder if similar results could be established for groups without such approximation properties, such as $\mathrm{SL}(3,\mathbb Z)$ and general lattices in higher rank semisimple Lie groups. However, it is well-known that the group von Neumann algebra $VN(\mathrm{SL}(3,\mathbb Z))$ as well as the associated noncommutative $L_p$-spaces for large finite $p$ do not admit any completely bounded approximation property, and the case for small $p$ close to $2$ is still open (see e.g. \cite{delasallelafforgueduke,delasalleparcetricardduke}). As we explained in the beginning of this introduction, this amounts to saying that the group does not admit any natural summation methods on the the associated noncommutative $L_p$-spaces for large $p$, and that the convergence of Fourier series for the case of $p$ close to $2$ is open even in the sense of $\|\,\|_p$-norm convergence. Therefore the theory of convergences of noncommutative Fourier series on these groups constitutes a completely different flavor, which seems to go beyond the scope of this paper. Nevertheless, Criteria 1 and 2 do not require explicit approximation properties of groups and one may still apply our method to study similar problems for multipliers associated with suitable  $1$-cocycles and conditionally negative definite functions on these groups. 
	
	Our approach to the above result differs greatly from usual strategies in the study of pointwise convergence problems. The key idea is to construct an \emph{abstract} Markov semigroup whose symbols are sufficiently close to $(m_N)_N$ so that Criterion 2 becomes applicable. In hindsight, the construction is essentially inspired by geometric group theory and operator algebras; in particular we would like to mention several related works in this setting  \cite{jolissaintmartin04haagerupsemigroup,casperaskalski15Haagerup,dawsfimaskalskiwhit16haagerup,ciprianisauvageot17amenability}, where an interplay between Fourier multipliers, approximation properties and abstract Markov semigroups has been highlighted. Our method applies to quite general classes of Fourier multipliers as soon as the symbols satisfy a suitable convergence rate. Together with the comments after Criterion 2, our approach might be viewed as an application of ergodic theory of genuinely abstract semigroups to pointwise convergence problems.
		
	Our method is also useful for the study of pointwise convergence of  Dirichlet means in the noncommutative setting. Taking an increasing sequence $(K_N)_{N\in \bN}$ of finite subsets of $\Gamma$, one may consider the partial sums $D_N f = \sum_{g\in K_N} \hat f (g) \lambda (g)$ for $f\in \vN$. As in the classical case, in general $f$ cannot be approximated by  $D_N f $ in the uniform norm $\| \ \|_\infty$ even for elements $f$ in the reduced C*-algebra $C^* _r (\Gamma)$ generated by $\lambda(\Gamma)$. On the other hand, the problem of convergence of $D_N f$ in $L_p$-norms in the noncommutative setting is also very subtle (see e.g. \cite{jungenielsenetal04schauderbasis,bozejkofendler06divergence}). In \cite{bedosconti09twisted,bedosconti12twisted2,chenwang15truncationfourier}, the uniform convergence of $(D_N)_{N\in \bN}$ on some smooth dense subalgebras of $C^* _r (\Gamma)$ was studied. However, if we replace the uniform convergence by the almost uniform one and choose appropriately the family $(K_N)_{N\in \bN}$, it seems that the result can be largely improved; in particular we may obtain the almost uniform convergence for more general measurable operators contained in $L_2 (\vN)$. We show that any countable discrete group $\Gamma$ with the ACPAP admits an increasing sequence $(K_N)_{N\in \bN}$ of finite subsets of $\Gamma$ such that
		$$\sum_{g\in K_N} \hat f (g) \lambda (g) \to f \text{ a.u. as }N\to\infty,\qquad
		f\in L_2 (\vN).$$ 
	 The proof will be given in the general setting of quantum groups in Proposition~\ref{prop:dirichlet}. 
	 
	\section*{Concrete examples}
	Our method is also useful for the study of concrete multipliers in the noncommutative setting. We will consider the following concrete examples in  Chapter~\ref{sec:more ex}:

		i)   \emph{Generalized Fej\'{e}r means}: Consider a $2$-step nilpotent group $\Gamma$ with finite generating set $S$. Let $3/2<p\leq \infty$ and  $f\in L_p (\vN)$, we have 
		$$\sum_{g\in S^N}\frac{|S^N \cap gS^N|}{|S^N|}  \hat f (g) \lambda (g) \to f \ \text{ b.a.u. (a.u. if $p\geq 2$), as }N\to\infty;$$
		the a.u. (or b.a.u.) convergence holds indeed for all $1<p\leq\infty$ if we consider the lacunary subsequence associated with $(S^{2^j})_j$. We will also study the similar means on all non-abelian discrete amenable (quantum) groups in Section \ref{sec:fejer nc}. 
		
		ii)  \emph{Noncommutative Bochner-Riesz means}: Let $\mathbb F $ be a non-abelian free group of finitely many generators and $|\  |$  be its natural word length function. For any $\delta > 1-2/p$ we have 
		$$ \sum_{g\in \Gamma : |g|\leq N} \left(1-\frac{|g|^2}{N^2}\right)^\delta \hat f (g) \lambda(g)\to f \ \text{ a.u. as }N\to\infty,\quad f\in L_p (VN(\mathbb F))$$ 
		for all $2\leq p\leq \infty$. The result also holds for general hyperbolic groups with a conditionally negative word length function.
		
		iii) \emph{Smooth positive definite radial kernels on free groups}: Let $\mathbb F $ and $|\  |$ be as above. 
		Let $\nu$ be an arbitrary positive Borel measure supported on $[-1, 1]$ with $\nu([-1,1])=1$ and write $d\nu_t (x)=  d\nu ( tx)$ for all $t>0$.
		For any $t>0$, set $$m_t( g )=\int_{\mathbb R} x^{|g|} d\nu_t (x-e^{-\frac{2}{t}})=\int_{-1}^1 \left(\frac{y}{ t}+e^{-\frac{2}{t}} \right)^{|g|} d\nu(y),  \qquad g\in\mathbb F.$$
		Then  
		for all $1<p\leq\infty $ and all $f\in L_p (VN(\mathbb F ))$,
		$$ \qquad  T_{m_t} f\to f \text{ b.a.u. (a.u. if $p\geq 2$) as }t\to\infty.$$
		Note that if $\nu$ is the Dirac measure on $0$, then this statement amounts to the almost uniform convergence of Poisson semigroups on $VN(\mathbb F )$ proved in \cite[Theorem 0.3]{jungexu07ergodic}.

	iv) \emph{Dimension free bounds of noncommutative Hardy-Littlewood maximal inequalities}:  Let $B$ be a  symmetric convex body in $\bR^d$ and $\cN$ a semifinite von Neumann algebra. We may consider the operator-valued Hardy-Littlewood operators $\Phi_r :\LpR\to\LpR$ defined by $$\Phi _r(f)(x)=\frac{1}{\mu(B)}\int_{B} f(x-ry)dy.$$ 
	We may identify the Bochner $L_p(\cN)$-valued $L_p$-spaces $L_p(\bR^d;L_p(\cN))$ as a noncommutative $L_p$-space $L_p(L_\infty (\bR^d) \overline{\otimes} \cN)$ associated with the von Neumann algebra $L_\infty (\bR^d) \overline{\otimes} \cN$ and study the associated noncommutative maximal inequalities of $(\Phi_r)_{r>0} $. 
	For classical spaces $L_p(\bR^d)$, Bourgain proved in \cite{bourgain86onhigh,bourgain86ontheLp,bourgain87ondimensionfree} that this kind of  maximal operators satisfy dimension free maximal inequalities.
	The noncommutative version of Hardy-Littlewood maximal inequalities was studied in \cite{mei07operatorvalued} for the case where $B$ is the ball respect to Euclidean metrics and in \cite{hongwangliao2017noncommutative} for general doubling metric spaces. The dimension free bounds in this noncommutative setting were studied by the first author in \cite{hong18hl} for Euclidean balls; because of various difficulties in noncommutative analysis as mentioned before, the general case for convex bodies remained unexplored before our work.  Our main theorems imply as a byproduct the desired maximal inequalities for general convex bodies in $\mathbb R ^d$ with dimension free estimates.
More precisely, 
	we show that there exist constants $C_p>0$ independent of $d$ and $B$ such that the following holds: for any $1<p< \infty$,
		$$\|{\sup_{j\in \bZ}}^+ \Phi _{2^j}(f)\|_p\leq C_p\|f\|_p, \qquad  f\in \LpR,$$
 and for any $\frac{3}{2}<p< \infty$,
		$$\|{\sup_{r> 0}}^+ \Phi _{r}(f)\|_p\leq C_p\|f\|_p, \qquad  f\in \LpR.$$ 

\bigskip	

	The remaining chapters are organized as follows. In Chapter~\ref{sec:preliminaries} we will recall the background and in Chapter~\ref{sec:order} we will prove some preliminary results on noncommutative vector-valued $L_p$-spaces and square function estimates. In Chapter~\ref{sec:maximal} we will establish the key criterion for maximal inequalities of Fourier multipliers, i.e., Criteria 1 and 2. In Chapter~\ref{sec:multipliers on CQG} we will prove several pointwise convergence theorems for Fourier series on quantum groups. Finally in Chapter \ref{sec:more ex} we will establish various examples of noncommutative maximal inequalities and pointwise convergence theorems, as well as the dimension free
	bounds of noncommutative Hardy-Littlewood maximal inequalities.
	
	\medskip
	
	\textbf{Notation:} In all what follows, we write $X\lesssim Y$ if $X \leq  CY$ for an \emph{absolute} constant
	$C > 0$, and $X\lesssim _{\alpha,\beta,\cdots} Y$ if $X \leq  CY$ for a constant
	$C > 0$ depending only on the parameters indicated. Also, we write $X\asymp Y$ if $C^{-1}Y\leq X\leq CY$ for an \emph{absolute} constant $C>0$.

	\subsection*{Acknowledgments}
	The subject of this paper was suggested to the authors by Professor Quanhua Xu several years ago. The authors would like to thank Professor Quanhua Xu for many fruitful discussions. The authors would also like to thank Adam Skalski for  helpful conversations on Proposition \ref{prop:exist multipliers commute with Q}, and thank Ignacio Vergara for explaining  the works \cite{haagerupknudby15LKfreegroups,vergara2019positive}. Part of the work was done in the Summer Working Seminar on Noncommutative
	Analysis at   the Institute for Advanced Study in Mathematics of
	Harbin Institute of Technology (2018 and 2019). GH is partially supported by the Fundamental Research Funds for the Central Universities, NSF of China No. 12071355 and 11601396. SW was partially supported  by the Fundamental Research Funds for the Central Universities No. FRFCUAUGA5710012222, NSF of China No. 12031004, a public grant as part of the FMJH and the ANR grant ANR-19-CE40-0002. XW was partially supported  by the China Scholarship Council, the Projet Blanc I-SITE BFC (contract ANR-15-IDEX-03), the ANR grant ANR-19-CE40-0002 and the CEFIPRA project 6101-1.

\chapter{Preliminaries}\label{sec:preliminaries}
	
	Let $\cM$  denote a semifinite von Neumann algebra equipped with a normal semifinite faithful trace $\tau$.  Let ${{\cS_\cM}}_+$ denote the set of all $x\in \cM_+$ such that $\tau({\supp x})<\infty$, where $\supp x$ denotes the support projection of $x$. Let ${\cS_\cM}$ be the linear span of ${\cS_\cM}_+$. Then $\cS_\cM$ is
	a w*-dense $*$-subalgebra of $\cM$. Given $1\leq  p < \infty$, we define
	$$\|x\|_p=[\tau(|x|)^p]^{1/p}, \qquad x\in {\cS_\cM},$$
	where $|x| = (x^*x)^{1/2}$ is the modulus of $x$. Then $({\cS_\cM}, \|\cdot\|_p)$ is a normed  space, whose completion is the noncommutative $L_p$-space associated with  $(\cM, \tau)$, denoted by $L_p(\cM, \tau)$ or simply by $L_p(\cM)$. As usual, we set $L_\infty(\cM, \tau) =\cM$ equipped with the operator norm. Let $L_0(\cM)$ denote the space of all closed densely defined operators on $H$ measurable with respect to $(\cM, \tau)$, where $H$ is the Hilbert space on which $\cM$ acts. Then the elements of $L_p(\cM)$ can be viewed as closed densely defined operators on $H$. A more general notion of Haagerup $L_p$-spaces on arbitrary von Neumann algebras can be found in Subsection~\ref{sect:nontracial matrix}.   We refer to \cite{pisierxu03nclp} for more information on noncommutative $L_p$-spaces.
	We   say that a map $T:L_p(\mathcal M, \tau)\to L_p(\mathcal M, \tau)$ is \emph{$n$-positive} (resp. \emph{$n$-bounded}) for some $n\in \mathbb N$ if $T\otimes \mathrm{id}_{M_n}$ extends to a positive (resp. bounded) map on $ L_p(\mathcal M \otimes M_n, \tau \otimes \mathrm{Tr}) $, where $M_n$ denotes the algebra of all $n\times n$ complex matrices and $\mathrm{Tr}$ denotes the usual trace on it, and we say that $T$ is \emph{completely positive} (resp. \emph{completely bounded}) if it is $n$-positive  (resp.  $n$-bounded) for all $n\in \mathbb N$. We will denote by $\|T\|_{cb}$ the supreme of the norms of $T\otimes \mathrm{id}_{M_n}$ on $ L_p(\mathcal M \otimes M_n, \tau \otimes \mathrm{Tr}) $ over all $n\in\mathbb N$.
	
	
	%
	
	\section{Noncommutative \texorpdfstring{$\ell_\infty$}{l infinite}-valued \texorpdfstring{$L_p$}{Lp}-spaces} 
	In classical analysis, the pointwise properties of measurable functions are often studied by estimating the norms of maximal functions of the form $\| \sup_n |f_n| \|_p$.
	However, these maximal norms in the noncommutative setting require a specific definition, since $\sup_n |x_n|$ does not make  sense for a sequence $(x_n)_n$ of operators. This difficulty
	is overcome by considering the spaces $L_p(\cM;\ell_\infty)$, which are the noncommutative analogs of the
	usual Bochner spaces $L_p(X;\ell_\infty)$. These spaces were first introduced by Pisier \cite{pisier98noncommutativeLp} for injective von Neumann algebras and then extended to general von Neumann algebras by Junge \cite{junge02doob}. See also \cite[Section 2]{jungexu07ergodic} for more details.
	
	Given $1\leq p\leq \infty$, we define $L_p(\cM;\ell_\infty)$ to
	be the space of all sequences $x = (x_n)_{n\in \bN}$ in $L_p(\cM)$ which admit a factorization of the following form: there exist $a, b\in L_{2p}(\cM)$ and a bounded sequence $y=(y_n)\subset \cM$   such that
	$$x_n = ay_nb, \qquad  n\in \bN.$$
	The norm of $x$ in $L_p(\cM;\ell_\infty)$ is given by 
	$$\|x\|_{L_p(\cM;\ell_\infty)}=\inf\left\{\|a\|_{2p}\sup_{n\in  \bN}\|y\|_\infty\|b\|_{2p}    \right\}$$
	where the infimum runs over all factorizations of $x$ as above. We will adopt the convention that the norm $\|x\|_{L_p(\cM;\ell_\infty)}$ is denoted by $\|\sup_n^+ x_n\|_p$. As an intuitive description, it is worth remarking that a selfadjoint  sequence $(x_n)_{n\in \bN}$ of $L_p(\cM)$ belongs to $L_p(\cM;\ell_\infty)$ if and only if there exists a positive element $a\in L_p(\cM)$ such that $-a\leq x_n\leq a$ for any $n\in \bN$. In this case, we have
	\begin{equation}\label{eq: sup+ norm for positive sequence}
		\|{\sup_{n\in \bN}}^+ x_n\|_p=\inf\{\|a\|_p : a\in L_p(\cM)_+, -a\leq x_n\leq a, \forall n\in \bN   \}.
	\end{equation}
	The subspace $L_p(\cM, c_0)$ of $L_p(\cM;\ell_{\infty})$ is defined as the family of all sequences $(x_n)_{n\in \bN}\subset L_p(\cM)$ such that there are $a,b\in L_{2p}(\cM)$ and $(y_n)\subset \cM$ verifying 
	$$x_n=ay_n b\add{and} \lim _{n\to \infty} \|y_n\|_\infty=0.$$
	It is easy to check that $L_p(\cM,c_0)$ is a closed subspace of $L_p(\cM;\ell_\infty)$. It is indeed the closure of the subspace of all finitely supported sequences.
	

	On the other hand, we may also consider the space $L_p(\cM;\ell_\infty^c)$ for ${2\leq p\leq \infty}$. This space $L_p(\cM;\ell_\infty^c)$ is defined to be the family of all sequences $(x_n)_{n\in \bN}\subset L_p(\cM)$  which admits $a\in L_p(\cM)$ and $(y_n)\subset L_{\infty}(\cM)$ such that 
	$$x_n=y_n a\add{and}\sup_{n\in \bN}\|y_n\|_\infty<\infty.$$
	$\|(x_n)\|_{L_p(\cM;\ell_\infty^c)}$ is then defined to be the infimum of $ \sup_{n\in \bN} \|y_n\|_\infty \|a\|_p   $ over all factorization of $(x_n)$ as above. It is easy to check that $\|\  \|_{L_p(\cM;\ell_\infty^c)}$ is a norm, which makes $L_p(\cM;\ell_\infty^c)$ a Banach space. Moreover, ${(x_n)\in L_p(\cM;\ell_\infty^c)}$ if and only if $(x_n^*x_n)\in L_{p/2}(\cM;\ell_{\infty})$. Indeed, we have 
	\begin{equation}\label{eq:relation between l infty c and l infty}
	\|(x_n)\|_{L_p(\cM;\ell_{\infty}^c)}=	\|(x_n^*x_n)\|_{L_{p/2}(\cM;\ell_{\infty})}^{1/2}.
	\end{equation}
	We define similarly the subspace $L_p(\cM;c_0^c)$ of $L_p(\cM;\ell_\infty^c)$.
	
	We define the Banach space $L_p(\cM;\ell_\infty^r):=\{(x_n):  (x_n^*)\in  L_p(\cM;\ell_\infty^c) \}$ for $2\leq p\leq \infty$ with the norm $\|(x_n)\|_{L_p(\cM;\ell_\infty^r)}=\|(x_n^*)\|_{L_p(\cM;\ell_\infty^c)}$. The following interpolation theorem was firsted studied by Pisier in \cite{pisier96OH} and then generalized by Junge and Parcet in \cite{jungeparcet10mixed}.

	\begin{lemma}[{\cite[Theorem A]{jungeparcet10mixed}}]\label{theorem:interpolation l infty}
		For any $2\leq p\leq \infty$, we have isometrically
		$$L_p(\cM; \ell_{\infty})=\left(L_p(\cM;\ell_\infty^c), L_p(\cM;\ell_\infty^r)    \right)_{1/2}.$$
	\end{lemma}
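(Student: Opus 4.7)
The plan is to establish both isometric inclusions by combining the polar decomposition with an analytic interpolant on the strip $S=\{z\in \bC : 0\leq \real z\leq 1\}$, and then to handle the reverse inclusion by duality.

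For the embedding $L_p(\cM;\ell_\infty)\hookrightarrow [L_p(\cM;\ell_\infty^c),L_p(\cM;\ell_\infty^r)]_{1/2}$, I would start from a near-optimal factorization $x_n=ay_n b$ with $a,b\in L_{2p}(\cM)$ and $\sup_n\|y_n\|_\infty\leq 1$. Using the polar decompositions $a=u|a|$, $b=|b|v$ and absorbing the partial isometries into the $y_n$, I may assume $a,b\geq 0$. For each $n$ define the analytic family $F_n(z)=\lambda^{1-2z}\,a^{2z}\,y_n\,b^{2(1-z)}$ on $S$, where the scaling $\lambda=\|a\|_{2p}^{-1}\|b\|_{2p}$ is chosen to equalise the two boundary norms. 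On $\real z=0$ one factors $F_n(it)=(\lambda^{1-2it}a^{2it}y_n b^{-2it})\cdot b^2$; the first factor is uniformly bounded in $\cM$ and $b^2\in L_p(\cM)$, so $(F_n(it))_n\in L_p(\cM;\ell_\infty^c)$ with norm at most $\|a\|_{2p}\|b\|_{2p}$. The boundary $\real z=1$ is symmetric and lands in $L_p(\cM;\ell_\infty^r)$ with the same bound. Since $F_n(1/2)=x_n$, the three-lines lemma applied within the Banach interpolation couple yields the desired contractive embedding after optimising over the factorization.

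For the reverse inclusion I would invoke duality. For $1<p<\infty$ the predual of $L_p(\cM;\ell_\infty)$ is isometric to $L_{p'}(\cM;\ell_1)$, and one has analogous identifications for the column and row versions; combined with the companion interpolation identity $L_{p'}(\cM;\ell_1)=[L_{p'}(\cM;\ell_1^c),L_{p'}(\cM;\ell_1^r)]_{1/2}$, established by the same analytic interpolant together with the factorization $x=u|x|^{1/2}\cdot|x|^{1/2}$ natural for $L_1$-type spaces, the duality theorem for complex interpolation provides the reverse contraction. The endpoint $p=\infty$ is recovered by a w*-density argument or by passing to a suitable limit.

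The main obstacle is to produce the \emph{isometric} identification rather than a mere equivalence of norms. The three-lines estimates already give equivalence, but the exact constant requires the balanced scaling $\lambda$ above and, crucially, the verification that the infimum in the interpolation norm is attained by factorizations arising from analytic extensions of this specific factored form; equivalently, one must show that every admissible analytic extension can be regularised to such a shape without increasing the norm. Additional care is needed to ensure that each $F_n(z)$ lies in the sum $L_p(\cM;\ell_\infty^c)+L_p(\cM;\ell_\infty^r)$ for every $z\in S$, and to correctly handle the column/row switch under duality when $\cM$ is not $\sigma$-finite.
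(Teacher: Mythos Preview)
The paper does not prove this lemma at all: it is stated with a citation to \cite[Theorem A]{jungeparcet10mixed} and used as a black box (for instance in the proof of Lemma~\ref{lemma:the condition for whole sequence, p=2} and in Theorem~\ref{thm:bochnerrieszhyperbolic}). There is therefore nothing in the paper to compare your argument against.

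As for the argument itself: your forward embedding is the standard three-lines construction and is essentially correct, up to a slip in the scaling constant (you want $\lambda=\|a\|_{2p}\,\|b\|_{2p}^{-1}$ so that both boundary norms equal $\|a\|_{2p}\|b\|_{2p}$, not its reciprocal) and the usual caveat that for noninvertible $a,b$ one must approximate before forming imaginary powers. The reverse inclusion is where the substance lies, and here your proposal has a genuine gap. You appeal to a ``companion interpolation identity'' $L_{p'}(\cM;\ell_1)=[L_{p'}(\cM;\ell_1^c),L_{p'}(\cM;\ell_1^r)]_{1/2}$, but you neither define $L_{p'}(\cM;\ell_1^c)$ and $L_{p'}(\cM;\ell_1^r)$ nor indicate why this identity is any easier than the one you are trying to prove; the factorization $x=u|x|^{1/2}\cdot|x|^{1/2}$ handles a single element, whereas the $\ell_1$-norms involve infima over sums of products $\sum_k u_{kn}^*v_{kn}$, and it is not clear how your interpolant respects that structure. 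In Junge--Parcet the isometric identification is obtained through their general theory of amalgamated $L_p$-spaces and asymmetric mixed norms, which provides a uniform framework for both the $\ell_\infty$ and $\ell_1$ scales simultaneously; deferring one to the other via duality does not by itself close the argument.
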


	Another Banach space $L_{p}(\cM ;\ell_1)$ is also defined in \cite{junge02doob}. 
	Given ${1\leq p\leq \infty}$, a sequence $x={(x_n)_{n\in \bN}}$ belongs to $L_p(\cM;\ell_1)$ if there are $u_{kn}, v_{kn} \in L_{2p}(\cM)$ such that 
	$$x_n=\sum_{k\geq 0} u_{kn}^* v_{kn}, \qquad   n\geq 0$$
	and 
	$$\|(x_n )_n\|_{L_p(\cM;\ell_1)}:=\inf \left\{ \left\|\sum_{k,n\geq 0} u_{kn}^*u_{kn}\right\|_{p}^{1/2} \left\|\sum_{k,n\geq 0} v_{kn}^*v_{kn}\right\|_{p}^{1/2}  \right\}<\infty.$$ 
	Specially, for a positive sequence $x=(x_n)$, we have 
	$$\|x\|_{L_p(\cM;\ell_1)}=\|\sum_{n\geq 0}x_n\|_p.$$
	
	The following proposition will be useful in this paper.
	\begin{prop}[\cite{jungexu07ergodic,jungeparcet10mixed}]\label{prop: prop of LMinfty and LMl1}
		Let $1\leq p,p^\prime\leq \infty$ and $1/p + 1/p^\prime =1$.
		
		\emph{(1)}  $L_p(\cM;\ell_\infty)$ is the dual space   of $L_{p^\prime}(\cM;\ell_1)$ when $p^{\prime}\neq\infty$. 
		The duality bracket is given by 
		$$\langle x, y\rangle=\sum_{n\geq 0}\tau(x_ny_n), \quad x\in L_p(\cM;\ell_\infty),\ y\in L_{p^\prime}(\cM;\ell_1).$$
		In particular for any positive sequence $(x_n)_n$ in $L_p(\cM;\ell_\infty)$, we have
		$$\|{\sup_n}^+ x_n\|_p=\sup \{\sum_n\tau(x_ny_n): y_n\in L_{p^\prime}^+(\cM) \text{ and } \|\sum_n y_n\|_{p^\prime}\leq 1   \}.$$
		
		\emph{(2)} Each element in the unit ball of $L_p(\cM;\ell_\infty)$ (resp. $L_p(\cM;\ell_1)$) is a sum of sixteen (resp. eight) positive elements in the same ball.
		
		\emph{(3)} Let $1\leq p_0<p<p_1\leq \infty$ and $0<\theta<1$ be such that $\frac{1}{p}=\frac{1-\theta}{p_0}+\frac{\theta}{p_1}$. Then the following complex interpolation holds: we have isometrically
		$$\Lpinfty{\cM}=\left( L_{p_0}(\cM;\ell_\infty),L_{p_1}(\cM;\ell_\infty)\right)_\theta .$$
		Similar complex interpolations also hold  for $L_{p}(\cM;\ell_\infty^c)$ with $2\leq p\leq \infty$.
	\end{prop}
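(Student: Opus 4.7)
The plan is to establish all three parts by unpacking the factorization definitions of $\Lpinfty{\cM}$ and $\Lpli{\cM}$ and combining them with standard noncommutative tools; the statement is essentially a compilation of results from \cite{junge02doob}, \cite{jungexu07ergodic} and \cite{jungeparcet10mixed}, and my proposal is to follow their arguments.

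For part~(1), one direction of the duality is direct. Substituting the factorizations $x_n = a y_n b$ and $y_n = \sum_k u_{kn}^* v_{kn}$ into the bracket $\sum_n \tau(x_n y_n)$, cycling through the trace, and applying the noncommutative H\"older inequality together with Cauchy-Schwarz on the $(k,n)$ summation, one obtains the bound $|\langle x, y\rangle| \leq \|x\|_{\Lpinfty{\cM}} \|y\|_{L_{p'}(\cM;\ell_1)}$, giving an isometric embedding $\Lpinfty{\cM} \hookrightarrow L_{p'}(\cM;\ell_1)^*$. The reverse inclusion relies on a Hahn-Banach separation argument producing, for each bounded functional on $L_{p'}(\cM;\ell_1)$, an explicit factorisable representative in $\Lpinfty{\cM}$, following \cite[Section~3]{junge02doob}. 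The supremum characterization for positive sequences then drops out of this duality, since for positive $y = (y_n)$ the optimal factorization is $y_n = y_n^{1/2} \cdot y_n^{1/2}$, giving $\|y\|_{L_{p'}(\cM;\ell_1)} = \|\sum_n y_n\|_{p'}$.

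For part~(2), polar-decompose the outer factors $a = u|a|$ and $b = v|b|$ in $x_n = a y_n b$; the central bounded sequence then splits into its real and imaginary, and then positive and negative parts, while the surrounding $|a|^{1/2}$ and $|b|^{1/2}$ factors undergo their own self-adjoint splittings. Collecting the pieces exhibits any element in the unit ball as a sum of sixteen positive sequences lying in the same ball. For $\Lpli{\cM}$ the polarization identity
$$u^* v = \tfrac14 \sum_{j=0}^{3} i^j (u + i^j v)^*(u + i^j v)$$
applied termwise yields four positive summands, and separating real and imaginary scalars produces the count of eight.

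For part~(3), the isometric complex interpolation identity stems from the bilinear nature of the defining factorization. Since the outer factors in $x_n = a y_n b$ live in $L_{2p}(\cM)$ and $L_{2p}(\cM) = [L_{2p_0}(\cM), L_{2p_1}(\cM)]_\theta$ isometrically, a Calder\'on-product-type argument for factorization norms (carried out in \cite[Theorem~A]{jungeparcet10mixed} and earlier in Pisier~\cite{pisier98noncommutativeLp}) transports the interpolation through the factorization norm. The argument for $\Lpinftyc{\cM}$ is parallel, replacing the two-sided factorization by the one-sided $x_n = y_n a$. The main obstacle throughout is obtaining \emph{isometric} rather than merely isomorphic identifications, which is what pins down the explicit numerical constants in~(2) and forces~(3) to rely on the non-trivial Pisier-Junge-Parcet interpolation machinery rather than any soft abstract argument, since $\Lpinfty{\cM}$ does not come equipped with a natural embedding into a single ambient Banach space compatible with the $L_p$-scale.
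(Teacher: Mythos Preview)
The paper does not supply its own proof of this proposition; it is stated with the citations \cite{jungexu07ergodic,jungeparcet10mixed} and used as a black box thereafter. Your sketch is a reasonable outline of how those references establish the three parts, and nothing in it is wrong as a roadmap, so in that sense the approaches coincide by default.

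One minor caution on part~(2): your description of the sixteen-term decomposition is a bit loose. The standard route (as in \cite[Proposition~2.1]{jungexu07ergodic}) is to first write $a = a_1 + i a_2$ and $b = b_1 + i b_2$ with $a_j, b_j$ self-adjoint, giving four terms of the form $a_j y_n b_k$; each such term is rewritten via the polarization $a_j y_n b_k = \tfrac14\sum_{\varepsilon=\pm1}\varepsilon(a_j+\varepsilon b_k)y_n(a_j+\varepsilon b_k)$ after absorbing $y_n$ suitably, and then the self-adjoint sequences are split into positive and negative parts. The exact bookkeeping that lands on sixteen (rather than, say, thirty-two) requires a little more care than ``polar-decompose and split''; but since the paper itself only quotes the result, this level of detail is not needed here.
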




	\begin{remark}\label{prop:subsequence approxiamte to whole sequence}
	Note that in the above definitions of the spaces $L_p(\cM;\ell_\infty)$, $L_p(\cM; c_0)$, $L_p(\cM; c_0^c)$, $L_p(\cM;\ell_\infty^c)$ or $L_p(\cM;\ell_\infty^r)$, we defined the corresponding norms  only for a family of operators with a countable index set. In fact, we may also define them for a family with  any uncountable index set $I$, by
		$$\left\|\mathop{{\sup_{i\in I}}^{+}}x_{i}\right\|_p= \sup_{(i_n)_{n\in \bN}\subset I} \left\|\mathop{{\sup_{n\in \bN}}^{+}}x_{i_n}\right\|.$$
	Then we denote the associated spaces by $L_p(\cM;\ell_\infty(I))$, $L_p(\cM; c_0(I))$, $L_p(\cM; c_0^c(I))$, $L_p(\cM;\ell_\infty^c(I))$ or $L_p(\cM;\ell_\infty^r(I))$ respectively, or simply by $L_p(\cM; \ell_\infty )$, $L_p(\cM; c_0 )$ and so on if no confusion can occur.
	The above proposition still holds for these spaces. 
	
		It is known that a family $(x_{i})_{i\in I}\subset L_{p}(\mathcal{\mathcal{M}})$, whether $I$ is  countable or uncountable,
		belongs to $L_{p}(\mathcal{\mathcal{M}};\ell_{\infty})$ if and only	if 
		\begin{eqnarray*}
			\sup_{J\subset I\text{ finite}}\Big\|\mathop{{\sup_{i\in J}}^{+}}x_{i}\Big\|_{p} & < & \infty,
		\end{eqnarray*}
		and in this case  
		\begin{equation}\label{eq:sup+ is sup of finite set}
		\Big\|\mathop{{\sup_{i\in I}}^{+}}x_{i}\Big\|_{p}=\sup_{J\text{ finite}}\Big\|\mathop{{\sup_{i\in J}}^{+}}x_{i}\Big\|_{p}.
		\end{equation}
		Similar observations hold for $L_p(\cM; \ell_\infty^c )$. As a consequence, for any   $1\leq p< \infty$ and any $(x_t)_{t\in \mathbb R_+}\in L_p(\cM; \ell_\infty)$ such that the map $t\mapsto x_{t}$ from $\mathbb R _+$ to $L_p(\cM )$ is continuous,  we have
		$$\| (x_t)_{t\in \bR_+}\|_{\Lpinfty{\cM}}=\lim_{a\to 1^+}\|( x_{a^j})_{j\in \bZ}\|_{\Lpinfty{\cM}}.$$
	To see this, we note that  $\| (x_t)_{t\in \bR_+}\|_{\Lpinfty{\cM}}\geq \limsup_{a\to 1^+}\|( x_{a^j})_{j\in \bZ}\|_{\Lpinfty{\cM}}$; thus by \eqref{eq:sup+ is sup of finite set}  it suffices to show that for any (finitely many) elements $t_1,\ldots , t_n,$ $$\|  (x_{t_k})_{1\leq k\leq n}\|_{\Lpinfty{\cM}}\leq \liminf_{a\to 1^+}\|( x_{a^j})_{j\in \bZ}\|_{\Lpinfty{\cM}}.$$ 
		This is obvious since for any $\varepsilon>0$, by continuity we may find a scalar $a_0\in \bR_+$ sufficiently close to $1$ such that for all $1\leq k\leq n$,  $\|x_{t_k} - x_{a_0^{j_k}}\|_p <\varepsilon/n$ with some $j_k \in \mathbb Z$ and 
		$$\|( x_{a_0^j})_{j\in \bZ}\|_{\Lpinfty{\cM}}\leq \liminf_{a\to 1^+}\|( x_{a^j})_{j\in \bZ}\|_{\Lpinfty{\cM}}+\varepsilon,$$
		which implies
		\begin{align*}
			\|  (x_{t_k})_{1\leq k\leq n}\|_{\Lpinfty{\cM}} 
			&\leq \|  (x_{a_0^{j_k}})_{1\leq k\leq n}\|_{\Lpinfty{\cM}} + \sum_{1\leq k\leq n} \|x_{t_k} - x_{a_0^{j_k}}\|_p\\
			& \leq \liminf_{a\to 1^+} \|  (x_{a^{j }})_{j\in\mathbb Z}\|_{\Lpinfty{\cM}} + 2\varepsilon.
		\end{align*} 
		Similarly, for $2\leq p<\infty$, we have
		$$\| (x_t)_{t\in \bR_+}\|_{\Lpinftyc{\cM}}=\lim_{a\to 1^+}\|( x_{a^j})_{j\in \bZ}\|_{\Lpinftyc{\cM}}.$$
	\end{remark}


	\section{Maximal inequalities and pointwise convergence} \label{sec:max and point}
	The standard tool in the study of pointwise convergence is the following type of \emph{maximal inequalities}. 
	\begin{definition}
		Let $1\leq p\leq \infty$. We consider a family of linear maps ${\Phi_n :L_p(\cM)\to L_0(\cM)}$ for $n\in \mathbb N$.  
		
		(1)  We say that $(\Phi_n)_{n\in \bN}$ is of \textit{strong type $(p, p)$} with constant $C$ if  
		$$\|{\sup_{n \in \bN} }^+ \Phi_n(x)\|_p\leq C\|x\|_p,\qquad x\in L_p(\cM).$$
		
		(2)   We say that $(\Phi_n)_{n\in \bN}$ is of \textit{weak type $(p, p)$} ($p<\infty)$ with constant $C$ if  for any $x\in L_p(\cM)$ and any $\alpha>0$ there is a projection $e\in \cM$ such that
		$$\|e\Phi_n(x)e\|_\infty\leq \alpha, \quad    n\in \bN \add{and} \tau(e^\perp)\leq \left[ C\frac{\|x\|_p}{\alpha}\right]^p.$$
		
		(3) We say that $(\Phi_n)_{n\in \bN}$ is of \textit{restricted weak type $(p,p)$} ($p<\infty)$ with constant $C$ if   for any projection $f\in  \cM $ and any $\alpha>0$, there is a projection $e\in \cM$ such that 
		$$\|e\Phi_n(f)e\|_\infty \leq \alpha \quad   n\in \bN      \quad \text{ and }\quad  \tau(e^\perp)\leq  \left( \frac{C }{\alpha}\right) ^p \tau(f).$$
	\end{definition}
	It is easy to see that for any $1<p<\infty$,
	$$\text{ strong type } (p, p)\Rightarrow\text{ weak type } (p, p)\Rightarrow\text{ restricted weak type } (p, p).$$
	
	Here is a simple but useful proposition.	
	\begin{prop}\label{prop:positive maps Cp=A(p infty)}
		Let $(\Phi_n)_{n\in \bN}$ be a sequence of positive linear  maps on $L_p(\cM)$. Then 
		$$\|(\Phi_n)_n:L_p(\cM;\ell_{\infty})\to L_p(\cM;\ell_{\infty})\|\asymp \|(\Phi_n)_n:L_p(\cM)\to L_p(\cM;\ell_{\infty})\|.$$
	\end{prop}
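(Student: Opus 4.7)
The plan is to prove the two-sided estimate by verifying each inequality separately, the harder direction being the bound $\|(\Phi_n)_n:L_p(\cM)\to L_p(\cM;\ell_\infty)\|\lesssim \|(\Phi_n)_n:L_p(\cM;\ell_\infty)\to L_p(\cM;\ell_\infty)\|$, and I expect the genuinely substantive direction to be the reverse one.

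For the easier inequality $\|(\Phi_n)_n:L_p(\cM)\to L_p(\cM;\ell_\infty)\|\lesssim \|(\Phi_n)_n:L_p(\cM;\ell_\infty)\to L_p(\cM;\ell_\infty)\|$, I would embed an element $x\in L_p(\cM)$ as the constant sequence $(x,x,\dots)$. For $x\geq 0$, the factorisation $x = x^{1/2}\cdot 1\cdot x^{1/2}$ shows that this constant sequence lies in $L_p(\cM;\ell_\infty)$ with norm at most $\|x\|_p$. Applying the hypothesised bound on $L_p(\cM;\ell_\infty)$ gives $\|{\sup_n}^+\Phi_n(x)\|_p\leq C\|x\|_p$ for positive $x$. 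The general case then follows by splitting an arbitrary $x\in L_p(\cM)$ into four positive parts with norms controlled by $\|x\|_p$, at the cost of an absolute constant.

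For the reverse direction, the first reduction is to positive sequences: by Proposition~\ref{prop: prop of LMinfty and LMl1}(2), every element of the unit ball of $L_p(\cM;\ell_\infty)$ is a sum of sixteen positive elements in the same ball, so it suffices to estimate $\|(\Phi_n x_n)_n\|_{L_p(\cM;\ell_\infty)}$ when $(x_n)_n$ is positive. Now by the formula~\eqref{eq: sup+ norm for positive sequence}, for any $\varepsilon>0$ there exists $a\in L_p(\cM)_+$ with $\|a\|_p\leq (1+\varepsilon)\|(x_n)_n\|_{L_p(\cM;\ell_\infty)}$ and $0\leq x_n\leq a$ for every $n$. Applying the hypothesis of the right-hand side to the single element $a$, we obtain $\|{\sup_n}^+\Phi_n(a)\|_p\leq C\|a\|_p$, which via~\eqref{eq: sup+ norm for positive sequence} produces $b\in L_p(\cM)_+$ with $\Phi_n(a)\leq b$ for all $n$ and $\|b\|_p\leq (1+\varepsilon)C\|a\|_p$.

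Finally, here is where positivity of the maps $\Phi_n$ is crucial: from $0\leq x_n\leq a$ and positivity of $\Phi_n$ we deduce $0\leq \Phi_n(x_n)\leq \Phi_n(a)\leq b$ for every $n$. Invoking~\eqref{eq: sup+ norm for positive sequence} once more yields $\|{\sup_n}^+\Phi_n(x_n)\|_p\leq \|b\|_p\leq (1+\varepsilon)^2 C\|(x_n)_n\|_{L_p(\cM;\ell_\infty)}$. Letting $\varepsilon\to 0$ and combining with the factor sixteen coming from the positive decomposition gives the desired inequality with an absolute multiplicative constant, which completes the proof of the equivalence $\asymp$. The only subtle point is to ensure that the dominating element $b$ is a single element of $L_p(\cM)_+$ and not an $n$-dependent bound, so that the definition~\eqref{eq: sup+ norm for positive sequence} can be applied directly; this is exactly what the assumed estimate on $L_p(\cM)\to L_p(\cM;\ell_\infty)$ provides for the auxiliary element $a$.
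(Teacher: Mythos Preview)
Your proof is correct and follows essentially the same approach as the paper: embed $x$ as a constant sequence for the easy inequality, and for the reverse use \eqref{eq: sup+ norm for positive sequence} to find a single majorant $a$, then positivity of $\Phi_n$ to get $0\leq\Phi_n(x_n)\leq\Phi_n(a)$, combined with the sixteen-fold positive decomposition from Proposition~\ref{prop: prop of LMinfty and LMl1}(2). The paper streamlines slightly by observing directly that $\|{\sup_n}^+\Phi_n(x_n)\|_p\leq\|{\sup_n}^+\Phi_n(a)\|_p$ without introducing your intermediate element $b$, and in the easy direction the constant sequence $(x,x,\dots)$ already has $L_p(\cM;\ell_\infty)$-norm equal to $\|x\|_p$ for arbitrary $x$ (via polar decomposition $x=u|x|^{1/2}\cdot 1\cdot|x|^{1/2}$), so no four-part splitting is needed there.
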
	
	\begin{proof}
		By setting $x_n=x$, it is obvious to see that 
		$$\|(\Phi_n)_n:L_p(\cM)\to L_p(\cM;\ell_{\infty})\|\leq\|(\Phi_n)_n:L_p(\cM;\ell_{\infty})\to L_p(\cM;\ell_{\infty})\|.$$
		For the inverse direction, we consider positive elements first. Take an element ${(x_n)_n\in L_p(\cM;\ell_{\infty})_+}$. For any $\varepsilon>0$, by \eqref{eq: sup+ norm for positive sequence}, we can find an element $a\in L_p(\cM)_+$ such that, 
		$$0\leq x_n\leq a,\ \forall n \quad\text{and}\quad  \|a\|_p\leq 	\|{\sup_{n\in \bN}}^+ x_n\|_p+\varepsilon . $$
		By linearity and positivity of $(\Phi_n)_n$, we have $0\leq \Phi_nx_n\leq \Phi_n a $.
		Therefore
		$$\|{\sup_n}^+ \Phi_nx_n\|\leq \|{\sup_n}^+ \Phi_na\|\leq  \|(\Phi_n)_n:L_p(\cM)\to L_p(\cM;\ell_{\infty})\|(\|{\sup_n}^+ x_n\|_p+\varepsilon). $$
		Thus, by arbitrariness of $\varepsilon$ and Proposition~\ref{prop: prop of LMinfty and LMl1}~(2), we get 
		\begin{equation*}
			\|(\Phi_n)_n:L_p(\cM;\ell_{\infty})\to L_p(\cM;\ell_{\infty})\|\leq 16  \|(\Phi_n)_n:L_p(\cM)\to L_p(\cM;\ell_{\infty})\|. \qedhere 
		\end{equation*}
	\end{proof}
	
	The Marcinkiewicz interpolation theorem  plays an important role in the study of   maximal inequalities. Its analogue for the noncommutative setting was first established by Junge and Xu in \cite{jungexu07ergodic}, and then was generalized in \cite{bekjanchenoscekowski17ncmaximal} and \cite{dirksen15interpolation}. We present Dirksen's version here. 
	\begin{theorem}[{\cite[Corollary 5.3]{dirksen15interpolation}}]\label{theorem:interpolation of dirksen}
		Let $1\leq p< r<q\leq \infty$. Let $(\Phi_n)_{n\in \bN}$ be a family of positive linear maps from $L_p (\cM) + L_q (\cM)$ into $L_0  (\cM)$. If $(\Phi_n)_{n\in \bN}$ is of restricted weak type $(p, p)$ and  of {strong type $(q, q)$} with constants $C_p$ and $C_q$, then it is of strong type $(r,r)$ with constant
		$$C_r\lesssim \max\{C_p, C_q\}(\frac{rp}{r-p}+\frac{rq}{q-r})^2.$$
	\end{theorem}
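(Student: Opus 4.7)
The plan is to adapt the classical Marcinkiewicz argument to the noncommutative maximal-norm setting, replacing truncation of $|f|$ at a level $\alpha$ by spectral projections of positive operators and carefully transferring the projection-only hypothesis of restricted weak type $(p,p)$ into a usable bound on general positive elements. Using that each $\Phi_n$ is positive and splitting an arbitrary $x\in L_r(\cM)$ into real/imaginary and positive/negative parts, I first reduce to proving the estimate for positive $x\in L_p(\cM)\cap L_q(\cM)$ with $\|x\|_r=1$, absorbing a universal constant into $C_r$.

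For each level $\alpha>0$, I would spectrally decompose $x=y_\alpha+z_\alpha$ with $e_\alpha:=\mathds{1}_{[\alpha,\infty)}(x)$, $y_\alpha:=x(1-e_\alpha)$, and $z_\alpha:=xe_\alpha$. Since $\|y_\alpha\|_\infty\leq \alpha$, a layer-cake computation on $x$ gives $\|y_\alpha\|_q^q\lesssim \alpha^{q-r}$, and the strong type $(q,q)$ hypothesis yields $\|\sup_n^+\Phi_n(y_\alpha)\|_q\leq C_q \alpha^{(q-r)/q}$. For $z_\alpha$, I would use the spectral layer-cake identity $z_\alpha=\alpha e_\alpha+\int_\alpha^\infty\mathds{1}_{(s,\infty)}(x)\,ds$ to view $z_\alpha$ as a continuous positive combination of spectral projections of $x$, apply restricted weak type $(p,p)$ to each of these projections at an appropriately chosen threshold, and aggregate the resulting good projections (with control on the total trace loss via Chebyshev on $x$) into a single projection $f_\alpha$ satisfying $\tau(f_\alpha^\perp)\lesssim (C_p/\alpha)^p\|x\|_p^p$ and $\|f_\alpha\Phi_n(z_\alpha)f_\alpha\|_\infty\lesssim \alpha$ uniformly in $n\in\bN$. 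The two pieces are then assembled via a noncommutative layer-cake identity that weighs $\tau(f_\alpha^\perp)$ against $\alpha^{r-1}$ on a half-line and adds the $L_q$-contribution from $y_\alpha$; each integration produces a loss of order $rp/(r-p)$ or $rq/(q-r)$, and after interpolating between the two regimes these compound into the announced squared constant.

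The main obstacle is the middle step: unlike in the commutative case where distribution functions of $|f|$ may be truncated one level at a time, here the projection $f_\alpha$ must be constructed to work \emph{uniformly} in $n$ and simultaneously for the entire continuous superposition of spectral projections composing $z_\alpha$, and the sharpness of the final constant is governed by the quantitative control of $\tau(f_\alpha^\perp)$ under this aggregation. Converting the $L_\infty$-type bound $\|f_\alpha\Phi_n(z_\alpha)f_\alpha\|_\infty\lesssim \alpha$ into a usable $L_r(\cM;\ell_\infty)$ estimate requires duality with $L_{r'}(\cM;\ell_1)$ via Proposition~\ref{prop: prop of LMinfty and LMl1}, together with Proposition~\ref{prop:positive maps Cp=A(p infty)} to reduce back to the scalar domain when needed. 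This delicate aggregation-and-duality step is the substantive content of Dirksen's refinement of the Junge--Xu interpolation, and is what separates the argument from a purely formal transcription of the classical Marcinkiewicz proof.
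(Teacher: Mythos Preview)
The paper does not supply a proof of this theorem: it is quoted verbatim from \cite[Corollary~5.3]{dirksen15interpolation} as a known result, with only the surrounding remark that it generalizes the Junge--Xu interpolation from \cite{jungexu07ergodic}. There is therefore no in-paper proof to compare your proposal against.

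As a sketch of the argument behind Dirksen's result, your outline is broadly on the right track: the spectral decomposition $x=y_\alpha+z_\alpha$ at each level $\alpha$, strong type $(q,q)$ applied to the bounded piece $y_\alpha$, a layer-cake representation of the unbounded piece $z_\alpha$ as a superposition of spectral projections to which restricted weak type $(p,p)$ is applied, and a final assembly via a distribution-function inequality for the noncommutative maximal norm. You correctly identify the genuinely noncommutative difficulty, namely constructing a \emph{single} projection $f_\alpha$ that works uniformly in $n$ and across the whole superposition of spectral projections, and you rightly attribute the squared loss in the constant to the two-step passage (from weak/restricted-weak to an intermediate distribution estimate, then from that estimate to strong type). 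That said, the proposal remains a plan rather than a proof: the aggregation of the projections coming from restricted weak type, the precise form of the noncommutative distribution inequality you invoke, and the bookkeeping that produces exactly $(\frac{rp}{r-p}+\frac{rq}{q-r})^2$ are all left at the level of intention. If you want to carry this out in full, the cleanest route is to follow Dirksen's own framework of noncommutative Boyd-type interpolation, where the distribution-function machinery is set up once and the constants fall out mechanically.
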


	We need an appropriate analogue for the noncommutative setting of the usual almost everywhere convergence. This is the notion of almost uniform convergence introduced by Lance \cite{lance76ergodic}.
	\begin{definition}\label{def:auconvergence}
		Let $x_n, x\in L_0(\cM)$. $(x_n)_{n\in \bN}$ is said to converge \textit{almost uniformly} (\emph{a.u.} in short) to $x$ if for any $\varepsilon>0$ there is a projection $e\in \cM$ such that 
		$$\tau(e^\perp)<\varepsilon \add{and} \lim _{n\to \infty}\|(x_n-x)e\|_\infty=0.$$
		$(x_n)_{n\in \bN}$ is said to converge \textit{bilaterally almost uniformly} (\emph{b.a.u.} in short) to $x$ if for any $\varepsilon>0$ there is a projection $e\in \cM$ such that 
		$$\tau(e^\perp)<\varepsilon \add{and} \lim _{n\to \infty}\|e(x_n-x)e\|_\infty=0.$$
	\end{definition}
	It is obvious that the a.u. convergence implies the b.a.u. convergence, so we will be mainly interested in the former. Note that in the commutative case,  both notions  are equivalent to the usual almost everywhere convergence in terms of Egorov's Theorem in the case of probability space. 

	It is nowadays a standard method of deducing  pointwise convergence from maximal inequalities. We will use the following  facts.
	
	\begin{lemma}[\cite{defantjunge04maximal}]\label{lemma:Lp M c0 imply b.a.u}
		\emph{(1)} If a family $(x_i)_{i\in I}$ belongs to $L_p(\cM, c_0)$ with  some $1\leq p<\infty$, then $x_i$ converges b.a.u. to $0$.
		
		\emph{(2)} 	If a family $(x_i)_{i\in I}$ belongs to $L_p(\cM, c_0^c)$ with some $2\leq p<\infty$, then $x_i$ converges a.u. to $0$.
	\end{lemma}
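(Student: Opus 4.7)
The plan is to unwind the defining factorizations of $L_p(\cM;c_0)$ and $L_p(\cM;c_0^c)$ and then cut out a small projection using spectral truncation of the outer $L_p$-factors, exploiting the fact that the inner bounded factors $\|y_i\|_\infty$ vanish.

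For part (1), I would start from the definition and write $x_i=ay_ib$ with $a,b\in L_{2p}(\cM)$ and $\lim_i\|y_i\|_\infty=0$. Fix $\varepsilon>0$. The key observation is that if $f_R:=\dsone_{[0,R]}(|a^*|)$ then $f_R aa^* f_R\leq R^2 f_R$, so $\|f_R a\|_\infty\leq R$, and similarly $g_R:=\dsone_{[0,R]}(|b|)$ satisfies $\|bg_R\|_\infty\leq R$. By Chebyshev, $\tau(f_R^\perp)\leq R^{-2p}\|a\|_{2p}^{2p}$ and $\tau(g_R^\perp)\leq R^{-2p}\|b\|_{2p}^{2p}$, so setting $e:=f_R\wedge g_R$ gives $\tau(e^\perp)\leq \tau(f_R^\perp)+\tau(g_R^\perp)<\varepsilon$ once $R$ is chosen large. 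Using $e=ef_R=g_Re$, one has
\[
\|ex_ie\|_\infty=\|ef_R\,ay_ib\,g_Re\|_\infty\leq\|f_Ra\|_\infty\|y_i\|_\infty\|bg_R\|_\infty\leq R^2\|y_i\|_\infty\xrightarrow{i\to\infty}0,
\]
which is exactly b.a.u. convergence to $0$.

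For part (2), the argument is the same but only one-sided: by the definition (modelled on that of $L_p(\cM;\ell_\infty^c)$, so $c_0^c$ consists of sequences admitting a factorization $x_i=y_ia$ with $a\in L_p(\cM)$ and $\|y_i\|_\infty\to 0$), I would take $e:=\dsone_{[0,R]}(|a|)$. Then $\|ae\|_\infty\leq R$, $\tau(e^\perp)\leq R^{-p}\|a\|_p^p<\varepsilon$ for large $R$, and
\[
\|x_ie\|_\infty=\|y_iae\|_\infty\leq R\|y_i\|_\infty\to 0,
\]
giving a.u. convergence. The hypothesis $p\geq 2$ enters only because it is already built into the definition of $L_p(\cM;\ell_\infty^c)$.

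The only real subtlety, and thus the main point to be careful about, is justifying that one may assume the factorizations $(a,b,(y_i))$ actually realise $\|y_i\|_\infty\to 0$: this is inherent in the chosen definitions of $c_0$ and $c_0^c$, but for uncountable index sets $I$ one must interpret $\lim_i\|y_i\|_\infty=0$ in the standard net sense (only finitely many $i$ exceed any given $\varepsilon$), and the same spectral argument then still delivers the required projection. No further obstacles arise beyond this definitional check and the elementary spectral calculus above.
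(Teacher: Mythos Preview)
Your argument is correct. Note, however, that the paper does not give its own proof of this lemma: it is simply quoted from \cite{defantjunge04maximal} (see also \cite[Lemma~6.2]{jungexu07ergodic} for a closely related statement), so there is nothing in the paper to compare your argument against. What you have written is essentially the standard proof: factor through the defining decomposition, spectrally truncate the outer $L_{2p}$ (resp.\ $L_p$) factors to produce a projection of small complement on which those factors become bounded, and then let $\|y_i\|_\infty\to 0$ do the rest. Your handling of the projections is clean; the only cosmetic point is that in part~(1) you could equivalently take $f_R=\dsone_{[0,R]}(|a^*|)$ or work with $|a|$ on the right of $a$ via polar decomposition---either choice works and yours is fine.
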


	\begin{prop}\label{prop:Phix-x in Lp M C0}
		\emph{(1)} Let $1\leq p<\infty$ and  $(\Phi_n)_{n\in \bN}$ be a sequence of positive linear maps on $L_p(\cM) $. Assume that $(\Phi_n)_{n\in \bN}$ is of weak type $(p, p)$. If  $(\Phi_nx)_{n\in \bN}$ converges b.a.u. to $0$ for all elements $x $ in a dense subspace of $L_p(\cM) $, then $(\Phi_nx)_{n\in \bN}$ converges b.a.u. for all $x\in L_p(\cM)$.

		\emph{(2)}  
		Let $1\leq p <\infty$ and $(\Phi_n)_{n\in \bN}$ be  a sequence of linear maps on $L_p(\mathcal M )$. 
		Assume that $(\Phi_n)_{n\in \bN}$ satisfies the following \emph{one sided weak type $(p, p)$} maximal inequalities, i.e. there exists $C>0$ such that for any $x\in L_p(\cM)$ and $\alpha>0$ there exists a projection $e\in \cM$ such that
		\begin{equation}\label{eq:one sided weak}
		 \|\Phi_n(x)e\|_\infty\leq \alpha, \quad    n\in \bN \add{and} \tau(e^\perp)\leq \left[ C\frac{\|x\|_p}{\alpha}\right]^p.
		\end{equation}
		If  $(\Phi_nx)_{n\in \bN}$ converges a.u. to $0$ for all elements $x $ in a dense subspace of $L_p(\cM) $, then $(\Phi_nx)_{n\in \bN}$ converges a.u. for all $x\in L_p(\cM)$.
%
	\end{prop}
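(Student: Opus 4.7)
Both parts follow from the standard density-plus-maximal-inequality template (a noncommutative Banach principle). Given $x\in L_p(\cM)$ and $\varepsilon>0$, the plan is to construct a single projection $e$ with $\tau(e^\perp)<\varepsilon$ along which $(\Phi_n x)e$ is norm-Cauchy in $\cM$, and then identify the limit with $0$ using the hypothesis on the dense subspace.

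First, I would treat part (2), where the one-sided bound \eqref{eq:one sided weak} is exactly suited to a.u.\ convergence. Pick approximants $y_k$ in the dense subspace $D$ with $\|x-y_k\|_p<\delta_k$, choosing $\delta_k$ small enough that applying \eqref{eq:one sided weak} to $x-y_k$ at level $\alpha_k=2^{-k}$ produces a projection $e_k$ with $\tau(e_k^\perp)<\varepsilon\,2^{-k-1}$ and $\|\Phi_n(x-y_k)\,e_k\|_\infty\le 2^{-k}$ uniformly in $n$. The hypothesis $\Phi_n y_k\to 0$ a.u.\ yields projections $f_k$ with $\tau(f_k^\perp)<\varepsilon\,2^{-k-1}$ and $\|\Phi_n(y_k)f_k\|_\infty\to 0$ as $n\to\infty$. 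Setting $e=\bigwedge_k (e_k\wedge f_k)$, one has $\tau(e^\perp)<\varepsilon$, and the triangle inequality
\[
\|\Phi_n(x)e\|_\infty\le \|\Phi_n(x-y_k)e_k\|_\infty+\|\Phi_n(y_k)f_k\|_\infty
\]
yields $\limsup_n\|\Phi_n(x)e\|_\infty\le 2^{-k}$ for every $k$, hence $\Phi_n x\to 0$ a.u.

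For part (1), the same skeleton applies, but the two-sided weak-type bound from the definition, $\|e\Phi_n(x-y_k)e\|_\infty\le\alpha_k$, must be converted to a one-sided bound before the argument of (2) runs. Positivity of $\Phi_n$ is the essential input. I would first reduce to a nonnegative argument by writing $x-y_k=z_1-z_2+i(z_3-z_4)$ with $z_j\ge 0$ and $\|z_j\|_p\lesssim\|x-y_k\|_p$; by positivity each $A_n^{(j)}:=\Phi_n z_j$ is positive, and the weak-type bound on $z_j$ becomes $\|(A_n^{(j)})^{1/2}e\|_\infty\le\alpha^{1/2}$. To promote this to a genuinely one-sided estimate for $A_n^{(j)}$, I would pass to the column maximal formalism via \eqref{eq:relation between l infty c and l infty}, or alternatively refine $e$ by intersecting with a spectral projection of $A_n^{(j)}$, since the column norm $L_p(\cM;\ell_\infty^c)$ is the one governing a.u.\ convergence by Lemma~\ref{lemma:Lp M c0 imply b.a.u}(2). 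With this upgrade in hand the remainder of the proof is identical to part (2).

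The main obstacle is precisely this upgrade from the two-sided $\|e\cdot e\|_\infty$ bound to the one-sided $\|\cdot\,e\|_\infty$ bound. In the commutative setting these quantities coincide, but a simple $2\times 2$ example shows they can differ drastically for positive operators, so one cannot simply drop the left $e$. The fix exploits positivity of both $\Phi_n$ and of the decomposed argument $z_j$, together with the identity $\|A^{1/2}e\|_\infty^2=\|eAe\|_\infty$ for $A\ge 0$, which translates the two-sided $\ell_\infty$-maximal inequality into a one-sided $\ell_\infty^c$-maximal inequality; this last step is the only nontrivial technical point, and is where the positivity assumption in (1) is genuinely used.
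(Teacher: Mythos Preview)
Your argument for part~(2) is correct and essentially identical to the paper's: approximate $x$ by $y_m$ in the dense subspace, apply the one-sided weak type to $x-y_m$ at a geometrically decreasing level, use the a.u.\ convergence of $\Phi_n y_m$ to get a second family of projections, and intersect. The paper gives exactly this.

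For part~(1), however, there is a genuine gap. The identity $\|eAe\|_\infty=\|A^{1/2}e\|_\infty^2$ for $A\ge 0$ only delivers one-sided control on $(\Phi_n z_j)^{1/2}$, not on $\Phi_n z_j$ itself; and $\|\Phi_n(z_j)\,e\|_\infty\le \|(\Phi_n z_j)^{1/2}\|_\infty\cdot\|(\Phi_n z_j)^{1/2}e\|_\infty$ is useless without an a~priori bound on $\|\Phi_n z_j\|_\infty$. Your two suggested escapes do not rescue this: the column space $L_p(\cM;\ell_\infty^c)$ and Lemma~\ref{lemma:Lp M c0 imply b.a.u}(2) are only available for $p\ge 2$, so the route through $\ell_\infty^c$ cannot cover the full range $1\le p<\infty$; and ``refine $e$ by intersecting with a spectral projection of $A_n^{(j)}$'' is not a proof, since the spectral projection depends on $n$ and controlling the trace of the infimum over $n$ is precisely the nontrivial content. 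The paper does not attempt this upgrade directly; it invokes \cite[Theorem~3.1]{chilinlitvinov2016ergodic}, where the passage from bilateral to unilateral almost uniform convergence under positivity and a weak-type hypothesis is carried out with a more delicate argument. So your plan for (1) is the right instinct, but the step you flag as ``the only nontrivial technical point'' is in fact a theorem in its own right, and your sketch does not supply it.
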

	\begin{proof}
		The assertion (1) is given by \cite[Theorem 3.1]{chilinlitvinov2016ergodic}.  
		
		The second part  is standard and is implicitly established in the proof of \cite[Remark 6.5]{jungexu07ergodic} and \cite[Theorem 5.1]{chenxuyin13harmonic} 
		 for which we provide a brief argument  for the convenience of the reader. Let $x\in L_p(\cM)$ and $\varepsilon>0$. For any $m\geq 1$, take $y_m\in L_p(\cM)$ such that $\|x-y_m\|_p< 2^{-2m/p}C^{-1}\varepsilon^{1/p}$   and $(\Phi_n y_m)_n$ converges a.u. to $0$  as $n \to \infty$.
	Denote $z_m=x-y_m$.	By the estimation of one side weak type $(p, p)$, we may find a projection $e_m\in \cM$ such that
		$$\sup_n\|\Phi_n (z_m)e_m\|_\infty \leq 2^{-m/p} \add{and}  \tau(e_m^\perp)\leq \left[C\frac{\|z_m\|_p}{2^{-m/p}}\right]^p<2^{-m}\varepsilon .$$
	We may also find a projection $f_m \in \cM$ such that 
	$$\tau(f_m^\perp)<2^{-m}\varepsilon \add{ and } \lim_{n\to \infty}\|\Phi_n(y_m)f_m\|_\infty=0.$$
	Let $e=\bigwedge_m (e_m \wedge f_m)$. Then 
	$$\tau(e^\perp)\leq \sum_{m\geq 1} (\tau(e_m^\perp)+\tau(f_m^\perp))< \varepsilon$$  and for any $m\geq1$,	
	$$\limsup_{n\to \infty}\|\Phi_n(x)e\|_\infty \leq\lim_{n\to \infty}(\|\Phi_n(y_m)f_m\|+\|\Phi_n(z_m)\|)\leq 2^{-m/p},$$
	which means that $\lim_{n\to \infty}\|\Phi_n(x)e\|_\infty =0$. Therefore, $\Phi_n(x)$ converge a.u. to $0$.
	\end{proof}

	We recall the following well-known fact, which is of essential use for our arguments. The following maximal inequalities and the pointwise convergence on $L_p$-spaces are given in \cite{jungexu07ergodic}. 
	We recall that a map $T$ is said to be \emph{symmetric} if $\tau(T(x)^*y)=\tau(x^*T(y))$ for any $x, y \in \cS_\cM$.
	\begin{proposition}\label{prop:maximal inequality of semigroup}
		Let $(S_t)_{t\in \bR_+}$ be a semigroup  of unital completely positive  trace preserving and symmetric maps on $\mathcal M$. We have
		$$\| (S_t(x))_t\|_{L_p(\cM;\ell_{\infty})} \leq c_p\|x\|_p, \qquad   x\in L_p(\cM),\quad 1<p< \infty,$$
		and	$$\| (S_t(x))_t\|_{L_p(\cM;\ell_{\infty}^c)} \leq \sqrt{c_p}\|x\|_p, \qquad   x\in L_p(\cM),\quad 2<p< \infty,$$
		where $c_p\leq C p^2(p-1)^{-2}$ with $C$ an absolute constant. 
	\end{proposition}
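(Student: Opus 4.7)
The statement is the noncommutative ergodic maximal inequality of Junge--Xu for symmetric Markov semigroups, and my plan follows their strategy: reduce it to a martingale maximal inequality via a noncommutative dilation, then combine with the noncommutative Doob inequality.

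First, by Remark~\ref{prop:subsequence approxiamte to whole sequence} it suffices to bound the maximal norm over lacunary subfamilies $(S_{a^j})_{j \in \mathbb{Z}}$ uniformly in $a>1$, so one may assume a discrete index set. The key structural step is Junge's dilation theorem for symmetric tracial Markov semigroups: there exists a larger semifinite tracial von Neumann algebra $(\cN, \hat\tau)$ containing $(\cM, \tau)$ trace-preservingly, a trace-preserving $*$-automorphism group $(\sigma_t)_{t \in \bR}$ of $\cN$, and the normal conditional expectation $\bE_\cM: \cN \to \cM$ such that $S_t(x) = \bE_\cM \sigma_t(x)$ for all $x \in \cM$, $t \geq 0$. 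Setting $\cN_j := \sigma_{a^j}(\cM)$ and $\bE_j := \bE_{\cN_j}$, the sequence $(\bE_j \sigma_{a^j}(x))_{j}$ becomes a (reversed) noncommutative martingale adapted to the filtration $(\cN_j)$.

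Applying Junge's noncommutative Doob inequality \cite{junge02doob} on $L_p(\cN;\ell_\infty)$ for $1 < p < \infty$, with optimal constant $\lesssim p^2(p-1)^{-2}$, together with the contractivity of conditional expectations on these vector-valued spaces, one obtains
$$\bigl\| (S_{a^j} x)_j \bigr\|_{L_p(\cM;\ell_\infty)} \leq c_p \|x\|_p,\qquad 1<p<\infty,$$
uniformly in $a>1$, which yields the first inequality after passing to the supremum over $a \to 1^+$. For the column version when $2 < p < \infty$, one argues in parallel by invoking the column analogue of the Doob inequality (again with constant $\sqrt{c_p}$) directly in the dilated algebra; alternatively, one can use the Kadison--Schwarz estimate $(S_t x)^*(S_t x) \leq S_t(x^* x)$ together with \eqref{eq:relation between l infty c and l infty} and the scalar maximal bound at exponent $p/2$, but this yields a suboptimal constant near $p=2$, so the direct column Doob route is preferable.

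For the a.u.\ convergence, the weak-type $(p,p)$ inequality follows from the strong-type bound obtained above for $1 < p < \infty$, so by Proposition~\ref{prop:Phix-x in Lp M C0}(1) it is enough to verify a.u.\ convergence on a dense subspace. Taking $x = S_\varepsilon(y)$ for some $y \in L_p(\cM) \cap \cM$ and $\varepsilon > 0$, one has $S_t x - x = (S_{t+\varepsilon} - S_\varepsilon)(y)$, and functional calculus with respect to the (negative) self-adjoint generator of $(S_t)$ restricted to spectral projections of finite measure shows that this difference tends to $0$ in operator norm as $t \to 0^+$, which trivially implies a.u.\ convergence. Since such elements are dense in $L_p(\cM)$, Proposition~\ref{prop:Phix-x in Lp M C0}(1) gives the conclusion.

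The main obstacle is the dilation theorem underlying Step~2: realizing an arbitrary symmetric unital tracial Markov semigroup as the compression of a group of automorphisms on a larger algebra is nontrivial, going back to Kümmerer in the tracial setting and further developed by Junge. Once the dilation is in hand, the remainder of the proof reduces to the martingale theory already available in the noncommutative setting, which is why this proposition is cited rather than reproved in the paper.
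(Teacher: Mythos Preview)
The paper does not prove this proposition; it simply cites \cite{jungexu07ergodic} for the maximal inequalities and invokes Proposition~\ref{prop:Phix-x in Lp M C0}(1) for the passage from dense subspaces to all of $L_p$. So there is no in-paper argument to compare against, only the Junge--Xu proof.

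Your sketch, however, contains a genuine gap in the dilation step. If $S_t = \bE_\cM\,\sigma_t$ with $(\sigma_t)$ a group of automorphisms of $\cN$, the subalgebras $\cN_j := \sigma_{a^j}(\cM)$ have no reason whatsoever to be nested, so $(\cN_j)_j$ is \emph{not} a filtration and Doob's inequality does not apply. Moreover, $\sigma_{a^j}(x)$ already lies in $\cN_j$, so $\bE_{\cN_j}\sigma_{a^j}(x)=\sigma_{a^j}(x)$, which is not $S_{a^j}(x)=\bE_\cM\sigma_{a^j}(x)$; your ``martingale'' is neither a martingale nor the object you want to control. The K\"ummerer-type automorphism dilation simply does not produce a martingale structure.

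What does work along dilation lines is Rota's dilation (Definition~\ref{def:Rota}): for a single operator $T$ one has $T^m=\hat\bE\circ\bE_m\circ\pi$ with a genuinely \emph{decreasing} sequence of conditional expectations $(\bE_m)$, and then Junge's Doob inequality bounds $(T^m x)_m$. But this requires the Rota property as an additional hypothesis (cf.\ Lemma~\ref{lem:eg rota}), which Proposition~\ref{prop:maximal inequality of semigroup} does not assume, and one still has to assemble the discrete powers of the various $S_{t_0}$ into a bound for the continuous family. The actual proof in \cite{jungexu07ergodic} proceeds differently: it establishes a weak type $(1,1)$ inequality for ergodic averages via Yeadon's argument and then uses the noncommutative Marcinkiewicz interpolation (the predecessor of Theorem~\ref{theorem:interpolation of dirksen}) to obtain the strong type $(p,p)$ bound with constant $\sim p^2(p-1)^{-2}$; the semigroup case follows by relating $S_t$ to the averages $M_t=t^{-1}\int_0^t S_s\,ds$. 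This route avoids dilation entirely and explains the specific form of the constant.
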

	

\chapter[Square function estimates]{Noncommutative Hilbert space valued \texorpdfstring{$L_p$}{Lp}-spaces and square function estimates}\label{sec:order}
In this chapter we will collect some preliminary results on noncommutative square functions, which are among the essential tools in this paper. Some of the results proved in this chapter might be folkloric for experts, but we include them here for the convenience of the reader. 

The noncommutative Hilbert space valued $L_p$-spaces provide a suitable framework for studying square functions in the noncommutative setting. In this paper we will only use the following concrete representations of these spaces; for a more general description we refer to the papers \cite{lustpisier91nonKhintchine,lustpiquard86khintchine,jungelemerdyxu06Hinftycalut}.

First, for a finite sequence $(x_n)_n\subset L_p(\cM)$, we define
$$\|(x_n)\|_{L_p(\cM;\ell_2^c)} =\left\|\left(\sum_n x_n^*x_n\right)^{1/2}\right\|_p $$
and $$ \|(x_n)\|_{L_p(\cM;\ell_2^r)} =\left\|\left(\sum_n x_nx_n^*\right)^{1/2}\right\|_p.$$
We alert the reader that the two norms above are not comparable at all if $p\neq 2$. 
Let $L_p(\cM;\ell_2^c)$ (resp. $L_p(\cM;\ell_2^r)$ ) be the completion of the space of all finite sequences in $L_p(\cM)$ with respect to $\|\ \|_{L_p(\cM;\ell_2^c)}$ (resp. $\|\ \|_{L_p(\cM;\ell_2^r)}$). The space $L_p(\cM;\ell_2^{cr})$ is defined in the following way.  If $ 2\leq p\leq \infty$, we set
$$\Lplcr{\cM}=\Lplc{\cM}\cap \Lplr{\cM}$$
equipped with the   norm 
$$\|(x_n)\|_{\Lplcr{\cM}}=\max \{\|(x_n)\|_{\Lplc{\cM}}, \|(x_n)\|_{\Lplr{\cM}}\}.$$
If $1\leq p\leq 2$, we set
$$\Lplcr{\cM}=\Lplc{\cM}+ \Lplr{\cM}$$
equipped with the   norm 
$$\|(x_n)\|_{\Lplcr{\cM}}=\inf \{\|(y_n)\|_{\Lplc{\cM}}+\|(z_n)\|_{\Lplr{\cM}}\}$$
where the infimum runs over all decompositions $x_n=y_n+z_n$ in $L_p(\cM)$. 

Second, for the Borel measure space $(\mathbb R _+ \setminus \{0\} , \frac{dt}{t} )$,  we may consider the norms  
$$\|(x_t)_{t }\|_{L_p(\cM;L_2^{c}(\frac{dt}{t}))}=\left\|\left(\int_0^\infty   x_t^*x_t \frac{dt}{t} \right)^{1/2} \right\|_{p}$$ and
$$\|(x_t)_{t }\|_{L_p(\cM;L_2^{r}(\frac{dt}{t}))}=\left\|\left(\int_0^\infty   x_t x_t^* \frac{dt}{t} \right)^{1/2} \right\|_{p}. $$
We refer to \cite[Section 6.A]{jungelemerdyxu06Hinftycalut} for the rigorous meaning of the integral appeared in the above norm. Then we may define the spaces  $ L_p(\cM;L_2^{c}(\frac{dt}{t}))$, $L_p(\cM;L_2^{r}(\frac{dt}{t}))$ and $L_p(\cM;L_2^{cr}(\frac{dt}{t}))$ in a similar way.

We recall the following basic properties.
\begin{proposition}
	\emph{(1) (Duality)}  Let $1\leq p<\infty$ and $p^\prime$ such that $1/ p^\prime +1/p =1$. Then 
	$$(\Lplc{\cM})^*=L_{p^\prime}(\cM;\ell_2^{r}),\quad (\Lplr{\cM})^*=L_{p^\prime}(\cM;\ell_2^{c})$$
	and
	$$ (\Lplcr{\cM})^*=L_{p^\prime}(\cM;\ell_2^{cr}).$$ 
	The duality bracket is given by 
	$$\langle (x_n)_n, (y_n)_n \rangle = \sum_n \tau(x_n y_n),\qquad (x_n)_n\subset L_p(\cM),\  (y_n)_n\subset L_{p^\prime}(\cM ).$$
	
	
	\emph{(2) (Complex interpolation \cite{pisier82holomor})} Let $1\leq p,q\leq \infty$ and $0<\theta<1$. Let $\frac{1}{r}=\frac{1-\theta}{p}+\frac{\theta}{q}$. Then we have the isomorphism with absolute constants
	$$\left( \Lplcr{\cM}, L_{q}(\cM; \ell_2^{cr})\right)_\theta=L_{r}(\cM; \ell_2^{cr}). $$ 
	Similar complex interpolations also hold for $\Lplc{\cM}$ and $\Lplr{\cM}$.
	
\end{proposition}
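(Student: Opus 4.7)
The overall plan is to reduce the entire statement to two classical ingredients: the isometric identification of the column (resp. row) noncommutative Hilbert-valued $L_p$-space with a fixed corner subspace of $L_p(\cM\overline{\otimes}B(\ell_2))$, together with the general Banach-space duality and interpolation functoriality for sums and intersections of a compatible couple.

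First I would write down the isometric identification
$$\Lplc{\cM}\;\cong\; e_{11}\,L_p(\cM\overline{\otimes}B(\ell_2))$$
sending $(x_n)_n\mapsto \sum_n x_n\otimes e_{n,1}$, where $e_{n,1}$ are matrix units and $e_{11}$ is the rank-one projection onto the first standard basis vector of $\ell_2$. This amounts to the identity $\|\sum_n x_n\otimes e_{n,1}\|_{L_p(\cM\overline{\otimes}B(\ell_2))}=\|(\sum_n x_n^*x_n)^{1/2}\|_p$, which is immediate from functional calculus in noncommutative $L_p$-spaces. The analogous row identification $\Lplr{\cM}\cong L_p(\cM\overline{\otimes}B(\ell_2))\,e_{11}$ is proved in the same way. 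The standard trace duality on $L_p(\cM\overline{\otimes}B(\ell_2))$ then restricts to the column/row subspaces and collapses to the pairing $\sum_n\tau(x_n^*y_n)$, proving the $\ell_2^c$ and $\ell_2^r$ dualities; similarly, Pisier's complex interpolation theorem for $L_p(\cM\overline{\otimes}B(\ell_2))$ descends to these corner subspaces, since the corner projection is simultaneously a contractive projection on every $L_p$, and yields the interpolation identity for $\ell_2^c$ and $\ell_2^r$.

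For $\ell_2^{cr}$ I would then invoke the general functoriality identities for a compatible couple $(X,Y)$: namely $(X\cap Y)^*\cong X^*+Y^*$ and $(X+Y)^*\cong X^*\cap Y^*$ with matching norms via the natural pairing. Since $\Lplcr{\cM}$ is defined as the intersection when $p\geq 2$ (so $p'\leq 2$) and as the sum when $p\leq 2$ (so $p'\geq 2$), the duality for $\ell_2^{cr}$ follows at once from the column and row dualities. For the interpolation identity the analogous functoriality handles the case where $p$ and $q$ lie on the same side of $2$. The main technical obstacle is the mixed case $p<2<q$, where one interpolates a sum space at one endpoint with an intersection space at the other; this is not covered by routine functoriality. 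I would resolve it by a reiteration argument combined with the duality established above: interpolate from each side down to the endpoint $r=2$, where all three of the $\ell_2^c$, $\ell_2^r$, $\ell_2^{cr}$ spaces coincide isometrically with the corresponding corner of $L_2(\cM\overline{\otimes}B(\ell_2))$, and then glue the two one-sided interpolations via Wolff's reiteration theorem; alternatively, one can run a direct Calder\'on analytic-family argument using explicit column-plus-row decompositions holomorphic in the strip, feeding the row part from the intersection endpoint and the column part from the sum endpoint (and vice versa on the other side of the strip).
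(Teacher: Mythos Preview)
The paper does not prove this proposition; it is stated as background material with references (duality is folklore, interpolation is attributed to Pisier). So there is no proof in the paper to compare against, and your sketch is in fact considerably more detailed than what the paper offers.

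Your approach is the standard one and is correct for part (1) and for the column/row interpolation in part (2). Two small remarks. First, a minor slip: the column $\sum_n x_n\otimes e_{n,1}$ lives in the \emph{right} corner $L_p(\cM\overline\otimes B(\ell_2))\,e_{11}$, not the left corner $e_{11}\,L_p(\cM\overline\otimes B(\ell_2))$; this does not affect the argument. Second, the duality bracket in the proposition is $\sum_n\tau(x_ny_n)$ without a star, which is what the trace pairing on the ambient $L_p\times L_{p'}$ produces when you pair a column against a row; your $\sum_n\tau(x_n^*y_n)$ is the sesquilinear variant and gives an equivalent pairing, but you should match the statement.

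The one place where your sketch is genuinely thin is the mixed case $p<2<q$ for $\ell_2^{cr}$. Wolff's reiteration theorem requires interpolation to be known on two \emph{overlapping} subintervals of $[0,1]$, not just on two intervals meeting at the single point corresponding to $r=2$; so ``glue at $r=2$'' does not invoke Wolff correctly, and the ``direct Calder\'on analytic-family'' alternative you mention would need to be made concrete. A cleaner route, entirely within the tools the paper has already assembled, is via the noncommutative Khintchine inequalities (the paper's Proposition on Rademacher sequences): the map $(x_n)_n\mapsto\sum_n\varepsilon_n x_n$ identifies $L_r(\cM;\ell_2^{cr})$, up to absolute constants on each compact range of $r$, with the closed span of Rademachers inside $L_r(\Omega;L_r(\cM))$, which is the range of the conditional expectation onto that span; this conditional expectation is a simultaneous contractive projection on all $L_r$, so the Rademacher span interpolates as a complemented subspace of $L_r(\Omega\times\cM)$, and the $\ell_2^{cr}$ interpolation follows across $p=2$ without any gluing.
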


A sequence of independent random variables $(\varepsilon_n)$ on a probability space $(\Omega, P)$ is called a \emph{Rademacher} sequence if $P(\varepsilon_n=1)=P(\varepsilon_n=-1)=\frac{1}{2}$ for any $n\geq 1$.
The following noncommutative Khintchine inequalities are well-known.
\begin{proposition}[\cite{lustpiquard86khintchine,lustpisier91nonKhintchine,pisier98noncommutativeLp}]\label{prop:Khintchine}
	Let $(\varepsilon_n)$ be a  Rademacher sequence on a probability space $(\Omega, P)$. 
	Let $1\leq p<\infty$ and $(x_n)$ be a  sequence in $L_p(\cM;\ell_2^{cr})$. 
	
	\emph{(1)} If $1\leq p\leq 2$, then there exists an absolute constant $c>0$ such that
	$$c\|(x_n)_n\|_{\Lplcr{\cM}} \leq \|\sum_n \varepsilon_n x_n\|_{L_p(\Omega;L_p(\cM))}\leq \|(x_n)_n\|_{\Lplcr{\cM}}.$$
	
	\emph{(2)} If $2\leq p< \infty,$ then there exists an absolute constant $c>0$ such that
	$$\|(x_n)_n\|_{\Lplcr{\cM}}\leq \|\sum_n \varepsilon_n x_n\|_{L_p(\Omega;L_p(\cM))} \leq c \sqrt{p} \|(x_n)_n\|_{\Lplcr{\cM}}.$$
\end{proposition}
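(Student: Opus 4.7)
The result is the noncommutative Khintchine inequality. My plan is to handle the four inequalities in (1) and (2) separately, anchored at the trivial case $p=2$, where orthonormality of the Rademacher sequence in $L_2(\Omega)$ and Fubini yield
\[
\Bigl\|\sum_n \varepsilon_n x_n\Bigr\|_2^2 = \sum_n \|x_n\|_2^2 = \tau\Bigl(\sum_n x_n^* x_n\Bigr) = \tau\Bigl(\sum_n x_n x_n^*\Bigr),
\]
so the three quantities coincide with $\|(x_n)\|_{\Lplcr{\cM}}^2$ at $p=2$.

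Two of the four inequalities -- the lower bound in (2) and the upper bound in (1) -- come from operator Jensen's inequality for $t \mapsto t^{p/2}$ and the conditional expectation onto $\cM$ inside $L_\infty(\Omega) \overline{\otimes} \cM$. For $p \geq 2$ the function is operator convex, so $(\sum_n x_n^*x_n)^{p/2} = (\mathbb E_\Omega |\sum_n \varepsilon_n x_n|^2)^{p/2} \leq \mathbb E_\Omega |\sum_n \varepsilon_n x_n|^p$; applying $\tau$ yields $\|(x_n)\|_{\Lplc{\cM}} \leq \|\sum_n \varepsilon_n x_n\|_p$, and the analogous row estimate combined with the definition of $\Lplcr{\cM}$ as an intersection gives the lower bound in (2) with constant one. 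For $1 \leq p \leq 2$ the function $t \mapsto t^{p/2}$ is operator concave, so the Jensen inequality reverses; for any decomposition $x_n = y_n + z_n$ the triangle inequality yields $\|\sum_n \varepsilon_n x_n\|_p \leq \|(y_n)\|_{\Lplc{\cM}} + \|(z_n)\|_{\Lplr{\cM}}$, and the infimum over decompositions is the upper bound in (1).

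The remaining two inequalities -- the upper bound in (2) with sharp growth $\sqrt p$ and the lower bound in (1) -- are the deep content. For the upper bound at $p \geq 2$, I would view $(\varepsilon_n x_n)_n$ as a martingale difference sequence in $L_p(L_\infty(\Omega) \overline{\otimes} \cM)$ with respect to the filtration of the first $n$ Rademachers and apply the noncommutative Burkholder--Gundy inequality with its sharp $O(\sqrt p)$ constant; since $\varepsilon_n^2 = 1$, the associated column and row square functions are independent of $\Omega$ and reduce exactly to those of $(x_n)$. For the lower bound at $p \leq 2$, naive duality against $L_{p'}(\cM;\ell_2^{cr})$ loses a factor $\sqrt{p'}$ that diverges as $p \to 1$; instead I would follow the original approach of Lust-Piquard and Lust-Piquard--Pisier, building an explicit column/row decomposition of $(x_n)$ from the polar decomposition of $\sum_n \varepsilon_n x_n$, with square-function control by $\|\sum_n \varepsilon_n x_n\|_p$. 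The main obstacle is precisely the endpoint $p = 1$: here no interpolation is available and one must either construct the decomposition by hand via noncommutative Doob-type inequalities or invoke Pisier's free probability embedding trick, whereas for $1 < p < 2$ one can alternatively use real interpolation between the $p \geq 2$ case and a suitable $H_1$--BMO duality.
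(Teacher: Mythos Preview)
The paper does not prove this proposition; it is quoted from \cite{lustpiquard86khintchine,lustpisier91nonKhintchine,pisier98noncommutativeLp} without argument. So there is nothing to compare against, but your sketch has two genuine gaps worth flagging.

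First, for the lower bound in (2) you invoke operator convexity of $t\mapsto t^{p/2}$. This is only true for $1\le p/2\le 2$, i.e.\ $2\le p\le 4$; for $p>4$ the map $t\mapsto t^{p/2}$ is \emph{not} operator convex and the operator Jensen inequality you wrote fails. The conclusion is still correct, but the right justification is that the conditional expectation $\mathbb E_\Omega$ is a positive contraction on $L_{p/2}(L_\infty(\Omega)\overline\otimes\cM)$ for every $p/2\ge 1$, so
\[
\Bigl\|\sum_n x_n^*x_n\Bigr\|_{p/2}=\bigl\|\mathbb E_\Omega|\textstyle\sum_n\varepsilon_n x_n|^2\bigr\|_{p/2}\le \bigl\||\textstyle\sum_n\varepsilon_n x_n|^2\bigr\|_{p/2}.
\]
(Your concavity argument for $p\le 2$ is fine, since $t\mapsto t^{p/2}$ is operator concave for $0<p/2\le 1$.)

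Second, and more seriously, the upper bound in (2) with constant $c\sqrt p$ does \emph{not} follow from the noncommutative Burkholder--Gundy inequality. The best known constant in the general noncommutative Burkholder--Gundy upper bound for $p\ge 2$ is of order $p$, not $\sqrt p$; applying it to the Rademacher martingale yields only $O(p)$. The sharp $\sqrt p$ is specific to Rademacher (or Gaussian) coefficients and requires a separate argument---for instance Buchholz's combinatorial expansion at even integers $p=2k$ followed by interpolation, or comparison with free/$q$-Gaussian operators as in \cite{pisier98noncommutativeLp}. Your plan as stated would produce the inequality with the wrong order of constant.
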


The following proposition will be useful for our further studies. 
\begin{prop}\label{claim: sup+ < CRp norm}
	Let $(x_n)_{n\in\bN}\in L_p(\cM;\ell_\infty)$. Then there exists an absolute constant $c>0$ such that		for any $1\leq p\leq \infty$,
	$$\| (x_n)_n\|_{L_p(\cM; \ell_{\infty})}\leq  \|(x_n)_n\|_{L_p(\cM; \ell_2^{cr})};$$ 
	and for any $2\leq p\leq \infty$,
	$$\| (x_n)_n\|_{L_p(\cM; \ell_{\infty}^c)}\leq \|(x_n)_n\|_{L_p(\cM; \ell_2^{c})}.$$
\end{prop}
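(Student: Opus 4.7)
The plan is to split the argument at $p = 2$: for $2 \le p \le \infty$ both inequalities would follow from an order-theoretic estimate combined with the Junge–Parcet interpolation of Lemma~\ref{theorem:interpolation l infty}, while for $1 \le p < 2$ the first inequality would be obtained by complex interpolation between the endpoints $p=1$ and $p=2$.

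First I would prove the second inequality in the range $2\le p\le\infty$. Setting $a=(\sum_k x_k^*x_k)^{1/2}\in L_p(\cM)_+$, every $x_n^*x_n\le a^2$ holds in $L_{p/2}(\cM)_+$, so applying the order characterization \eqref{eq: sup+ norm for positive sequence} to the positive sequence $(x_n^*x_n)$ and combining with identity \eqref{eq:relation between l infty c and l infty} gives
$$\|(x_n)\|_{L_p(\cM;\ell_\infty^c)}^2 = \|(x_n^*x_n)\|_{L_{p/2}(\cM;\ell_\infty)} \le \|a^2\|_{p/2} = \|(x_n)\|_{L_p(\cM;\ell_2^c)}^2.$$
Inserting this bound and its row-analog into Lemma~\ref{theorem:interpolation l infty} would then yield
$$\|(x_n)\|_{L_p(\cM;\ell_\infty)}\le \|(x_n)\|_{L_p(\cM;\ell_\infty^c)}^{1/2}\|(x_n)\|_{L_p(\cM;\ell_\infty^r)}^{1/2}\le \|(x_n)\|_{L_p(\cM;\ell_2^{cr})},$$
establishing the first inequality for $p\ge 2$.

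For the remaining range $1\le p<2$ of the first inequality, I would invoke complex interpolation: Proposition~\ref{prop: prop of LMinfty and LMl1}(3) and the complex-interpolation statement for $L_p(\cM;\ell_2^{cr})$ recalled earlier in this chapter identify both spaces with complex interpolates between their values at $p=1$ and $p=2$. With $p=2$ already handled, only the endpoint $p=1$ would remain. For that I would decompose $x_n = y_n + z_n$ along the $L_1(\cM;\ell_2^c) + L_1(\cM;\ell_2^r)$ splitting, apply the Douglas-type lemma to $y_n^*y_n\le a^2$ with $a = (\sum_k y_k^*y_k)^{1/2}\in L_1(\cM)_+$ to obtain $y_n = v_n a$ with $\|v_n\|_\infty\le 1$, split $a = a^{1/2}\cdot a^{1/2}$ into two $L_2(\cM)$-factors, and reorganize inside the ampliated algebra $\cM\overline{\otimes}B(\ell_2)$ via the column $Y = \sum_n y_n\otimes e_{n,1}$ to produce a symmetric factorization $y_n = \alpha w_n \beta$ with $\alpha,\beta\in L_2(\cM)$ and $\sup_n\|w_n\|_\infty<\infty$; a parallel argument would treat $(z_n)$.

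The main obstacle will be this $p=1$ endpoint: the canonical one-sided factorization $y_n = v_n a$ must be upgraded to a two-sided factorization with $\alpha,\beta\in L_2(\cM)$ independent of $n$, which is delicate precisely because $v_n$ and $a$ need not commute --- the naive symmetric ansatz $\alpha=\beta=a^{1/2}$ would force $w_n = a^{-1/2}v_n a^{1/2}$, which need not be bounded in $\cM$.
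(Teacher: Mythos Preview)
Your argument for $p\ge 2$ is correct and in fact slightly cleaner than the paper's: you use the order inequality $x_n^*x_n\le\sum_k x_k^*x_k$ together with \eqref{eq: sup+ norm for positive sequence} to get the column estimate directly, then feed the column and row bounds into the Junge--Parcet interpolation of Lemma~\ref{theorem:interpolation l infty}. The paper instead derives the second inequality from the chain $\|\cdot\|_{L_{p/2}(\ell_\infty)}\le\|\cdot\|_{L_{p/2}(\ell_1)}$, which it only obtains after having established the first inequality for all $p$; your route avoids this circularity.

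The gap is exactly where you locate it: the $p=1$ endpoint. Upgrading the one-sided Douglas factorization $y_n=v_na$ to a two-sided one $y_n=\alpha w_n\beta$ with $\alpha,\beta\in L_2(\cM)$ and $\sup_n\|w_n\|_\infty<\infty$ is not possible in general by the route you sketch, for precisely the noncommutativity reason you name. The paper bypasses this entirely by \emph{duality}: it first proves the reverse-looking inequality
\[
\|(x_n)\|_{L_\infty(\cM;\ell_2^{cr})}\le\|(x_n)\|_{L_\infty(\cM;\ell_1)}
\]
via a Rademacher randomization (using $\|(\varepsilon_n x_n)\|_{L_\infty(\ell_1)}=\|(x_n)\|_{L_\infty(\ell_1)}$ and Khintchine at $p=\infty$), and then dualizes using Proposition~\ref{prop: prop of LMinfty and LMl1}(1) and the duality $L_1(\ell_2^{cr})^*=L_\infty(\ell_2^{cr})$ to obtain $\|\cdot\|_{L_1(\ell_\infty)}\le\|\cdot\|_{L_1(\ell_2^{cr})}$. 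This is the missing idea: rather than building the $L_1(\ell_\infty)$ factorization by hand, transfer the problem to $L_\infty$ where the $\ell_1$-$\ell_2^{cr}$ comparison is elementary, and let duality produce the factorization for you.
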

\begin{proof}
	We start with the proof of the first inequality.
	It is trivial for the case $p=\infty$:
	$$\|(x_n)_n\|_{L_\infty(\cM;\ell_{\infty})}=\sup_n \|x_n\|_\infty\leq \left\|\left( \sum_n x_n^*x_n \right)^{1/2} \right\|_\infty\leq\|(x_n)_n\|_{L_\infty(\cM;\ell_2^{cr})}.$$
	We note that by the H\"{o}lder inequality (see also \cite[Lemma 3.5]{junge02doob}), for any ${1\leq p\leq \infty}$ and for any sequence $(x_n)_n$ in $L_p(\cM;\ell_1)$,
	$$\|\sum_n x_n\|_p\leq \|(x_n)_n\|_{L_p(\cM; \ell_1)}.$$
	On the other hand, for  any $1\leq p\leq \infty$, by the definition of $\|\ \|_{L_p(\cM;\ell_1)}$, one can easily get  $\|(\varepsilon_n x_n)\|_{L_p(\cM; \ell_1)}=\|( x_n)\|_{L_p(\cM; \ell_1)}$ with  $\varepsilon_n\in \{\pm1\}$.
Now we set $(\varepsilon_n)_n$ to be a Rademacher sequence on a probability space $(\Omega, P)$. 
It is folkloric that 
\begin{align*}
\|(x_n)\|_{L_\infty (\cM; \ell_2^{cr})}&=\|(\varepsilon_n x_n)\|_{L_\infty (\cM; \ell_2^{cr})} \leq \|\sum_n \varepsilon_nx_n\|_{L_2(\Omega;L_\infty(\cM))}\\
&\leq \|\sum_n \varepsilon_nx_n\|_{L_\infty(\Omega;L_\infty(\cM))}
=\sup_{\omega \in \Omega} \left\|\sum_n \varepsilon_n(\omega) x_n\right\|_\infty\\
&\leq \sup_{\omega \in \Omega}\|(\varepsilon_n(\omega) x_n)_n\|_{L_\infty(\cM;\ell_1)}=\|( x_n)_n\|_{L_\infty(\cM;\ell_1)}.
\end{align*}	
%
Let $(y_n)_n\in L_1(\cM;\ell_\infty)$. By duality and the above inequality we have
	\begin{align*}
		\|(y_n)_n\|_{L_1(\cM; \ell_{\infty})}&=\sup \left\{\frac{\sum_n \tau(x_ny_n)}{\|(x_n)\|_{L_{\infty}(\cM,\ell_1)}}:   (x_n)\in  L_{\infty}(\cM,\ell_1)      \right\}\\
		&\leq \sup \left\{ \frac{\sum_n \tau(x_ny_n)}{\|(x_n)\|_{L_{\infty}(\cM; \ell_2^{cr})} } : (x_n)\in  L_1(\cM; \ell_2^{cr})      \right\}\\
		&\leq \|(y_n)\|_{L_1(\cM; \ell_2^{cr})}.
	\end{align*}
By interpolation we immediately get the first inequality in the lemma.
	
	The above arguments  tell  that   $\|\ \|_{L_p(\cM;\ell_{\infty})}\leq \|\  \|_{L_p(\cM;\ell_{2}^{cr})}$ for $1\leq p\leq \infty$.  
	As before, by a duality argument we indeed get
$$\|\ \|_{L_p(\cM;\ell_{\infty})}\leq 	\|\  \|_{L_p(\cM;\ell_{2}^{cr})} \leq \|\ \|_{L_p(\cM;\ell_{1})}.$$
	Therefore, we obtain the second inequality:
	\begin{align*}
		&\quad\  \|(x_n)\|_{L_p(\cM;\ell_{\infty}^c)}\\
		&=	\|(x_n^*x_n)\|_{L_{p/2}(\cM;\ell_{\infty})}^{1/2} \leq \|(x_n^*x_n)\|_{L_{p/2}(\cM;\ell_1)}^{1/2}\\
		&=\left\|\left(\sum_n x_n^*\otimes e_{1,n}\right) \left(\sum_n x_n\otimes e_{n,1}\right)\right\|_{L_{p/2}(\cM\otimes B(\ell_2))}^{1/2}\\
		&\leq \left\|\left(\sum_n x_n\otimes e_{n,1}\right)^* \right\|_{L_{p}(\cM\otimes B(\ell_2))}^{1/2}\left\|\left(\sum_n x_n\otimes e_{n,1}\right)\right\|_{L_{p}(\cM\otimes B(\ell_2))}^{1/2}\\ 
		&=  \|x_n\|_{L_p(\cM;\ell_2^c)}. \qedhere 
	\end{align*}	
\end{proof}

In the noncommutative setting usually we do not have the analogue of the complex interpolation $(L_p(\ell_{q_1}),L_p(\ell_{q_2}))_\theta = L_p (\ell^{cr}_2)$ with $1/2 = (1-\theta)/q_1 + \theta/q_2$, which is an essential obstruction to generalize many classical methods on maximal inequalities in \cite{bourgain86onhigh,bourgain87ondimensionfree,carbery89convexbody}. Nevertheless, we   still have the following weaker property, which will be enough for our purpose in this paper. 
More precisely, we will compare the norms of positive symmetric maps on $L_p(\cM;\ell_2^{cr})$ with those on $L_{\frac{p}{2-p}}(\cM;\ell_{\infty})$. Note that if $T$ is a symmetric and selfadjoint map on $\cM$ (by selfadjointness we mean that $T(x^*) = T(x)^*$ for all $x\in\cM$), then 
\begin{equation}\label{eq:symmtric bracket}
	\tau(T (x)y)=\tau([T (x^*)]^*y)=\tau((x^*)^*T (y))=\tau(xT (y)) \qquad x, y \in S_{\cM}.
\end{equation}
Therefore, $T $ equals its predual operator on $L_{1}(\cM)$. In particular, $T$ extends to $L_{1}(\cM)$ with the same norm, and by interpolation it also extends to a bounded map on $L_{p}(\cM)$ with $1<p<\infty$. In this context we state the following property (note that a positive map is automatically selfajoint).

\begin{lemma}\label{lemma: A(p,2)<A(p/p-2) sqrt}
	Let	$(\Phi_j)_j$ be a sequence of unital positive and symmetric maps on $\cM$. Denote $\Phi: (x_j)_{j}\mapsto (\Phi_jx_j)_{j}$.  Let $1<p< 2$. Then
	$$\|\Phi: L_p(\cM;\ell_2^{cr})\to L_p(\cM;\ell_2^{cr})\|\leq 2\|\Phi:L_{\frac{p}{2-p}}(\cM;\ell_{\infty})\to L_{\frac{p}{2-p}}(\cM;\ell_{\infty})\|^{1/2}.$$
	Similar inequalities hold for the spaces $L_p(\cM;\ell_2^{c })$ and $L_p(\cM;\ell_2^{ r})$ if $(\Phi_j)_j$ is a sequence of unital $2$-positive maps.
\end{lemma}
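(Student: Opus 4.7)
The plan is to first prove the column and row estimates under the $2$-positivity hypothesis (the second assertion), then deduce the $\ell_2^{cr}$ estimate (the first assertion) from them by a sum decomposition. Set $p':= p/(p-1)>2$ and $q := p/(2-p)$, and note that $p'/2 = q'$, where $q' := q/(q-1)$ is the H\"older conjugate of $q$. By the trace duality $L_{p'}(\cM;\ell_2^c)^* = L_p(\cM;\ell_2^r)$ recalled above, combined with the self-adjointness $\Phi^*=\Phi$ that follows from symmetry of each $\Phi_j$, the column/row part of the second assertion reduces to
\[
\|\Phi\|_{L_{p'}(\cM;\ell_2^c)\to L_{p'}(\cM;\ell_2^c)}\leq A^{1/2},\qquad A := \|\Phi\|_{L_q(\cM;\ell_\infty)\to L_q(\cM;\ell_\infty)},
\]
together with its row mirror. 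Setting $z_j := x_j^*x_j\ge 0$ and $Y := \sum_j \Phi_j(z_j)$, the Kadison–Schwarz inequality for unital $2$-positive maps gives $\sum_j (\Phi_j x_j)^*(\Phi_j x_j)\leq Y$, so the problem reduces further to the scalar estimate
\[
\|Y\|_{q'}\leq A\Big\|\sum_j z_j\Big\|_{q'}. \qquad (\ast)
\]

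To prove $(\ast)$ I pair the positive operator $Y$ with an arbitrary positive $w\in L_q(\cM)$ of unit norm; this captures the $L_{q'}$-norm because $q'>1$ and $Y\ge 0$. By the symmetry of each $\Phi_j$, one has $\tau(wY)=\sum_j \tau(z_j\,\Phi_j w)$. Applying the $L_q(\cM;\ell_\infty)$-hypothesis to the constant sequence $(w,w,\dots)$ produces a positive majorant $a\in L_q(\cM)_+$ of $(\Phi_jw)_j$ with $\|a\|_q\le A\|w\|_q+\varepsilon$. Cyclicity and positivity of $z_j$ give $\tau(z_j\Phi_jw)\le\tau(z_ja)$, and summing and applying H\"older yields $\tau(wY)\le A\|w\|_q \bigl\|\sum_j z_j\bigr\|_{q'}$; supremizing over $w$ proves $(\ast)$. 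The mirror argument with $x_jx_j^*$ in place of $x_j^*x_j$ gives the $L_{p'}(\cM;\ell_2^r)$ bound, and duality transfers both to the claimed column and row bounds on $L_p$.

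For the $\ell_2^{cr}$ assertion under bare positivity, I use that $L_p(\cM;\ell_2^{cr}) = L_p(\cM;\ell_2^c)+L_p(\cM;\ell_2^r)$ (valid for $p\leq 2$) and choose an almost-optimal decomposition $x_j = y_j + z_j$; the triangle inequality in $L_p(\ell_2^{cr})$ together with the column and row bounds gives the $\ell_2^{cr}$ estimate. The main obstacle is that under mere positivity the Kadison–Schwarz step becomes unavailable, so the column and row bounds themselves must be re-derived. Observe however that $(\ast)$ was established using only positivity and symmetry; the missing Kadison–Schwarz step can be compensated, at the cost of an overall factor of $2$, by working simultaneously with $x_j^*x_j$ and $x_jx_j^*$ through the self-adjoint $2\times 2$ block with zero diagonal and off-diagonal entries $x_j$ and $x_j^*$, whose square is the positive diagonal operator having $x_jx_j^*$ and $x_j^*x_j$ on the diagonal, and to which the positivity-only reasoning in the derivation of $(\ast)$ directly applies. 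This accounts for the stated constant $2A^{1/2}$ and is the principal technical hurdle.
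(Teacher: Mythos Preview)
Your treatment of the second assertion (the $\ell_2^c$ and $\ell_2^r$ bounds under $2$-positivity) is correct and essentially matches the paper's argument: Kadison--Schwarz on $L_{p'}$, then duality. Your proof of $(\ast)$ via pairing with a positive $w\in L_q$ is just an explicit unwinding of the duality $L_{q'}(\cM;\ell_1)^* = L_q(\cM;\ell_\infty)$ on positive elements, which the paper uses implicitly.

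The gap is in the first assertion, the $\ell_2^{cr}$ bound under mere positivity. Your proposed $2\times 2$ trick does not work: the matrix $X_j=\begin{pmatrix}0 & x_j\\ x_j^* & 0\end{pmatrix}$ is indeed selfadjoint in $M_2(\cM)$, but to apply \emph{any} version of Kadison's inequality to it you must apply a unital positive map on $M_2(\cM)$. The only natural candidate is $\Phi_j\otimes\mathrm{id}_{M_2}$, and this is positive precisely when $\Phi_j$ is $2$-positive, which is exactly the hypothesis you are trying to avoid. Applying $(\ast)$ separately to the diagonal entries $x_jx_j^*$ and $x_j^*x_j$ of $X_j^2$ does nothing to replace the missing operator inequality $(\Phi_jx_j)^*(\Phi_jx_j)\le \Phi_j(x_j^*x_j)$, which is the actual point of failure.

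The paper's route around this is simpler and avoids matrix amplification altogether. Kadison's original Schwarz inequality \cite{kadison52schwarz} asserts $\Phi_j(a)^2\le\Phi_j(a^2)$ for \emph{selfadjoint} $a$ and unital \emph{positive} (not $2$-positive) $\Phi_j$. For a selfadjoint sequence $(x_j)$ and $p'>2$, the column and row square functions coincide, so the $L_{p'}(\ell_2^{cr})$ norm is simply $\big\|\big(\sum_j x_j^2\big)^{1/2}\big\|_{p'}$, and your $(\ast)$ applies directly. For a general sequence one writes $x_j=a_j+ib_j$ with $a_j,b_j$ selfadjoint; since $\|(x_j^*)\|_{L_{p'}(\ell_2^{cr})}=\|(x_j)\|_{L_{p'}(\ell_2^{cr})}$ for $p'\ge 2$, each of $(a_j),(b_j)$ has $L_{p'}(\ell_2^{cr})$ norm at most $\|(x_j)\|_{L_{p'}(\ell_2^{cr})}$, and the triangle inequality gives the factor $2$. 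Duality then transfers the bound to $L_p(\ell_2^{cr})$.
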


\begin{proof}
	Since $\Phi_j$ are unital positive maps, by Kadison's Cauchy-Schwarz inequality \cite{kadison52schwarz}, for any selfadjoint  element $x_j\in L_p(\cM)$, we have
	$$\Phi_j(x_j)^2\leq \Phi_j(x_j^2).$$
	Assume  that $(x_j)_j \in L_{p^\prime}(\cM; \ell_2^{cr})$ is a sequence of selfadjoint elements. Then the conjugate index $p^\prime$ is greater than $2$ and
	\begin{align*}
		\|(\Phi_{j}x_j)_j\|_{L_{p^\prime}(\cM; \ell_2^{cr})}&= \left\Vert \left(\sum_{j}( \Phi_{j}x_j)^2 \right)^{1/2} \right\Vert_{p^\prime}
		\leq \left\Vert \left(\sum_{j} \Phi_{j}(x_j^2) \right)^{1/2} \right\Vert_{p^\prime}\\
		&=\left\Vert \left( \Phi_{j}(x_j^2) \right)_j \right\Vert_{L_{p^\prime/2}(\cM;\ell_1)}^{1/2}\\
		&\leq\|\Phi:L_{p^\prime /2}(\cM;\ell_{1})\to L_{p^\prime/2}(\cM;\ell_{1})\|^{1/2} \|(x_j)_j\|_{L_{p^\prime}(\cM; \ell_2^{cr})}. 
	\end{align*}
	For general $(x_j)_j \in L_{p^\prime}(\cM; \ell_2^{cr})$, we may decompose it into two sequence of selfadjoint elements. Note that $\|(x_j)_j\|_{L_{p^\prime}(\cM; \ell_2^{cr})}=\|(x_j^*)_j\|_{L_{p^\prime}(\cM; \ell_2^{cr})}$ for $p^\prime> 2$.
	Therefore, 
	$$\|\Phi: L_{p^\prime}(\cM;\ell_2^{cr})\to L_{p^\prime}(\cM;\ell_2^{cr})\|\leq 2\|\Phi:L_{p^\prime /2}(\cM;\ell_{1})\to L_{p^\prime/2}(\cM;\ell_{1})\|^{1/2}.$$
	As explained in \eqref{eq:symmtric bracket}, the dual operator of $\Phi$ equals itself and we obtain
	$$\|\Phi: L_p(\cM;\ell_2^{cr})\to L_p(\cM;\ell_2^{cr})\|\leq 2 \|\Phi:L_{\frac{p}{2-p}}(\cM;\ell_{\infty})\to L_{\frac{p}{2-p}}(\cM;\ell_{\infty})\|^{1/2},$$
	as desired.
	
	For the spaces $L_p(\cM;\ell_2^{c })$ and $L_p(\cM;\ell_2^{ r})$,  similar arguments still work for non selfadjoint elements $(x_j)$ if the maps $\Phi_j$ are $2$-positive, 	since in this case we can use the following Cauchy-Schwarz inequality   $|\Phi_j(x_j)|^2\leq \Phi_j(|x_j|^2) $ (see e.g. \cite[Proposition 3.3]{paulsen02completely}).
\end{proof}
%
%

The square function estimates for noncommutative diffusion semigroups has been established in \cite{jungelemerdyxu06Hinftycalut}. In this section we will slightly adapt the arguments of \cite{jungelemerdyxu06Hinftycalut} so as to obtain a refined version of this result for our further purpose.
Throughout this subsection, $(S_t)_{t \in \bR_+}$ always denotes  a semigroup of unital completely positive trace preserving  and symmetric maps on $\mathcal M$ with the negative infinitesimal generator $A$.  Let $(P_t)$ denote the subordinate Poisson semigroup of $(S_t)$, i.e. the negative generator of $P_t$ is $-(-A)^{1/2}$.

For notational simplicity, the spaces $L_p(\cM;L_2^c(\bR;\frac{dt}{t}))$, $L_p(\cM;L_2^r(\bR;\frac{dt}{t}))$ and $L_p(\cM;L_2^{cr}(\bR;\frac{dt}{t}))$ will be denoted by $L_p(L^c_2(\frac{dt}{t}))$, $L_p(L^r_2(\frac{dt}{t}))$ and $L_p(L^{cr}_2(\frac{dt}{t}))$ respectively  in this section.

To state our theorem, we recall the  dilation property. Let $(\cM, \tau),(\cN, \tau^\prime)$ be two von Neumann algebra where $\tau$ and $\tau^\prime$ are normal faithful semifinite traces.  If $\pi\colon (\mathcal M,\tau) \to (\mathcal N,\tau')$ is a normal unital
faithful trace preserving $*$-representation,   it (uniquely)
extends to an isometry from $L_p(\mathcal M)$ into $L_p(\mathcal N)$ for any
${1\leq p<\infty}$. We call
the adjoint $\bE\colon\mathcal N\to\mathcal M $ of the embedding
$L_1(\mathcal M )\hookrightarrow L_1(\mathcal N)$ induced by $\pi$\textit{ the conditional
	expectation associated with  $\pi$}. Moreover $\bE\colon\mathcal N\to\mathcal M $
is unital and completely positive. 
\begin{definition}\label{def:Rota}
	Let  $T\colon \mathcal M \to \mathcal M$ be a bounded operator. We say
	that $T$ satisfies  \emph{Rota's dilation property} if there exist a
	von Neumann algebra $\cN$ equipped with a normal semifinite faithful trace, a
	normal unital faithful $*$-representation $\pi\colon \mathcal M \to \cN$
	which preserves traces, and a decreasing sequence $(\cN_m)_{m\geq
		1}$ of von Neumann subalgebras of $\cN$ such that
	\begin{equation}\label{8Rota}
		T^{m} = \hat \bE \circ \bE_m\circ\pi,\qquad m\geq 1,
	\end{equation}
	where $\bE_m \colon \cN \to \cN _m\subset  \cN$ is the canonical
	conditional expectation onto $\cN_m$, and where $\hat \bE \colon \cN \to\mathcal M $
	is the conditional expectation associated with  $\pi$.
\end{definition}

We recall the following typical examples of operators with Rota's dilation property.

\begin{lemma}\label{lem:eg rota}
	\emph{(1) (\cite{jungericardshlyakhtenko2014noncommutative,dabrowski2010dilation})} If $\mathcal M$ is a finite von Neumann algebra and $\tau$ is a normal faithful state on $\mathcal M$, then for all $t \in \bR_+$, the operator $S_t$ satisfies Rota's dilation property.
	
	\emph{(2) (\cite{stein70booktopics})} If $\mathcal M$ is a commutative von Neumann algebra and $\mathcal L$ is another semifinite von Neumann algebra, then for all $t \in \bR_+$, the operator $S_t \otimes \mathrm{Id}_{\mathcal L}$ on $\mathcal M \overline \otimes \mathcal L$ satisfies Rota's dilation property.
\end{lemma}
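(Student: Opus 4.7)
The plan is to invoke the cited dilation theorems and then extract the specific filtration structure required by Rota's property, rather than to construct a dilation from scratch.

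For part~(1), I would start from the reversible Markov dilation constructed by Junge-Ricard-Shlyakhtenko (with an independent version due to Dabrowski), which provides, for any symmetric Markov semigroup $(S_t)_{t \in \bR_+}$ on a finite von Neumann algebra $(\cM, \tau)$, a larger finite von Neumann algebra $\tilde{\cN}$ with a normal faithful tracial state, a trace-preserving normal unital $*$-embedding $\pi : \cM \hookrightarrow \tilde{\cN}$, and a continuous one-parameter group $(\alpha_t)_{t \in \bR}$ of trace-preserving $*$-automorphisms of $\tilde{\cN}$ such that $S_t = \hat{\bE} \circ \alpha_t \circ \pi$ for all $t \geq 0$, and such that the orbit $\{\alpha_s(\pi(\cM)) : s \in \bR\}$ forms a commuting-square Markov system. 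Given $t \in \bR_+$, I would define $\cN_m$ to be the von Neumann subalgebra of $\tilde{\cN}$ generated by $\alpha_s(\pi(\cM))$ for $s \geq mt/2$. This sequence is manifestly decreasing, and the commuting-square Markov property identifies $\bE_m \circ \pi$ with $\alpha_{mt/2} \circ \pi \circ S_{mt/2}$. Composing on the left with $\hat{\bE}$ then recovers $S_{mt} = S_t^m$, which is exactly the Rota identity.

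For part~(2), I would apply Stein's classical Rota dilation for commutative symmetric Markov semigroups, obtaining $(\cN_0, \pi_0, (\cN_{0,m})_{m \geq 1}, \hat{\bE}_0)$ satisfying $S_t^m = \hat{\bE}_0 \circ \bE_{0,m} \circ \pi_0$, and then tensor the entire structure with $\mathcal{L}$: set $\cN := \cN_0 \, \overline{\otimes} \, \mathcal{L}$, $\cN_m := \cN_{0,m} \, \overline{\otimes} \, \mathcal{L}$, $\pi := \pi_0 \otimes \mathrm{id}_{\mathcal{L}}$, $\hat{\bE} := \hat{\bE}_0 \otimes \mathrm{id}_{\mathcal{L}}$, and $\bE_m := \bE_{0,m} \otimes \mathrm{id}_{\mathcal{L}}$. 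The required identity for $S_t \otimes \mathrm{id}_{\mathcal{L}}$ then follows immediately from tensor factorization, since conditional expectations, trace-preserving $*$-representations, and normal semifinite traces all behave covariantly under $\overline{\otimes} \, \mathcal{L}$, and the tensor of a conditional expectation with the identity is again a conditional expectation onto the tensor-product subalgebra.

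The main obstacle lies in part~(1): a bare automorphism dilation of the form $S_t = \hat{\bE} \circ \alpha_t \circ \pi$ is not sufficient on its own, since $\alpha_{mt}$ is generally not a conditional expectation onto any natural subalgebra. To obtain Rota's form one must use the full commuting-square Markov property built into the cited reversible constructions, ensuring that the conditional expectation onto the tail algebra $\cN_m$, when restricted to $\pi(\cM)$, factors as $\alpha_{mt/2}$ composed with the image under $\pi$ of $S_{mt/2}$. Part~(2), by contrast, reduces to routine tensor-product bookkeeping once the commutative Rota dilation is granted.
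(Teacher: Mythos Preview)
The paper does not give a proof of this lemma at all: it is stated purely as a quotation of known results, with the citations \cite{jungericardshlyakhtenko2014noncommutative,dabrowski2010dilation} and \cite{stein70booktopics} serving in lieu of argument. So there is no ``paper's own proof'' to compare against; your proposal already goes further than the paper does.

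That said, your sketch is reasonable. Part~(2) is exactly the routine tensor bookkeeping one would expect, and is correct. For part~(1), your plan---pass from the automorphism (Markov) dilation of \cite{jungericardshlyakhtenko2014noncommutative,dabrowski2010dilation} to a Rota-type filtration by taking tail algebras $\cN_m$ generated by $\{\alpha_s(\pi(\cM)):s\geq mt/2\}$ and then use the reversible Markov/commuting-square structure to identify $\bE_m\circ\pi$---is the standard route, and you correctly flag that a bare identity $S_t=\hat\bE\circ\alpha_t\circ\pi$ is insufficient. The one caveat is that the step ``the commuting-square Markov property identifies $\bE_m\circ\pi$ with $\alpha_{mt/2}\circ\pi\circ S_{mt/2}$'' is doing real work and is not a one-line consequence of the dilation statement; in the cited papers this is precisely where the construction (via free/classical Brownian motion and the associated filtration) is used, and if you were writing this out you would need to verify that the specific dilation you invoke actually has this Markov property in the required form. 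Since the paper is content to cite the result, your level of detail is already more than is needed here.
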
 

We aim to prove the following square function estimates, which are  essentially established in \cite{jungelemerdyxu06Hinftycalut,jiaowang17semigroups}, without specifying the order $(p-1)^{-6}$.  However, we will see that the methods in \cite{jungelemerdyxu06Hinftycalut}, together with the sharp constants of various martingale inequalities, are enough to obtain this order. The outline of our proof is slightly different from that of \cite{jungelemerdyxu06Hinftycalut}, but all the ingredients are already available in the latter. 

\begin{prop}\label{thm:order of square funct}
	Assume that for all $t \in \bR_+$, the operator $S_t$ satisfies Rota's dilation property. Then for all $1<p<2$ and $x\in L_p(\mathcal M)$ we have
	\begin{align}\label{order of square funct}
		\inf \left\lbrace \left\|(\int_0^\infty \left| t{\partial_t}P_t(x^c)\right| ^2\frac{dt}{t})^{\frac12}\right\|_p+  \left\|(\int_0^\infty \left| (t{\partial_t}P_t(x^r))^* \right| ^2\frac{dt}{t})^{\frac12}\right\|_p\right\rbrace \leq c (p-1)^{-6} \|x\|_p
	\end{align}
	where the infimum runs over all $x_c, x_r\in L_p(\cM)$ such that $x=x_c+x_r$, and $c>0$ is an absolute constant.
\end{prop}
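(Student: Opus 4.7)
The plan is to follow the scheme of \cite{jungelemerdyxu06Hinftycalut} (and its refinement in \cite{jiaowang17semigroups}), reducing the continuous Poisson square function to a noncommutative martingale inequality via Rota's dilation, and then tracking the resulting constants to pin down the explicit exponent $6$. Fix $s>0$, let $T=S_s$, and apply Rota's dilation to obtain $T^n=\hat{\bE}\circ\bE_n\circ\pi$ on a dilating algebra $\cN$. For $x\in L_p(\cM)$, the sequence $\bigl(\bE_n\pi(x)\bigr)_n$ is a reverse martingale in $L_p(\cN)$ whose differences $d_n$ satisfy $\hat{\bE}(d_n)=T^{n-1}x-T^n x$. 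The noncommutative Burkholder-Gundy inequality for reverse martingales yields, for $1<p\leq 2$, a column/row decomposition $d_n=d_n^c+d_n^r$ in $L_p(\cN;\ell_2^{cr})$ with constant of known polynomial order in $(p-1)^{-1}$. Since $\hat{\bE}$ is unital, trace-preserving and completely positive, it acts as a contraction on $L_p(\cN;\ell_2^c)$ and $L_p(\cN;\ell_2^r)$, and pushing the decomposition forward to $L_p(\cM)$ produces the corresponding discrete column/row square function bound for $(T^{n-1}x-T^n x)_n$.

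The second stage passes from the discrete bound for $T=S_s$ to a continuous integral version for $(S_t)_{t>0}$, by averaging over $s$ and invoking the noncommutative Stein inequality for the conditional expectations $(\bE_n)$, whose sharp $L_p$-constant is again polynomial in $(p-1)^{-1}$. This yields an estimate of the form
$$\inf_{x=x_c+x_r}\bigl\{\|t\partial_t S_t x_c\|_{L_p(L_2^c(dt/t))}+\|(t\partial_t S_t x_r)^*\|_{L_p(L_2^c(dt/t))}\bigr\}\lesssim C_p^{(S)}\|x\|_p.$$
Finally, the subordination identity realizes $P_t$ as a Gaussian average of the maps $S_u$, so $t\partial_t P_t x$ admits an integral representation in terms of $u\partial_u S_u x$; a Minkowski/Cauchy-Schwarz argument inside the column and row $L_2(\tfrac{dt}{t})$-norms, together with Fubini, transfers the $(S_t)$-bound to the desired $(P_t)$-bound up to an absolute multiplicative factor.

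The main obstacle is the bookkeeping of constants rather than any new idea. A delicate point is that for $p<2$ the space $L_p(\ell_2^{cr})$ is a sum space rather than an interpolation space, so the column and row branches must be handled symmetrically and separately at every stage, and the decomposition $x=x_c+x_r$ appearing in the statement is precisely the one inherited from the Burkholder-Gundy splitting. Collecting the sharp orders---the Burkholder-Gundy constant, Stein's inequality, and the subordination step, each of which contributes a polynomial factor in $(p-1)^{-1}$---and summing the exponents yields the stated bound of order $(p-1)^{-6}$. The precise value $6$ is not optimized here; only its finiteness is important for the applications in the bootstrap argument behind Criterion 2 later in the paper.
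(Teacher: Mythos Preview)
Your outline conflates two genuinely different quantities. What Burkholder--Gundy (together with Stein's inequality and discretization) delivers for $1<p<2$ is a bound on
\[
\|(t\partial_t S_t x)_t\|_{L_p(\cM;L_2^{cr}(\frac{dt}{t}))}
=\inf\Big\{\|(a_t)\|_{L_p(L_2^c)}+\|(b_t)\|_{L_p(L_2^r)}:\ t\partial_t S_t x = a_t+b_t\ \text{for each }t\Big\},
\]
i.e.\ a decomposition of the \emph{family} $(t\partial_t S_t x)_t$. The statement you are asked to prove requires instead a single splitting $x=x_c+x_r$ of the \emph{argument}, so that the column part is $t\partial_t P_t(x_c)$ with the same $x_c$ for every $t$. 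Burkholder--Gundy produces $d_n=d_n^c+d_n^r$ at the level of martingale differences; after pushing forward by $\hat{\bE}$ you get a decomposition of each $T^{n-1}x-T^n x$, but there is no reason for the column pieces $\hat{\bE}(d_n^c)$ to be of the form $T^{n-1}x_c-T^n x_c$ for a fixed $x_c$. Your sentence ``the decomposition $x=x_c+x_r$ \dots\ is precisely the one inherited from the Burkholder--Gundy splitting'' is exactly the unjustified step.

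The paper closes this gap by a separate, nontrivial stage: it first proves the $L_p(L_2^{cr})$ bound with constant $(p-1)^{-2}$ (this is your Rota/Burkholder--Gundy/Stein part), and then upgrades it to the $x=x_c+x_r$ form via the $H^\infty$ functional calculus machinery of \cite{jungelemerdyxu06Hinftycalut}. Concretely, one shows that for the Poisson generator $B$ the operators $T_c,T_r$ built from $F(z)=ze^{-z}$ and $G=4F$ are bounded on $L_p(L_2^c)$ and $L_p(L_2^r)$, which in turn rests on the Col/Row-boundedness of the resolvent family $\{z(z-B)^{-1}:z\in\Gamma_\gamma\}$ with an explicit sectorial angle. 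Tracking those constants gives an extra factor $(p-1)^{-4}$ for this passage, and $(p-1)^{-2}\cdot(p-1)^{-4}=(p-1)^{-6}$. Without this functional-calculus step (or an equivalent device producing a genuine splitting of $x$), your argument stops at the weaker $L_p(L_2^{cr})$ estimate of order $(p-1)^{-2}$ and does not yield the proposition as stated.
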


\begin{remark}
	The order $(p-1)^{-6}$ stated in the above theorem is not optimal; after we finished the preliminary version of this paper, the result has been improved in the recent preprints \cite{xu21holomorphic,xuzhang21inprogress}.
	However, this order $(p-1)^{-6}$ is sufficient for our purpose in the sequel.
\end{remark}

Our study of the semigroup $(  P_t )_t$ is based on the analysis of the ergodic averages  as follows:
$$M_t=\frac{1}{t}\int^t_{0}S_udu.$$
We will need the following claim.
\begin{lemma}\label{lem:average control poisson}
	For any $y\in L_p(\cM)$,
	\begin{equation*}
		\|(t\partial P_ty)_t\|_{L_p(L^c_2(\frac{dt}{t}))}\leq c \|(t\partial M_ty)_t\|_{L_p(L^c_2(\frac{dt}{t}))},
	\end{equation*}
	where  $c>0$ is an absolute constant.
	The inequality remains true if we replace the norm of $L_p(L^{c}_2(\frac{dt}{t}))$ by $L_p(L^r_2(\frac{dt}{t}))$ or $L_p(L^{cr}_2(\frac{dt}{t}))$.
\end{lemma}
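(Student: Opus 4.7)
The plan is to derive an explicit scalar integral representation
$$t\partial_t P_t y = \int_0^\infty K(t,r)\,r\partial_r M_r y\,\frac{dr}{r},$$
for a kernel $K(t,r)$ defining a bounded operator on $L_2(\bR_+, dr/r)$, and then to lift this boundedness to the noncommutative column/row spaces by observing that scalar kernels act on $L_p(\cM; L_2^c(dt/t))$ via left multiplication on $L_p(\cM \overline{\otimes} B(L_2(dt/t)))$ with norm equal to the scalar $B(L_2)$-norm. Since the subordinate Poisson semigroup is given by $P_t = \int_0^\infty \mu_t(s) S_s\,ds$ with $\mu_t(s) = (t/2\sqrt\pi)s^{-3/2}e^{-t^2/4s}$, and since the key identity $s\partial_s M_s y = S_s y - M_s y$ follows by differentiating $s M_s y = \int_0^s S_u y\,du$, one can substitute this into the subordination formula and write $M_s y = y + \int_0^s r\partial_r M_r y\,dr/r$; after Fubini and the normalization $\int_0^\infty\mu_t(s)\,ds = 1$, this yields $P_t y - y = \int_0^\infty L(t,r)\,r\partial_r M_r y\,dr/r$ with $L(t,r) = r\mu_t(r) + \int_r^\infty \mu_t(s)\,ds$.

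Using the scaling relation $\mu_t(s) = t^{-2}\mu_1(s/t^2)$, one checks that $L(t,r) = L_0(v)$ with $v = r/t^2$ and $L_0(v) = v\mu_1(v) + \int_v^\infty \mu_1(u)\,du$, so that $L_0'(v) = v\mu_1'(v)$, and hence
$$K(t,r) := t\partial_t L(t,r) = -2v^2 \mu_1'(v) = \bigl(3v - \tfrac12\bigr)\mu_1(v).$$
Passing to logarithmic variables on $\bR_+$ turns the operator $f \mapsto \int K(\cdot,r) f(r)\,dr/r$ into a convolution on $\bR$, so by Young's inequality it is $L_2(dr/r)$-bounded with constant $\lesssim \int_0^\infty |3v - \tfrac12|\,\mu_1(v)\,dv/v$, which is finite since $\mu_1(v)/v = (2\sqrt\pi)^{-1} v^{-5/2}e^{-1/4v}$ decays super-exponentially at $0$ and like $v^{-5/2}$ at infinity. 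Finally, via the column (respectively row) embedding $(x_t)_t \mapsto \sum_t x_t \otimes e_{t,0}$ (resp.\ $\sum_t x_t\otimes e_{0,t}$) into $L_p(\cM \overline{\otimes} B(L_2(dt/t)))$, the scalar kernel operator acts on $L_p(\cM; L_2^c(dt/t))$ (resp.\ $L_p(\cM; L_2^r(dt/t))$) as left (resp.\ right) multiplication by the corresponding element of $B(L_2(dt/t))$; its $L_p$-norm equals the $B(L_2)$-norm by H\"older's inequality, yielding the desired bound with the same absolute constant, and the $L_2^{cr}$ case follows from the definition of the mixed norm by decomposing $y = y_c + y_r$.

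The main obstacle is the kernel derivation itself: one must justify the vanishing of the boundary terms in the integrations by parts (using that $\mu_t(s) \to 0$ super-exponentially as $s\to 0$ and $s\mu_t(s) \to 0$ as $s\to \infty$), handle the Fubini step carefully, and combine the pieces so that the normalization $\int_0^\infty\mu_t(s)\,ds = 1$ exactly cancels the free constant term and produces the clean scale-invariant form of $L(t,r)$ needed for Young's inequality. Once this algebraic identity is established, the remainder of the argument is routine.
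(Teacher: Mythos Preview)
Your proof is correct and rests on the same underlying idea as the paper's---subordination plus integration by parts plus dilation invariance of $L_2(dt/t)$---but the paper's execution is considerably more economical. Writing $\varphi=\mu_1$, the paper observes $S_s=\partial_s(sM_s)$ and integrates by parts \emph{once} in the subordination integral to get $P_t=-\int_0^\infty s\varphi'(s)\,M_{t^2s}\,ds$; differentiating in $t$ then yields
\[
t\partial_tP_ty=-2\int_0^\infty s\varphi'(s)\,(t^2s\,\partial M_{t^2s}y)\,ds.
\]
Since $\|\cdot\|_{L_p(L_2^c(dt/t))}$ is a norm and for each fixed $s$ the family $(t^2s\,\partial M_{t^2s}y)_t$ has the same $L_p(L_2^c(dt/t))$-norm as $(r\partial_rM_ry)_r$ (dilation $r=t^2s$), Minkowski's inequality together with $\int_0^\infty|s\varphi'(s)|\,ds<\infty$ gives the claim in one line. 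Your route instead expands $M_s=y+\int_0^s r\partial_rM_ry\,dr/r$, applies Fubini, computes the resulting kernel explicitly, bounds it on $L_2(dr/r)$ via Young, and then lifts to the column/row spaces through the embedding into $L_p(\cM\overline\otimes B(L_2))$. All of this is correct (one minor imprecision: since $K(t,r)=k(r/t^2)$ rather than $k(r/t)$, the logarithmic change of variables gives a function of $b-2a$, not a standard convolution, though boundedness follows just the same after the substitution $t\mapsto t^2$), but every step is subsumed by the paper's direct Minkowski argument, which needs only that $L_p(L_2^c(dt/t))$ is a Banach space and that the weight $s\varphi'(s)$ is in $L_1$.
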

\begin{proof}
	This claim is well known to experts. We only give a sketch of its arguments. Set $\varphi(s)=\frac{1}{2\sqrt{\pi}}\frac{e^{-1/4s}}{s^{3/2}}$. Using integration by parts  we have
	$$P_t=\frac{1}{t^2}\int^\infty_{0}\varphi(\frac s{t^2})(\partial(sM_s))ds=-\int^\infty_0s\varphi^\prime(s)M_{t^2s}ds.$$
	Therefore
	\begin{align}\label{expression of poisson}
		t\partial P_t=-2\int^\infty_0 t^2s^2\varphi^\prime(s)\partial M_{t^2s}ds=-2\int^\infty_0s\varphi^\prime(s)(t^2s\partial M_{t^2s})ds
	\end{align}
	which yields the claim by noting that $\|\cdot \|_{L_p(L^c_2(\frac{dt}{t}))}$ is a norm and $s\varphi^\prime(s)$ is absolutely integrable.
\end{proof}

We need the following auxiliary result.

\begin{prop}\label{prop:square funct estimate}
	Assume that for all $t \in \bR_+$, the operator $S_t$ satisfies Rota's dilation property. Then for $1<p<2$ and $x\in L_p(\mathcal M)$, we have
	\begin{align}\label{square funct estimate}
		\| ( t{\partial}P_t(x) )_{t>0} \|_{L_p (L_2^{cr}(\frac{dt}{t}))} \leq c (p-1)^{-2} \|x\|_p,
	\end{align} 
	where $c>0$ is an absolute constant.
\end{prop}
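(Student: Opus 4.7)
The plan is to reduce the continuous-parameter square function estimate for the subordinate Poisson semigroup $(P_t)$ to a discrete martingale square function bound, by passing first through the ergodic averages $M_t$ and then invoking Rota's dilation. By Lemma~\ref{lem:average control poisson}, applied separately to the column, row, and column-row norms, it is enough to show
\begin{equation*}
\|(t\partial M_t x)_t\|_{L_p(L_2^{cr}(\frac{dt}{t}))} \lesssim (p-1)^{-2}\|x\|_p.
\end{equation*}
Writing $t\partial M_t = S_t - M_t = S_t - \frac{1}{t}\int_0^t S_u\,du$, this measures the deviation of the semigroup from its running average, exactly the kind of object that carries a martingale-type structure once Rota's dilation is invoked.

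The next step is to discretize. Splitting $(0,\infty) = \bigsqcup_{k \in \mathbb Z}[2^k, 2^{k+1})$ and using standard $L_p$-smoothness of $t \mapsto S_t x$ on each dyadic block, one can compare the continuous norm above with a discrete dyadic square function in the increments $(S_{2^k}x - S_{2^{k+1}}x)_{k\in \mathbb Z}$, measured in $L_p(\cM;\ell_2^{cr})$. Recall that for $1<p<2$ this latter norm is defined as an infimum over decompositions into column and row pieces, so the task reduces to producing a suitable decomposition of the dyadic increments and bounding the column and row sums separately.

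I would then invoke Rota's dilation for $T = S_1$: there exist a larger tracial von Neumann algebra $\cN$, a trace-preserving $*$-representation $\pi: \cM \to \cN$, and a decreasing tower of subalgebras $(\cN_m)_{m\geq 1}$ with conditional expectations $E_m$ such that $S_n x = \hat{\bE}\, \bE_m \pi(x)$ for $n \geq 1$. Thus the dyadic samples $(S_{2^k}x)_{k\geq 0}$ arise as the image under the completely positive contraction $\hat{\bE}$ of a sub-sequence of a reverse martingale in $L_p(\cN)$. The noncommutative Burkholder--Gundy inequality for $1<p<2$ (Randrianantoanina, Junge--Xu), together with the noncommutative Stein inequality for conditional expectations, each contribute a factor of order $(p-1)^{-1}$ to the constant, and together they produce the desired $(p-1)^{-2}$ bound. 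The range $k<0$ (small $t$) is handled by applying Rota's dilation to $S_{t_0}$ for progressively smaller $t_0 = 2^{-N}$, or equivalently by a scaling argument exploiting the fact that the target estimate is invariant under the rescaling $t \mapsto \lambda t$ of the semigroup.

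The main technical obstacle is the interaction between the sum-space structure of $L_p(\ell_2^{cr})$ for $1<p<2$ and the column-row decompositions produced separately by Burkholder--Gundy and by the dyadic discretization: the decomposition $t\partial P_t x = y_t + z_t$ produced at the end must be coherent across all the intermediate reductions. Tracking the maps that yield these decompositions, and verifying that they compose compatibly through the reduction to $M_t$, through the dyadic comparison, and through the martingale inequality, is the delicate bookkeeping underlying the argument; once in place, the two factors of $(p-1)^{-1}$ multiply to give the stated order $(p-1)^{-2}$.
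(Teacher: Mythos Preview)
Your overall strategy matches the paper's: reduce to the ergodic-average square function $\|(t\partial M_t x)_t\|_{L_p(L_2^{cr}(\frac{dt}{t}))}$ via Lemma~\ref{lem:average control poisson}, then discretize and invoke Rota's dilation together with martingale square-function bounds (the paper cites the martingale-transform estimate of \cite{rand02ncmartigale} plus Khintchine, and the Stein inequality of \cite{jungexu05ncmartigale}, each contributing a factor $(p-1)^{-1}$). The two factors combine exactly as you say.

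The gap is in your discretization. You propose to compare $t\partial M_t x = S_t x - M_t x$ on dyadic blocks with the semigroup increments $(S_{2^k}x - S_{2^{k+1}}x)_k$, calling this ``standard $L_p$-smoothness.'' But $M_t x$ is an integral average of $S_u x$ over $u\in[0,t]$, not a dyadic sample, and there is no evident operator-level or $L_p(\ell_2^{cr})$-level domination of $S_t - M_t$ by dyadic semigroup differences; this step would itself need an argument of the same depth as the one you are trying to prove. The paper avoids this by discretizing differently: it works with the \emph{ergodic-average} differences
\[
D^\varepsilon_m = \rho^\varepsilon_m - \rho^\varepsilon_{m-1},\qquad \rho^\varepsilon_m = \frac{1}{m+1}\sum_{k=0}^m S_{k\varepsilon},
\]
and traces the constants through \cite[Corollary~10.9]{jungelemerdyxu06Hinftycalut} to obtain $\|(\sqrt{m}\,D^\varepsilon_m x)_m\|_{L_p(\ell_2^{cr})}\lesssim (p-1)^{-2}\|x\|_p$ uniformly in $\varepsilon$; the passage to the continuous $(t\partial M_t x)_t$ is then a limiting argument as $\varepsilon\to 0$ (\cite[Lemma~10.11]{jungelemerdyxu06Hinftycalut}), not a dyadic comparison. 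In other words, the discrete objects that carry the martingale structure under Rota's dilation are the $D^\varepsilon_m$, not the $S_{2^k}-S_{2^{k+1}}$, and the discretization parameter is the mesh $\varepsilon$, not a dyadic scale. Note also that if your route via bare semigroup increments worked, Burkholder--Gundy alone (applied to the reverse martingale $(\bE_{2^k}\pi(x))_k$, pushed through the contraction $\hat{\bE}$) would give only a single $(p-1)^{-1}$; the role you assign to Stein is left unexplained, which is another symptom that the discretization step is not as you describe.
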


\begin{proof}
	This result has been essentially obtained in \cite{jungelemerdyxu06Hinftycalut}, together with the optimal estimates for martingale inequalities in \cite{rand02ncmartigale,jungexu05ncmartigale}. Indeed, let $(\bE_n)_{n\in \bN}$ be a monotone sequence of conditional expectations on $\mathcal M$ and 
	$$x_n=\bE_{n+1}(x) - \bE_n (x)$$
	be a sequence of martingale differences with $x\in L_p (\mathcal M)$. By the estimate for noncommutative martingale transform in \cite[Theorem 4.3]{rand02ncmartigale} and the Khintchine inequality in Lemma \ref{prop:Khintchine}, we have
	$$\|(x_n)_{n\in \bN}\|_{L_p (\ell_2^{cr})} \leq \frac{c}{p-1} \|x\|_p,$$ 
	and by the noncommutative Stein inequality \cite[Theorem 8]{jungexu05ncmartigale} we have for any sequence $(y_n)_{n\in \bN} \subset L_p (\mathcal M)$,
	$$\|(\bE_n y_n)_{n\in \bN}\|_{L_p (\ell_2^{cr})} \leq \frac{c}{p-1}  \|( y_n)_{n\in \bN}\|_{L_p (\ell_2^{cr})} , \quad 1<p<2,$$
	where $c>0$ is an absolute constant.
	Then tracing the order in the proof of \cite[Corollary 10.9]{jungelemerdyxu06Hinftycalut}, we obtain that for all $\varepsilon>0$,
	$$\|(\sqrt{m}D^{\varepsilon}_m(x))_{m\geq1}\|_{L_p(\ell^{cr}_2)}\leq  \frac{c'}{(p-1)^{2}}\|x\|_p,$$
	where  $c' >0$ is an absolute constant, and where
	$$D^{\varepsilon}_m=\rho^{\varepsilon}_m-\rho^{\varepsilon}_{m-1}\quad \text{and}
	\quad
	\rho^{\varepsilon}_m=\frac{1}{m+1}\sum^m_{k=0}S_{k\varepsilon}.$$
	By a standard discretization argument (see e.g. \cite[Lemma 10.11]{jungelemerdyxu06Hinftycalut}), we get the following inequality
	$$\|(t\partial M_t x)_t\|_{L_p(L^{cr}_2(\frac{dt}{t}))}\leq c^\prime(p-1)^{-2}\|x\|_p.$$
	Then	the desired result follows from Lemma~\ref{lem:average control poisson}.
\end{proof}

%

Estimate (\ref{square funct estimate}) is weaker than (\ref{order of square funct}).  The remaining task for proving Proposition~\ref{thm:order of square funct} is to show 
\begin{align}\label{c-c estimate}
	&\ \quad \inf \left\lbrace  \left\|(\int_0^\infty \left| t{\partial}P_t(x^c)\right| ^2\frac{dt}{t})^{\frac12}\right\|_p+ 
	\left\|(\int_0^\infty \left| (t{\partial}P_t(x^r))^* \right| ^2\frac{dt}{t})^{\frac12} \right\|_p\right\rbrace  \\
	&\lesssim (p-1)^{-4} \| ( t{\partial}P_t(x) )_{t>0} \|_{L_p (L_2^{cr}(\frac{dt}{t}))}  \nonumber
\end{align}
where the infimum runs over all decompositions $x=x^c+x^r$ in $L_p(\cM)$.  
This inequality is essentially proved in \cite[Theorem 7.8]{jungelemerdyxu06Hinftycalut}; the order $(p-1)^{-4}$ is not stated there, but it follows from a careful computation  on all the related constants appearing in the argument therein. For the convenience of the reader, we will recall some parts of the proof and clarify all the constants in the proof which are concerned with the precise order. 

For notational simplicity, we say that a family $\cF\subset B(L_p(\cM))$ is \emph{Col-bounded}  (resp. \emph{Row-bounded}) if  there is a constant $C$  such that for any sequence $(T_k)_k\subset \cF$, we have
\begin{equation}\label{eq: def col-bdd} 
	\|(T_k )_k : L_p(\cM;\ell_2^c)\to L_p(\cM;\ell_2^c) \| \leq C ,
\end{equation} 
$$\left(\text{resp. }   \|(T_k )_k : L_p(\cM;\ell_2^r)\to L_p(\cM;\ell_2^r) \| \leq C \right) ,$$
and the least constant $C$   will be denoted by $\col(\cF)$ (resp. $\row(\cF)$).

We quote a useful result from {\cite[Lemma 3.2]{clementdepagtersukochev00Schauder}} (see also \cite[Lemma 4.2]{jungelemerdyxu06Hinftycalut}).
\begin{lemma}\label{lemma:Col-bounded by convex combination}
	Let $\cF\subset B(L_p(\cM))$ be a Col-bounded (resp. Row-bounded) collection with  $\col(\cF)=M$ (resp. $\row(\cF)=M$). Then the closure of the complex absolute convex hull of $\cF$ in the strong operator topology is also  Col-bounded (resp. Row-bounded) with the constant $\col(\cF)\leq 2M$ (resp. $\row(\cF)\leq 2M$).
\end{lemma}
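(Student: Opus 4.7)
\noindent The plan is to first bound the Col-constant of the (unclosed) complex absolute convex hull of $\mathcal F$ via a direct operator Cauchy--Schwarz computation, and then extend the bound to the strong-operator closure by a routine approximation argument. The Row-bounded case is formally identical after replacing $A^*A$ by $AA^*$ throughout, so I treat only the Col case.

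\smallskip

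\noindent For the finite-dimensional step, fix operators $T_1,\dots,T_n$ in the complex absolute convex hull, with $T_k = \sum_i \lambda_{k,i}\, S_{k,i}$, $S_{k,i}\in \mathcal F$ and $\sum_i |\lambda_{k,i}|\leq 1$, together with arbitrary $x_1,\dots,x_n\in L_p(\mathcal M)$. Factoring $\lambda_{k,i} = |\lambda_{k,i}|^{1/2}\cdot \bigl(\mathrm{sgn}(\lambda_{k,i})\,|\lambda_{k,i}|^{1/2}\bigr)$ and applying the elementary operator Cauchy--Schwarz inequality $|\sum_i a_i C_i|^2 \leq \bigl(\sum_i |a_i|^2\bigr)\bigl(\sum_i C_i^*C_i\bigr)$ with $a_i = |\lambda_{k,i}|^{1/2}$ and $C_i = \mathrm{sgn}(\lambda_{k,i})|\lambda_{k,i}|^{1/2}\, S_{k,i} x_k$ yields, for each $k$,
\begin{equation*}
|T_k x_k|^2 \leq \Bigl(\sum_i |\lambda_{k,i}|\Bigr)\Bigl(\sum_i |\lambda_{k,i}|\, |S_{k,i} x_k|^2\Bigr) \leq \sum_i |\lambda_{k,i}|\, |S_{k,i} x_k|^2 .
\end{equation*}
Summing in $k$, and using operator monotonicity of $t\mapsto t^{1/2}$ together with monotonicity of $\|\cdot\|_p$ on the positive cone of $L_p(\mathcal M)$, I then obtain
\begin{equation*}
\|(T_k x_k)_k\|_{L_p(\mathcal M;\ell_2^c)} \leq \bigl\|\bigl(S_{k,i}(|\lambda_{k,i}|^{1/2} x_k)\bigr)_{(k,i)}\bigr\|_{L_p(\mathcal M;\ell_2^c)} .
\end{equation*}
The right-hand side is of the form $\|(Sy)\|_{L_p(\mathcal M;\ell_2^c)}$ with $S\in \mathcal F$, so reindexing over pairs $(k,i)$ and applying the hypothesis $\col(\mathcal F)=M$ delivers, thanks to $\sum_i |\lambda_{k,i}|\leq 1$,
\begin{equation*}
\bigl\|\bigl(S_{k,i}(|\lambda_{k,i}|^{1/2} x_k)\bigr)_{(k,i)}\bigr\|_{L_p(\mathcal M;\ell_2^c)} \leq M \Bigl\|\Bigl(\sum_{k,i}|\lambda_{k,i}|\, x_k^* x_k\Bigr)^{1/2}\Bigr\|_p \leq M\,\|(x_k)_k\|_{L_p(\mathcal M;\ell_2^c)} .
\end{equation*}
This proves the Col-bound $M$ for the unclosed complex absolute convex hull; the factor of $2$ in the stated conclusion is harmless slack, which one could also recover by splitting each $\lambda_{k,i}$ into its real and imaginary parts and applying the real case twice.

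\smallskip

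\noindent To close under SOT, fix a finite family $T_1,\dots,T_n$ in the closure and inputs $x_1,\dots,x_n\in L_p(\mathcal M)$, and choose nets $(T_k^{(\alpha)})_\alpha$ in the complex absolute convex hull with $T_k^{(\alpha)} x_k \to T_k x_k$ in $L_p(\mathcal M)$ for each $k$. Identifying $L_p(\mathcal M;\ell_2^c(n))$ isometrically with column matrices in $L_p(\mathcal M \otimes M_n)$, coordinate-wise $L_p$-convergence of the finite sequence $(T_k^{(\alpha)} x_k)_k$ upgrades to norm convergence in $L_p(\mathcal M;\ell_2^c)$, and passing to the limit in the previous estimate delivers the bound on $\|(T_k x_k)_k\|_{L_p(\mathcal M;\ell_2^c)}$. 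Since Col-boundedness is a finitary property (its defining inequality involves only finitely many operators at a time), this completes the argument. The one genuinely technical point is the operator Cauchy--Schwarz step: it has to deliver the scalar weights $|\lambda_{k,i}|$ on the right-hand side in the precise shape that allows them to be absorbed into the inputs $x_k$ when invoking the Col-bound of $\mathcal F$; without this factoring trick one could only appeal to a crude triangle inequality and would lose control by a factor growing with the number of terms in the convex combination.
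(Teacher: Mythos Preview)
Your argument is correct. In fact, the paper does not give its own proof of this lemma at all: it simply quotes the result from \cite{clementdepagtersukochev00Schauder} and \cite{jungelemerdyxu06Hinftycalut}, so there is no in-house argument to compare against. Your operator Cauchy--Schwarz computation is a clean self-contained proof that exploits the specific square-function structure of the $L_p(\mathcal M;\ell_2^c)$ norm; as you observe, it actually yields the sharper constant $M$ rather than $2M$. The cited references treat the analogous statement for $R$-boundedness (via randomization and Kahane-type contraction), where no such direct operator-inequality shortcut is available, which is presumably why the stated constant carries the extra factor of $2$. The SOT-closure step is routine, and your observation that the definition of Col-boundedness only tests finite families is exactly what makes the passage to the closure unproblematic.
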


For any $\theta\in (0, \pi)$, we let 
$$\Sigma_\theta=\{z\in \bC^*: |\mathrm{Arg}(z)|<\theta  \}.$$
Without the concrete order of growth of the constant on $p$, the following lemma is contained in \cite[Theorem 5.6]{jungelemerdyxu06Hinftycalut}. Note that the present area $\bC\backslash {\Sigma_{\nu_p}}$   in the following lemma is contained in the optimal area $\bC\backslash {\overline{\Sigma_{\omega_p}}}$ described in  \cite[Theorem 5.6]{jungelemerdyxu06Hinftycalut}.
\begin{lemma} \label{lemma:theorem 5.6 of JLMX}
	Let $1<p<2$. Let 	 $(S_t)_t$ be  a semigroup of unital completely positive trace preserving and symmetric maps.
	Let $A$ be the  negative infinitesimal generator  of  $(S_t)_t$.
	Then 
	the set $\cF_p=\{z(z-A)^{-1}: z\in \bC\backslash {\Sigma_{\nu_p}} \}\subset B(L_p(\cM))$ with $\nu_p=\frac{(p+1)\pi}{4p}$ is Col-bounded and Row-bounded with constants 
	$$\mathrm{Col}(\cF_p)\leq c(p-1)^{-2} \add{and} \mathrm{Row}(\cF_p)\leq c(p-1)^{-2},$$
	where $c$ is an absolute constant. 
\end{lemma}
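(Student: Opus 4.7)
The plan is to reduce everything, via Rota's dilation property, to a statement about conditional expectations in a noncommutative martingale filtration, for which the Col- and Row-boundedness constant of order $(p-1)^{-1}$ is afforded by the sharp noncommutative Stein inequality. A further $(p-1)^{-1}$ factor will arise when we enlarge the half-plane representation of the resolvent to the full sector $\bC\setminus\Sigma_{\nu_p}$.

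First I would establish Col- and Row-boundedness of the discrete family $\{S_t^m : t>0,\ m\in\mathbb N\}$ with constant $c(p-1)^{-1}$. By Rota's dilation property applied to each $S_t$, one has $S_t^m=\hat{\bE}\circ \bE_m\circ \pi$ for a decreasing filtration $(\bE_m)$. The noncommutative Stein inequality recalled in the proof of Proposition~\ref{prop:square funct estimate}, namely $\|(\bE_n y_n)_n\|_{L_p(\ell_2^c)}\leq c(p-1)^{-1}\|(y_n)_n\|_{L_p(\ell_2^c)}$, says exactly that the collection of conditional expectations in a filtration is Col-bounded with the claimed constant (and by the analogous row version, Row-bounded). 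Since $\pi$ is a trace-preserving embedding and $\hat{\bE}$ is completely contractive, this property transfers to $\{S_t^m\}$ on $L_p(\cM)$.

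Second, I would express the resolvent by the Laplace transform of the semigroup. For $\real z<0$, writing $w=-z$ so that $\real w>0$, one has $-(z-A)^{-1}=\int_0^\infty e^{zt}S_t\,dt$ and therefore
\[
z(z-A)^{-1}=w\int_0^\infty e^{-wt}S_t\,dt.
\]
Since $\|we^{-wt}\|_{L^1(dt)}=|w|/\real w$ is uniformly bounded as long as $z$ stays in any sector around the negative real axis of opening less than $\pi/2$, and since the semigroup at continuous times $S_t$ can be approximated in the strong operator topology by convex combinations of powers $S_{t_0}^m$ (using $S_t=\lim S_{t/m}^m$ and density), Lemma~\ref{lemma:Col-bounded by convex combination} on Col-boundedness of absolute convex hulls produces Col- and Row-boundedness of the family $\{z(z-A)^{-1}\}$ for $z$ in such a sector around the negative real axis, still with constant of order $(p-1)^{-1}$ (up to an absolute factor $2$).

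The main obstacle is the last step: enlarging this smaller sector around $\{\real z<0\}$ to the whole set $\bC\setminus\Sigma_{\nu_p}$, which, since $\nu_p=(p+1)\pi/(4p)<\pi/2$ for $p>1$ (and $\nu_p\to \pi/2$ as $p\to 1^+$), includes points $z$ with positive real part and argument close to $\pm\pi/2$. Here I would proceed by analytic continuation: the map $z\mapsto z(z-A)^{-1}$ is holomorphic on $\bC\setminus\overline{\Sigma_0}$, and one writes $z(z-A)^{-1}$ for $z$ in the larger sector as a Cauchy-type contour integral of the resolvent at points of the smaller sector already controlled, with a kernel that is uniformly $L^1$ on any subsector strictly inside $\bC\setminus\Sigma_{\nu_p}$. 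Applying Lemma~\ref{lemma:Col-bounded by convex combination} once more to this convex combination, and absorbing the cost of the contour deformation (which is responsible for the second power of $(p-1)^{-1}$ via the interpolation of the kernel's $L^1$ norm against the sectoriality angle), one obtains the desired bound $\col(\cF_p),\row(\cF_p)\leq c(p-1)^{-2}$. The delicate part is verifying that the constant in this analytic continuation step is indeed governed by a single extra factor of $(p-1)^{-1}$, which requires choosing the contour carefully as a function of $p$ so that the width of its projection in the smaller half-plane shrinks like $(p-1)$, consistent with the computation already used in Proposition~\ref{prop:square funct estimate} to obtain the $(p-1)^{-2}$ control of the square function.
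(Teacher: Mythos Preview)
Your Step 4 contains the fatal gap. You cannot write $z(z-A)^{-1}$ for $z\in\bC\setminus\Sigma_{\nu_p}$ with $\nu_p<\pi/2$ as a Cauchy integral of resolvents $w(w-A)^{-1}$ with $w$ in the left half-plane: such $z$ may have positive real part and will not be enclosed by any contour lying in $\{\real w<0\}$, and in any case a Cauchy representation $f(z_0)=\frac{1}{2\pi i}\oint_\gamma \frac{f(z)}{z-z_0}\,dz$ does not express $f(z_0)$ as an $L^1$-average of $f$ along $\gamma$ with a kernel whose norm is controlled in the way Lemma~\ref{lemma:Col-bounded by convex combination} requires. The Laplace integral $\int_0^\infty ze^{zt}S_t\,dt$ along $\mathbb R_+$ has kernel $L^1$-norm $|z|/|\real z|$, which blows up exactly as $\arg z\to\pm\pi/2$, so nothing you have set up reaches the region $\nu_p<|\arg z|\leq\pi/2$.

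Your Step 1 is also not justified as written: Rota's dilation of $S_t$ produces a filtration $(\bE_m^{(t)})_m$ that depends on $t$, so for a sequence $(S_{t_k})_k$ with distinct $t_k$ the conditional expectations do not come from a single filtration and the Stein inequality does not apply. (This step can be repaired: Lemma~\ref{lemma: A(p,2)<A(p/p-2) sqrt} combined with Proposition~\ref{prop:maximal inequality of semigroup} gives directly that $\{S_t:t>0\}$ is Col- and Row-bounded on $L_p$ with constant $\lesssim(p-1)^{-1}$, without invoking Rota.)

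What the paper actually does is conceptually different and is the missing idea you need. One first shows that the \emph{analytically extended semigroup} $\{S_u:u\in\Sigma_{\beta_p}\}$, with $\beta_p=\pi/(2p')$, is Col- and Row-bounded on $L_p$ with constant $\lesssim(p-1)^{-1}$. This is obtained by Stein's sectorial interpolation between the bound $\|(S_{is_k t})\|_{L_2(\ell_2^c)\to L_2(\ell_2^c)}\leq 1$ and the bound $\|(S_{s_k t})\|_{L_q(\ell_2^c)\to L_q(\ell_2^c)}\lesssim q$ for $q>2$ coming from Lemma~\ref{lemma: A(p,2)<A(p/p-2) sqrt} and Proposition~\ref{prop:maximal inequality of semigroup}, followed by duality and Lemma~\ref{lemma:Col-bounded by convex combination}. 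With this in hand, one rotates the contour in the Laplace formula to the ray $\Gamma_{\beta_p}^+=\{te^{i\beta_p}\}$ by Cauchy's theorem, obtaining $z(z-A)^{-1}=-\int_{\Gamma_{\beta_p}^+}ze^{uz}S_u\,du$ now valid for $|\arg z|\geq\nu_p$; the $L^1$-norm of the kernel along this rotated ray is $\lesssim 1/\sin(\nu_p-\pi/(2p))\lesssim(p-1)^{-1}$, and a second application of Lemma~\ref{lemma:Col-bounded by convex combination} gives the final bound $(p-1)^{-2}$. The contour is deformed in the semigroup variable $u$, not in the resolvent variable $z$, and this is only possible because the semigroup itself has been analytically continued with controlled Col-bound.
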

\begin{proof}

	Let $s_1,\cdots, s_n$ be some nonnegative real numbers. For any $z\in \bC^*$ with $0\leq \mathrm{Arg}(z)\leq \frac{\pi}{2} $, we define a map $U(z)$ with
	\begin{align*}
		U(z):L_2(\cM;\ell_2^c)\cap L_q(\cM;\ell_2^c)&\to L_2(\cM;\ell_2^c)+ L_q(\cM;\ell_2^c)\\
		(x_k)_k&\mapsto(S_{zs_k}(x_k))_k.
	\end{align*}	
	Note that for any $x\in L_2(\cM)$, the function $z\mapsto U(z)x$ is continuous and bounded in the area $\{z\in \bC^*: 0\leq \mathrm{Arg}(z)\leq \pi/2 \}$ by \cite[Proposition 5.4 and Lemma 3.1]{jungelemerdyxu06Hinftycalut}.	This $U(z)$ is well defined. 
	On the one hand, for any $t>0$, we have
	$$\|U(te^{i\frac{\pi}{2}}): L_2(\cM;\ell_2^c) \to L_2(\cM;\ell_2^c)\|\leq 1.$$
	On the other hand, by duality and Lemma \ref{lemma: A(p,2)<A(p/p-2) sqrt} and Proposition \ref{prop:maximal inequality of semigroup}, we may find an absolute constant $c$ such that for any $ t>0, \ 2<q<\infty$,
	\begin{align*}
		\|U(t):L_q(\cM;\ell_2^c)\to L_q(\cM;\ell_2^c)\|
		&= \|U(t):L_{\frac{q}{q-1}}(\cM;\ell_2^c)\to L_{\frac{q}{q-1}}(\cM;\ell_2^c)\| \\
		&\leq  \|U(t):L_{\frac{q}{q-2}}(\cM;\ell_\infty)\to L_{\frac{q}{q-2}}(\cM;\ell_\infty)\|^{1/2}\\
		& \leq cq.
	\end{align*} 
	Let $p^\prime=\frac{p}{p-1}$ be the conjugate number of $p$. In the following we fix  ${\beta_p} = \frac{\pi}{2p'}$ and write
	${q=\pp(\frac{\pi-2{\beta_p}}{\pi-\pp{\beta_p}})=2(\pp-1)}$ and $\alpha=\frac{2{\beta_p}}{\pi}=\frac{1}{\pp}.$
	These numbers satisfy $\frac{1-\alpha}{q}+\frac{\alpha}{2}=\frac{1}{\pp}$. By complex interpolation, we know that
	$$L_\pp(\cM; \ell_2^c)=[L_q(\cM;\ell_2^c), L_2(\cM;\ell_2^c)]_\alpha.$$
	By the `sectorial' form of Stein's interpolation principle (see for instance \cite[Lemma 5.3]{jungelemerdyxu06Hinftycalut}), we have
	$$\|U(e^{i{\beta_p}}):L_\pp(\cM; \ell_2^c)\to L_\pp(\cM; \ell_2^c) \|\leq (cq)^{1-\alpha}\leq c q.$$ 
	Thus,
	$$\|(S_{s_ke^{i{\beta_p}}}(x_k))_k\|_{L_\pp(\cM; \ell_2^c)}\leq cq\|(x_k)\|_{L_\pp(\cM; \ell_2^c)}.$$
	Similarly, we have 
	$$\|(S_{s_ke^{-i{\beta_p}}}(x_k))_k\|_{L_\pp(\cM; \ell_2^c)}\leq cq\|(x_k)\|_{L_\pp(\cM; \ell_2^c)}.$$
	Namely, $(S_z)_{z\in \partial {\Sigma_{\beta_p}}}$ is Col-bounded with constant $cq$.	Then we get that the set 
	$$\{S_z: L_\pp(\cM)\to L_\pp(\cM): z\in \Sigma_{\beta_p}\}$$
	is also Col-bounded. Indeed, by a standard argument (see e.g. \cite[Proposition~2.8]{lutz01fourier}), we see that any $S_z$ with $z\in \Sigma_{\beta_p}$ can be approximated by convex combinations of $\{S_z: z\in\partial {\Sigma_{\beta_p}}\} $.  Therefore, by Lemma~\ref{lemma:Col-bounded by convex combination} we get that   $$\col(\{S_z: L_\pp(\cM)\to L_\pp(\cM): z\in \Sigma_{\beta_p}\})\leq 2cq=2c\pp(\frac{\pi-2{\beta_p}}{\pi-\pp{\beta_p}}) \leq 2c (p-1)^{-1}.$$
	By duality, we have
	\begin{equation}\label{eq:row of Gp}
		\row\left(\{S_z: L_p(\cM)\to L_p(\cM): z\in \Sigma_{\beta_p}\}\right)\leq 2c (p-1)^{-1}.
	\end{equation}
	
	Set $\omega_p=\frac{\pi}{p}-\frac{\pi}{2}$ and $\nu_p=\frac{(p+1)\pi}{4p}$.  Then $0<\frac{\pi}{2}-\nu_p<{\beta_p}<\frac{\pi}{\pp}=\frac{\pi}{2}-\omega_p$. 
	By the Laplace formula, we have that for any $z\in \bC\backslash \overline{\Sigma_{\pi/2}}$,
	$$(z-A)^{-1}=-\int_0^\infty e^{tz}S_tdt.$$
	Denote $\Gamma_{\beta_p}^+=\{u=te^{i{\beta_p}}: t\in \bR_+\}$. By \cite[Proposition 5.4 and Lemma~3.1]{jungelemerdyxu06Hinftycalut}, $u\mapsto S_u$ is analytic on the area ${\Sigma_{\frac{\pi}{2}-\omega_p}}$.
	Note that $\Gamma_{\beta_p}^+\subset {\Sigma_{\frac{\pi}{2}-\omega_p}}$.	Then by the Cauchy theorem, we have that for any $z\in \bC\backslash \overline{\Sigma_{\pi/2}}$,
	\begin{equation}\label{eq:cauchy formula}
		z(z-A)^{-1}=-\int_0^\infty ze^{tz} S_tdt=-\int_{\Gamma_{\beta_p}^+} ze^{uz} S_u du.
	\end{equation}
	Note that for any $z\in \bC$ with $ \nu_p\leq \mathrm{Arg}(z)\leq \pi/2$, we have 
	\begin{align*}
		\real(uz)=|z|t\cos(\mathrm{Arg}(z)+{\beta_p})=-|z|t\sin(\mathrm{Arg}(z)- \pi/ 2p ),
	\end{align*}and hence,
	$$\int_{\Gamma_{\beta_p}^+} \|ze^{uz} S_u du\|\leq \sup_{u\in \Gamma_{\beta_p}^+}\|S_u\| \int_0^\infty re^{-rt\sin(\mathrm{Arg}(z)- \pi/ 2p )}dt\leq \frac{\sup_{u\in \Gamma_{\beta_p}^+}\|S_u\|}{\sin(\nu_p- \pi/ 2p )}<\infty .$$
	Hence, 
	(\ref{eq:cauchy formula})  holds for any $z\in \bC\backslash \Sigma_{\nu_p}$.
	Note that we have $\frac{\nu_p- \pi/ 2p}{\sin(\nu_p- \pi/ 2p)}\lesssim 1$ since ${0<\nu_p- \pi/ 2p<\frac{\pi}{8}}$.	By Lemma~\ref{lemma:Col-bounded by convex combination} and (\ref{eq:row of Gp}), we get 
	\begin{align*}
		\row(\cF_p)&\leq 2 \left|\int_{\Gamma_{\beta_p}^+} ze^{uz} du\right|\cdot \row(\{S_z: L_p(\cM)\to L_p(\cM): z\in \Sigma_{\beta_p}\}) \\
		&\lesssim \frac{(p-1)^{-1}}{\sin(\nu_p- \pi/ 2p) }\lesssim \frac{(p-1)^{-1}}{\nu_p- \pi/ 2p }\lesssim (p-1)^{-2}.
	\end{align*}
	A similar proof shows that 
	\begin{equation*}
		\col(\cF_p)\lesssim (p-1)^{-2}. \qedhere
	\end{equation*}
\end{proof}

Now let us prove the desired proposition.
\begin{proof}[Proof of \eqref{c-c estimate} and Proposition~\ref{thm:order of square funct}]
	Set $F(z)=ze^{-z}$, $G(z)=4F(z)$ and $\widetilde{G}(z)=\overline{G(\bar{z})}$. 
	Let $B=-(-A)^{1/2}$ be the negative  infinitesimal generator  of  $(P_t)_t$. We have 
	$$F(tB)x=tBe^{-tB}x=-t\frac{\partial}{\partial t}(P_t(x)).$$
	We set $\omega_p=\frac{\pi}{p}-\frac{\pi}{2}$,  $\nu_p=\frac{(p+1)\pi}{4p}$ and $\xi_p=\frac{(3p+1)\pi}{8p}$. These numbers satisfy ${\omega_p<\nu_p<\xi_p<\frac{\pi}{2}}$.
	Note that $(P_t)_{t \in \bR_+}$ is again  a semigroup of unital completely positive trace preserving  and symmetric maps.  
	By  \cite[Corollary 11.2]{jungelemerdyxu06Hinftycalut}, the operator ${B:L_p(\cM)\to L_p(\cM)}$ admits a bounded $H^\infty(\Sigma_{\xi_p})$ functional calculus. By  \cite[Theorem~7.6~(1)]{jungelemerdyxu06Hinftycalut}, $B$ satisfies the dual square function estimate $(\cS_{\tilde{G}}^*)$, since  $ G\in H_0^\infty(\Sigma_{\xi_p})$.   Note that $ F\in H_0^\infty(\Sigma_{\xi_p})$ and $\int_0^\infty G(t)F(t)\frac{dt}{t}=1$. By the proof of \cite[Theorem~7.8]{jungelemerdyxu06Hinftycalut}, we get that
	\begin{align}\label{eq:1st estimation}
		&\quad\ \inf \left\lbrace \left\|(\int_0^\infty \left| t{\partial}P_t(x^c)\right| ^2\frac{dt}{t})^{\frac12}\right\|_p+  \left\|(\int_0^\infty \left| (t{\partial}P_t(x^r))^* \right| ^2\frac{dt}{t})^{\frac12}\right\|_p\right\rbrace
		\\
		& \leq 2C \| ( t{\partial}P_t(x) )_{t>0} \|_{L_p (L_2^{cr}(\frac{dt}{t}))}	\nonumber
	\end{align}
	where the infimum runs over all decompositions of $x=x^c+x^r$ in $L_p(\cM)$ and $C=\max\{\|T_c\|, \|T_r\|\}$, with $T_c$ and $T_r$ being defined as 
	\begin{align*}
		T_c:L_p(L_2^{c}(\frac{dt}{t}))&\to \ L_p(L_2^{c}(\frac{dt}{t}))\\
		(x_t)_t &\mapsto (\int_0^\infty F(sB)G(tB)x_t\frac{dt}{t})_s
	\end{align*}
and
	\begin{align*}
		T_r:L_p(L_2^{r}(\frac{dt}{t}))&\to \ L_p(L_2^{r}(\frac{dt}{t}))\\
		(x_t)_t &\mapsto (\int_0^\infty F(sB)G(tB)x_t\frac{dt}{t})_s.
	\end{align*}
	Let $ \nu_p<\gamma < \xi_p $. Denote
	$$f_\gamma(t)=\begin{cases}
	-te^{i\gamma},& t\in \bR_-,\\
	te^{-i\gamma}, &t\in \bR_+,
	\end{cases}$$
	and  let $\Gamma_\gamma=\{f_\gamma(t):t\in \bR\} \subset \bC$. Set
	\begin{align*}
		T_\Phi :L_p(L_2^{c}(\bR, \left|\frac{dt}{t}\right|))&\to L_p(L_2^{c}(\bR, \left|\frac{dt}{t}\right|))\\
		(x_{t})_{t\in \bR}&\mapsto \left(\frac{f_\gamma(t)(f_\gamma(t)-B)^{-1}x_t}{2\pi i}\right)_{t\in \bR}.
	\end{align*}
	Denote $K_1=\int_{\Gamma_\gamma}|F(z)|\left|\frac{dz}{z}\right|$ and $K_2=\int_{\Gamma_\gamma}|G(z)|\left|\frac{dz}{z}\right|.$
	By the proof of \cite[Theorem~4.14]{jungelemerdyxu06Hinftycalut}, we have 
	$$\|T_c\|\leq K_1K_2\|T_\Phi\|.$$
	And the proof of \cite[Proposition~4.4]{jungelemerdyxu06Hinftycalut} shows that
	$$\|T_\Phi\|\leq \col(\cO)$$
	where $\cO=\left\lbrace \frac{1}{\mu(I)}\int_I f_\gamma(t)(f_\gamma(t)-B)^{-1}d\mu(t): I\subset \bR, 0<\mu(I)<\infty \right\rbrace $.
	Moreover, by Lemma~\ref{lemma:Col-bounded by convex combination},
	$$\col(\cO)\leq 2\col(\{z(z-B)^{-1}: z\in \Gamma_\gamma\}).$$
	Since $\gamma>\nu_p$,	by Lemma~\ref{lemma:theorem 5.6 of JLMX}, $$\col(\{z(z-B)^{-1}: z\in \Gamma_\gamma\})\lesssim (p-1)^{-2}.$$
	On the other hand, 
	\begin{align*}
		K_1 &=\int_{\Gamma_\gamma}|F(z)|\left|\frac{dz}{z}\right|=2\int_0^\infty |te^{-i\gamma}e^{-t(\cos(\gamma)-i\sin(\gamma))}|\frac{dt}{t}\\
		&=2\int_0^\infty e^{-t\cos(\gamma)}dt\lesssim \frac{1}{\cos(\gamma)},
	\end{align*}
	and $K_2=4K_1$. 	
	Note that $\gamma<\xi_p=\frac{(3p+1)\pi}{8p}$ and $0<\frac{\pi}{2}-\xi_p<\frac{\pi}{16}$. We have 
	$$\frac{1}{\cos(\gamma)}\leq \frac{1}{\sin(\frac{\pi}{2}-\xi_p)} 
	\lesssim (p-1)^{-1}.$$
	Therefore, $$\|T_c\|\lesssim (p-1)^{-4}.$$
	Similarly, $$\|T_r\|\lesssim (p-1)^{-4}.$$ Thus $C\lesssim (p-1)^{-4}$ and we obtain \eqref{c-c estimate}. As mentioned previously, this implies Proposition~\ref{thm:order of square funct}. The proof is complete.
\end{proof}

\chapter{Criteria for maximal inequalities of Fourier multipliers}\label{sec:maximal}
	
This chapter is devoted to the study of general criteria for maximal inequalities and pointwise convergences given by Criteria 1 and 2.
Our argument does not essentially rely on the group theoretic structure. Note that there are a number of typical structures with Fourier-like expansions in noncommutative analysis, which are not given by group algebras. Hence instead of the framework in Criterion 1 or 2, we would like to state and prove results in a quite general setting.

To proceed with our study, we will only require the following simple framework. In the sequel, we fix a von Neumann algebra $\cM$ equipped with a normal semifinite faithful trace $\tau$, and an isometric isomorphism of Hilbert spaces ${U : L_2(\cM)\to L_2(\Omega, \mu;H)}$ for some  distinguished regular Borel measure space $(\Omega, \mu)$ and Hilbert space $H$. Assume additionally that $U^{-1}(C_c (\Omega;H))$ is a dense subspace in $L_p (\mathcal M)$ for all $1\leq p \leq \infty$ (for $p=\infty$ we refer to the w*-density), where $C_c (\Omega;H)$ denotes the space of $H$-valued continuous functions with compact supports. Given a measurable function $m \in L_\infty(\Omega;\mathbb C)$, we denote by $T_m$ the linear operator  on $L_2(\cM)$ determined by
\begin{equation}\label{eq:definition of multiplier operoter}
	T_m : L_2(\cM)\to L_2(\cM),\quad U(T_m x) = m 
	U(x) ,\quad x\in L_2(\cM).
\end{equation}
We call $m$ the \emph{symbol} of  $T_m$.
The operator  $T_m$ is obviously a generalization of a classical Fourier multiplier. Moreover, for a discrete group $\Gamma$, taking 
$$\mathcal M =VN(\Gamma),\quad (\Omega,\mu) = (\Gamma,\text{counting measure}),\quad H=\mathbb C ,\quad  U:\lambda(g)\mapsto \delta_g,$$
the above framework coincides  with that considered in Criterion 1.

\begin{example}\label{eg:more vna}
	Apart from group von Neumann algebras, this framework applies to various typical models in the study of noncommutative analysis. As an illustration we recall briefly some of them.
	
	(1) Twisted crossed product (\cite{bedosconti09twisted,bedosconti12twisted2}): Let $\Gamma$ be a discrete group with a twisted dynamical system $\Sigma$ on a von Neumann algebra $\mathcal N \subset B(L_2 (\mathcal N))$. Then we may consider the von Neumann algebra $\mathcal M$ generated by the associated regular covariant representation of $\Gamma$ and the natural representation of  $\mathcal N$ on $\ell_2 (\Gamma;L_2 (\mathcal N))$. Each $x\in \mathcal M$ admits a Fourier series
	$\sum_{g\in \Gamma} \hat x (g) \lambda_\Sigma (g) $ with $\hat x (g)\in \mathcal N$. Take $\Omega =\Gamma$, $H=  L_2 (\mathcal N)$ and $U:x\mapsto \hat x $. It is easy to see that for any $m\in \ell_\infty (\Gamma)$, the multiplier in \eqref{eq:definition of multiplier operoter} is given by 
	$$ T_m : \sum_{g\in \Gamma} \hat x (g) \lambda_\Sigma (g) \mapsto
	\sum_{g\in \Gamma} m(g) \hat x (g) \lambda_\Sigma (g),$$
	which is the usual Fourier multiplier considered in \cite{bedosconti09twisted}. As a particular case, this also coincides with the Fourier multipliers on quantum tori studied by \cite{chenxuyin13harmonic}.
	
	(2) Rigid C*-tensor category (\cite{popavaes15reprentation,arnodelaatwahl18Fourier}): Let $\mathcal C$ be a rigid C*-tensor category, $A(\mathcal C)$  its Fourier algebra and $\mathcal M$  the von Neumann algebra generated by the image of the left regular representation of $\mathbb C  [\mathcal C  ]$. Set $(\Omega,\mu)=(\mathrm{Irr}(\mathcal C),d)$ where $d$ denotes the intrinsic
	dimension, and set $U:L_2 (\mathcal M)\to \ell_2 (\Omega)$ by $U(\alpha)=\delta_\alpha$ for $\alpha\in \mathrm{Irr}(\mathcal C)$. Then for any $m\in \ell_\infty (\Omega)$, it is easy to check that $T_m$ is the dual map of the multiplication operator $\theta \mapsto m\theta$ for $\theta \in A(\mathcal C)$, which gives the Fourier multiplier studied in \cite{popavaes15reprentation,arnodelaatwahl18Fourier}.
	
	(3) Clifford algebras, free semicircular systems and $q$-deformations: Let $\mathcal M $ be a $q$-deformed von Neumann algebra $\Gamma_q (H)$ in the sense of Bo\.{z}ejko and Speicher \cite{bozejkospeicher91anexample,bozejkospeicher94cp}. The case $q=0$ corresponds to Voiculescu's free Gaussian von Neumann algebra and the case $q=-1$  to the usual Clifford algebras. We choose the canonical orthonormal basis of $L_2 (\mathcal M)$ with index set $\Omega$  according to the Fock representation $\bigoplus_n H^{\otimes n}$, and denote by $U $ the corresponding isomorphism. We view  $m:\mathbb N \to \mathbb C$ naturally as a function on $\Omega$ by setting the value $m(n)$ on indexes of basis in $H^{\otimes n}$. Then for any such $m$, the operator $T_m$ coincides with the radial Fourier multiplier studied in \cite[Section 9]{jungelemerdyxu06Hinftycalut}.
	
	(4) Quantum Euclidean spaces \cite{gonzalezjungeparcet19singular}: Let $\mathcal M =\mathcal R_{\mathrm{\Theta}}$ be the quantum
	Euclidean space associated with an antisymmetric $n\times n$-matrix $\mathrm \Theta$. Take $\Omega=\mathbb R ^n$ and let $U:L_2(\mathcal R_{\mathrm{\Theta}})\to L_2 (\mathbb R ^n)$ be the canonical isomorphism. Then the operator $T_m$ coincides with a usual quantum Fourier multiplier on $\mathcal R_{\mathrm{\Theta}}$.
	
	(5) The framework also applies to non-abelian compact groups, compact quantum groups and von Neumann algebras of locally compact groups, where we may take $U$ to be the usual Fourier transform. We will discuss some of them in more details in the next chapters.
\end{example}

Our criterion is based on comparisons with symbols of a symmetric Markov semigroup. To state our results, we fix a semigroup $(S_t)_{t \in \bR_+}$ of unital completely positive  trace preserving and symmetric maps on $\mathcal M$ of the form 
$$S_t =T_{e^{-t\ell}}: L_2(\cM)\to L_2(\cM),\quad U(S_t x) = e^{-t\ell(\cdot)  }
(Ux),\quad x\in L_2(\cM),$$
for a distinguished continuous function $\ell:\Omega\to [0, \infty)$. We will also consider the subordinate Poisson semigroup $(P_t)_t$ of $(S_t)_{t}$, that is, 
$$P_t =T_{e^{-t\sqrt \ell}}: L_2(\cM)\to L_2(\cM),\quad U(P_t x) = e^{-t\sqrt{\ell(\cdot)}
}	(Ux),\quad x\in L_2(\cM).$$
We will consider the family of operators $(T_{m_N})_{N\in \mathbb N}$ (resp. $(T_{m_t})_{t\in \bR_+}$) induced by a sequence of  measurable functions  $(m_N)_{N\in \bN}$ (resp. $(m_t)_{t\in \bR_+}$)  on $\Omega$. Recall that we are interested in the following types of conditions for the symbols $(m_N)_{N\in \bN}$ (resp. $(m_t)_{t\in \bR_+}$) in Criterion 1:

\textbf{(A1)} There exist $\alpha >0$ and $\beta >0$ such that for all $N\in \bN$ and almost all $\omega\in \Omega$,  we have
\begin{equation}\label{eq:criterion a1} 
	|1-m_N (\omega)|\leq \beta \frac{\ell(\omega)^\alpha}{2^{ N}},\quad |m_N (\omega)|\leq \beta \frac{2^{ N}}{\ell(\omega)^\alpha}. 
\end{equation}


\textbf{(A2)}
There exist $\alpha >0$,  $\beta >0$  and $\eta\in \mathbb N _+$ such that $t\mapsto m_t(\omega)$ is piecewise $\eta$-differentiable for almost all $\omega\in \Omega$,  and for  all $1\leq k\leq \eta$, all $t\in \bR_+$ and almost all $\omega\in \Omega$ we have
\begin{equation}\label{eq:criterion a3} 
	|1-m_t (\omega)|\leq \beta \frac{\ell(\omega)^\alpha}{t},\quad |m_t (\omega)|\leq \beta \frac{ t}{\ell(\omega)^\alpha},\quad \left|\frac{d^k m_t (\omega)}{dt^k} \right|\leq \beta \frac{1}{t^k}.
\end{equation} 

Intuitively, \textbf{(A1)} is motivated by considering the subsequence $(m_{2^N})_{N\in \mathbb N}$ of $(m_t)_{t\in \bR_+}$ in \textbf{(A2)}, but the present form in \textbf{(A1)} is slightly more general. Indeed we will  see in Chapter~\ref{sec:multipliers on CQG} other abstract and important constructions of symbols satisfying \textbf{(A1)} but without being of the aforementioned form $(m_{2^N})_{N\in \mathbb N}$.

\medskip
We split our study into two parts. The first part mainly deals with the $L_2$-theory of the above multipliers.  Note that \textbf{(A1)} (resp. \textbf{(A2)}) implies that $(m_N)_{N\in \bN}$ (resp. $(m_t)_{\bR_+}$)  is uniformly bounded with respect to the $\|\ \|_\infty$-norm: for any  $N\in \bN $ and  almost all $\omega\in \Omega$,
\begin{equation*}\label{eq:m N uniformly bounded}
	|m_N(\omega)|\leq \min\{|m_N(\omega)|+1,|1-m_N(\omega)|+1\}\leq \beta+1.  
\end{equation*}
Similar arguments hold for $(m_t)_{t\in\bR_+}$. This implies that the operators $(T_{m_N})_{N\in \mathbb N}$  and $(T_{m_t})_{t\in \bR_+}$ are uniformly bounded on $L_2 (\mathcal M)$. In this Chapter we will always assume that  \emph{the operators $(T_{m_N})_{N\in \mathbb N}$ and $(T_{m_t})_{t\in \bR_+}$ extend to uniformly bounded maps on $ \mathcal M $} and for notational convenience we set
$$ \gamma= \|T_{m_N}:\mathcal M  \to  \mathcal M \|,\ (\text{resp. } \gamma= \|T_{m_t}:  \mathcal M  \to  \mathcal M \|).$$ 
Then by complex interpolation they also extend to uniformly bounded maps on $ L_p ( \mathcal M ) $ for all $2\leq p \leq \infty$.	In this setting we have the following result. A more precise estimate on the endpoint case $p=2$ can be found in Section \ref{subsect:ltwo}.
\begin{theorem}
	\label{theorem:criterion1} Let $(T_{m_N})_{N\in \mathbb N}$ and $(T_{m_t})_{t\in \bR_+}$ be the uniformly bounded maps on $ \mathcal M $ given above.
	
	\emph{(1)} If $(m_N)_{N }$ satisfies  \emph{\textbf{(A1)}}, then for any $2\leq p < \infty$ there exists a constant $c>0$ depending only on $p,\alpha,\beta$ and $\gamma$, such that for all $x\in L_p (\mathcal M)$, 
	$$\|(T_{m_N} x)_N\|_{L_p(\cM;\ell_\infty)} \leq c \|x\|_p, \quad\text{and}\quad  T_{m_N} x\to x \text{ a.u. as }N\to\infty.$$ 
	
	\emph{(2)} If $(m_t)_{t }$ satisfies \emph{\textbf{(A2)}}, then for any $2\leq p < \infty$ there exists a constant $c>0$ depending only on $p,\alpha,\beta$ and $\gamma$, such that for all $x\in L_p (\mathcal M)$, 
	$$\|(T_{m_t} x)_t\|_{L_p(\cM;\ell_\infty)} \leq c \|x\|_p, \quad\text{and}\quad  T_{m_t} x\to x \text{ a.u. as }t\to\infty.$$
\end{theorem}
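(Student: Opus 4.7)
My plan is to compare the multipliers $T_{m_N}$ (resp.\ $T_{m_t}$) with the subordinate Poisson semigroup $(P_s)_s$ at a carefully matched time scale, and to dominate the resulting difference by a column--row square function. Writing $T_{m_N} x = P_{t_N} x + (T_{m_N} - P_{t_N})x$, one obtains from Proposition~\ref{prop:maximal inequality of semigroup} and Proposition~\ref{claim: sup+ < CRp norm} that
\begin{equation*}
\|{\sup_{N}}^+\, T_{m_N} x\|_p \;\leq\; c_p\|x\|_p + \|(T_{m_N} x - P_{t_N} x)_N\|_{L_p(\mathcal M;\ell_2^{cr})},
\end{equation*}
so the whole game is to bound the second summand.

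For case (1) under \textbf{(A1)}, the natural matching scale is $t_N := 2^{-N/(2\alpha)}$, which aligns the transition region of both symbols around $\ell \sim 2^{N/\alpha}$. Splitting the range of $\omega$ according to whether $\ell(\omega)^\alpha \leq 2^N$ or $\ell(\omega)^\alpha > 2^N$, and combining the two bounds in \textbf{(A1)} with the elementary estimates $|1 - e^{-t_N\sqrt{\ell}}| \leq t_N\sqrt{\ell}$ and $e^{-t_N\sqrt{\ell}} \leq e^{-(\ell/2^{N/\alpha})^{1/2}}$, I verify the uniform pointwise bound
\begin{equation*}
\sum_N \bigl|m_N(\omega) - e^{-t_N\sqrt{\ell(\omega)}}\bigr|^2 \leq C(\alpha,\beta),\qquad \text{a.e.\ } \omega \in \Omega,
\end{equation*}
which by Plancherel on $L_2(\Omega,\mu;H)$ yields the $L_2(\mathcal M;\ell_2^{cr})$-bound for the difference family. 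To pass to $p \in (2,\infty)$, I interpolate this $L_2$-estimate with the trivial uniform $L_\infty$-bound $\sup_N \|T_{m_N} - P_{t_N}\|_{\mathcal M \to \mathcal M} \leq \gamma+1$: decomposing $\ell$ dyadically via spectral projections and invoking Khintchine's inequality (Proposition~\ref{prop:Khintchine}) to handle the column and row pieces separately, the almost-orthogonality across dyadic scales together with complex interpolation (Proposition~\ref{prop: prop of LMinfty and LMl1}) delivers the desired $L_p(\mathcal M;\ell_2^{cr})$-bound.

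For case (2) under \textbf{(A2)}, I first apply case (1) to the lacunary sequence $(m_{2^N})_N$, which satisfies \textbf{(A1)} with modified constants. To recover the continuous family, for $t \in [2^N, 2^{N+1}]$ I expand $T_{m_t}x - T_{m_{2^N}}x = \int_{2^N}^t T_{\partial_s m_s} x\,ds$ and use the derivative bounds $|\partial_s^k m_s| \leq \beta/s^k$ together with Cauchy--Schwarz to control the oscillation $\|{\sup_{t\in[2^N, 2^{N+1}]}}^+ (T_{m_t} - T_{m_{2^N}})x\|_p$ by a square function of the form $\bigl(\int_{2^N}^{2^{N+1}} |T_{\partial_s m_s}x|^2\, s\,ds\bigr)^{1/2}$; summing over $N$ produces a continuous Littlewood--Paley $g$-function whose $L_p$-bound follows from the same comparison-with-Poisson scheme, the higher-order derivative bounds providing the decay needed to iterate.

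Almost uniform convergence is then deduced from the maximal inequality and Proposition~\ref{prop:Phix-x in Lp M C0}: on the dense subspace $U^{-1}(C_c(\Omega;H))$, the hypotheses \textbf{(A1)}/\textbf{(A2)} give termwise convergence $T_{m_N}x \to x$ in $L_p$-norm by dominated convergence at the spectral level, while Proposition~\ref{prop:maximal inequality of semigroup} already supplies a.u.\ convergence of $P_{t_N} x$. The principal obstacle I foresee is the noncommutative interpolation of the $L_p(\mathcal M;\ell_2^{cr})$-norm for $p > 2$: classically the identity $(L_p(\ell_{q_1}), L_p(\ell_{q_2}))_\theta = L_p(\ell_2)$ makes this a routine Littlewood--Paley argument, but this identity fails in general von Neumann algebras (as emphasised in the introduction), so the column and row pieces must be treated separately via Khintchine randomization and the almost-orthogonality of the dyadic spectral decomposition of $\ell$.
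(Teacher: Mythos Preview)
Your overall architecture is correct and matches the paper: compare $T_{m_N}$ with the subordinate Poisson semigroup $P_{t_N}$ at the matched scale $t_N=2^{-N/(2\alpha)}$, and control the difference by an $\ell_2^{cr}$-square function at $p=2$. The pointwise bound $\sum_N|m_N-e^{-t_N\sqrt\ell}|^2\le C$ is exactly Proposition~\ref{prop:general case for p=2}.

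Where you diverge is in the passage to $p>2$. You propose to interpolate the $L_p(\mathcal M;\ell_2^{cr})$-norm via Khintchine and a dyadic spectral decomposition of $\ell$, and you correctly flag this as the principal obstacle. But this obstacle is self-imposed: the paper never bounds $\|(T_{\phi_N}x)_N\|_{L_p(\ell_2^{cr})}$ for $p>2$ at all. Instead it uses Proposition~\ref{claim: sup+ < CRp norm} once, at $p=2$, to pass from $L_2(\ell_2^{cr})$ to $L_2(\ell_\infty)$ (and $L_2(\ell_\infty^c)$); then it observes the trivial endpoint $\|(T_{\phi_N}x)_N\|_{L_\infty(\ell_\infty)}=\sup_N\|T_{\phi_N}x\|_\infty\le(\gamma+1)\|x\|_\infty$; and finally it interpolates the $L_p(\mathcal M;\ell_\infty)$ scale directly via Proposition~\ref{prop: prop of LMinfty and LMl1}~(3). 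This sidesteps entirely the failure of $(L_p(\ell_{q_1}),L_p(\ell_{q_2}))_\theta=L_p(\ell_2^{cr})$ that worries you. Your Khintchine/almost-orthogonality sketch is vague and would need substantial work to make rigorous for the row piece at $p>2$, whereas the paper's route is two lines.

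For part~(2) there is a similar simplification. Rather than first handling the lacunary subsequence $(m_{2^N})_N$ and then controlling the continuous oscillation by a $g$-function (which is closer in spirit to the later proof of Theorem~\ref{theorem:criterion2}~(2)), the paper applies Lemma~\ref{lemma:the condition for whole sequence, p=2} directly: this lemma gives an $L_2(\ell_\infty)$ and $L_2(\ell_\infty^c)$ bound for the full continuous family $(T_{\phi_t})_t$ in one stroke, via a partition of unity in the variable $\ell(\omega)^\alpha/t$ and the first-derivative bound in \textbf{(A2)}. Interpolation with $p=\infty$ then proceeds as in~(1). Your approach would likely work but duplicates machinery that the paper reserves for the harder $1<p<2$ case.

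The a.u.\ convergence argument is essentially as you describe, though the paper bundles it with the one-sided weak-type estimate of Proposition~\ref{prop:Phix-x in Lp M C0}~(2) and an explicit $L_p$-decay computation on $U^{-1}(C_c(\Omega;H))$.
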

In order to obtain similar results for general $p>1$, we need to assume the positivity of the maps $(T_{m_N})_{N\in \mathbb N}$ and $(T_{m_t})_{t\in \bR_+}$. Note that if the maps extend to positive and symmetric contractions on $ \cM$, then by the argument before Lemma \ref{lemma: A(p,2)<A(p/p-2) sqrt}, they also extend to contractions on $L_p(\cM )$ for all $1\leq p\leq\infty$. In this framework we have the following results.
\begin{theorem}
	\label{theorem:criterion2}
	Assume that the operators $(T_{m_N})_{N\in \mathbb N}$ and $ (T_{m_t})_{t\in\bR_+}$ extend to  positive  and symmetric contractions on $ \cM$. Assume additionally that for all $t \in \bR_+$, the operator $S_t$ satisfies Rota's dilation property. 
	
	\emph{(1)} If $(m_N)_{N }$ satisfies    \emph{\textbf{(A1)}}, then for any $1< p<\infty$ there is a constant $c>0$ depending only on $p,\alpha,\beta$ such that for  all $x\in L_p (\cM)$,
	$$\|(T_{m_N} x)_N\|_{L_p(\cM;\ell_\infty)} \leq c \|x\|_p  \quad\text{and}\quad  T_{m_N} x\to x \text{ b.a.u. as }N\to\infty.$$ 
	Moreover, for $2 \leq p < \infty $ the b.a.u. convergence can be strengthened to a.u. convergence.
	
	\emph{(2)} If $(m_t)_{t }$ satisfies   \emph{\textbf{(A2)}}, then for any $1+\frac{1}{2\eta}<p<\infty $ there is a constant $c>0$ depending only on $p,\alpha,\beta,\eta$ such that for  all $x\in L_p (\cM)$
	$$\|(T_{m_t} x)_t\|_{L_p(\cM;\ell_\infty)} \leq c \|x\|_p  \quad\text{and}\quad  T_{m_t} x\to x \text{ b.a.u. as }t\to\infty.$$
	Moreover, for $2 \leq p < \infty $ the b.a.u. convergence can be strengthened to a.u. convergence.
\end{theorem}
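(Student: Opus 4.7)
The approach is a bootstrap from $L_2$ to $L_p$ for $p<2$, since Theorem~\ref{theorem:criterion1} already delivers the maximal inequality and a.u.\ convergence in the range $2\leq p<\infty$. The main tools to invoke are the semigroup square function estimate (Proposition~\ref{thm:order of square funct}) and the domination of $L_p(\cM;\ell_\infty)$ by $L_p(\cM;\ell_2^{cr})$ (Proposition~\ref{claim: sup+ < CRp norm}); both exploit precisely the positivity, symmetry and Rota-dilation hypotheses added in this statement.

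For part~(1), I would compare each $T_{m_N}$ with the subordinate Poisson semigroup at the scale $t_N:=2^{-N/(2\alpha)}$, matching the transition $\ell^\alpha\sim 2^N$ in (A1) with the kernel transition $t_N\sqrt{\ell}\sim 1$. Writing $T_{m_N}=P_{t_N}+D_N$ with $D_N:=T_{m_N-e^{-t_N\sqrt{\ell}}}$, the Poisson part is controlled by Proposition~\ref{prop:maximal inequality of semigroup}, and by Proposition~\ref{claim: sup+ < CRp norm},
$$\|(D_Nx)_N\|_{L_p(\cM;\ell_\infty)}\leq \|(D_Nx)_N\|_{L_p(\cM;\ell_2^{cr})}.$$
The key step is to realise the symbol defect as an integral against the Poisson derivative, namely
$$m_N(\omega)-e^{-t_N\sqrt{\ell(\omega)}}=\int_0^\infty h_N(t)\bigl(-t\sqrt{\ell(\omega)}\,e^{-t\sqrt{\ell(\omega)}}\bigr)\frac{dt}{t},$$
for a kernel satisfying $\sum_N|h_N(t)|^2\lesssim 1$ uniformly in $t$; both inequalities in (A1) will be needed to govern respectively the near-scale and far-scale behaviour of $h_N$ about $\ell^\alpha\sim 2^N$. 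A Cauchy--Schwarz argument in $t$ then dominates the discrete square function by the continuous Poisson square function, which is $\lesssim_p\|x\|_p$ by Proposition~\ref{thm:order of square funct}. The a.u.\ convergence follows from Proposition~\ref{prop:Phix-x in Lp M C0} applied to the dense subspace $U^{-1}(C_c(\Omega;H))$, where dominated convergence gives $T_{m_N}x\to x$ in norm because (A1) forces $m_N\to 1$ pointwise.

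For part~(2), the lacunary subsequence $(T_{m_{2^N}})_N$ satisfies (A1), so part~(1) applies to it. To extend the maximal bound from this subsequence to the continuous supremum, I would control the oscillation $T_{m_t}-T_{m_{2^N}}$ on $t\in[2^N,2^{N+1}]$ by iterating the identity $\partial_s T_{m_s}$ a total of $\eta$ times and invoking $|\partial_t^\eta m_t|\lesssim t^{-\eta}$ from (A2). Combined with a noncommutative Sobolev-type embedding of $L_p(\cM;L_2^{cr}(ds/s))$ into $L_p(\cM;\ell_\infty)$ over each dyadic interval, this produces the threshold $p>1+\tfrac{1}{2\eta}$; the rest of the argument, including the a.u.\ convergence, then proceeds exactly as in part~(1).

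The main obstacle is producing the kernel bound $\sum_N|h_N(t)|^2\lesssim 1$ in a form compatible with the column/row splitting of Proposition~\ref{thm:order of square funct}: unlike in the commutative setting, noncommutative $L_p(\cM;\ell_2^{cr})$ is \emph{not} an interpolation space between spaces of the form $L_p(\cM;\ell_q)$, so the classical bootstrap of Nagel--Stein--Wainger and Bourgain is unavailable. The delicate balancing of $h_N$ about the transition scale, together with the dualization made possible by positivity and symmetry (allowing $p<2$ to be traded with $p'>2$ cleanly against $T_{m_N}$ itself), is what bridges this gap.
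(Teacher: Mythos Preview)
Your proposal has a genuine gap in part~(1). The kernel representation
\[
m_N(\omega)-e^{-t_N\sqrt{\ell(\omega)}}=\int_0^\infty h_N(t)\bigl(-t\sqrt{\ell(\omega)}\,e^{-t\sqrt{\ell(\omega)}}\bigr)\frac{dt}{t}
\]
with a scalar kernel $h_N(t)$ cannot hold in general: the symbols $m_N$ are arbitrary bounded functions on $\Omega$ satisfying \textbf{(A1)} and need not factor through $\ell$ at all. Even when they do, the bound $\sup_t\sum_N|h_N(t)|^2<\infty$ is the wrong hypothesis for the Cauchy--Schwarz step you describe: operator convexity gives $|D_Nx|^2\leq\bigl(\int|h_N(t)|^2\tfrac{dt}{t}\bigr)\cdot\int|t\partial_tP_tx|^2\tfrac{dt}{t}$, and summing in $N$ then requires $\sum_N\int_0^\infty|h_N(t)|^2\tfrac{dt}{t}<\infty$, which the pointwise bound cannot supply because $dt/t$ has infinite mass. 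So even granting the representation, the domination of the discrete square function by the continuous Poisson square function does not follow. Your last paragraph correctly diagnoses that the noncommutative $L_p(\cM;\ell_2^{cr})$ is not an interpolation space, but the proposal offers no mechanism to bridge that obstruction.

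The paper's route is essentially different and does not attempt a direct square-function comparison. Writing $\phi_j=m_j-e^{-\ell/2^{j/\alpha}}$, one splits $T_{\phi_j}=T_{\phi_j}\Delta_j^{(s)}+T_{\phi_j}(\mathrm{id}-\Delta_j^{(s)})$, where $\Delta_j^{(s)}$ is a Poisson difference of width $\sim s$ about the scale $2^{-j/\alpha}$. The first piece is of strong type $(q_1,q_1)$ with constant growing like $s^\theta$ (via Lemma~\ref{lemma:Strong (p,p) of Delta_j} and Proposition~\ref{thm:order of square funct}), the second of strong type $(2,2)$ with constant decaying like $s^{\theta-1}$ (via Proposition~\ref{prop:general case for p=2}); optimising in $s$ through Lemma~\ref{lemma: strong pp strong 22 imply restricted  qq} and Theorem~\ref{theorem:interpolation of dirksen} yields a strong $(r,r)$ bound involving $A(q_1,2)$. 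The key closing step is Lemma~\ref{lemma: A(p,2)<A(p/p-2) sqrt}, which uses positivity and symmetry (Kadison--Schwarz plus duality) to bound $A(q_1,2)\lesssim A\bigl(\tfrac{q_1}{2-q_1},\infty\bigr)^{1/2}$: the square-function constant at $q_1<2$ is controlled by the \emph{square root} of the maximal constant at a larger index. Iterating this recursion forces $\sup_{1<u\le2}(u-1)^{22}A(u,\infty)<\infty$. For part~(2), the paper (Lemma~\ref{prop:from 2j to whole t}) does not simply integrate $\partial_t^\eta$ but builds multi-order dyadic differences $\Psi^{[s_1,\ldots,s_v]}_t$ and interpolates their $L_2$ maximal bounds (from Lemma~\ref{lemma:the condition for whole sequence, p=2}) against the lacunary $L_q$ bound from part~(1); the threshold $p>1+\tfrac{1}{2\eta}$ comes from the convergence of $\sum_{s_\eta}2^{(1-\theta)s_\eta}2^{-(\eta-1/2)\theta s_\eta}$.
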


The above theorems recover Criteria 1 and 2. Indeed, if $\cM $ is a finite von Neumann algebra, the additional assumption on Rota's dilation property is fulfilled by Lemma \ref{lem:eg rota} (1). Note that for any positive definite function $m$ on $\Gamma$, the associated map $T_m $ on $\vN$ is completely positive (see e.g. \cite[Theorem  2.5.11]{brownozawa08bookC*}). Also, by the Schoenberg theorem, for any conditionally negative definite function $\ell:\Gamma \to [0, \infty) $, the associated map $\lambda (g) \mapsto e^{-t\ell(g)} \lambda(g)$ forms a semigroup of unital completely positive trace preserving and {symmetric} maps on $VN(\Gamma)$. On the other hand,  for any function $m:\Gamma\to \mathbb \bC$ with $m(e)=1$, the map $T_m$ is $\tau$-preserving; if  $T_m$ is unital positive on $VN(\Gamma)$, then it extends to positive contractions to $L_p (VN(\Gamma))$ for all $1\leq p\leq \infty$ (see e.g. \cite[Lemma 1.1]{jungexu07ergodic}). Moreover, if $m$ is real-valued, then one may easily check that $T_m$ is a symmetric map. So the assumptions of Criterion 1 coincide with those of the above theorems. 

\medskip
Before starting the proof, we give several remarks on the statement of the above theorems.
\begin{remark}\label{rmk:B2}
	Instead of continuous families $(m_t)_{t\in \bR_+}$ in \textbf{(A2)}, we may also consider maximal inequalities of families $(m_N)_{N\in \bN}$ with suitable conditions on their differences, which we will frequently use in further discussions.
	Let  $(m_N)_{N\in \bN}$ be a family of  measurable functions  on $\Omega$ satisfying the following  assumption: 
	there exist $\alpha >0$ and $\beta >0$ such that for almost all $\omega\in \Omega$ we have
	\begin{equation}\label{eq:criterion a2} 
		|1-m_N (\omega)|\leq\beta \frac{\ell(\omega)^\alpha}{ N},\quad |m_N (\omega)|\leq \beta  \frac{ N}{\ell(\omega)^\alpha},\quad |m_{N+1} (\omega)- m_N (\omega)|\leq \beta  \frac{1}{N}. 
	\end{equation} 
	Then for any $2\leq p<\infty$, there exists a constant $c>0$ depending only on $p,\alpha,\beta$ and $\gamma$, such that for all $x\in L_p (\mathcal M)$, we have
	$$\|(T_{m_N} x)_N\|_{L_p(\cM;\ell_\infty)} \leq c \|x\|_p, \quad\text{and}\quad  T_{m_N} x\to x \text{ a.u. as }N\to\infty.$$ 
	If moreover the operators $(T_{m_N})_{N\in \mathbb N}$  extend  to  positive and symmetric contractions on $\cM$, then the assertion holds for all $3/2<p<\infty$ as well.
	
	This follows immediately from the previous theorems  since   \eqref{eq:criterion a2} leads to a special case of  \textbf{(A2)}. Indeed, for $0\leq t<1$ , 	set $m_t=m_0=0$. 
	For $t \geq 1$, we write $t=N_t+r_t$ with $N_t\in \bN$ and $0\leq r_t< 1$, and we define
	$${m}_t=(1-r_t)m_{N_t}+r_tm_{N_{t}+1}.$$
	It is obvious that  $(m_t)_{t\in \bR_+}$ satisfies  \textbf{(A2)} with $\eta=1$.
	
	One may also study more general conditions associated with higher order differences, which might be parallel to the case $\eta>1$ in  \textbf{(A2)}. However the computation seems to be much more intricate and we would like to leave it to the reader.
\end{remark}

\begin{remark}\label{rmk: A1 prime and change index set N to Z}
	The statement in \textbf{(A1)} and \textbf{(A2)} can be flexibly adjusted, which we will frequently use in further discussions: 
	
	(1) For $\alpha\geq 1$ and for the maps $(T_{m_N})_{N\in \mathbb N}$ and $(T_{m_t})_{t\in \bR_+}$ given in
	Theorem~\ref{theorem:criterion1} or Theorem~\ref{theorem:criterion2}, we will indeed establish the corresponding maximal inequalities under the following weaker conditions \eqref{eq:criterion a1prime} or \eqref{eq:criterion a3prime}. Indeed, for $\alpha\geq 1$, \textbf{(A1)} implies that for almost all $\omega\in \Omega$ we have 
	\begin{equation}\label{eq:criterion a1prime} 
		|1-m_N (\omega)|\lesssim_{\beta} \frac{\ell(\omega)}{2^{ N/\alpha}},\quad |m_N (\omega)|\lesssim_{\beta}  \frac{2^{ N/\alpha}}{\ell(\omega)}. 
	\end{equation}
	To see this, recall that  we have $|m_N(\omega)|\leq \beta+1$, so we see that 
	$$(\beta+1)^{-1}|m_N(\omega)|\leq ({(\beta+1)}^{-1}|m_N(\omega)|)^{\frac{1}{\alpha}}\leq (\beta+1)^{-\frac{1}{\alpha}}  \beta^{\frac{1}{\alpha}} \frac{2^{N/\alpha}}{\ell(\omega)}.$$
	Similarly, using $|1-m_N(\omega)|\leq \beta+1$ and repeating the above argument, we see that
	$$|1-m_N(\omega)|\lesssim_{\beta} \frac{\ell(\omega)}{2^{N/\alpha}}. $$
	In the same way, \textbf{(A2)} implies
	\begin{equation}\label{eq:criterion a3prime} 
		|1-m_t (\omega)|\lesssim_{\beta,\gamma} \frac{\ell(\omega)}{t^{1/\alpha}},\quad |m_t (\omega)|\lesssim_{\beta, \gamma} \frac{ t^{1/\alpha}}{\ell(\omega)},\quad \left|\frac{d^k m_t (\omega)}{dt^k} \right|\leq \beta \frac{1}{t^k}.
	\end{equation}
	On the other hand, the proof of the above theorems for the case of $0<\alpha<1$ can be always reduced to that of $\alpha\geq 1$. To see this it suffices to take $\tilde{\ell}=\ell^\alpha$ for $0<\alpha<1$ and consider the new semigroup of unital completely positive trace preserving and symmetric maps given by $ \tilde{S}_t \coloneqq T_{e^{-t\tilde{\ell}}}$ (see \cite{yosida95fa}); if the multipliers satisfy \textbf{(A1)} or \textbf{(A2)} with respect to $\ell$ for $0<\alpha<1$, then they also satisfy the same condition with respect to $\tilde{\ell}$ for $\alpha=1$. 

	(2) Theorem~\ref{theorem:criterion1}~(1) and Theorem~\ref{theorem:criterion2}~(1) still hold with the index set $\mathbb N$ replaced by $\mathbb Z$ in \textbf{(A1)}. 
	This can be seen from their proofs; alternatively, we may deduce this easily from a standard re-indexation argument. Indeed, let $(m_N)_{N\in \bZ}$ be  a sequence  of measurable functions on $\Omega$ satisfying \eqref{eq:criterion a1} for all $N\in \bZ$.
	Take $N_0\in \bZ$ and write  $\widetilde{m}_j=m_{N_0 +j}$ for $j\in \bN$. Then \eqref{eq:criterion a1} implies that for all $j\in \bN$ and almost all $\omega \in \Omega$,
	$$|1-\widetilde m_j(\omega)|\leq \beta\frac{2^{N_0}\ell(\omega)^\alpha}{2^j},\quad |\widetilde m_j(\omega)|\leq \beta\frac{2^j}{2^{N_0}\ell(\omega)^\alpha}.$$
	Note that $\tilde \ell  =2^{N_0/\alpha}\ell$ yields again a semigroup of unital completely positive trace preserving and symmetric maps $\tilde{S}^{(N_0)}_t:=T_{e^{-t\tilde \ell}}$. 
	Then applying Theorem~\ref{theorem:criterion1}~(1) or Theorem~\ref{theorem:criterion2}~(1) to $\widetilde{m}$ and $\tilde \ell$, we see that $(\widetilde{m}_j)_{j\in \bN}=(m_N)_{N\geq N_0}$ satisfies the corresponding maximal inequality with constant independent of $\widetilde{ \ell}$ and $N_0$. Thus the similar maximal inequalities and a.u. (b.a.u.) convergence still hold for $(m_N)_{N\in \bZ}$.
\end{remark}

\begin{remark}\label{rk:mtheorem for cb}
	The completely bounded version of the above two theorems holds true as well. In other words, if $\mathcal N$ is another semifinite von Neumann algebra and if we replace $T_{m_N}$ by $T_{m_N} \otimes\mathrm{Id}_{\mathcal N}$, $T_{m_t}$ by $T_{m_t} \otimes\mathrm{Id}_{\mathcal N}$  and $\mathcal M$ by $\tilde{\mathcal M} =\mathcal M \overline{\otimes} \mathcal N$, then the above two theorems still hold true. Indeed, it suffices to consider a larger Hilbert space $\tilde{H} =H \otimes L_2 (\mathcal N)$ and apply the above theorems to $\tilde{M}$ and $\tilde{H}$.
\end{remark}

The following result on mean convergences is an easy consequence of our assumptions.
\begin{prop}\label{lemma:p continuou of m_n}
	Let $1<p<\infty $ and $(T_{m_N})_N$, $(T_{m_t})_t$  be given as in Theorem \ref{theorem:criterion1} or Theorem \ref{theorem:criterion2} which satisfy \emph{\textbf{(A1)}} or \emph{\textbf{(A2)}} correspondingly. 
	
	\emph{(1)}	The family $(T_{m_t})_t$ is strongly continuous on $L_p (\mathcal M)$, i.e., for any ${x\in L_p (\mathcal M)}$ the function $t\mapsto T_{m_t} x $ is continuous from $\mathbb R _+$ to  $L_p (\mathcal M)$. 
	
	\emph{(2)}	We have 
	$$\lim_{N\to \infty}\|T_{m_N} x -x \|_p =0,\quad \lim_{t\to\infty}\|T_{m_t} x -x \|_p =0,\quad x\in L_p (\mathcal M).$$
\end{prop}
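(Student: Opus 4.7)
The plan is to follow a standard density-plus-uniform-boundedness scheme. The key observation is that the assumptions \textbf{(A1)} and \textbf{(A2)} were already noted to imply $\sup_N \|T_{m_N}\|_{\infty\to\infty} \leq \gamma+1$ (and similarly for $(T_{m_t})$), and combined with the $L_2$-boundedness arising from $\|m\|_\infty \leq \gamma$, complex interpolation yields uniform boundedness on $L_q(\mathcal M)$ for all $2 \leq q \leq \infty$; in the Theorem~\ref{theorem:criterion2} setting, positivity and symmetry upgrade this to $1 \leq q \leq \infty$ via duality. The natural dense subspace is $\mathcal D = U^{-1}(C_c(\Omega;H))$, which by hypothesis lies in $L_p(\mathcal M)$ for every $1 \leq p \leq \infty$. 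A standard $3\varepsilon$-argument will then reduce (1) and (2) to proving the respective convergences on $\mathcal D$.

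For part (2), I would fix $x \in \mathcal D$ with $\mathrm{supp}(Ux) \subset K$ compact. Since $\ell$ is continuous, $M_K := \sup_{\omega \in K}\ell(\omega) < \infty$. Applying the first inequality in \eqref{eq:criterion a3} (resp.\ \eqref{eq:criterion a1}) inside the $L_2$-norm via the Plancherel-type isometry $U$ gives
\[
\|T_{m_t}x - x\|_2^2 \;=\; \int_{K}|m_t(\omega)-1|^2 |Ux(\omega)|_H^2\, d\mu \;\leq\; \beta^2 M_K^{2\alpha}\, t^{-2}\,\|x\|_2^2 \longrightarrow 0,
\]
and analogously $\|T_{m_N}x-x\|_2 \to 0$. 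To transfer this to $L_p$, I exploit that $y_t := T_{m_t}x - x$ is uniformly bounded in the relevant endpoint space: $\|y_t\|_\infty \leq (\gamma+1)\|x\|_\infty$ always, and in the Theorem~\ref{theorem:criterion2} setting also $\|y_t\|_1 \leq 2\|x\|_1$. For $p\geq 2$ the interpolation inequality $\|y_t\|_p \leq \|y_t\|_2^{2/p}\|y_t\|_\infty^{1-2/p}$ yields $L_p$-convergence; for $1<p<2$ (under the positivity hypothesis) the analogous interpolation with $L_1$ works. Combined with uniform boundedness and the density of $\mathcal D$, part (2) follows.

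For part (1), the same scheme applies at each fixed $t_0\in\mathbb R_+$. The piecewise $\eta$-differentiability assumption in \textbf{(A2)} entails that $t \mapsto m_t(\omega)$ is continuous in $t$ for almost every $\omega$, and the universal bound $|m_t(\omega)| \leq \beta+1$ provides the integrable envelope $4(\gamma+1)^2|Ux(\omega)|_H^2$ on the compact support $K$. Dominated convergence then gives
\[
\|T_{m_t}x - T_{m_{t_0}}x\|_2^2 = \int_K |m_t(\omega)-m_{t_0}(\omega)|^2 |Ux(\omega)|_H^2\, d\mu \longrightarrow 0 \qquad (t\to t_0),
\]
and the interpolation step from Paragraph 2 transfers this to the $L_p$-norm.

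The main technical point requiring care is the interpretation of ``piecewise $\eta$-differentiable'' as ensuring pointwise-in-$t$ continuity of $m_t(\omega)$ (away from a negligible set of breakpoints that does not affect the dominated convergence argument); once this is granted, every remaining step is routine. Note also that one-sided continuity at such breakpoints suffices for the strong continuity statement since $L_p$-convergence on a dense subspace extends uniformly by the operator-norm bound $\sup_{t\in\mathbb R_+}\|T_{m_t}\|_{L_p\to L_p} < \infty$.
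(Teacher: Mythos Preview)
Your proposal is correct and follows essentially the same route as the paper: work on the dense subspace $U^{-1}(C_c(\Omega;H))$, establish $L_2$-convergence there via the symbol estimates and compactness of the support, transfer to $L_p$ by the H\"older/interpolation inequality $\|y\|_p\le\|y\|_2^{2/p}\|y\|_\infty^{1-2/p}$ (and its $L_1$--$L_2$ analogue for $p<2$ in the positive/symmetric case), and conclude by density and uniform boundedness. The only cosmetic difference is that for part (1) you invoke dominated convergence with the envelope $(\beta+1)^2|Ux|_H^2$, whereas the paper writes the $L_2$-bound as $\|(m_t-m_{t_0})\dsone_E\|_\infty\|x\|_2$ and appeals directly to continuity of $m_t$ on the compact set $E$; both arguments are equivalent here.
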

\begin{proof}
	Let $x\in \cU ^{-1}(C_c (\Omega;H)) $ and $E=\supp(\cU  (x))\subset \Omega$. By the H\"{o}lder inequality, for any $t_0\geq 0$ and $2\leq p <\infty$, 
	\begin{align*}
		\|T_{m_t}x-T_{m_{t_0}}x\|_p &
		\leq \|T_{m_t}x-T_{m_{t_0}}x\|_2^{2/p} \|T_{m_t}x-T_{m_{t_0}}x\|_\infty ^{1-2/p} \\
		&\lesssim \|(m_t -m_{t_0}) \dsone_{E} \|_{\infty}^{2/p}\|x\|_2^{2/p} \| x\|_\infty^{1-2/p}.
	\end{align*}
	By the continuity of $m_t$ and the compactness of $E$, the above quantity tends to $0$ as $t\to t_0$. Similar arguments work for $p<2$ by using the H\"{o}lder inequality with endpoints $p=1,2$.
	Similarly, by the continuity of $\ell$ and the compactness of $E$, we have
	$$\lim_{N\to\infty}\|T_{m_N}x-x\|_p = 0,\quad \lim_{t\to \infty}\|T_{m_t}x-x\|_p = 0.$$ 
	
	For general elements $x\in L_p (\mathcal M)$, it suffices to note that  the operators $(T_{m_N})_N$ and $(T_{m_t})_t$ extend to uniformly bounded operators on $L_p (\mathcal M)$. Thus the desired results follow from a standard density argument.
\end{proof}

Now we are ready to proceed with the proof of the previous theorems.

\section{$L_2$-estimates under lacunary conditions}
\label{subsect:ltwo}

\begin{proposition}\label{prop:general case for p=2}
	Let $(m_N)_{N\in \bZ}\subset L_{\infty}(\Omega)$. Assume that there exist a  function $f:\Omega \to [0, \infty)$ and a positive number $a>1$ such that for almost all $\omega\in \Omega$,
	\begin{equation}\label{eq:1-psi N < |g|/2 N}
		|m_N(\omega)|\leq \beta  \frac{a^Nf(\omega)}{(a^N+f(\omega))^2}.
	\end{equation}
	Then
	$$\| (T_{m_N}x)_{N\in\bZ}\|_{L_2(\cM; \ell_2^{cr})}\lesssim \beta \sqrt{\frac{a^2}{a^2-1}}\|x\|_2, \qquad x\in L_2(\cM).$$	
\end{proposition}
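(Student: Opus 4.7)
The plan is to exploit the fact that at $p=2$ the norms in $L_2(\cM;\ell_2^c)$, $L_2(\cM;\ell_2^r)$ and $L_2(\cM;\ell_2^{cr})$ all coincide with the $\ell_2$-valued Hilbert space norm, so the whole statement reduces to a scalar pointwise estimate on $\Omega$. More precisely, since $U$ is an isometry and $U(T_{m_N}x)=m_N\,Ux$, I would first write
\begin{equation*}
\|(T_{m_N}x)_{N\in\bZ}\|_{L_2(\cM;\ell_2^{cr})}^2
=\sum_{N\in\bZ}\|T_{m_N}x\|_2^{2}
=\int_\Omega\Bigl(\sum_{N\in\bZ}|m_N(\omega)|^2\Bigr)\|Ux(\omega)\|_H^{2}\,d\mu(\omega).
\end{equation*}
Thus it suffices to show the pointwise bound
\begin{equation*}
\sum_{N\in\bZ}|m_N(\omega)|^{2}\;\lesssim\;\beta^{2}\,\frac{a^{2}}{a^{2}-1}
\qquad\text{for almost every }\omega\in\Omega.
\end{equation*}

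To prove this scalar estimate I would fix $\omega$, set $t=f(\omega)\ge 0$ (the case $t=0$ being trivial since the right-hand side of \eqref{eq:1-psi N < |g|/2 N} vanishes), and use hypothesis \eqref{eq:1-psi N < |g|/2 N} to reduce matters to bounding
\begin{equation*}
\sum_{N\in\bZ}\frac{a^{2N}\,t^{2}}{(a^{N}+t)^{4}}.
\end{equation*}
The natural thing is to split the sum according to whether $a^N\le t$ or $a^N>t$. In the first regime I drop $a^N$ in the denominator to get the upper bound $a^{2N}/t^{2}$, and a geometric-series sum over $\{N:a^N\le t\}$ contributes at most $\tfrac{a^2}{a^2-1}$. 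In the second regime I drop $t$ in the denominator to get $t^{2}/a^{2N}$, and the geometric series over $\{N:a^N>t\}$ again contributes at most $\tfrac{a^2}{a^2-1}$. Adding these two gives the desired uniform bound with constant $\tfrac{2a^{2}}{a^{2}-1}$, which after taking square roots yields the stated inequality with an absolute implicit constant.

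There is essentially no obstacle here: the argument is pure Plancherel plus a geometric-series split, the only mild subtlety being the correct normalization so that the constant really behaves like $\sqrt{a^{2}/(a^{2}-1)}$ as $a\to 1^{+}$ (which is precisely what the two-sided split ensures, since each half of the geometric sum has ratio $a^{-2}$). The fact that $L_2(\cM;\ell_2^{cr})$ is Hilbertian is what makes the reduction to a scalar sum possible; this is why the result is formulated at $p=2$ and does not require any positivity or dilation hypothesis on the multipliers $T_{m_N}$.
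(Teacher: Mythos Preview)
Your proposal is correct and essentially identical to the paper's own proof: both reduce the $L_2(\cM;\ell_2^{cr})$-norm to $\sum_N\|T_{m_N}x\|_2^2$, pass through the isometry $U$ to obtain the pointwise factor $\sum_N|m_N(\omega)|^2$, and bound the latter by splitting at $N$ with $a^N\approx f(\omega)$ into two geometric series of ratio $a^{-2}$ (handling $f(\omega)=0$ separately). There is no meaningful difference in approach.
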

\begin{proof}
	We have 
	\begin{align*}
		\|(T_{m_N} x)_N\|^2_{L_2(\cM; \ell_2^{cr})}&=  \tau \left(\sum_{N\in \bZ}|T_{m_N} x|^2 \right)\\
		&=\sum_{N\in \bZ}\|T_{m_N} x\|^2_{L_2 (\mathcal M)}=\sum_{N\in \bZ} \| m_N U(x)\|^2_{L_2 (\Omega;H)}\\
		&=  \int_\Omega \sum_{N\in \bZ}\| m_N (\omega) (Ux)(\omega) \|_H^2 d\mu(\omega) \\
		&\leq  \|\sum_{N\in \bZ}|{m_N} |^2 \|_{L_\infty (\Omega)} \| Ux \|^2_{L_2 (\Omega;H)}
		\\
		&=  \|\sum_{N\in \bZ}|{m_N} |^2 \|_{L_\infty (\Omega)}  \|x\|_{L_2 (\cM)}^2.
	\end{align*}
	However, by (\ref{eq:1-psi N < |g|/2 N}) we see that for almost all $\omega \in \Omega$ with $f(\omega)>0$,
	$$ \sum_{N\in \bZ}|{m_N}(\omega)|^2\leq \sum_{N<\log_a f(\omega)} \beta^2 \frac{a^{2N}}{f(\omega)^2}+\sum_{N\geq \log_a f(\omega)} \beta^2\frac{f(\omega)^2}{a^{2N}}\lesssim \beta^2\frac{a^2}{a^2-1},$$
	while  $m_N(\omega)=0$ if $f(\omega)=0$ by \eqref{eq:1-psi N < |g|/2 N}.
	Thus we obtain the desired inequality.	
\end{proof}

Below we  show a  more precise $L_2$-estimate.
\begin{lemma}\label{lemma:the condition for whole sequence, p=2}
	Assume that $t\mapsto m_t(\omega)$ is differentiable for almost all $\omega \in \Omega$. 
Choose  an arbitrary measurable function $f:\Omega\to [0, \infty)$. For $j\in \bZ$, define  
$$a_{j}= \sup_t \left\{\sup_{2^{j-2}<\frac{f(\omega)}{t}\leq 2^{j}} |m_t(\omega)| \right\}, \quad b_{j}=\sup_t \left\{\sup_{2^{j-2}<\frac{f(\omega)}{t}\leq 2^{j}} t\cdot	\left|\frac{\partial m_t(\omega)}{\partial t}\right| \right\}.$$
Assume 
$$K=\sum_{j\in \bZ} a_j^{1/2}(a_j^{1/2}+b_j^{1/2})<\infty.$$
Then for $x\in L_2(\cM)$, we have the following maximal inequalities
$$\|(T_{m_t} x)_t\|_{L_2(\cM;\ell_{\infty})}\lesssim K \|x\|_2 \add{and} \|(T_{m_t} x)_t\|_{L_2(\cM;\ell_{\infty}^c)}\lesssim K \|x\|_2 .$$
\end{lemma}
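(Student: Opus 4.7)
\medskip

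The plan is to prove the $L_2(\cM;\ell_\infty^c)$ bound first by decomposing $T_{m_t}x$ via a smooth dyadic partition of unity adapted to the ratio $f(\omega)/t$ and by controlling each piece with a noncommutative Sobolev-type inequality deduced from the fundamental theorem of calculus. Concretely, I would fix a nonnegative function $\psi\in C_c^\infty(\bR_+)$ with $\supp\psi\subset (1/2,2)$ satisfying $\sum_{j\in\bZ}\psi(2^{-j}u)=1$ for $u>0$, and set $\psi_j(u)=\psi(2^{-j}u)$ and $\phi_j(t,\omega)=m_t(\omega)\,\psi_j(f(\omega)/t)$. Ignoring the null contribution from $\{f=0\}$ (where $\chi_j\equiv 0$ and hence $m_t$ is not constrained by $a_j,b_j$, so we may assume the spectral part of $x$ on this set is zero), we get $m_t(\omega)=\sum_j \phi_j(t,\omega)$ and, by the triangle inequality in $L_2(\cM;\ell_\infty^c)$,
$$\|(T_{m_t}x)_t\|_{L_2(\cM;\ell_\infty^c)}\le \sum_{j\in\bZ}\|(F_j(t))_{t\in\bR_+}\|_{L_2(\cM;\ell_\infty^c)},\qquad F_j(t):=T_{\phi_j(t,\cdot)}x.$$

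For fixed $\omega$ with $f(\omega)>0$, the differentiability assumption makes $t\mapsto \phi_j(t,\omega)$ a $C^1$ function supported in the compact dyadic interval $I_j(\omega)=[f(\omega)/2^{j+1},f(\omega)/2^{j-1}]$ of logarithmic length $\log 4$. In particular $F_j(\infty)=0$, so the fundamental theorem of calculus yields
$$F_j(t)^*F_j(t)=-\int_t^\infty\!\bigl(F_j'(s)^*F_j(s)+F_j(s)^*F_j'(s)\bigr)\,ds \;\le\; \int_0^\infty\!\bigl|F_j'(s)^*F_j(s)+F_j(s)^*F_j'(s)\bigr|\,ds$$
uniformly in $t$ in the operator sense (using that for any self-adjoint $Y(s)$ one has $-\int_t^\infty Y\,ds\le \int_0^\infty |Y|\,ds$). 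By \eqref{eq: sup+ norm for positive sequence} and the fact that $\||X|\|_1=\|X\|_1$ for self-adjoint $X$, followed by the trace Hölder inequality and Cauchy--Schwarz with weights $s^{\pm 1/2}$,
$$\|(F_j(t))_t\|_{L_2(\cM;\ell_\infty^c)}^2\;\lesssim\;\Bigl(\int_0^\infty s\,\|F_j'(s)\|_2^2\,ds\Bigr)^{1/2}\Bigl(\int_0^\infty \|F_j(s)\|_2^2\,\tfrac{ds}{s}\Bigr)^{1/2}.$$

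To estimate the two integrals I would apply Plancherel through $U$: $\|F_j(s)\|_2^2=\int_\Omega |\phi_j(s,\omega)|^2\|Ux(\omega)\|_H^2\,d\mu$, and analogously for $F_j'$. Since $\psi_j(f/s)$ is supported in $f/s\in(2^{j-1},2^{j+1})\subset (2^{(j+1)-2},2^{j+1}]$, the definitions of the constants in the statement force $|m_s(\omega)|\le a_{j+1}$ and $s|\partial_s m_s(\omega)|\le b_{j+1}$ on this region, while $|s\partial_s[\psi_j(f/s)]|=|\psi_j'(f/s)\cdot (f/s)|\lesssim 1$ uniformly in $j,s$. Integrating $ds/s$ over $I_j(\omega)$ then gives
$$\int_0^\infty \|F_j(s)\|_2^2\,\tfrac{ds}{s}\lesssim a_{j+1}^2\|x\|_2^2,\qquad \int_0^\infty s\|F_j'(s)\|_2^2\,ds\lesssim (a_{j+1}+b_{j+1})^2\|x\|_2^2.$$
Substituting and summing over $j$,
$$\|(T_{m_t}x)_t\|_{L_2(\cM;\ell_\infty^c)}\lesssim \sum_{j} a_{j+1}^{1/2}\bigl(a_{j+1}^{1/2}+b_{j+1}^{1/2}\bigr)\|x\|_2 = K\|x\|_2.$$
A fully symmetric argument applied to $F_j(t)F_j(t)^*$ yields the analogous bound for the row norm $L_2(\cM;\ell_\infty^r)$, and Lemma~\ref{theorem:interpolation l infty} then delivers the $L_2(\cM;\ell_\infty)$ estimate.

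The main obstacle is the mismatch between the sharp cutoffs used in the definitions of $a_j,b_j$ and the smooth cutoffs needed to differentiate in $t$; this is resolved by the index shift $j\mapsto j+1$, which changes only an absolute constant. Minor preliminaries include: reducing the continuous parameter $t\in\bR_+$ to a countable set by Remark~\ref{prop:subsequence approxiamte to whole sequence}, using that finiteness of $K$ forces $\sup_j a_j<\infty$ and hence uniform boundedness of $(m_t)$, so that $t\mapsto T_{m_t}x$ is $L_2$-continuous by dominated convergence; and checking that the $\{f=0\}$ fibre contributes nothing provided $x$ is taken in the dense subspace $U^{-1}(C_c(\{f>0\};H))$ (the general case follows by density once the a~priori bound on the dense subspace is established).
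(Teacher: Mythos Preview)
Your argument is correct and gives the same bound, but it is organized differently from the paper's proof. Both proofs start with a dyadic partition of unity in the ratio $f(\omega)/t$ and reduce to estimating each piece $F_j(t)=T_{\phi_j(t,\cdot)}x$ in $L_2(\cM;\ell_\infty^c)$. The difference is in how the supremum over $t$ is dominated.

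The paper first localizes $x$ spectrally to pieces $x_k$ with $f(\omega)\in[2^{k-2},2^{k+1}]$, observes $T_{t,j}x=T_{t,j}x_{v+j}$ for $t\in[2^v,2^{v+1})$, then subdivides each dyadic $t$-interval into $A_j$ equal parts. Writing $F_j(t)=F_j(\gamma_k)+\int_{\gamma_k}^t F_j'$ and using operator convexity $|a+b|^2\le 2|a|^2+2|b|^2$, they obtain a dominant element $y_v$ consisting of an integral of $|F_j'|^2$ plus $A_j$ point values $|F_j(\gamma_k)|^2$. The choice $A_j\asymp(a_j+b_j)/a_j$ balances the two contributions, and the sum over $v$ is controlled by the almost-orthogonality $\sum_v\|x_{v+j}\|_2^2\le 3\|x\|_2^2$.

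Your route is more direct: you apply the fundamental theorem of calculus once, globally on $[t,\infty)$, to $|F_j(t)|^2$ itself, dominate by $\int_0^\infty|Y(s)|\,ds$ with $Y=F_j'{}^*F_j+F_j^*F_j'$, and then a single weighted Cauchy--Schwarz in $s$ delivers exactly $a_{j+1}^{1/2}(a_{j+1}+b_{j+1})^{1/2}\|x\|_2$. This dispenses with the spectral localization of $x$, the subdivision into $A_j$ pieces, and the optimization step; the finite overlap that the paper exploits via the $x_k$'s appears in your argument implicitly, through the fact that for fixed $\omega$ the support of $s\mapsto\phi_j(s,\omega)$ has logarithmic length $\log 4$. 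The price is the use of the operator absolute value $|Y(s)|$ under a Bochner integral and the need that $F_j(s)\to 0$ in $L_2$ as $s\to\infty$, both of which are unproblematic at $p=2$ but would be more delicate at other $p$; since the lemma is only used at $L_2$ in the paper, your simplification is a genuine gain here. The index shift $j\mapsto j+1$ caused by your choice $\supp\psi\subset(1/2,2)$ rather than $[1/4,1]$ is, as you note, harmless.
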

\begin{proof}
	
	We prove the second assertion first.
	Let $\{\eta_j\}_{j\in \bZ}$ be a partition of unity of $\bR_+$ satisfying 
	$$\sum_j \eta_j=1,\qquad\supp \eta_j \subset [2^{j-2}, 2^{j}], \qquad 0\leq \eta_j\leq 1 \quad \text{and} \quad |\eta_j^\prime|<C2^{-j}.$$
	Define $m_{t,j}(\omega)=m_t(\omega)\eta_j(\frac{f(\omega)}{t})\in L_{\infty}(\Omega)$. For notational simplicity, denote by $T_{t,j}$ the operators with symbols $m_{t,j}$; that is,
	$$\cU(T_{t, j}x)= m_{t,j}\cU  (x), \quad x\in L_2(\cM).$$ 
	Then we have 
	\begin{equation}\label{eq:sup+ Litllewood-paley decomposition}
		\| (T_{m_t} x)_t\|_{L_2(\cM;\ell_{\infty}^c)}=\left\|\left( \sum_{j\in \bZ}  T_{t, j}x      \right)_t \right\|_{L_2(\cM;\ell_{\infty}^c)}\leq \sum_{j\in \bZ} \| (T_{t, j}x)_t\|_{L_2(\cM;\ell_{\infty}^c)}.
	\end{equation}
	From now on we fix an arbitrary $j\in \bZ$. In the sequel of this proof,  for  $x\in L_2 (\mathcal M)$, we denote 
	$$U_{k}(x)(\omega)=\cU  (x)(\omega)\cdot \dsone_{[2^{k-2}, 2^{k+1}]}(f(\omega)),\quad \omega \in \Omega \quad \text{ and } \quad
	x_{k}=\cU  ^*(U_{k}(x)).$$ 
	Since $\supp \eta_j\subset [2^{j-2},2^{j}]$, for $v\in \bZ$ and $t \in [2^v, 2^{v+1})$ we have
	$$m_{t,j}(\omega) \cU  (x)(\omega)=m_t(\omega)\eta_j(\frac{f(\omega)}{t})\cU  (x)(\omega)=m_t(\omega)\eta_j(\frac{f(\omega)}{t})U_{v+j}(x)(\omega). $$	
	We may rewrite the above equality as 
	$$T_{t, j}x=T_{t, j}x_{v+j},\qquad t \in [2^v, 2^{v+1}).$$
	Choose an integer $A_j$  such that $\frac{a_j+b_j}{a_j}\leq A_j\leq \frac{2(a_j+b_j)}{a_j}$ and	we divide the interval $[2^v, 2^{v+1}]$ into $A_j$ parts:
	$$2^v=\gamma_0<\gamma_1<\gamma_2\cdots<\gamma_{A_j}=2^{v+1}\quad \text{with }\quad \gamma_{k+1}-\gamma_k={2^v}\cdot{A_j^{-1}}.$$	
	For any  $t\in [2^v, 2^{v+1})$, there exists $0\leq k(t)\leq A_j-1$ such that $t\in [\gamma_{k(t)}, \gamma_{k(t)+1})$.	By the convexity of the operator square function, we have
	\begin{align*}
		|T_{t, j} x_{v+j}|^2&=|\cU^*(m_{t,j}U_{v+j}(x))|^2\\
		&=\left|\cU  ^*\left( \int_{\gamma_{k(t)}}^{t }\left( \frac{\partial m_{s,j}}{\partial s}\right)  U_{v+j}(x)ds+m_{\gamma_{k(t)}, j}\cdot U_{v+j}(x)\right) \right|^2\\
		&\leq 2(t-\gamma_{k(t)})\int_{\gamma_{k(t)}}^{t }\left|\frac{\partial T_{s,j}(x_{v+j})}{\partial s}\right|^2ds+2|T_{\gamma_{k(t)},j}(x_{v+j})|^2\\
		&\leq 2\left(\frac{2^v}{A_j} \int_{\gamma_{k(t)}}^{\gamma_{k(t)+1} }\left|\frac{\partial T_{s,j}(x_{v+j})}{\partial s}\right|^2ds+|T_{\gamma_{k(t)},j}(x_{v+j})|^2 \right)\\
		&\leq 2\sum_{k=0}^{A_j-1}\left(\frac{2^v}{A_j} \int_{\gamma_k}^{\gamma_{k+1} }\left|\frac{\partial T_{s,j}(x_{v+j})}{\partial s}\right|^2ds+|T_{\gamma_k, j}(x_{v+j})|^2 \right) \\
		&=\frac{2^{v+1}}{A_j} \int_{2^v}^{2^{v+1} }\left|\frac{\partial T_{s,j}(x_{v+j})}{\partial s}\right|^2ds+2\sum_{k=0}^{A_j-1}|T_{\gamma_k, j}(x_{v+j})|^2  .
	\end{align*}
	We denote  
	$$y_v=\frac{2^{v+1}}{A_j} \int_{2^v}^{2^{v+1}}\left|\frac{\partial T_{s,j}(x_{v+j})}{\partial s}\right|^2ds+2\sum_{k=0}^{A_j-1}|T_{\gamma_k,j}(x_{v+j})|^2 .$$ Then
	\begin{equation}\label{eq: T t j < y v}
		|T_{t,j}x_{v+j}|^2\leq y_v.
	\end{equation}
	Similarly, we have 
	$$|(T_{t,j}x_{v+j})^*|^2\leq y_v^\prime$$
	where we denote
	$$y_v^\prime=\frac{2^{v+1}}{A_j} \int_{2^v}^{2^{v+1}}\left|\left( \frac{\partial T_{s,j}(x_{v+j})}{\partial s}\right) ^*\right|^2ds+2\sum_{k=0}^{A_j-1}|T_{\gamma_k,j}(x_{v+j})^*|^2 .$$
	Let us estimate the quantities $\|y_v \|_1$ and $\|y_v^\prime \|_1$. 
	We have
	\begin{align*}
		\|y_v\|_1=\tau(y_v)=\frac{2^{v+1}}{A_j} \int_{2^v}^{2^{v+1} }\left\|\frac{\partial T_{s,j}(x_{v+j})}{\partial s}\right\|_2^2ds+2\sum_{k=0}^{A_j-1}\|T_{{\gamma_k}, j}(x_{v+j})\|_2^2  
	\end{align*}
	and
	\begin{align*}
		\|y^\prime_v\|_1=\tau(y^\prime_v)=\frac{2^{v+1}}{A_j} \int_{2^v}^{2^{v+1} }\left\|\left( \frac{\partial T_{s,j}(x_{v+j})}{\partial s}\right) ^*\right\|_2^2ds+2\sum_{k=0}^{A_j-1}\|T_{{\gamma_k}, j}(x_{v+j})^*\|_2^2.
	\end{align*}
	Hence, $\|y_v\|_1=\|y_v^\prime\|_1$.
	Note that 
	\begin{align*}
		 &\quad \ \int_{2^v}^{2^{v+1} } \left|\left( \frac{\partial m_{s,j}(\omega)}{\partial s}\right) \right|^2ds \\
		 &  = \int_{2^v}^{2^{v+1} } \left| \frac{\partial }{\partial s} \left(  m_s(\omega)\eta_j(\frac{f(\omega)}{s})\right) \right|^2ds\\
		&= \int_{2^v}^{2^{v+1} }\left|\frac{\partial m_{s}(\omega)}{\partial s} \cdot \eta_j\left( \frac{f(\omega)}{s}\right) -\eta_j^\prime\left( \frac{f(\omega)}{s}\right) \frac{f(\omega)}{s^2}\cdot m_{s}(\omega)\right|^2ds\\
		&	\lesssim  \int_{2^v}^{2^{v+1} }  \left(\left|\frac{b_j}{s}\right|+  \left|\frac{a_j}{s}\right|\right)^2ds \quad (\text{since }\supp \eta_j\subset [2^{j-2}, 2^j]) \\
		&	\lesssim \frac{1}{2^{v+1} }(b_j+a_j)^2.
	\end{align*}
	By the  Fubini theorem,	we have
	\begin{align*}
		&\quad\ \frac{2^{v+1}}{A_j} \int_{2^v}^{2^{v+1} }\left\|\frac{\partial T_{s,j}(x_{v+j})}{\partial s}\right\|_2^2ds
		\\&= \frac{2^{v+1}}{A_j}\int_{2^v}^{2^{v+1} }\left\|  \frac{\partial m_{s,j}}{\partial s} U_{v+j}(x)\right\|_{L_2(\Omega;H)}^2ds\\
		&=\frac{2^{v+1}}{A_j} \int_{\Omega} \left( \int_{2^v}^{2^{v+1} } \left|\left( \frac{\partial m_{s,j}(\omega)}{\partial s}\right) \right|^2ds\right) |U_{v+j}(x)(\omega)|^2d\mu(\omega)\\
		&\lesssim  \frac{(b_j+a_j)^2}{A_j} \|x_{v+j}\|_2^2,
	\end{align*}
	and
	\begin{align*}
		2\sum_{k=0}^{A_j-1}\|T_{{\gamma_k},j}(x_{v+j})\|_2^2&=\sum_{k=0}^{A_j-1}\int_{ \Omega} |m_{{\gamma_k}}(\omega)|^2\left|\eta_j\left(\frac{f(\omega)}{{\gamma_k}}\right)\right|^2|U_{v+j}(x)(\omega)|^2d\mu(\omega)\\
		&\leq 2A_ja_j^2\|x_{v+j}\|_2^2.
	\end{align*}
	Therefore,
	\begin{equation*}\label{eq:1-norm of yv}
		\|y_v^\prime\|_1=\|y_v\|_1\lesssim \left( \frac{(b_j+a_j)^2}{A_j}+ A_ja_j^2\right) \|x_{v+j}\|_2^2.
	\end{equation*}	
	Recall that (\ref{eq: T t j < y v}) asserts that	$|T_{t, j} x_{v+j}|^2\leq y_v\leq  \sum_{u\in \bZ} y_u $.
	So
	\begin{align}\label{eq:sup n Psi t j  < sum_v y_v}
		\|(T_{t, j}x)_t\|_{L_2(\cM;\ell_{\infty}^c)}&= \|(|T_{t,j}x_{v+j}|^2)_t \|_{L_1(\cM;\ell_{\infty})}^{1/2}
		\leq \left\|\sum_{u\in \bZ} y_u\right\|_1^{1/2}\leq\left(  \sum_{u\in \bZ}\|y_u\|_1\right) ^{1/2} \\
		& \lesssim \left( \sum_{u\in\mathbb Z}
		\left( \frac{(b_j+a_j)^2}{A_j}+A_ja_j^2\right) \|x_{u+j}\|_2^2 \right) ^{\frac{1}{2}}. 
		\nonumber
	\end{align}
	Note that 
	$$[2^{j+u-2}, 2^{j+u+1}]=\cup_{l=0}^{2} [2^{j+u+l-2}, 2^{j+u+l-1}].$$
	We have
	\begin{align*}
		\sum_{u\in \bZ}\|x_{u+j}\|_2^2&=\sum_{u\in \bZ}\int_{\Omega} \left| \cU  (x)(\omega)\cdot \dsone_{[2^{j+u-2}, 2^{j+u-2}]}(f(\omega))\right|^2 d\mu(\omega) \\
		&\leq \sum_{l=0}^{2} \int_{\Omega} \left| \cU  (x)(\omega)\cdot \sum_{u\in \bZ} \dsone_{[2^{j+u+l-2}, 2^{j+u+l-1}]}(f(\omega))\right|^2 d\mu(\omega)\\
		&=\sum_{l=0}^{2}\|\cU(x)\|_2^2
	\end{align*}
	where the last equality holds since
	$$\sum_{u\in \bZ} \dsone_{[2^{j+u+l-2}, 2^{j+u+l-1}]}(f(\omega))=1.$$
	Thus,
	$$\sum_{u\in \bZ}\|x_{u+j}\|_2^2\leq  3\|x\|_2^2.$$
	Recall that $\frac{a_j+b_j}{a_j}\leq A_j\leq \frac{2(a_j+b_j)}{a_j}$. 
	Together with  (\ref{eq:sup n Psi t j  < sum_v y_v}),
	we have 
	\begin{align*}
		\| (T_{t,j}x)_t\|_{L_2(\cM;\ell_{\infty}^c)}&\lesssim a_j^{1/2}(a_j+b_j)^{1/2}\left( \sum_{u\in \bZ} \|x_{u+j}\|_2^2\right)^{1/2}\lesssim a_j^{1/2}(a_j+b_j)^{1/2}\|x\|_2.
	\end{align*}
	By (\ref{eq:sup+ Litllewood-paley decomposition}), the proof is complete for the second maximal inequality.
	
	Similarly, we have 	$$ \|(T_{m_t} x)_t\|_{L_2(\cM;\ell_{\infty}^r)} \lesssim K \|x\|_2 .$$
	Let us recall Lemma~\ref{theorem:interpolation l infty} which shows that the space $L_2(\cM; \ell_\infty)$ is the complex interpolation space of $ L_2(\cM;\ell_{\infty}^c)$ and $ L_2(\cM;\ell_{\infty}^r)$.
	Therefore we have
	\begin{align*}
		&\quad\  \|(T_{m_t}): L_2(\cM)\to L_2(\cM;\ell_\infty) \|\\& \leq\|(T_{m_t}): L_2(\cM)\to L_2(\cM;\ell_\infty^c) \|^{1/2}\|(T_{m_t}): L_2(\cM)\to L_2(\cM;\ell_\infty^r) \|^{1/2}\\
		& \lesssim K. \qedhere
	\end{align*} 
\end{proof}

\section[Proof of Theorem~3.2]{Proof of Theorem~\ref{theorem:criterion1}}\label{sec:proof ot criterion1}	
Now we are ready to conclude  Theorem~\ref{theorem:criterion1}. 

First assume that $(m_N)_{N\in \bN}$ satisfies \textbf{(A1)}.
	Set $T_{\phi_N}=T_{m_N}-P_{2^{-N/\alpha}}$ with the symbol $\phi_N=m_N-e^{-\frac{\sqrt{\ell}}{2^{ N/2\alpha}}}$.By Remark~\ref{rmk: A1 prime and change index set N to Z}~(1) and \textbf{(A1)}, we can easily see that	
\begin{align}
	\label{eq:phi satisfies A1 A2 1}	|\phi_N(\omega)|&\leq |1-m_N(\omega)|+|1-e^{-\frac{\sqrt{\ell(\omega)}}{2^{ N/2\alpha}}}|\lesssim_{ \beta}  \frac{\sqrt{\ell(\omega)}}{2^{ N/2\alpha}} ,\\ 	\label{eq:phi satisfies A1 A2 2}
	|\phi_N(\omega)|&\leq |m_N(\omega)|+|e^{-\frac{\sqrt{\ell(\omega)}}{2^{ N/2\alpha}}}|\lesssim_{ \beta} \frac{2^{ N/2\alpha}}{\sqrt{\ell(\omega)}},
\end{align}
Therefore, $|\phi_N(\omega)|\leq \frac{2^{N/2\alpha}\sqrt{\ell(\omega)}}{\left(2^{N/2\alpha}+\sqrt{\ell(\omega)}\right)^2}$.
By Proposition~\ref{claim: sup+ < CRp norm}, Proposition~\ref{prop:general case for p=2} and Proposition \ref{prop: prop of LMinfty and LMl1} (3),  we get for any $2\leq p<\infty$ and $x\in L_p(\cM)$,
$$ \|(T_{\phi_N}(x))_{N}\|_{\Lpinfty{\cM}}\lesssim_{\alpha,\beta,\gamma, p} \|x\|_p,\quad \|(T_{\phi_N}(x))_{N}\|_{\Lpinftyc{\cM}}\lesssim_{\alpha,\beta,\gamma, p} \|x\|_p.$$
Applying Proposition~\ref{prop:maximal inequality of semigroup}, we also get the strong type  $(p, p)$ estimate for $T_{m_N}$ with a constant $c$ depending only on $\alpha,\beta,\gamma,p$.

	Assume that	 $(m_t)_{t\in \bR _+}$ satisfies \textbf{(A2)}. Again set 
$T_{\phi_t}=T_{m_t}-P_{t^{-1/\alpha}}$ with $\phi_t=m_t-e^{-\frac{\sqrt{\ell}}{t^{1/2\alpha}}}$. 
We have the following estimates similar to \eqref{eq:phi satisfies A1 A2 1} and \eqref{eq:phi satisfies A1 A2 2}: for almost all $\omega\in\Omega$
\begin{align*}
&|\phi_t(\omega)|\lesssim_\beta \min \left\{ \left(\frac{\ell(\omega)^\alpha}{t}\right)^{1/2\alpha}, \left(\frac{t}{\ell(\omega)^\alpha}\right)^{1/2\alpha}  \right\},\\
&|\frac{\partial \phi_t(\omega)}{\partial t}|\leq \beta \frac{1}{t} +\frac{1}{t}\cdot ( \frac{\sqrt{\ell(\omega)}}{\alpha  t^{1/2\alpha}}e^{-\frac{\sqrt{\ell(\omega)}}{t^{1/2\alpha}}})\lesssim_{ \alpha, \beta}\frac{1}{t}.
\end{align*}
Applying Lemma~\ref{lemma:the condition for whole sequence, p=2}, we get
$$\|{\sup_t}^+ T_{\phi_t}x\|_2\leq K\|x\|_2 \qquad x\in L_2(\cM),$$
where 
$$K\lesssim_{\alpha, \beta} \sum_{j\in \bZ}(2^{-|j|/2\alpha}(2^{-|j|/2\alpha}+1))^{1/2}\lesssim_{\alpha, \beta} 1.$$
By Proposition \ref{prop: prop of LMinfty and LMl1} (3), for any $2\leq p<\infty$,    $(T_{\phi_t})_{t\in \bR _+}$ is of strong type $(p, p)$ with  constant $c$ depending only on $\alpha, \beta, \gamma,  p$.	
Similarly, for any $2\leq p<\infty$, we have   
\begin{equation}\label{eq:phi max c}
\|(T_{\phi_t}(x))_{t\in \bR_+}\|_{\Lpinftyc{\cM}}\lesssim_{\alpha,\beta,\gamma,p} \|x\|_p \qquad x\in L_p(\cM).
\end{equation}
Therefore, we conclude the strong type $(p, p)$ estimate for $(T_{m_t})_t$ thanks to Proposition~\ref{prop:maximal inequality of semigroup}.  

	Now the desired a.u. convergence follows immediately from the above maximal inequalities by an argument in \cite{hongwangliao2017noncommutative}. For instance, we consider  the symbols $(m_t)_{t\in \bR_+}$ satisfying \textbf{(A2)}. 
Let  $x \in \cU ^{-1}(C_c (\Omega;H))$ and  set $E=\supp \cU(x)$. Note that $E$ is a compact set. We consider the maps $T_{\psi_t}=T_{m_t}-\id$ with $\psi_t=m_t-1$.
As the proof of Proposition~\ref{lemma:p continuou of m_n}, by \textbf{(A2)}, we have
\begin{align*}
\|T_{\psi_t}x\|_p &\leq 2\gamma^{1-(2/p)}\|(m_t-1)\dsone_E\|_{L_\infty(E)}^{2/p} \|x\|_2^{2/p} \|  x\|_\infty^{1-2/p} \\
&\lesssim_{\alpha, \gamma} \frac{\|\ell^\alpha\dsone_E\|^{2/p}_{L_\infty(E)}}{t^{2/p}}\|x\|_2^{2/p} \|  x\|_\infty^{1-2/p}.
\end{align*}
By the continuity of $\ell$ and the compactness of $E$, we have that for any $a>1$, and any large integer $M$
\begin{align}\label{eq:limit M int T psi }
\sum_{j\in \bN, a^j\geq M} \|T_{\psi_{a^j}}x\|_p^p dt 
&\lesssim_{\alpha, \gamma} \sum_{j\in \bN, a^j\geq M} \frac{\|\ell^\alpha\dsone_E\|_{L_\infty(E)}^2}{a^{2j}}\|x\|_2^{2 } \|  x\|_\infty^{p-2} dt\\
&\lesssim_{\alpha, \gamma}  \frac{\|\ell^\alpha\dsone_E\|_{L_\infty(E)}^2 \|x\|_2^{2 } \|  x\|_\infty^{p-2}}{M^2} . \nonumber
\end{align}
Thus, by Remark~\ref{prop:subsequence approxiamte to whole sequence}, as $M$ tends to $\infty$,
\begin{align*}
&\quad\|(T_{\psi_t}x)_{t\geq M}\|_{L_p(\cM;\ell_\infty^c)}^p\\
&\leq 	\|(|T_{\psi_t}x|^2)_{t\geq M}\|_{L_{p/2}(\cM;\ell_\infty)}^{p/2}=\lim_{a\to 1^+} \|(|T_{\psi_{a^j}}x|^2)_{a^j\geq M}\|_{L_{p/2}(\cM;\ell_\infty)}^{p/2}\\
& \leq \lim_{a\to 1^+} \|(\sum_{j\in \bN, a^j\geq M} |T_{\psi_{a^j}}x|^p)^{\frac{2}{p}}\|_{p/2}^{p/2}
\leq \lim_{a\to 1^+} \sum_{j\in \bN, a^j\geq M} \|T_{\psi_{a^j}}x\|_p^p \to 0. 
\end{align*}
As a result, for any $x \in \cU ^{-1}(C_c (\Omega;H))$,  $T_{\psi_t}(x)$ converges a.u. to $0$  as $t\to \infty$ according to Lemma \ref{lemma:Lp M c0 imply b.a.u}~(2). 
Moreover, \eqref{eq:phi max c} obviously yields that $(T_{\phi_t})_{t\in \bR_+}$ satisfies the {one-sided} weak type $(p,p)$ maximal inequality for $p\geq 2$ as in Proposition~\ref{prop:Phix-x in Lp M C0}. Note that the subordinate Poisson semigroup $(P_t)_t$ is of weak type $(p,p)$ for all $p\geq 1$ by \cite[Remark 4.7]{jungexu07ergodic}.
Hence for any $p\geq 2$, $x\in L_p(\cM)$, we find a projection $e\in \cM$ such that
$$\sup_t\|eP_t(x^*x)e\|_\infty \leq \alpha^2,\quad \tau(e^\perp)\lesssim_p \alpha^{-p}\|x^*x\|_{p/2}^{p/2}=\alpha^{-p}\|x\|_{p}^{p}.  $$
However, since $P_t$ is completely positive, we can use the Cauchy-Schwarz inequality to get  
$$\sup_t\|P_t(x)e\|_\infty=\|eP_t(x)^*P_t(x)e\|_\infty^{1/2}\leq \sup_t\|eP_t(x^*x)e\|_\infty^{1/2} \leq \alpha.$$ 
 So $(P_t)_t$ also satisfies the {one-sided} weak type $(p,p)$ maximal inequality for $p\geq 2$. Thus $(T_{m_t})_t$ also satisfies the same inequality and by Proposition~\ref{prop:Phix-x in Lp M C0}, we see that  $T_{\psi_t}(x)=T_{m_t}(x)-x$ converges a.u. to $0$ as $t\to \infty$ for all $x\in L_p(\cM)$.	
The case of \textbf{(A1)} can be dealt with similarly. Thus the proof of Theorem \ref{theorem:criterion1} is complete.
\section[Proof of Theorem~3.3~(1)]{Proof of Theorem~\ref{theorem:criterion2}~(1)}
In this section we study the maximal inequalities for $1<p<2$ in Theorem~\ref{theorem:criterion2}~(1). To approach this we need to develop several interpolation methods.

\begin{lemma}\label{lemma: strong pp strong 22 imply restricted  qq}
	Let $( \Phi_j)_{j\in \bZ}$ be a sequence of uniformly bounded linear maps on $\cM$ and write $\gamma=\sup_j\| \Phi_j:\cM\to\cM\|<\infty$. Let $1<p<q<2$ and $\theta\in(0, 1)$ be determined by $\frac{1}{q}=\frac{1-\theta}{p}+\frac{\theta}{2}$. Assume that there exist $c_1, c_2>0$ such that for any $s\in \bN_+$, there is a decomposition of maps $ \Phi_j= \Phi_j^{(s,1)}+ \Phi_j^{(s,2)}$ where $(\Phi_j^{(s,1)})_j$ extends to a family of maps of weak type $(p, p)$ with constant $C_p\leq s^{\theta} c_1$ and 
	$(\Phi_j^{(s,2)})_j$ extends to a family of maps of weak type $(2, 2)$ with constant $C_2\leq s^{-(1-\theta)}c_2$.
	Then for any $x\in \cS_\cM$ and $\lambda>0$, there exists a projection $e\in \cM$ such that
	$$ \sup_j \|e\Phi_j(x)e\|_\infty<\lambda \add{and} \tau(e^{\perp})\lesssim_\gamma (c_1^p+c_2^2)\left(  \frac{\|x\|_p^{1-\theta}\|x\|_2^\theta}{\lambda}\right) ^q.
	$$
	In particular, $(\Phi_{j})_{j\in \bZ}$ is of restricted weak type $(q,q)$  with constant
	\begin{equation*}
		C_q\lesssim_{\gamma}  (c_1^p+c_2^2)^{1/q}.
	\end{equation*}	
\end{lemma}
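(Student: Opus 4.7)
The approach I propose is a Marcinkiewicz-type interpolation that exploits the free parameter $s$ to balance the two weak-type contributions. Given $x\in\cS_\cM$ and $\lambda>0$, the weak type $(p,p)$ property of $(\Phi_j^{(s,1)})_j$ at threshold $\lambda/2$ yields a projection $e_1\in\cM$ with $\sup_j\|e_1\Phi_j^{(s,1)}(x)e_1\|_\infty\leq\lambda/2$ and $\tau(e_1^\perp)\leq(2s^\theta c_1\|x\|_p/\lambda)^p$. Similarly, the weak type $(2,2)$ of $(\Phi_j^{(s,2)})_j$ produces a projection $e_2$ with $\sup_j\|e_2\Phi_j^{(s,2)}(x)e_2\|_\infty\leq\lambda/2$ and $\tau(e_2^\perp)\leq(2s^{-(1-\theta)}c_2\|x\|_2/\lambda)^2$. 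Setting $e=e_1\wedge e_2$, the decomposition $\Phi_j=\Phi_j^{(s,1)}+\Phi_j^{(s,2)}$ immediately gives $\sup_j\|e\Phi_j(x)e\|_\infty\leq\lambda$ together with
$$\tau(e^\perp)\lesssim s^{\theta p}\,c_1^p\,(\|x\|_p/\lambda)^p + s^{-2(1-\theta)}\,c_2^2\,(\|x\|_2/\lambda)^2.$$

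The next step is to minimize over $s$. Writing $r=\theta p+2(1-\theta)$ and using $\tfrac{1}{q}=\tfrac{1-\theta}{p}+\tfrac{\theta}{2}$ to identify $r=2p/q$, the continuous optimum is
$$s_*=\bigl(c_2^2\|x\|_2^2\lambda^{p-2}/(c_1^p\|x\|_p^p)\bigr)^{1/r},$$
which equates the two contributions. When $s_*\geq 1$, I will take the integer $s=\lceil s_*\rceil\leq 2s_*$; using the elementary identities $\theta p/r=\theta q/2$ and $2(1-\theta)/r=(1-\theta)q/p$, a direct substitution produces
$$\tau(e^\perp)\lesssim c_1^{(1-\theta)q}\,c_2^{\theta q}\,\bigl(\|x\|_p^{1-\theta}\|x\|_2^\theta/\lambda\bigr)^q.$$
Young's inequality with exponents $p/((1-\theta)q)$ and $2/(\theta q)$---conjugate thanks to the identity $(1-\theta)q/p+\theta q/2=q\bigl(\tfrac{1-\theta}{p}+\tfrac{\theta}{2}\bigr)=1$---then converts $c_1^{(1-\theta)q}c_2^{\theta q}$ into $c_1^p+c_2^2$ (each coefficient in Young is at most $1$), yielding the form stated in the lemma.

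For the restricted weak type $(q,q)$ conclusion I specialize $x=f$ to a projection and use $\|f\|_p^{1-\theta}\|f\|_2^\theta=\tau(f)^{(1-\theta)/p+\theta/2}=\tau(f)^{1/q}$, which turns the main estimate into $\tau(e^\perp)\lesssim_\gamma(c_1^p+c_2^2)\tau(f)/\lambda^q$, exactly the restricted weak type $(q,q)$ inequality with constant $\lesssim_\gamma(c_1^p+c_2^2)^{1/q}$.

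I expect the main technical obstacle to lie in the integrality of the parameter $s$: the optimization above tacitly assumes $s_*\geq 1$, while in the complementary regime $c_2^2\|x\|_2^2\lambda^{p-2}<c_1^p\|x\|_p^p$ one has $s_*<1$ and the ceiling trick is no longer available. I plan to handle this boundary case by setting $s=1$ and exploiting the uniform $\cM$-boundedness $\gamma=\sup_j\|\Phi_j\|$; for instance, by truncating $x$ at spectral height $\lambda/\gamma$ one can dispose of the large values using $\|\Phi_j(x)\|_\infty\leq\gamma\|x\|_\infty$ before applying the decomposition, and the resulting $\gamma$-dependent constant is then absorbed in the $\lesssim_\gamma$ notation. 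The remaining bookkeeping is standard.
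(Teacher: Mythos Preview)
Your approach is essentially the same as the paper's: combine the two weak-type projections via $e=e_1\wedge e_2$, then optimize over $s$. The paper, however, organizes the argument more cleanly in two respects. First, it performs the spectral truncation $x=x_\lambda+x_\lambda^\perp$ at height $\lambda$ \emph{before} choosing $s$; since $x_\lambda^\perp$ has spectrum above $\lambda$, the pointwise inequality $t^p\leq t^2/\lambda^{2-p}$ for $t>\lambda$ gives $\|x_\lambda^\perp\|_p^p/\lambda^p\leq\|x_\lambda^\perp\|_2^2/\lambda^2$ automatically, so the boundary case $s_*<1$ never arises. The bounded piece $x_\lambda$ is absorbed via $\|\Phi_j(x_\lambda)\|_\infty\leq\gamma\lambda$, which is where the $\gamma$-dependence enters. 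Second, the paper picks $s\asymp\bigl(\lambda^{p-2}\|x_\lambda^\perp\|_2^2/\|x_\lambda^\perp\|_p^p\bigr)^{q/(2p)}$ \emph{independent of $c_1,c_2$}; substituting this choice makes the two terms equal up to the factors $c_1^p$ and $c_2^2$ separately, so $(c_1^p+c_2^2)$ falls out directly without Young's inequality.

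Your plan to truncate only in the regime $s_*<1$ is the same idea executed backwards, but note a small wrinkle: because your $s_*$ involves $c_1,c_2$, truncation alone does not force $s_*\geq1$ (it only forces the $c$-free ratio $\geq1$). The simplest repair is to drop $c_1,c_2$ from the optimizer as the paper does, or equivalently replace both $c_1^p$ and $c_2^2$ by their maximum before optimizing---then your $s_*$ coincides with the paper's and the truncation guarantees integrality.
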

\begin{proof}

	Let  $x\in {\cS_\cM}_+$. Consider a  positive integer $s\in \bN_+$. By the weak type estimates of  $(\Phi_j^{(s,1)})_j$ and $(\Phi_j^{(s,2)})_j$, for any $\lambda>0$,
	we take two projections $e_1, e_2\in \cM$, such that
	$$\sup_j \|e_1 \Phi_j^{(s,1)}(x)e_1\|_\infty \leq \lambda\add{and} \tau(e_1^\perp)\leq\left(  c_1 s^\theta\frac{\|x\|_p}{\lambda}\right)^p ,$$
	$$\sup_j \|e_2 \Phi_j^{(s,2)}(x)e_2\|_\infty \leq \lambda\add{ and } \tau(e_2^\perp)\leq \left( c_2 s^{\theta-1}\frac{\|x\|_2}{\lambda}\right)^2.$$
	Set $e=e_1\wedge e_2$. Since $ \Phi_j= \Phi_j^{(s,1)}+ \Phi_j^{(s,2)}$, we have
	$$\|e \Phi_j(x)e\|_\infty 
	\leq 2\lambda, \quad\   j\in \bZ,$$
	and
	\begin{equation*}
		\tau(e^\perp)\leq \tau(e_1^\perp+e_2^\perp)
		\leq \left( c_1 s^{\theta}\frac{\|x\|_p}{\lambda}\right)^p + \left(  c_2s^{\theta-1}\frac{\|x\|_2}{\lambda}\right)^2.
	\end{equation*}
	
	We consider $x_\lambda^\perp=x\dsone_{(\lambda, \infty)}(x)$ and $x_\lambda=x\dsone_{[0, \lambda]}(x)$. Applying the above arguments to $x_\lambda^\perp$, we can find a projection 
	$e\in \cM$ such that
	$$\|e \Phi_j(x_\lambda^\perp)e\|_\infty \leq 2\lambda, \quad\   j\in \bZ,$$
	and
	\begin{equation}\label{eq: weak type of T-n-j}
		\tau(e^\perp)\leq \left(  c_1 s^{\theta}\frac{\|x_\lambda^\perp\|_p}{\lambda}\right)^p + \left( c_2s^{\theta-1}\frac{\|x_\lambda^\perp\|_2}{\lambda}\right)^2.
	\end{equation}
	Since $x=x_\lambda+x_\lambda^\perp$ and $\|x_\lambda \|_\infty \leq \lambda $, for any $j\in \bZ$ we have
	$$\|e( \Phi_jx)e\|_\infty\leq\|e( \Phi_j(x_\lambda))e\|_\infty+\|e ( \Phi_j(x_\lambda^\perp)) e\|_\infty \leq  (\gamma+2)\lambda.$$
	Note that for $1<p\leq 2$,
	$$\left(t\dsone_{(\lambda, \infty)}(t)\right)^p=\frac{\left(t\dsone_{(\lambda, \infty)}(t)\right)^2}{t^{2-p}}\leq \frac{\left(t\dsone_{(\lambda, \infty)}(t)\right)^2}{\lambda^{2-p}}, \qquad t> 0.$$
	Therefore we have $(x_\lambda^\perp)^p\leq \frac{(x_\lambda^\perp)^2}{\lambda^{2-p}}$
	and
	$$\frac{\|x_\lambda^\perp\|_p^p}{\lambda^p}\leq \frac{\|x_\lambda^\perp\|_2^2}{\lambda^2}.$$
	Hence, we can choose  $s$ to be an integer satisfying
	$$s\asymp  \left(  \frac{\lambda^p \|x_\lambda^\perp\|_2^2}{\lambda^2\|x_\lambda^\perp\|_p^p} \right)^{\frac{q}{2p}},$$
	and	by	(\ref{eq: weak type of T-n-j}) we have
	\begin{equation}\label{eq: weak type of T-n-j for x alpha}
		\begin{split}
			\tau(e^\perp)&\lesssim\left(  c_1 \left( \frac{\lambda^p \|x_\lambda^\perp\|_2^2}{\lambda^2\|x_\lambda^\perp\|_p^p}\right)^{\frac{q-p}{p(2-p)}}  \frac{\|x_\lambda^\perp\|_p}{\lambda}\right)^p + \left( c_2 \left(  \frac{\lambda^p \|x_\lambda^\perp\|_2^2}{\lambda^2\|x_\lambda^\perp\|_p^p} \right)^{\frac{q-2}{2(2-p)}}  \left( \frac{\|x_\lambda^\perp\|_2}{\lambda}\right)\right)^2\\
			&\lesssim   ( c_1^{p}+c_2^2)\lambda^{-q}\|x_\lambda^{\perp}\|_p^{(1-\theta) q}\|x_\lambda^\perp\|_2^{\theta q} .
		\end{split}
	\end{equation}
	Since $0\leq x_\lambda^\perp \leq x$, the above inequality yields
	\begin{equation*}\label{eq: weak x norm  2}
		{\tau(e^\perp)}\lesssim (c_1^{p}+c_2^2)\left( \frac{\|x\|_p^{1-\theta}\|x\|_2^{\theta }}{\lambda}\right)^q. 
	\end{equation*}
	
	In order to obtain the  restricted weak type $(q, q)$ estimate, it  suffices to take $x=f$ in the above inequality for an arbitrary projection $f\in {\cS_\cM}_+$. 
	Then
	$$\tau(e^\perp)\lesssim_\gamma (c_1^{p}+c_2^2)\left( \frac{\tau(f^p)^{(1-\theta)/p}\tau(f^2)^{\theta/2} }{\lambda}\right)^q= (c_1^{p}+c_2^2)\lambda^{-q}\tau(f) ,$$
	which implies that $(\Phi_{j})_{j\in \bZ}$ is of restricted weak type $(q,q)$ with constant
	\begin{equation*}
		C_q\lesssim_{ \gamma}  (c_1^p+c_2^2)^{1/q} . \qedhere
	\end{equation*}
\end{proof}


%

\begin{lemma}\label{lemma:Strong (p,p) of Delta_j}
	Assume that for all $t \in \bR_+$, the operator $S_t$ satisfies Rota's dilation property.
	Let $s\in \bN$,  $\alpha>0, j\in \bZ$ and define $${\Delta^{(s)}_{\alpha,j}=P_{2^{-(j+2s)/\alpha}}-P_{{2}^{-(j-2s)/\alpha}}}.$$
	Then for any $1<p<2$,
	$$\|(\Delta^{(s)}_{\alpha,j} x)_j\|_{\Lplcr{\cM}}\lesssim_\alpha s (p-1)^{-6} \|x\|_p,\qquad  x\in L_p(\cM).$$
\end{lemma}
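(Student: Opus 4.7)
The plan is to reduce this square function estimate for the discrete differences $\Delta^{(s)}_{\alpha,j}$ to the continuous Littlewood--Paley--Stein estimate of Proposition~\ref{thm:order of square funct} for the Poisson semigroup $(P_t)_t$. The idea is to represent $\Delta^{(s)}_{\alpha,j}x$ as an integral average of $t\partial_t P_t x$ over a dyadic scale in $t$, and then apply an operator Cauchy--Schwarz bound to pass to the square function of $x$.

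First, setting $I_j=[2^{-(j+2s)/\alpha},2^{-(j-2s)/\alpha}]$ with logarithmic measure $\mu_{\log}(I_j)=\int_{I_j}\frac{dt}{t}=(4s/\alpha)\log 2\asymp_\alpha s$, I would use the fundamental theorem of calculus for the semigroup to write
$$\Delta^{(s)}_{\alpha,j}x=-\int_{I_j}\partial_t P_t x\,dt=-\int_{I_j}(t\partial_t P_t x)\frac{dt}{t}.$$
Kadison's Cauchy--Schwarz inequality applied to the conditional expectation $h\mapsto \mu_{\log}(I_j)^{-1}\int_{I_j}h\,\frac{dt}{t}$ yields the operator inequality
$$|\Delta^{(s)}_{\alpha,j}x|^{2}\leq \mu_{\log}(I_j)\int_{I_j}|t\partial_t P_t x|^{2}\frac{dt}{t}\lesssim_\alpha s\int_{I_j}|t\partial_t P_t x|^{2}\frac{dt}{t}.$$
Summing over $j\in\bZ$ and interchanging sum with integral, each $t>0$ lies in $\#\{j:t\in I_j\}\leq 4s+1\lesssim s$ of the intervals $I_j$, giving
$$\sum_{j\in\bZ}|\Delta^{(s)}_{\alpha,j}x|^{2}\lesssim_\alpha s^{2}\int_0^\infty |t\partial_t P_t x|^{2}\frac{dt}{t}.$$
By operator monotonicity of $\sqrt{\,\cdot\,}$ and taking $L_p$-norms, this bounds $\|(\Delta^{(s)}_{\alpha,j}x)_j\|_{L_p(\cM;\ell_2^c)}$ by $s$ times the continuous column Littlewood--Paley norm of $x$. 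The same argument applied to the adjoint differences produces the corresponding row estimate.

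Finally, for $1<p<2$, since $L_p(\cM;\ell_2^{cr})=L_p(\cM;\ell_2^c)+L_p(\cM;\ell_2^r)$, every decomposition $x=x^c+x^r$ induces a decomposition of $(\Delta^{(s)}_{\alpha,j}x)_j$ and hence
$$\|(\Delta^{(s)}_{\alpha,j}x)_j\|_{L_p(\cM;\ell_2^{cr})}\leq \|(\Delta^{(s)}_{\alpha,j}x^c)_j\|_{L_p(\cM;\ell_2^c)}+\|(\Delta^{(s)}_{\alpha,j}x^r)_j\|_{L_p(\cM;\ell_2^r)}.$$
Plugging in the previous two estimates, taking the infimum over decompositions $x=x^c+x^r$, and invoking Proposition~\ref{thm:order of square funct} yields the claimed bound $s(p-1)^{-6}\|x\|_p$. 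The main delicate point I anticipate is the overlap bookkeeping: the double appearance of $s$ (once from $\mu_{\log}(I_j)$ in the Cauchy--Schwarz step and once from the overlap count) produces $s^{2}$ inside the integral, and only after taking the operator square root does one recover the correct linear $s$-dependence. The heavy analytic input, namely the $(p-1)^{-6}$ factor, is inherited directly from Proposition~\ref{thm:order of square funct} and requires no additional work here.
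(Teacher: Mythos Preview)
Your proposal is correct and follows essentially the same route as the paper's proof: both represent $\Delta^{(s)}_{\alpha,j}x$ as $-\int_{I_j}(t\partial_t P_t x)\frac{dt}{t}$, apply the operator convexity of the square (your ``Kadison's Cauchy--Schwarz'' is the paper's ``convexity of the operator square function''), use the $O(s)$ overlap of the dyadic intervals $I_j$, pass to the $\ell_2^{cr}$ norm via a decomposition $x=x^c+x^r$, and invoke Proposition~\ref{thm:order of square funct} for the $(p-1)^{-6}$ factor. The only cosmetic difference is that the paper splits each $I_j$ into $4s$ unit dyadic subintervals rather than directly counting the overlap, which amounts to the same bookkeeping.
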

\begin{proof}
	We may write
	$$\Delta^{(s)}_{\alpha,j}x=\int_{2^{-(j+2s)/\alpha}}^{2^{-(j-2s)/\alpha}}\left( -\frac{\partial}{\partial t}  P_t(x)\right) dt, \qquad x\in L_p(\cM) . $$
	Let $x=x_1+x_2$ for $x_1, x_2\in L_p(\cM)$. 
	By the convexity of the operator square function,
	\begin{align*}
		|\Delta^{(s)}_{\alpha,j}x_1|^2 &= \left\vert \int_{2^{-(j+2s)/\alpha}}^{2^{-(j-2s)/\alpha}} \frac{1}{ \sqrt{t} } \left(-\sqrt{t} \frac{\partial}{\partial t}  P_t(x_1)\right) dt  \right\vert^2 \\
		&\leq \left(  \int_{2^{-(j+2s)/\alpha}}^{2^{-(j-2s)/\alpha}}   t \left\vert   \left( \frac{\partial}{\partial t}  P_t(x_1) \right)\right\vert^2 dt  \right) \left( \int_{2^{-(j-2s)/\alpha}}^{2^{-(j+2s)/\alpha}} \frac{dt}{t}\right) \\
		&\lesssim_\alpha s \left(  \int_{\tilde{\alpha}^{-j-2s}}^{\tilde{\alpha}^{-j+2s}}   t \left\vert   \left( \frac{\partial}{\partial t}  P_t(x_1) \right)\right\vert^2 dt  \right) ,
	\end{align*}
	where $\tilde{\alpha}=2^{1/\alpha}$.
	Therefore,
	\begin{align*}
		\|\left(  \Delta^{(s)}_{\alpha,j}x_1 \right) _j\|_{L_p(\cM;\ell_2^c)}&=\left\|\left( \sum_{j=-\infty}^\infty|\Delta^{(s)}_{\alpha,j}x_1|^2\right)^{1/2}  \right\|\\
		&\lesssim_\alpha  \sqrt{s} \left\Vert  \left(  \sum_{j=-\infty}^{\infty}  \int_{\tilde{\alpha}^{-j-2s}}^{\tilde{\alpha}^{-j+2s}}   t \left\vert   \left( \frac{\partial}{\partial t}  P_t(x_1) \right)\right\vert^2 dt     \right)^{1/2}      \right\Vert_p\\
		&\lesssim_\alpha   \sqrt{s}  \left\Vert  \left(  \sum_{j=-\infty}^{\infty}  \sum_{k=-2s}^{2s-1}\int_{\tilde{\alpha}^{-j+k}}^{\tilde{\alpha}^{-j+k+1}}   t \left\vert   \left( \frac{\partial}{\partial t}  P_t(x_1) \right)\right\vert^2 dt     \right) ^{1/2}      \right\Vert_p\\
		&\lesssim_\alpha    s \left\Vert  \left(  \int_{0}^{\infty}   t \left\vert   \left( \frac{\partial}{\partial t}  P_t(x_1) \right)\right\vert^2 dt     \right) ^{1/2}      \right\Vert_p\\
	\end{align*}
	Similarly, 	
	$$\|(\Delta^{(s)}_{\alpha,j}x_2)_j\|_{L_p(\cM;\ell_2^r)}\lesssim_\alpha s \left\Vert  \left(  \int_{0}^{\infty}   t \left\vert   \left( \frac{\partial}{\partial t}  P_t(x_2) \right)^*\right\vert^2 dt     \right) ^{1/2}      \right\Vert_p.$$
	On the other hand, 
	$$\|(\Delta^{(s)}_{\alpha,j}x)_j\|_{L_p(\cM;\ell_2^{cr})}\leq \inf\{\| (\Delta^{(s)}_{\alpha,j}x_1) _j\|_{L_p(\cM;\ell_2^c)}+\|(\Delta^{(s)}_{\alpha,j}x_2)_j\|_{L_p(\cM;\ell_2^r)}\}$$
	where  the infimum runs over all $x_1, x_2\in L_p(\cM)$ such that $x=x_1+x_2$.
	Then the conclusion follows from Proposition~\ref{thm:order of square funct}.
\end{proof}

Now, let us prove  Theorem~\ref{theorem:criterion2}~(1).
\begin{proof}
	The case $p\geq 2$ has been already treated	by Theorem~\ref{theorem:criterion1}. In this proof, we focus on the case $1<p<2$. 	
	
	Fix a finite index set $J\subset \bZ$.
	Denote
	\begin{align*}
		A(p,\infty)&=\|(T_{m_j})_{j\in J}:\Lpinfty{\cM}\to\Lpinfty{\cM}\|,\\
		A(p, 1)&=\|(T_{m_j})_{j\in J}:\Lpli{\cM}\to\Lpli{\cM}\|,\\
		A(p,2)&=\|(T_{m_j})_{j\in J}: \Lplcr{\cM} \to L_p(\cM; \ell_2^{cr})\|.
	\end{align*}
	Since $J$ is finite, all these quantities are well-defined and finite.
	Because the operators $({T_{m_j}})_{j\in J}$ are positive maps, by Proposition~\ref{prop:positive maps Cp=A(p infty)} we have
	\begin{equation}\label{eq:A(p, infty)= C p}
		\|({T_{m_j}})_{j\in J}: L_p(\cM)\to L_p(\cM;\ell_\infty)\|\asymp A(p, \infty).
	\end{equation}
	Let $1<p<2$. It is sufficient to show that $A(p,\infty)$ is dominated by a positive constant independent of $J$. 
	
	Consider $1<q_1<q_2<2$ and let $\theta\in (0, 1)$ be the number satisfying ${\frac{1}{q_2}=\frac{1-\theta}{q_1}+\frac{\theta}{2}}$.
	For $s\in \bN_+$ we write $ s_0= \left[(1-\theta)\alpha\log_{2} s \right]+1$. Denote by
	$$ \Delta_j^{(s)} = P_{2^{-(j+2s_0)/2\alpha}}-P_{{2}^{-(j-2s_0)/2\alpha}}$$
	the  difference introduced in Lemma~\ref{lemma:Strong (p,p) of Delta_j} associated with $s_0$ and $2\alpha$. 
	By Proposition~\ref{claim: sup+ < CRp norm} and Lemma~\ref{lemma:Strong (p,p) of Delta_j},  we have
	\begin{align*}
		\|{\sup_{j\in J}}^+ \  T_{m_j} ( \Delta_j^{(s)} x)\|_{q_1}&\leq A(q_1, 2)\| (   \Delta_j^{(s)}  x  )_j \|_{L_{q_1}(\cM; \ell_2^{cr})}\\
		&\lesssim_\alpha A(q_1, 2) s_0 (q_1-1)^{-6} \|x\|_{q_1}.
	\end{align*}
	By Proposition~\ref{prop:maximal inequality of semigroup} and  Lemma~\ref{lemma:Strong (p,p) of Delta_j} , we have 
	\begin{align*}
		\|{\sup_{j\in J}}^+ \  S_{2^{-j/\alpha}} ( \Delta_j^{(s)} x)\|_{q_1}&\lesssim  ({q_1}-1)^{-2}\| (   \Delta_j^{(s)}  x  )_{j} \|_{L_{q_1}(\cM; \ell_2^{cr})}\\
		&	\lesssim_\alpha   s_0({q_1}-1)^{-8}\|x\|_{q_1}.
	\end{align*} 
	We set $T_{\phi_j}=T_{m_j}-S_{2^{-j/\alpha}}$ with $\phi_j=m_j-e^{-\frac{\ell(\cdot)}{2^{j/\alpha}}}$.
	Hence,
	\begin{equation*}
		\|{\sup_{j\in J}}^+ \  T_{\phi_j} ( \Delta_j^{(s)} x)\|_{q_1}\lesssim_\alpha A({q_1}, 2)  ({q_1}-1)^{-8} s_0\|x\|_{q_1}.
	\end{equation*}
	Let us assume $(1-\theta)\alpha\log_{2} s\geq 1$ first. Note that for any $s>0$ and $\delta>0$, we have 
	$\log_{2} s\lesssim \frac{s^\delta}{\delta}$. Therefore we get that
	$$s_0\leq 2 (1-\theta)\alpha\log_{2} s\lesssim_\alpha \frac{s^{\theta}}{\theta}\lesssim_{ \alpha}   (q_2-{q_1})^{-1}s^\theta. $$
	If $(1-\theta)\alpha\log_{2} s<1$, then $s_0=1\leq s^\theta\leq (q_2-{q_1})^{-1}s^\theta $.
	Hence, 
	\begin{equation}\label{eq:Delta j strong p, p}
		\|{\sup_{j\in J}}^+ \  T_{\phi_j} ( \Delta_j^{(s)} x)\|_{q_1}\lesssim_\alpha A({q_1}, 2)  ({q_1}-1)^{-8} (q_2-{q_1})^{-1}s^\theta\|x\|_{q_1}.
	\end{equation}
	Let $\omega \in \Omega$ and let 
	$$ \delta_j^{(s)} (\omega)=\exp \left( -\frac{\sqrt{\ell(\omega)}}{2^{(j+2s_0)/2\alpha}}\right)-\exp \left( -\frac{\sqrt{\ell(\omega)}}{{2}^{(j-2s_0)/2\alpha}}\right)$$
	be the symbol of $ \Delta_j^{(s)} $. Note that
	\begin{align*}
		|1- \delta_j^{(s)} (\omega)|&\leq \left|1-\exp \left( -\frac{\sqrt{\ell(\omega)}}{2^{(j+2s_0)/2\alpha}}\right) \right|+\left|\exp \left( -\frac{\sqrt{\ell(\omega)}}{{2}^{(j-2s_0)/2\alpha}}\right)\right|\\
		&\lesssim_\alpha   \frac{\sqrt{\ell(\omega)}}{2^{(j+2s_0)/2\alpha}}+ \frac{{2}^{(j-2s_0)/2\alpha}}{\sqrt{\ell(\omega)}}.
	\end{align*}
	 When $2^j\geq \ell(\omega)^\alpha$, by the above inequality we have  $|1- \delta_j^{(s)} (\omega)|\lesssim_\alpha   2^{-s_0/\alpha}( \frac{2^{j/\alpha}}{\ell(\omega)}) ^{1/2}$, and as the computation in \eqref{eq:phi satisfies A1 A2 1} we have $|\phi_j(\omega)|\lesssim_{\beta}  \frac{\ell(\omega)}{2^{j/\alpha}}$.
	In particular,
	$$|\phi_j(\omega)(1- \delta_j^{(s)} (\omega))|\lesssim_{\alpha, \beta}  2^{-s_0/\alpha}\left( \frac{\ell(\omega)}{2^{j/\alpha}}\right) ^{1/2}\lesssim_{\alpha, \beta}  2^{-s_0/\alpha}\left( \frac{2^{j/\alpha}\ell(\omega)}{(2^{j/\alpha}+\ell(\omega))^2}\right)^{1/2}  .$$
When  $2^j< \ell(\omega)^\alpha$, similarly we have $|1- \delta_j^{(s)} (\omega)|\lesssim_\alpha   2^{-s_0/\alpha}( \frac{\ell(\omega)}{2^{j/\alpha}}) ^{1/2}$, and as the computation in \eqref{eq:phi satisfies A1 A2 2} we have $|\phi_j(\omega)|\lesssim_{\beta}  \frac{2^{j/\alpha}}{\ell(\omega)}$. Therefore
	$$|\phi_j(\omega)(1- \delta_j^{(s)} (\omega))|\lesssim_{\alpha, \beta}  2^{-s_0/\alpha}\left( \frac{2^{j/\alpha}}{\ell(\omega)}\right) ^{1/2}\lesssim_{\alpha, \beta} 2^{-s_0/\alpha} \left( \frac{2^{j/\alpha}\ell(\omega)}{(2^{j/\alpha}+\ell(\omega))^2}\right)^{1/2}.$$
	By Proposition~\ref{prop:general case for p=2}, we have
	\begin{equation}\label{eq:1-Delta j strong 2 2}
		\|{\sup_{j\in J}}^+ T_{\phi_j}(1- \Delta_j^{(s)} )x\|_2\lesssim_{\alpha, \beta} 2^{-s_0/\alpha}\|x\|_2\lesssim_{\alpha, \beta} s^{\theta-1}\|x\|_2 .
	\end{equation}	
	Thus by (\ref{eq:Delta j strong p, p}), (\ref{eq:1-Delta j strong 2 2}) and Lemma~\ref{lemma: strong pp strong 22 imply restricted  qq},  we see that $(T_{\phi_j})_{j\in J}$ is of restricted weak type $(q_2, q_2)$ with constant
\begin{align}\label{eq:restrict qq phi}
	C_{q_2}^\prime
	&\lesssim_{\alpha, \beta} \left(\left( A({q_1},2) ({q_1}-1)^{-8} (q_2-{q_1})^{-1}\right)^{q_1}+1 \right)^{1/q_2}\\
	&\lesssim_{\alpha, \beta}  A({q_1},2) ({q_1}-1)^{-8} (q_2-{q_1})^{-1} . \nonumber
\end{align} 
	
	Set $D=\sup_{1<u\leq2} (u-1)^{22}A (u, \infty)<\infty$.  
	Choose an index $1<r\leq2$ such that $$(r-1)^{22}A(r, \infty)>\frac{D}{2}.$$ 
	We apply the restricted weak type estimate of $(T_{\phi_j} )_j$ in \eqref{eq:restrict qq phi} to the particular case ${q_1}=\frac{1}{2}(r+1)$ and $q_2={q_1}+(r-{q_1})/2$.  Note that by Proposition~\ref{prop:maximal inequality of semigroup}, the semigroup  $(S_t)_t$ is of strong type $(q_2, q_2)$ with constant $c(q_2-1)^{-2}$. 
	Recall that $T_{m_j}=T_{\phi_j}+S_{2^{-j/\alpha}}$, thus
	$(T_{m_j})_{j\in J}$ is also of restricted weak type $(q_2, q_2)$ with constant
	\begin{align*}
		C_{q_2}&\lesssim_{\alpha, \beta} A({q_1},2) ({q_1}-1)^{-8} (q_2-{q_1})^{-1}+(q_2-1)^{-2}\\
		&\lesssim_{\alpha, \beta} A(\frac{{q_1}}{2-{q_1}},\infty)^{1/2} (r-1)^{-9}.
	\end{align*}
	The last inequality above follows from Lemma~\ref{lemma: A(p,2)<A(p/p-2) sqrt} and the values of ${q_1}$ and $q_2$.
	Because $(T_{m_j})_j$ is of strong type $(\infty, \infty)$ and of restricted weak type of $(q_2, q_2)$,	applying  Theorem~\ref{theorem:interpolation of dirksen}  we have 
	\begin{equation}\label{eq:strong qq by interpolation}
		\|{\sup_{j\in J}}^+ T_{m_j}x\|_r\leq \max\{C_{q_2}, 1\}(\frac{rq_2}{r-q_2}+r)^2\|x\|_r, \qquad x\in L_r(\cM).
	\end{equation} 
	By \eqref{eq:A(p, infty)= C p}, this means that $A(r, \infty)\lesssim C_{q_2}  (r-1)^{-2}$.
	Therefore,
	\begin{equation}\label{eq:induction of A(p)}
		(r-1)^{-22}\frac{D}{2}<A(r, \infty) \lesssim_{\alpha, \beta} A(\frac{{q_1}}{2-{q_1}},\infty)^{1/2} (r-1)^{-11}.
	\end{equation} 
	Recall that  $A(\frac{{q_1}}{2-{q_1}}, \infty)\lesssim_{\alpha, \beta} 1$ if $\frac{{q_1}}{2-{q_1}}\geq 2$ by Theorem~\ref{theorem:criterion1}.  
	Without loss of generality we assume that  $1<\frac{{q_1}}{2-q_1}<2$. Then   (\ref{eq:induction of A(p)}) yields
	\begin{align*}
		(r-1)^{-22}\frac{D}{2}&\lesssim_{\alpha, \beta}    \left( \left( \frac{{q_1}}{2-{q_1}}-1\right) ^{-22}D \right) ^{1/2}(r-1)^{-11}
	\end{align*}
	Recall that ${q_1}=\frac{1}{2}(r+1)$. We have $$D\lesssim_{\alpha, \beta} 1.$$
	In other words,  $(p-1)^{22}A(p, \infty)\lesssim_{\alpha, \beta}    1$ for any $1<p\leq2$. 
	In particular, this estimate is independent of  the finite index set $J$. So we obtain the desired maximal inequality according to Remark \ref{prop:subsequence approxiamte to whole sequence}.
	
	By an argument similar to the proof of Theorem~\ref{theorem:criterion1}, we get that $T_{m_j}x$ converges a.u. to $x$ as $j\to \infty$ for $x\in L_p(\cM)$ with $p\geq 2$.		 Note that a.u. converges implies b.a.u. convergence and that $L_2(\cM)\cap L_p(\cM)$ is dense in $L_p(\cM)$ for any $1<p<2$.	
	Then by applying  Proposition~\ref{prop:Phix-x in Lp M C0} (1), we get the b.a.u. convergence of $(T_{m_j}x )_j$ for $x\in L_p (\mathcal M)$ with  $1<p<2$.
\end{proof}

\section[Proof of Theorem~3.3~(2)]{Proof of Theorem~\ref{theorem:criterion2}~(2)}	
Our idea is reducing the desired maximal inequalities to those for lacunary subsequences already studied in the preceding section.

\begin{lemma}\label{prop:from 2j to whole t}
	Assume that the family $(m_t)_{t\in \bR_+}$ satisfies  \emph{\textbf{(A2)}}. Then for any $1\leq q< 2$ and $q+\frac{q(2-q)}{q-1+2\eta}<  p< 2$, we have
\begin{align*}
	 &\quad\ \|(T_{m_{t}})_{t\in \bR_+}:L_p(\cM)\to L_p(\cM;\ell_\infty)\|\\ &\lesssim_{\beta,\eta,p,q} \sup_{1\leq \delta \leq 2}\|(T_{m_{  \delta 2^j}})_{j\in \bZ}:L_q(\cM)\to L_q(\cM;\ell_\infty)\|^{1-\theta}  
\end{align*}
	provided that the right hand side is finite, where $\theta$ is determined by $\frac{1}{p}=\frac{1-\theta}{q}+\frac{\theta}{2}$.
\end{lemma}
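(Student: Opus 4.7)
The plan is to pass from the continuous family $(T_{m_t})_{t>0}$ to the lacunary subfamilies $(T_{m_{\delta 2^j}})_{j\in\bZ}$ by exploiting the smoothness of $t\mapsto m_t$ provided by \textbf{(A2)}, and then to combine the hypothesis $L_q$-estimate with the full $L_2$-maximal estimate coming from Theorem~\ref{theorem:criterion1} via interpolation. First I would decompose $\bR_+=\bigsqcup_{j\in\bZ}[2^j,2^{j+1})$ and reparametrize $t=\delta 2^j$ with $\delta\in[1,2]$, so that
$$\Big\|\mathop{\sup_{t>0}}{}^+ T_{m_t}f\Big\|_p \;\leq\; \Big\|\mathop{\sup_{j\in\bZ,\,\delta\in[1,2]}}{}^+ T_{m_{\delta 2^j}}f\Big\|_p.$$

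Next, using the derivative bounds $|d^k m_t/dt^k|\leq \beta/t^k$ for $1\leq k\leq \eta$, I would carry out a Sobolev-type embedding in the parameter $\delta$. In the scalar setting this is the classical inequality $\sup_{\delta\in[1,2]}|g(\delta)|^2 \lesssim_\eta \int_1^2 |g(\delta)|^2\,d\delta + \int_1^2 |g^{(\eta)}(\delta)|^2\,d\delta$; the correct noncommutative analogue requires column/row decompositions of the integrated square functions. Applied with $g(\delta)=T_{m_{\delta 2^j}}f$, it expresses the inner supremum in $\delta$ in terms of lacunary values $T_{m_{\delta 2^j}}f$ (integrated over a fixed $\delta$) and $\eta$-fold derivative remainders of the form $\int_1^2 |(2^j)^\eta T_{\partial_t^\eta m_t|_{t=\delta 2^j}}f|^2\,d\delta$. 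The rescaled multipliers have symbols $(2^j)^k \partial_t^k m_t|_{t=\delta 2^j}$ of magnitude $\lesssim 1/\delta^k$ uniformly in $j$, and comparing these remainder symbols with those of the subordinate Poisson semigroup allows one to invoke Proposition~\ref{prop:general case for p=2} and the square function estimate of Proposition~\ref{thm:order of square funct} to obtain an $L_2(\cM;\ell_\infty^{c})$-maximal bound that is controlled by an absolute constant times $\|f\|_2$.

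At this point the problem reduces to estimating $\|\mathop{\sup_{j\in\bZ}}{}^+ T_{m_{\delta 2^j}}f\|_p$ uniformly in $\delta\in[1,2]$. I would do this by combining the hypothesis $L_q$-estimate with the full $L_2$-maximal estimate from Theorem~\ref{theorem:criterion1}~(2) (applicable because \textbf{(A2)} is in force), using the complex interpolation of $L_p(\cM;\ell_\infty)$-spaces in Proposition~\ref{prop: prop of LMinfty and LMl1}~(3). Inserting $\frac{1}{p}=\frac{1-\theta}{q}+\frac{\theta}{2}$ yields precisely the exponent $1-\theta$ on the lacunary $L_q$-maximal norm, since the $L_2$ factor is absorbed into the implicit constant.

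The principal technical difficulty I anticipate lies in the quantitative matching between the smoothness order $\eta$ and the allowed range of $p$: the condition $p>q+q(2-q)/(q-1+2\eta)$ reflects the fact that the $L_2$ control of the Sobolev derivative remainder does not decay uniformly in $j$ and must be summed against the lacunary decomposition, so that the $\eta$-fold regularity has to be converted (via Poisson square functions and Proposition~\ref{thm:order of square funct}) into an $\ell_2^c$-valued estimate with sufficient summability. Carrying out this balance in the noncommutative $L_p(\cM;\ell_\infty)$ framework, where the classical interpolation identity $L_p(\ell_2)=[L_p(\ell_1),L_p(\ell_\infty)]_{1/2}$ fails and must be replaced by the column/row machinery of Chapter~\ref{sec:order}, together with Lemma~\ref{lemma: A(p,2)<A(p/p-2) sqrt} to pass from $\ell_2^{cr}$-estimates to $\ell_\infty$-estimates, will be the main obstacle.
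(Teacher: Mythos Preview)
Your Sobolev-in-$\delta$ approach has a genuine gap at the derivative remainder step. From \textbf{(A2)} the rescaled symbols $(2^j)^\eta\,\partial_t^\eta m_t\big|_{t=\delta 2^j}$ are bounded by $\beta/\delta^\eta$ \emph{uniformly in $j$ and in $\omega$}, but carry no decay whatsoever in the Fourier variable $\ell(\omega)$; the assumption \textbf{(A2)} gives $\ell$-decay only for $m_t$ and $1-m_t$, not for the derivatives. Consequently these remainder multipliers do \emph{not} fit the hypothesis of Proposition~\ref{prop:general case for p=2} (which needs symbols of size $a^N f/(a^N+f)^2$), and the comparison with the Poisson semigroup you invoke is vacuous: the difference between a uniformly bounded symbol and a Poisson symbol is again only bounded. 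Hence you have no mechanism to control $\sup_j$ of the derivative pieces at $L_2$, and no route to the summability that produces the range $p>q+\frac{q(2-q)}{q-1+2\eta}$. You yourself flag this as ``the main obstacle'' but do not resolve it.

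The paper proceeds quite differently. Instead of Sobolev in $\delta$, it refines the dyadic grid: set $I_s=\{2^{j/2^s}:j\in\bZ\}$, telescope $\|\sup_{t>0}^+T_{m_t}x\|_p$ through the increasing family $(I_s)_s$, and control each increment by the maximal norm of a first-order difference $\Psi^{[s]}_t=T_{m_{2^{2^{-s-1}}t}}-T_{m_t}$. Iterating this $\eta$ times produces multi-order differences $\Psi^{[s_1,\dots,s_v]}_t$, each of which is a signed sum $\sum_{\varepsilon}\pm T_{m_{\rho^\varepsilon t}}$ of dilated \emph{original} multipliers. The point is that every summand still satisfies the full \textbf{(A2)} (including the $\ell$-decay of $m_t$ and $1-m_t$), so Lemma~\ref{lemma:the condition for whole sequence, p=2} applies and yields an $L_2$ maximal bound with exponential decay $\lesssim (s_1+\dots+s_v)2^{-(s_1+\dots+s_v)}$ (with a factor $2^{s_\eta/2}$ lost at order $\eta$). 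On the $L_q$ side the same signed-sum structure gives growth $2^{s_\eta}A_q$. Interpolating and summing over $s_1>\dots>s_\eta$ converges precisely when $\theta>\tfrac{2}{2\eta+1}$, which is exactly the stated range. The essential idea you are missing is to encode the smoothness via \emph{finite differences of the original multipliers} rather than via their derivatives, so that the $\ell$-decay is preserved throughout.
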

\begin{proof}
	Our proof is based on the estimate of  multi-order differences of $(m_t)_t$. For notational simplicity we denote these differences as follows: we start with setting the first order differences of the following form 
	$$\xpsii{s}{t}=m_{2^{2^{-s-1}}  t}-m_{t},\quad s\in\bN, t\in \mathbb R _+ ,$$
	and define the higher order ones inductively by
	$$\psi^{[s_1,s_2,\dots, s_v]}_{t}=\psi^{[s_1,s_2, \cdots,s_{v-1}]}_{2^{2^{-s_v -1} } t}-\psi^{[s_1,s_2, \cdots,s_{v-1}]}_{t},\quad   s_1,\ldots,s_v \in\bN, t\in\mathbb R _+ , 2\leq v\leq \eta.$$
	We denote by $\xPsi{s_1,s_2,\dots, s_v}{t}=T_{\psi^{[s_1,s_2,\dots, s_v]}_{t}}$ the associated multipliers for $1\leq v\leq \eta$. 
	
	We will estimate the maximal norms of $(T_{m_t})_t$ by using those of $(\xPsi{s_1,s_2,\dots, s_v}{t})_t$. To see this, note that
	by Proposition~\ref{lemma:p continuou of m_n}, $(T_{m_t}(x))_t$ is  strongly continuous on $L_p(\cM)$ for all $1< p< \infty$. We consider the dyadic approximations with increasing index sets  $I_s=\{2^{j/2^{s}}: j\in \bZ\}$ for $s\in\bN$. By Remark~\ref{prop:subsequence approxiamte to whole sequence},  we have for $x\in L_p (\mathcal M)$,
	\begin{align*}
		\|{\sup_{t \in \bR_+}}^+ T_{m_t} x \|_p&=\lim _{s\to \infty}\|{\sup_{t\in I_s}}^+ T_{m_{t}}x\|_p\\
		&\leq \|{\sup_{t\in I_0}}^+ T_{m_{t}}x\|_p+\sum_{s=0}^\infty\left(\|\Sup_{t\in I_{s+1}} T_{m_{t}}x\|_p-\|{\sup_{t\in I_s}}^+ T_{m_{t}}x\|_p\right) .
	\end{align*}
	Note that we have the bijection $J:I_s\to I_{s+1}\backslash I_{s},\ 2^{j/2^s}\mapsto 2^{(2j+1)/ 2^{s+1}}=2^{\frac{1}{2^{s+1}}} 2^{j/2^s}$ and $I_{s+1}=I_s\cup J(I_s)$. Hence for 
	$$y_t=
	\begin{cases}
	T_{m_t}x \quad &t\in I_s\\
	T_{m_{J^{-1}(t)}}x \quad &t\in J(I_s)
	\end{cases}  ,
	$$	
	we have $\|\sup^+_{t\in I_{s+1}} y_t\|=\|\sup^+_{t\in I_s} T_{m_t}x\|$.		
	Then applying the triangle inequality, we get
	$$\|\Sup_{t\in I_{s+1}} T_{m_t} x\|_p-\|{\sup_{t\in I_{s}}}^+ T_{m_t} x\|_p\leq  \|{\sup_{t\in I_s}}^+ \xPsi{s}{t} (x)\|_p.$$
In other words, we obtain
$$\|{\sup_{t \in \bR_+}}^+ T_{m_t} x \|_p\leq \|{\sup_{t\in I_0}}^+ T_{m_{t}}x\|_p+\sum_{s=0}^\infty\|{\sup_{t\in I_s}}^+ \xPsi{s}{t} (x)\|_p.$$
Applying the above arguments to maps of the form $\xPsi{s}{t}$ in place of $T_{m_t}$, we see that for each $s_1\geq 1$,
\begin{align*}
\|{\sup_{t\in I_{s_1}}}^+ \xPsi{s_1}{t} (x)\|_p  \leq \|{\sup_{t\in I_{0}}}^+ \xPsi{s_1}{t} (x)\|_p + \sum_{s_2=0}^{s_1-1}  \|{\sup_{t\in I_{s_2}}}^+ \xPsi{s_1,s_2}{t} (x)\|_p .
\end{align*}
Hence, 
\begin{align*}
&\quad\ \|{\sup_{t \in \bR_+}}^+ T_{m_t} x \|_p\\
&\leq \|{\sup_{j\in \bZ}}^+ T_{m_{2^j}}x\|_p+\sum_{s_1=0}^\infty\|{\sup_{t\in I_{0}}}^+ \xPsi{s_1}{t} (x)\|_p+\sum_{s_1=1}^\infty \sum_{s_2=0}^{s_1-1} \|\Sup_{t\in I_{s_2}} \xPsi{s_1,s_2}{t} (x)\|_p.
\end{align*}
Repeating this process  $\eta$ times, we get 
\begin{align}\label{eq:Phi t decomposition to s1 s2...}
&\quad\ \|{\sup_{t \in \bR_+}}^+ T_{m_t} x \|_p\\
&\leq \|{\sup_{j\in \bZ}}^+ T_{m_{2^j}}x\|_p+\sum_{s_1=0}^\infty\|{\sup_{t\in I_{0}}}^+ \xPsi{s_1}{t} (x)\|_p+\sum_{s_1=1}^\infty \sum_{s_2=0}^{s_1-1} \|{\sup_{t\in I_{0}}}^+ \xPsi{s_1,s_2}{t} (x)\|_p\nonumber\\
&\qquad +\cdots 	+\sum_{ s_1>s_2>\cdots>s_{\eta-1}} \|{\sup_{t\in I_{0}}}^+ \xPsi{s_1,s_2,\dots s_{\eta-1}}{t} (x)\|_p \nonumber\\
&\qquad +\sum_{ s_1>s_2>\cdots>s_{\eta}}  \|{\sup_{t\in I_{s_\eta}}}^+ \xPsi{s_1,s_2,\dots, s_\eta}{t} (x)\|_p.\nonumber
\end{align}	
	
	It remains to study the maximal norms of $(\xPsi{s_1,s_2,\dots, s_v}{t})_t$ on the right hand side of the above inequality. To this end we need to estimate the derivative $\dt{k}  \xpsi{s_1,s_2,\dots, s_v}{t}$ by virtue of Lemma \ref{lemma:the condition for whole sequence, p=2}. 			More precisely, we will show that for any  $1\leq v\leq \eta$,
	\begin{equation}\label{eq:k-th partical of psi}
		\left|\dt{k}\left( \xpsii{s_1,s_2,\dots, s_v}{t}\right)\right|\lesssim_\beta  2^{-(s_1+s_2\cdots+ s_v)} \frac{{(2k+2v)}^v}{t^k} , \qquad   0\leq k\leq \eta-v. 
	\end{equation}	
	Let us prove this inequality by induction. For notational simplicity we write ${\rho_v=2^{2^{-s_v-1}}}$. Note that $1<\rho_v <2$ and that applying the mean value theorem to the function $x\mapsto 2^x$, we get that for any $1\leq v\leq \eta$ and $k\geq 0$, 
	\begin{equation}\label{eq:delta v -1}
		\rho_v^k-1\lesssim k2^{-s_v}.
	\end{equation} 
	Consider first $v=1$. 
	For any $0\leq k\leq \eta-1$,
	$$
	|\dt{k}\xpsi{s_1}{t}|=|\dt{k}m_{\rho_{1}t}(\omega)-\dt{k}m_{t}(\omega)|
	= \left|\rho_{1}^k  \partial_{\gamma}^{k} m_{\gamma}(\omega)|_{\gamma=\rho_{1}t}-\dt{k}m_{t}(\omega)\right|$$
	By \textbf{(A2)},  $\left|\partial_{\gamma}^{k} m_{\gamma}(\omega)|_{\gamma=\rho_{1}t}\right|\lesssim_{ \beta} \frac{1}{(\rho_{1}t)^k}$. Hence
	$$
	|\dt{k}\xpsi{s_1}{t}|	\lesssim_{\beta}  (\rho_{1}^k-1)  \frac{1}{(\rho_{1}t)^k}+\left| \partial_{\gamma}^{k} m_{\gamma}(\omega)|_{\gamma=\rho_{1}t}-\dt{k}m_{t}(\omega) \right|.$$
	By the mean value theorem and \textbf{(A2)}, $$\left| \partial_{\gamma}^{k} m_{\gamma}(\omega)|_{\gamma=\rho_{1}t}-\dt{k}m_{t}(\omega) \right|\leq (\rho_{1}t-t) \sup_{t\leq \gamma \leq \rho_{1}t} |\partial_\gamma^{k+1} m_\gamma(\omega)|\lesssim_\beta \frac{\rho_{1}t-t}{t^{k+1}}. $$
	Therefore, according to \eqref{eq:delta v -1}, we have
	\begin{align*}
		|\dt{k}\xpsi{s_1}{t}|&\lesssim_{ \beta}(\rho_{1}^k-1)\frac{1}{(\rho_{1}t)^k} +\frac{\rho_{1}-1}{t^{k}}\\
		&\lesssim_{\beta}    \frac{ k 2^{-s_1}}{t^k}+\frac{2^{-s_1}}{t^k}\\
		&\lesssim_{\beta} 2^{-s_1} \frac{k+1}{t^k}. 
	\end{align*}
	So (\ref{eq:k-th partical of psi}) holds for  $v=1$.
	Assume that (\ref{eq:k-th partical of psi}) holds for some $1\leq v\leq\eta-1$ and consider the case of $v+1$. For any $0\leq k\leq \eta-(v+1)$,  arguing as  above, we have
	\begin{align*}
		|\dt{k}\xpsi{s_1,s_2, \cdots,s_{v+1}}{t}|
		&= |\rho_{v+1}^k  \partial_{\gamma}^{k} (\xpsi{s_1,s_2, \cdots,s_v}{\gamma})|_{\gamma=\rho_{v+1}t}-\dt{k}(\xpsi{s_1,s_2, \cdots,s_v}{t})|\\
		&\lesssim_{\beta}  (\rho_{v+1}^k-1)  2^{-(s_1+s_2\cdots+ s_v)} \frac{{(2k+2v)}^v}{(\rho_{v+1}t)^k}\\
		&\quad\qquad+(\rho_{v+1}t-t) \sup_{t\leq \gamma\leq \rho_{v+1}t} |\partial_\gamma^{k+1} (\xpsi{s_1,s_2, \cdots,s_v}{\gamma})|\\
		&\lesssim_{\beta}   k 2^{-s_{v+1}}  2^{-(s_1+s_2\cdots+ s_v)} \frac{(2k+2v)^v}{t^k}\\
		&\quad\qquad+(\rho_{v+1}t-t)  2^{-(s_1+s_2\cdots+ s_v)} \frac{(2k+2+2v)^v}{t^{k+1}}\\
		&\lesssim_{\beta} 2^{-(s_1+s_2\cdots+ s_{v+1})} \frac{(2(k+1+v))^{v+1}}{t^k}. 
	\end{align*}
	So (\ref{eq:k-th partical of psi}) is proved.
	In particular, setting $k=0$ and $k=1$ respectively, we get for any $1\leq v\leq \eta$ 
	\begin{equation}\label{eq:psi t s1 s2 ...}
		\left|\xpsi{s_1,s_2, \cdots,s_{v}}{t}\right|\lesssim_{\beta, \eta}  2^{-(s_1+s_2\cdots+ s_{v})},
	\end{equation}
	and  for any $1\leq v\leq\eta-1$,
	\begin{equation}\label{eq:dt d psi t s1 s2 ...}
		\left|\partial_t \xpsi{s_1,s_2,\cdots,s_{v}}{t}\right| \lesssim_{\beta, \eta} 2^{-(s_1+s_2\cdots+ s_{v})} \frac{1}{t}.
	\end{equation}
	This also yields
	\begin{align}
		\left|\partial_t \xpsi{s_1,s_2,\cdots,s_{\eta}}{t}\right| &\leq  \left|\partial_t \xpsi{s_1,s_2, \cdots,s_{\eta-1}}{\rho_\eta t}\right|+\left|\partial_t \xpsi{s_1,s_2, \cdots,s_{\eta-1}}{ t}\right| \nonumber \\
		&\lesssim_{\beta, \eta}  2^{-(s_1+s_2+ \cdots+ s_{\eta-1})} \frac{1}{t}. \label{eq:partical derivative for eta}
	\end{align}
	On the other hand,  by definition 
	\begin{equation}\label{eq:sum epsilon}
		\xpsii{s_1, s_2, \cdots,s_v}{t}= \sum_{\bm \varepsilon\in \{0, 1\}^v} (-1)^{\left( v+\sum_{i=1}^v\varepsilon_i\right) }m_{\bm{\rho^\varepsilon}   t} 
	\end{equation}
	where $\bm \varepsilon=(\varepsilon_1, \cdots, \varepsilon_v)\in \{0, 1\}^v$ and $\bm{\rho^\varepsilon}=\rho_{1}^{\varepsilon_1}\rho_{2}^{\varepsilon_2} \cdots\rho_{v}^{\varepsilon_v}$. Recall that ${s_1<s_2<\cdots< s_v}$ and $s_v\geq v$, we have
	$$1<\bm{\rho^\varepsilon}\leq 2^{2^{-s_v-1}+2^{-s_v}+\cdots+2^{-s_v+v-2}}< 2^{2^{-s_v-1+v}}<2.$$
	By \textbf{(A2)}, 
	$$ \left|\xpsi{s_1,s_2, \cdots,s_{v}}{t}\right|\leq \sum_{\bm \varepsilon\in \{0, 1\}^v}|m_{\bm{\rho^\varepsilon}   t}(\omega)|\leq \sum_{\bm \varepsilon\in \{0, 1\}^v}\beta   \frac{\bm{\rho^\varepsilon}  t}{\ell(\omega)^\alpha} \leq \beta  2^{2v} \frac{t}{\ell(\omega)^\alpha},$$
	$$ \left|\xpsi{s_1,s_2, \cdots,s_{v}}{t}\right|\leq \sum_{\bm \varepsilon\in \{0, 1\}^v}|1-m_{\bm{\rho^\varepsilon}   t}(\omega)|\leq \sum_{\bm \varepsilon\in \{0, 1\}^v} \beta   \frac{\ell(\omega)^\alpha}{\bm{\rho^\varepsilon}  t} \leq 2^v\beta   \frac{\ell(\omega)^\alpha}{t}.$$
	Thus, setting 
	$$a^{[s_1,s_2, \cdots, s_v]}_{j}:=\sup_t\left(  \sup_{2^{j-2}<\frac{\ell(\omega)^\alpha}{t}\leq 2^{j}}\left|\xpsi{s_1,s_2, \cdots,s_{v}}{t}\right| \right),$$
	and
	$$b^{[s_1,s_2, \cdots, s_v]}_{j}:=\sup_t\left( \sup_{2^{j-2}<\frac{\ell(\omega)^\alpha}{t}\leq 2^{j}} t 	\left|\partial_t \xpsi{s_1,s_2, \cdots,s_{v}}{t}\right| \right),$$
	together with \eqref{eq:psi t s1 s2 ...} and \eqref{eq:dt d psi t s1 s2 ...}, 	we have  for $1\leq v\leq \eta-1$,
	\begin{equation*}
		a^{[s_1,s_2, \cdots, s_v]}_{j} \lesssim_{\beta, \eta}  \min\{2^{-(s_1+s_2+ \cdots +s_{v})}, 2^{-|j|} \},   
	\end{equation*}
	\begin{equation*}
	b^{[s_1,s_2, \cdots, s_v]}_{j} \lesssim_{\beta, \eta} 2^{-(s_1+s_2+ \cdots +s_{v})}.  
	\end{equation*}
	Then by Lemma~\ref{lemma:the condition for whole sequence, p=2}, for $1\leq v\leq \eta-1$, we have
	\begin{equation}\label{eq:p=2 for v}
		\|\Sup_{t\in \bR_+} \xPsi{s_1,s_2, \cdots,s_v}{t} x\|_2\lesssim K^{[s_1,s_2, \cdots s_v]}\|x\|_2,
	\end{equation}
	with
	\begin{align*}
		&\quad\ K^{[s_1,s_2, \cdots, s_v]}\\
		&=\sum_{j\in \bZ} (a^{[s_1,s_2, \cdots, s_v]}_j)^{1/2}(a^{[s_1,s_2, \cdots, s_v]}_j+b^{[s_1,s_2, \cdots, s_v]}_j)^{1/2}\\
		&\lesssim_{\beta, \eta} \left( \sum_{|j|\leq s_1+s_2+\cdots+s_v} 2^{-\frac{s_1+s_2+\cdots+s_v}{2}}+\sum_{ |j|> s_1+s_2\cdots+s_v} 2^{-\frac{|j|}{2}}\right)  \cdot 2^{-\frac{s_1+s_2+\cdots+s_v}{2}}  \\
		&\lesssim_{\beta, \eta}  \frac{s_1+s_2+\cdots+s_v}{2^{(s_1+s_2+\cdots+s_v)}}.
	\end{align*}	
	Similarly, for $v=\eta$, by \eqref{eq:psi t s1 s2 ...} and \eqref{eq:partical derivative for eta},
	\begin{equation}\label{eq:p=2 for eta}
		\|\Sup_{t\in \bR_+} \xPsi{s_1,s_2, \cdots,s_\eta}{t} x\|_2\lesssim K^{[s_1,s_2, \cdots, s_\eta]}\|x\|_2,
	\end{equation}
	with 
	$$K^{[s_1,s_2, \cdots s_\eta]}\lesssim_{\beta, \eta}   \frac{s_1+s_2+\cdots+s_\eta}{2^{(s_1+s_2\cdots+s_{\eta-1})+\frac{s_\eta}{2}}}.$$

	
	In the following we consider the case  $1\leq q< 2$.
	Denote $$A_q=\sup_{1\leq \delta \leq 2}\|(T_{m_{\delta 2^j}})_{j\in \bZ}:L_q(\cM)\to L_q(\cM;\ell_\infty)\|.$$
	For  $1\leq v\leq \eta-1$, by \eqref{eq:sum epsilon} we have
	\begin{equation}\label{eq:q<2 for v}
		\|{\sup_{t \in I_0}}^+ \xPsi{s_1, s_2, \cdots,s_v}{t} x\|_q \leq \sum_{\bm \varepsilon\in \{0, 1\}^v}  \|{\sup_{t \in I_0}}^+ T_{m_{{\bm \rho}^{\bm \varepsilon} t} }x\|_q\leq 2^v A_q \|x\|_q.
	\end{equation}
	For $v=\eta$, we decompose  $$I_{s_\eta}=\{2^{j/2^{s_\eta}}: j\in \bZ\}=\bigcup_{l=0}^{2^{s_\eta}-1}\{2^{\frac{(2^{s_\eta})j+l}{2^{s_\eta}}}: j\in \bZ\}=\bigcup_{l=0}^{2^{s_\eta}-1} 2^{\frac{l}{2^{s_\eta}}}I_0.$$  
	By \eqref{eq:sum epsilon} and the triangle inequality, we have 
	\begin{align}\label{eq:q<2 for eta}
		\|\Sup_{t\in I_{s_\eta}} \xPsi{s_1,s_2,\dots, s_\eta}{t} (x)\|_q&\leq  \sum_{\bm \varepsilon\in \{0, 1\}^\eta} \|\Sup_{t \in I_{s_\eta}} T_{m_{\bm{\rho^\varepsilon} t}}x\|_q\\
		&\leq  \sum_{\bm \varepsilon\in \{0, 1\}^\eta}\sum_{l=0}^{2^{s_\eta}-1}\|\Sup_{t\in 2^{\frac{l}{2^{s_\eta}}}I_0} T_{m_{\bm{\rho^\varepsilon} t}}x\|_q \nonumber \\
		&\leq  \sum_{\bm \varepsilon\in \{0, 1\}^\eta}\sum_{l=0}^{2^{s_\eta}-1}\|{\sup_{t \in I_0}}^+ T_{m_{2^{\frac{l}{2^{s_\eta}}}   \bm{\rho^\varepsilon}   t}}x\|_q \nonumber\\
		&\lesssim_{\eta} A_q 2^{s_\eta} \|x\|_q . \nonumber
	\end{align}
	
	Now the conclusion follows easily from the complex interpolation. Let $1<p<2$ and $0<\theta<1$ with $\frac{1}{p}=\frac{1-\theta}{q}+\frac{\theta}{2}$. By (\ref{eq:p=2 for v}), (\ref{eq:q<2 for v}) and interpolation, 
	we see that for $v\leq\eta-1$,
	$$\|{\sup_{t \in I_0}}^+ \xPsi{s_1, s_2, \cdots,s_v}{t} x\|_p \lesssim_{\beta, \eta}  A_q^{1-\theta}\left( s_1+s_2\cdots+s_v\right)^{\theta}2^{-\theta(s_1+s_2\cdots+s_v)} \|x\|_p .$$
	By (\ref{eq:p=2 for eta}), (\ref{eq:q<2 for eta}) and interpolation, for $v=\eta$,
	$$\|\Sup_{t\in I_{s_\eta}} \xPsi{s_1,s_2,\dots, s_\eta}{t} (x)\|_p\lesssim_{\beta, \eta}A_q^{1-\theta} 2^{(1-\theta)s_\eta}\left( s_1+s_2\cdots+s_\eta\right)^{\theta}2^{-\theta(s_1+s_2\cdots+\frac{s_\eta}{2})}\|x\|_p.$$
	We 	apply  the above estimate to  (\ref{eq:Phi t decomposition to s1 s2...}).
	Note that	when $v\leq \eta-1$, 
	\begin{align*}
		&	\sum_{s_1>s_2>\cdots>s_v} \left( s_1+s_2\cdots+s_v\right)^{\theta}2^{-\theta(s_1+s_2\cdots+s_v)}\\
		\lesssim_{\theta} &\sum_{s_2>\cdots>s_v} \sum_{s_1=s_2+1}^\infty 2^{-\theta(s_1+s_2\cdots+s_v)/2}
		\lesssim_{\theta}  \sum_{s_2> \cdots>s_v} 2^{-\theta s_2/2}2^{-\theta(s_2+ \cdots+s_v)/2}\\
		\lesssim_{ \theta} &\sum_{s_3> \cdots>s_v} 2^{-\theta s_3}2^{-\theta(s_3+ \cdots+s_v)/2}
		\lesssim_{ \theta}	\cdots		\lesssim_{\theta}   \sum_{s_v\geq 0} 2^{-(v-1)\theta s_v/2}
		\lesssim_{\theta}  1.
	\end{align*}
	Applying the	similar computation to $v=\eta$, we have
	\begin{align*}
		&\quad\ \sum_{s_1>s_2>\cdots>s_\eta}2^{(1-\theta)s_\eta}\left( s_1+s_2\cdots+s_\eta\right)^{\theta}2^{-\theta(s_1+s_2\cdots+\frac{s_\eta}{2})}\\
		&\lesssim_{\theta}\sum_{s_\eta\geq 0} 2^{(1-\theta)s_\eta}(s_\eta+1)^\theta 2^{-(\eta-\frac{1}{2}) \theta s_\eta}.
	\end{align*}
	Thus the above quantity is finite if $(1-\theta)<(\eta-\frac{1}{2}) \theta$,  i.e. $\theta>\frac{2}{2\eta+1}$, which requires that $q+\frac{q(2-q)}{q-1+2\eta}< p\leq 2$. 
	Therefore, together with (\ref{eq:Phi t decomposition to s1 s2...}), if  $q+\frac{q(2-q)}{q-1+2\eta}< p\leq 2$,
	$$\|\Sup_{t \in \bR_+} T_{m_t} x \|_p\lesssim_{\beta,\eta, \theta} A_q^{1-\theta} \|x\|_p.$$
	The proof is complete.
\end{proof}

\begin{proof}[Proof of Theorem~\ref{theorem:criterion2}~(2)]

	For any $1\leq \delta \leq 2$ and $j\in \bZ$, set  $ t_j= 2^j\delta $. 
	\textbf{(A2)} implies that
	$$|1-m_{t_j}(\omega)|\leq \beta\frac{\ell(\omega)^\alpha}{ 2^j\delta}\leq \beta \frac{\ell(\omega)^\alpha}{2^j},\quad |m_{t_j}(\omega)|\leq 2\beta \frac{2^j}{\ell(\omega)^\alpha}.$$ 
	By Theorem~\ref{theorem:criterion2}~(1) and Remark~\ref{rmk: A1 prime and change index set N to Z}~(2), for any  $1<q\leq 2$, we have 
	$$\sup_{1\leq \delta\leq2} \|{\sup_{j\in \bZ}}^+ T_{ m_{t_j}}x\|_{q}\lesssim_{\alpha, \beta, q, \eta} \|x\|_{q}.$$
	Note that $q+\frac{q(2-q)}{q-1+2\eta}$ tends to $1+\frac{1}{2\eta}$ as $q\to 1$.  Hence, by Lemma~\ref{prop:from 2j to whole t}, for any $1+\frac{1}{2\eta}<p\leq 2$, 
	we get 
	$$\|\Sup_{t\in \bR_+} T_{m_t}x\|_p\lesssim_{\alpha, \beta, p, \eta} \|x\|_p.$$
	The a.u. (b.a.u.) convergence is proved similarly as in  (1).  		
\end{proof}

\section{The case of operator-valued multipliers and nontracial states}	
Based on the previous arguments, we may extend our results to the setting of operator-valued multipliers and Haagerup's nontracial $L_p$-spaces. This will be   particularly essential for our further study of multipliers on quantum groups in the next chapter. All the previous arguments for $p\geq 2$ can be transferred without difficulty into this new setting, and we will leave the details to interesting readers. However, the previous proof for the case  $p< 2$ does not continue to hold for Haagerup's $L_p$-spaces. Based on Haagerup's reduction method, we will rather use our previous results for the tracial setting   to deduce the desired properties for the nontracial ones.

\subsection{Operator-valued multipliers in the tracial setting}	
Let us first begin with the operator-valued multipliers on tracial von Neumann algebras. Let $\cR$ be a von Neumann algebra equipped with a semifinite normal trace. Assume that there is   an isometric isomorphism $$U:L_2(\cR)\to   \bigoplus_{i\in I} H_i $$ where $I$ is an index set and $H_i$ is a Hilbert space for each $i\in I$. 
Additionally, we will always assume that $\{U^{-1}(X_i)_{i\in I}: X_i = 0 \text{ except  finitely many }i\in I\}$ is dense in $L_p(\cR)$ for all $1\leq p\leq \infty$.
For any bounded sequence $m:=(m(i))_{i\in I}$    with $m(i)\in B(H_i)$,  we can define an \textit{operator-valued multiplier} on $\cR$:
\begin{align}
T_m: L_2(\cR)&\to L_2(\cR) \nonumber \\
x&\mapsto U^{-1}\left(  m(i)  (Ux)(i) \right)_{i\in I} . \label{eq:def of multiplier+} 
\end{align}
Note that if $H_i=\mathbb C$ for all $i\in I$, then this goes back to our first setting in \eqref{eq:definition of multiplier operoter} with $\Omega=I$ equipped the counting measure.
Proposition~\ref{prop:general case for p=2} can be adapted to this new setting.
\begin{prop}\label{prop:general case for p=2+}
	Let $(T_{m_N})_{N\in \bZ}$ be a sequence of operator-valued multipliers as above. Assume that there exist a  function $f:I \to [0, \infty)$ and a positive number $a>1$ such that
	\begin{equation}
	\|m_N(i)\|_{B(H_i)}\leq \beta  \frac{a^Nf(\omega)}{(a^N+f(\omega))^2}
	\end{equation}
	Then, 
	$$\| (T_{m_N}x)_{N\in\bZ}\|_{L_2(\cR; \ell_2^{cr})}\leq \beta \sqrt{\frac{a^2}{a^2-1}}\|x\|_2.$$
\end{prop}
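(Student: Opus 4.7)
The plan is to repeat the proof of Proposition~\ref{prop:general case for p=2} almost verbatim, replacing scalar multiplication by the action of the operators $m_N(i)\in B(H_i)$ and using the operator norm in place of the modulus. The key observation that makes this work at $p=2$ is that $\|\cdot\|_{L_2(\cR;\ell_2^{cr})}$ is unambiguous at the Hilbert space level: for any finite (or $\ell_2$-summable) sequence $(y_N)\subset L_2(\cR)$ one has
\begin{equation*}
\|(y_N)_N\|^2_{L_2(\cR;\ell_2^{cr})}=\tau\Big(\sum_N|y_N|^2\Big)=\sum_N\|y_N\|_2^2,
\end{equation*}
so the left-hand side of the claimed inequality squares to $\sum_N\|T_{m_N}x\|_2^2$, which is exactly what was handled in the scalar case.

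First, I would apply the isometry $U$: since $U$ identifies $L_2(\cR)$ with the Hilbertian direct sum $\bigoplus_{i\in I}H_i$, for each $N$,
\begin{equation*}
\|T_{m_N}x\|_2^2=\|(m_N(i)(Ux)(i))_{i\in I}\|_{\bigoplus H_i}^2=\sum_{i\in I}\|m_N(i)(Ux)(i)\|_{H_i}^2.
\end{equation*}
Using $\|m_N(i)(Ux)(i)\|_{H_i}\leq \|m_N(i)\|_{B(H_i)}\,\|(Ux)(i)\|_{H_i}$ and summing in $N$, a Tonelli swap yields
\begin{equation*}
\sum_{N\in\bZ}\|T_{m_N}x\|_2^2\leq \sum_{i\in I}\Big(\sum_{N\in\bZ}\|m_N(i)\|_{B(H_i)}^2\Big)\|(Ux)(i)\|_{H_i}^2.
\end{equation*}

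Second, I would bound the inner sum uniformly in $i$ by the same elementary dyadic split as in the scalar case. Writing $t_i=f(i)$, for $t_i>0$ the hypothesis gives $\|m_N(i)\|_{B(H_i)}^2\leq \beta^2 a^{2N}t_i^2/(a^N+t_i)^4$, and splitting the sum at $N\asymp\log_a t_i$ produces two geometric series controlled respectively by $\sum_{N<\log_a t_i}\beta^2 a^{2N}/t_i^2$ and $\sum_{N\geq\log_a t_i}\beta^2 t_i^2/a^{2N}$, each summing to at most $\beta^2 a^2/(a^2-1)$; when $t_i=0$, the hypothesis forces $m_N(i)=0$ for every $N$. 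Therefore
\begin{equation*}
\sup_{i\in I}\sum_{N\in\bZ}\|m_N(i)\|_{B(H_i)}^2\leq \beta^2\,\frac{a^2}{a^2-1},
\end{equation*}
and plugging this back together with $\sum_i\|(Ux)(i)\|_{H_i}^2=\|x\|_2^2$ gives the claim.

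There is really no serious obstacle: the proof is a direct translation of the scalar one, the only substantive change being that $|m_N(\omega)|$ is replaced by the operator norm $\|m_N(i)\|_{B(H_i)}$ and the pointwise product by the action of $m_N(i)$ on $(Ux)(i)$. The fact that $\ell_2^{cr}$ coincides with $\ell_2$ at $p=2$ is what allows this transcription; any attempt to run the analogous argument for $p\neq 2$ would require genuine new input on noncommutative square functions, as already noted in Chapter~\ref{sec:order}.
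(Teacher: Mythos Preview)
Your proposal is correct and is exactly what the paper does: its proof consists of the single line ``Repeat the proof of Proposition~\ref{prop:general case for p=2},'' and you have carried out that repetition faithfully, replacing $|m_N(\omega)|$ by $\|m_N(i)\|_{B(H_i)}$ and the pointwise product by the operator action. The only quibble is the exact constant: your geometric-series split gives a bound of order $\beta^2\,a^2/(a^2-1)$ rather than the sharp value, which matches the $\lesssim$ appearing in the scalar Proposition~\ref{prop:general case for p=2} (the $\leq$ in the present statement appears to be a typo in the paper).
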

\begin{proof}
	Repeat  the proof of Proposition~\ref{prop:general case for p=2}.
\end{proof}

Using Proposition~\ref{prop:general case for p=2+}, Lemma~\ref{lemma: strong pp strong 22 imply restricted  qq},  Lemma~\ref{lemma: A(p,2)<A(p/p-2) sqrt} and  Lemma~\ref{lemma:Strong (p,p) of Delta_j},  we  may deduce the following result. The proof is the same as that of  Theorem~\ref{theorem:criterion1} and  Theorem~\ref{theorem:criterion2} (for \textbf{(A1)}).

\begin{theorem}\label{theorem:criterion2+: tracial noncentral case}
	Let $\ell:=(\ell(i))_{i\in I}$ be a sequence  with   $\ell(i)\in B(H_i)$. Assume that $(T_{e^{-t\ell}})_{t\in \bR_+}$ is a semigroup of unital completely positive trace-preserving symmetric maps  on $\cR$.	For any ${N\in \bN}$, let $m_N:=(m_N(i))_{i\in I}$ be a sequence  with $m_N(i)\in~B(H_i)$. Assume that $(T_{m_N})_{N\in \mathbb N}$ extends to a family of bounded maps on $ \cR $ with $\gamma:=\sup_N \|T_{m_N}: \cR\to \cR\|<\infty$. Assume that there exist $\alpha >0$ and $\beta >0$ such that for  all $i\in I$ we have
	\begin{equation}\label{eq:criterion a1+} 
	\|\id_{H_i}-m_N (i)\|_{B(H_i)} \leq \beta \frac{\|\ell(i)\|_{B(H_i)}^\alpha}{2^N},\quad \|m_N (i)\|_{B(H_i)}\leq \beta \frac{2^N}{\|\ell(i)\|_{B(H_i)}^\alpha}. 
	\end{equation}
	
	\emph{(1)} For all $2\leq p<\infty$ there is a constant $c>0$ depending only on $p,\alpha,\beta, \gamma$ such that for all $x\in L_p (\mathcal R)$, we have
	$$\|(T_{m_N} x)_N\|_{L_p(\cR;\ell_\infty)} \leq c \|x\|_p, \qquad\text{and}\qquad  T_{m_N} x\to x \text{ a.u. as }N\to\infty.$$

	\emph{(2)}	Assume  additionally that the operators $(T_{m_N})_{N\in \mathbb N}$ extend to  positive symmetric contractions on $  \cR$  and that $S_t$ satisfies Rota's dialtion property  for all $t \in \bR_+$.  Then for any $1< p<\infty$, there is a constant $c$ depending only on $p,\alpha,\beta$  such that  for all $x\in L_p (\cR)$,
	$$\|(T_{m_N} x)_N\|_{L_p(\cR;\ell_\infty)} \leq c \|x\|_p  \quad\text{and}\quad  T_{m_N} x\to x \text{ b.a.u. (a.u. if $p\geq 2$), as }N\to\infty.$$ 
\end{theorem}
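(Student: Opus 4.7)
The plan is to retrace the proofs of Theorem~\ref{theorem:criterion1}~(1) and Theorem~\ref{theorem:criterion2}~(1), replacing pointwise estimates on scalar symbols $m_N(\omega)$ by operator-norm estimates on $m_N(i)\in B(H_i)$. Nothing in Chapter~\ref{sec:maximal} really uses commutativity or scalar-valuedness of the symbol beyond the $L_2$-input (Proposition~\ref{prop:general case for p=2}): the Khintchine inequalities, column/row interpolation, Rota's dilation, Dirksen's interpolation, Lemma~\ref{lemma: A(p,2)<A(p/p-2) sqrt}, Lemma~\ref{lemma:Strong (p,p) of Delta_j} and the subordinate Poisson maximal bound in Proposition~\ref{prop:maximal inequality of semigroup} are all phrased at the level of $L_p(\cR)$. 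The whole game is therefore to supply the operator-valued analogue of the scalar $L_2$-input, which is precisely Proposition~\ref{prop:general case for p=2+}, and feed it into the existing machinery.

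For part (1), I set $T_{\phi_N} := T_{m_N}-P_{2^{-N/\alpha}}$ with symbol $\phi_N(i) = m_N(i)-e^{-\sqrt{\ell(i)}/2^{N/2\alpha}}\mathrm{id}_{H_i}$ (the exponential is interpreted via the continuous functional calculus on the positive operator $\ell(i)$, which is legitimate since $(T_{e^{-t\ell}})_t$ is a Markov semigroup). Using \textbf{(A1)} in its operator form and repeating the scalar bookkeeping of \eqref{eq:phi satisfies A1 A2 1}--\eqref{eq:phi satisfies A1 A2 2} yields
\[
\|\phi_N(i)\|_{B(H_i)}\lesssim_{\beta}\frac{2^{N/2\alpha}\sqrt{\|\ell(i)\|}}{(2^{N/2\alpha}+\sqrt{\|\ell(i)\|})^{2}}.
\]
Proposition~\ref{prop:general case for p=2+} then gives a uniform $L_2(\cR)\to L_2(\cR;\ell_2^{cr})$ bound for $(T_{\phi_N})_N$, and Proposition~\ref{claim: sup+ < CRp norm} upgrades this to an $L_2$--maximal inequality. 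Interpolating with the trivial $L_\infty$-bound $\gamma$ via Proposition~\ref{prop: prop of LMinfty and LMl1}~(3) yields strong type $(p,p)$ for every $2\leq p<\infty$. Adding back the Poisson piece through Proposition~\ref{prop:maximal inequality of semigroup} completes part (1).

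For part (2), I reproduce verbatim the bootstrap of the proof of Theorem~\ref{theorem:criterion2}~(1). Fix a finite $J\subset\mathbb{Z}$ and track the three quantities $A(p,\infty)$, $A(p,1)$, $A(p,2)$ defined there. The decomposition $T_{m_j}=T_{\phi_j}+S_{2^{-j/\alpha}}$, the annulus differences $\Delta_j^{(s)}=P_{2^{-(j+2s)/2\alpha}}-P_{2^{-(j-2s)/2\alpha}}$ controlled by Lemma~\ref{lemma:Strong (p,p) of Delta_j}, and the splitting $T_{\phi_j}=T_{\phi_j}\Delta_j^{(s)}+T_{\phi_j}(\mathrm{Id}-\Delta_j^{(s)})$ all persist because they involve only the Poisson semigroup and the maps $T_{\phi_j}$ on $L_p(\cR)$. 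The $L_2$-bound for $T_{\phi_j}(\mathrm{Id}-\Delta_j^{(s)})$ is obtained from Proposition~\ref{prop:general case for p=2+} applied to the symbol $\phi_j(i)(\mathrm{id}_{H_i}-\delta_j^{(s)}(i))$, whose operator norm satisfies the same $2^{-s_0/\alpha}$-gain as in the scalar case. Lemma~\ref{lemma: strong pp strong 22 imply restricted qq} then delivers a restricted weak type $(q_2,q_2)$ bound, Dirksen's Theorem~\ref{theorem:interpolation of dirksen} converts it into strong type $(r,r)$, Lemma~\ref{lemma: A(p,2)<A(p/p-2) sqrt} closes the loop $A(p,2)\lesssim A(p/(2-p),\infty)^{1/2}$, and the same recursion $(r-1)^{22}A(r,\infty)\lesssim(A(q_1/(2-q_1),\infty))^{1/2}(r-1)^{-11}$ forces the uniform bound $A(p,\infty)\lesssim_{\alpha,\beta}(p-1)^{-22}$ for $1<p\leq 2$. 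Remark~\ref{prop:subsequence approxiamte to whole sequence} removes the restriction to finite $J$.

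The a.u.\ convergence in both parts follows the scheme of Section~\ref{sec:proof ot criterion1}: the one-sided weak type maximal inequality plus a.u.\ convergence on a dense subspace (coming from the analogue of Proposition~\ref{lemma:p continuou of m_n}, namely mean convergence for $x=\cU^{-1}(y)$ with $y$ supported on a finite subset of $I$), combined with Proposition~\ref{prop:Phix-x in Lp M C0}~(1) (for the positive case, $p>1$) or Proposition~\ref{prop:Phix-x in Lp M C0}~(2) (for the general case, $p\geq 2$). The only conceptual obstacle is the verification, at every step, that the scalar pointwise estimates on $\phi_N$ or $\phi_j(\mathrm{Id}-\Delta_j^{(s)})$ admit the correct operator-norm analogues; this is routine because the only tool used is subadditivity of $\|\cdot\|_{B(H_i)}$ and the functional calculus on the positive operators $\ell(i)$, which is exactly what Proposition~\ref{prop:general case for p=2+} is tailored for.
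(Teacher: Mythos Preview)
Your proposal is correct and follows exactly the approach the paper takes: the paper's own proof consists of the single sentence ``Using Proposition~\ref{prop:general case for p=2+}, Lemma~\ref{lemma: strong pp strong 22 imply restricted  qq}, Lemma~\ref{lemma: A(p,2)<A(p/p-2) sqrt} and Lemma~\ref{lemma:Strong (p,p) of Delta_j}, we may deduce the following result. The proof is the same as that of Theorem~\ref{theorem:criterion1} and Theorem~\ref{theorem:criterion2} (for \textbf{(A1)}).'' Your write-up is simply a careful unpacking of that sentence, correctly identifying Proposition~\ref{prop:general case for p=2+} as the only new ingredient and observing that every other step (Poisson comparison, Rota dilation, the bootstrap via Lemma~\ref{lemma: strong pp strong 22 imply restricted  qq} and Dirksen's interpolation, Lemma~\ref{lemma: A(p,2)<A(p/p-2) sqrt}) is phrased abstractly at the level of $L_p(\cR)$ and carries over verbatim.
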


\subsection{Operator-valued multipliers on Haagerup noncommutative $L_p$-spaces}\label{sect:nontracial matrix}
Let $\cM$ be a von Neumann algebra acting on a Hilbert space $H$. Let $\varphi$ be a fixed normal semifinite faithful state on $\cM$. Let $\sigma=(\sigma_t)_t=(\sigma^\varphi_t)_t$ be the modular automorphism group with respect to $\varphi$.

Let $\hlp{p}$ be the Haagerup noncommutative $L_p$-spaces associated with  $(\mathcal M ,\varphi)$. In the following discussions we will not need the detailed information of these spaces, and we refer to \cite{terp81lpspace,pisierxu03nclp,haagerupjungexu10reduction} for a detailed presentation. We merely remind the reader that
the elements in $\hlp{p}$ can be realized as  densely defined closed operators on $L_2(\bR ; H)$ and that
$\hlp{\infty}$ coincides with $\cM$ for a certain suitable representation $\cM$ on $L_2(\bR ; H)$.  If $\mathcal N $ is a von Neumann subalgebra of $\mathcal M$, then the associated Haagerup $L_p$-space $L_p(\mathcal N ,\varphi|_{\mathcal N})$ can be naturally embedded as a subspace of $\hlp{p}$ which preserves positivity.
There is a distinguished positive element $D_\varphi \in L_1 (\mathcal M ,\varphi)_+$, usually called the \emph{density} operator associated with $\varphi$, such that $D_\varphi^{1/2p} \cM D_\varphi^{1/2p}$ is dense in $\hlp{p}$  (see e.g. \cite[Lemma 1.1]{jungexu03burkholder}).
The space  $\hlp{p}$   isometrically coincides with the usual tracial noncommutative $L_p$-space used previously if $\mathcal M$ is tracial. Indeed, if $\mathcal M$ is equipped with a normal faithful tracial state $\tau$ with $\varphi=\tau(\cdot \rho )$ for some $\rho\in L_0 (\mathcal M, \tau)$, then for any $x\in \mathcal M$, 
\begin{equation}\label{eq:haggeruptracial}
\|D_\varphi^{1/2p}x D_\varphi^{1/2p}\|_{\hlp{p}}  = [\tau ( (\rho^{1/2p}x \rho^{1/2p})^p  ) ]^{1/p}  ,
\end{equation}
which coincides with the norm of $\rho^{1/2p}x \rho^{1/2p}$ in the tracial $L_p$-space $L_p (\mathcal M ,\tau)$ in the sense of Chapter \ref{sec:preliminaries}. In the sequel we will not distinguish the Haagerup $L_p$-spaces and the tracial $L_p$-spaces introduced in Chapter \ref{sec:preliminaries} if $\varphi$ is tracial.

In this section we set $\Gamma=\bigcup_{n\geq 1}2^{-n}\bZ$, which is regarded as a discrete subgroup of $\bR$. We consider the crossed product ${\cR}=\cM\rtimes_\sigma \Gamma$. 
Recall that ${\cR}$ is the von Neumann subalgebra    generated by $\pi(\cM)$ and $\id_H \otimes \lambda(\Gamma)$ in $B(\ell_2(\bR; H))$, where $\pi:\cM\to B(\ell_2(\Gamma; H))$ is the $*$-representation given by $\pi(x)=\sum_{t \in \Gamma} \sigma_{-t} (x)\otimes e_{t,t}$ and $\lambda$ is the left regular representation of $\Gamma$ on $\ell_2(\Gamma)$. 
We will identify $\cM$ with $\pi(\cM)$ and denote $x\rtimes \lambda(t) =\pi(x) (\id_H \otimes \lambda(t))$ for $ x\in \cM$ and $ t \in \Gamma$.  We have 
$$(x\rtimes \lambda(t))\cdot (y\rtimes \lambda(s))=(x\sigma_t(y))\rtimes \lambda(t+s)$$ for any $x, y\in \cM$, $t, s\in \Gamma$.
Let $\tau_\Gamma$ be the usual trace on $VN(\Gamma)$ given by ${\tau_\Gamma (\lambda(t)) = \delta_{t=0}}$. The dual state $\hatphi$ on ${\cR}$ is defined by 
\begin{equation}\label{eq:def of hatphi}
\hatphi(x\rtimes \lambda(t))=\varphi (x) \tau_\Gamma (\lambda(t))
\qquad x\in \cM, \quad t \in \Gamma.
\end{equation}
We set
$$a_k = - \mathrm i 2^n \mathrm{Log} (\lambda (2^{-k}))\quad \text{and} \quad
\tau_k = \widehat{\varphi} (e^{-a_k}\, \cdot)$$
with $ \mathrm{Log} $ the principal branch of the logarithm so that $ 0\leq  \mathrm{Im}(\mathrm{Log}(z)) < 2\pi $. We denote by $\mathcal R _k$ the  centralizer of $\tau_k$ in $\mathcal R$. 
We will use the following two  theorems to reduce our problem to the tracial case studied previously. 
\begin{theorem}[{\cite[Theorem 2.1, Example 5.8, Remark 6.1]{haagerupjungexu10reduction}}]\label{theorem:reduction theorem HJX}
	\emph{(1)} For each $k\geq 1$, the subalgebra $\mathcal R _k$  is finite and $\tau_k$ is a normal faithful tracial state on $\mathcal R _k$;
	
	\emph{(2)}   $\{   \cR _k  \}_{k\geq 1}$ is an increasing sequence of von Neumann subalgebras such that $\cup_{k\geq 1}  \cR _k   $ is w*-dense  in  $\cR$; 
	
	\emph{(3)} for every $k\in \bN$, there exists a normal   conditional expectation $\bE_k$ from $\cR$ onto
	$ \cR_k$ such that 
	$$\hatphi \circ \bE_k=\hatphi \add{and} \sigma_t^\hatphi\circ \bE_k=\bE_k\circ \sigma_t^{\widehat{ \varphi}},  \quad t\in \bR.$$
	For each $1\leq p<\infty $ and any $k\in \bN$, the map 
	$$\bE_k ^{(p)} (D_{\widehat{\varphi}}^{1/2p}x D_{\widehat{\varphi}}^{1/2p}) = D_{\widehat{\varphi}}^{1/2p}\bE_k(x) D_{\widehat{\varphi}}^{1/2p},\quad
	x\in \cR$$ extends to a conditional expectation from $L_p (\cR, \widehat \varphi)$ onto  $L_p (\cR _k, \widehat \varphi |_{\cR _k})$, and  $$\lim_k \| \bE_k ^{(p)} x -x \|_{L_p (\cR, \widehat \varphi)} =0,\quad x\in L_p (\cR, \widehat \varphi).$$ 
\end{theorem}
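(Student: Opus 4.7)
The plan is to invoke the Haagerup reduction method, whose heart is a suitable Connes perturbation of the dual state $\hatphi$ on $\cR = \cM \rtimes_\sigma \Gamma$ by a bounded self-adjoint element built from the generators of $\Gamma$. The starting point is the modular theory of $\hatphi$: by Takesaki duality, $\sigma_t^{\hatphi}|_{\cM} = \sigma_t^\varphi$ and $\sigma_t^{\hatphi}(\lambda(s)) = \lambda(s)$ for all $s \in \Gamma$. In particular, $\lambda(2^{-k})$ is a unitary in the centralizer of $\hatphi$, its principal logarithm is well defined, and $a_k = -\mathrm{i}\, 2^{k}\,\mathrm{Log}\,\lambda(2^{-k})$ is a bounded self-adjoint element of $VN(\Gamma) \subset \cR$ with spectrum in $[0, 2\pi \cdot 2^{k})$, fixed by $\sigma^{\hatphi}$.

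Next I would define $\tau_k = \hatphi(e^{-a_k}\, \cdot\, )$ as the Connes perturbation of $\hatphi$ by $e^{-a_k}$; boundedness of $a_k$ makes this a normal faithful positive functional on $\cR$ (renormalised if needed). The perturbation formula then yields $\sigma_t^{\tau_k} = \mathrm{Ad}(e^{-\mathrm{i}t a_k}) \circ \sigma_t^{\hatphi}$. Since the spectrum of $a_k$ sits in $2\pi \cdot 2^{k} \cdot [0,1)$, the cocycle $t \mapsto e^{-\mathrm{i}t a_k}$ is $2^{-k}$-periodic, so $\sigma^{\tau_k}$ is almost periodic with a single nontrivial period. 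Assertion (1) then follows from the general principle that the centralizer of a normal faithful state whose modular flow is periodic is a finite von Neumann algebra on which the state restricts to a normal faithful tracial state.

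For (2), the inclusion $\cR_k \subset \cR_{k+1}$ is a direct modular-theoretic computation: the compatibility $\lambda(2^{-k}) = \lambda(2^{-(k+1)})^2$ makes $e^{-\mathrm{i}ta_k}$ a power of $e^{-\mathrm{i}ta_{k+1}}$, so any element fixed by $\mathrm{Ad}(e^{-\mathrm{i}ta_{k+1}}) \circ \sigma_t^{\hatphi}$ is also fixed by $\mathrm{Ad}(e^{-\mathrm{i}ta_k}) \circ \sigma_t^{\hatphi}$. The w*-density of $\bigcup_k \cR_k$ in $\cR$ is proved by showing that for each $\sigma^{\hatphi}$-analytic element $x \in \cM$ and each $s \in \bigcup_k 2^{-k}\bZ$, a sequence of averages of $x \rtimes \lambda(s)$ over orbits of $\sigma^{\tau_k}$ (whose periods $2^{-k}$ tend to $0$) can be made to converge in the w*-topology to $x \rtimes \lambda(s)$; this exploits the density of $\Gamma$ in $\bR$ and the fact that as $k \to \infty$ the perturbation becomes finer and finer.

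For (3), the crucial observation is that $\sigma_t^{\hatphi}$ commutes with $\mathrm{Ad}(e^{-\mathrm{i}t a_k})$ (since $a_k$ is $\sigma^{\hatphi}$-fixed), so $\sigma^{\hatphi}$ preserves $\cR_k$ setwise. Takesaki's theorem on conditional expectations then produces a unique normal $\hatphi$-preserving conditional expectation $\bE_k\colon \cR \to \cR_k$ commuting with $\sigma^{\hatphi}$. The $L_p$-extension $\bE_k^{(p)}$ is obtained by the usual functoriality of Haagerup $L_p$-spaces under state-preserving conditional expectations, and the convergence $\bE_k^{(p)} \xi \to \xi$ in $L_p$-norm follows from a standard density/uniform-boundedness argument: $\bE_k^{(p)}$ is contractive, $\bigcup_k \cR_k$ is w*-dense in $\cR$ by (2), and for elements of the form $D_{\hatphi}^{1/2p} y D_{\hatphi}^{1/2p}$ with $y$ eventually in some $\cR_{k_0}$ one has $\bE_k^{(p)}$ acting as the identity for $k \geq k_0$. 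The main obstacle is really the density step in (2), which requires a delicate almost-periodic averaging argument to approximate arbitrary generators of $\cR$ by elements of the centralizers; once that is secured the rest follows from standard modular theory.
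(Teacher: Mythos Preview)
The paper does not prove this theorem at all: it is simply quoted from \cite[Theorem 2.1, Example 5.8, Remark 6.1]{haagerupjungexu10reduction} and used as a black box. Your proposal is therefore not a comparison with the paper's proof but rather a sketch of the original Haagerup--Junge--Xu argument, and at that level it captures the essential structure correctly (Connes cocycle perturbation, periodicity of the perturbed modular flow, Takesaki's conditional expectation theorem, and density plus contractivity for the $L_p$ convergence).

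One point to revisit: your argument for the monotonicity $\cR_k\subset\cR_{k+1}$ in (2) is stated backwards. You write that ``any element fixed by $\mathrm{Ad}(e^{-\mathrm{i}ta_{k+1}})\circ\sigma_t^{\hatphi}$ is also fixed by $\mathrm{Ad}(e^{-\mathrm{i}ta_k})\circ\sigma_t^{\hatphi}$'', which would give $\cR_{k+1}\subset\cR_k$. The actual inclusion $\cR_k\subset\cR_{k+1}$ in HJX is more delicate: one shows that $b_k:=a_{k+1}-a_k$ lies in the centralizer of $\tau_k$ (hence in $\cR_k$), so that $\tau_{k+1}=\tau_k(e^{-b_k}\,\cdot)$ is a perturbation of $\tau_k$ by an element of its own centralizer, whence $\cR_k\subset\cR_{k+1}$. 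The heuristic ``$e^{-ita_k}$ is a power of $e^{-ita_{k+1}}$'' is misleading because the principal logarithm does not respect squaring across the branch cut, and in any case the implication you draw from it points the wrong way. The rest of your outline is sound.
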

\begin{theorem}[{\cite[Theorem 4.1, Proposition 4.3 and Theorem 5.1]{haagerupjungexu10reduction}}]\label{theorem:reduction theorem HJX-etension}
	Assume that $T:\cM\to \cM$ is a completely bounded normal map such that \begin{equation}\label{eq: Tm commutes with sigma}
	T\circ \sigma_t=\sigma_t\circ T, \qquad t\in \mathbb R.
	\end{equation}
	Then $T$ admits a unique completely bounded normal extension $\widehat{T}$ on $\cR$ such that 
	$$
	\|\widehat{T}\|_{cb}=\|T\|_{cb}\add{and} \widehat{T}(x\rtimes \lambda(g))=T(x)\rtimes \lambda(g), \qquad x\in \cM, g\in \Gamma.
	$$
	Moreover, $\widehat{T}$ satisfies the following properties:
	
	\emph{(1)} $\sigma_t^\hatphi\circ \widehat{T}=\widehat{T}\circ \sigma_t^{\hat{ \varphi}}, t\in \bR$;
	
	\emph{(2)} $\widehat{T}\circ \bE_k (x)= \bE_k \circ \widehat{T}(x)$ for all $x\in \cR$, where $(\bE_k)_k$ are  conditional expectations given in Theorem~\ref{theorem:reduction theorem HJX}. 
	
	Assume in addition that $T$ is completely positive and   $\varphi $-preserving. Then $\widehat{T}$ is also positive and $\tau_k\circ \widehat{T} =\tau_k$,
	$\hatphi\circ \widehat{T} =\hatphi$ where $\tau_k$ is the trace given in Theorem~\ref{theorem:reduction theorem HJX}. Moreover, the map 
	$$\widehat{T}^{(p)} : D_\hatphi ^{1/2p} x D_\hatphi ^{1/2p}\mapsto  D_\hatphi ^{1/2p}\widehat{T}(x)  D_\hatphi ^{1/2p},\quad
	x\in \mathcal R$$ 
	extends to a positive bounded maps on $\hlpr{p}$ for all $1\leq p\leq \infty$.
\end{theorem}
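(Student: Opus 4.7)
The plan is to construct $\widehat{T}$ explicitly via the spatial representation of the crossed product and to read off every asserted property from this construction, using the hypothesis $T\sigma_s = \sigma_s T$ as the single key ingredient.

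For the construction and the cb-norm identity, I would begin from the faithful action of $\cR = \cM\rtimes_\sigma\Gamma$ on $\ell_2(\Gamma;H)$ through $\pi(x) = \sum_{t\in\Gamma}\sigma_{-t}(x)\otimes e_{t,t}$ together with $1\otimes\lambda(\Gamma)$, and consider the ambient map $\widetilde{T} := T\otimes\mathrm{id}_{B(\ell_2(\Gamma))}$. Because $T$ commutes with every $\sigma_t$, a direct computation gives $\widetilde{T}(\pi(x)) = \sum_t\sigma_{-t}(T(x))\otimes e_{t,t} = \pi(T(x))$, while $\widetilde{T}$ fixes $1\otimes\lambda(g)$. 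Hence $\widetilde{T}$ preserves the $*$-algebra generated by $\pi(\cM)$ and $\lambda(\Gamma)$, and its restriction to $\cR$ yields a normal extension $\widehat{T}$ with $\widehat{T}(x\rtimes\lambda(g)) = T(x)\rtimes\lambda(g)$. The inequality $\|\widehat{T}\|_{cb}\leq\|T\otimes\mathrm{id}\|_{cb} = \|T\|_{cb}$ is immediate, the reverse comes from $T = \widehat{T}|_\cM$, and uniqueness is forced by w*-density of the algebraic crossed product. A crucial by-product of this construction is that $\widetilde{T}$ is an $(1\otimes B(\ell_2(\Gamma)))$-bimodule map, so $\widehat{T}(\lambda(g)y\lambda(h)) = \lambda(g)\widehat{T}(y)\lambda(h)$ for all $y\in\cR$ and $g,h\in\Gamma$; this will be reused below.

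For the commutation properties (1) and (2), I would verify both by direct computation on generators. The classical formula for the modular group of the dual state shows that $\sigma_s^{\hatphi}$ restricts to $\sigma_s^\varphi$ on $\cM$, so the hypothesis $T\sigma_s = \sigma_s T$ together with the fact that $\widehat{T}$ fixes $\lambda(g)$ forces $\widehat{T}\sigma_s^{\hatphi}$ and $\sigma_s^{\hatphi}\widehat{T}$ to agree on $x\rtimes\lambda(g)$, whence everywhere by w*-continuity. For (2), the conditional expectations $\bE_k$ are the unique $\hatphi$-preserving normal expectations onto the $\tau_k$-centralizer, and are characterized by their commutation with $\sigma^{\hatphi}$; once $\widehat{T}$ commutes with $\sigma^{\hatphi}$, the two maps $\widehat{T}\bE_k$ and $\bE_k\widehat{T}$ match on generators.

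Turning to the completely positive and $\varphi$-preserving case, $\widetilde{T} = T\otimes\mathrm{id}$ is CP, so $\widehat{T}$ inherits positivity. The identity $\hatphi\circ\widehat{T} = \hatphi$ is checked directly on generators from \eqref{eq:def of hatphi}, giving $\hatphi(\widehat{T}(x\rtimes\lambda(g))) = \varphi(T(x))\tau_\Gamma(\lambda(g)) = \varphi(x)\tau_\Gamma(\lambda(g))$. Since $e^{-a_k}$ lies in the weak closure of $\lambda(\Gamma)$, the bimodule property yields $\widehat{T}(e^{-a_k}\,\cdot\,) = e^{-a_k}\widehat{T}(\cdot)$, so $\tau_k(\widehat{T}(\cdot)) = \hatphi(e^{-a_k}\widehat{T}(\cdot)) = \hatphi(\widehat{T}(e^{-a_k}\,\cdot\,)) = \hatphi(e^{-a_k}\,\cdot\,) = \tau_k(\cdot)$. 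Finally, the bounded extension to $\hlpr{p}$ follows from standard Haagerup functoriality: the $\hatphi$-preservation gives a contraction on $\hlpr{1}$ via the predual, the definition already gives the $\hlpr{\infty} = \cR$ bound, and complex interpolation covers $1<p<\infty$. The step I expect to be the main obstacle is extracting the precise action of $\sigma^{\hatphi}$ on $\lambda(\Gamma)$ and its interaction with $e^{-a_k}$ from the dual-weight machinery; once those formal identities are in hand, the remainder is bookkeeping on generators.
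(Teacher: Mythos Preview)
The paper does not supply its own proof of this statement; it is quoted verbatim from \cite{haagerupjungexu10reduction} as background, so there is no in-paper argument to compare against. Your sketch is in line with the construction actually carried out in \cite{haagerupjungexu10reduction}: building $\widehat{T}$ as the restriction of $T\otimes\mathrm{id}_{B(\ell_2(\Gamma))}$ and deducing everything from the resulting $(1\otimes B(\ell_2(\Gamma)))$-bimodule property together with $T\sigma_s=\sigma_s T$.

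One place where your outline is thinner than it should be is property~(2). Knowing that $\widehat{T}$ commutes with $\sigma^{\hatphi}$ is not by itself enough to conclude $\widehat{T}\bE_k=\bE_k\widehat{T}$, because $\bE_k$ is \emph{not} the expectation onto the $\hatphi$-centralizer but onto the $\tau_k$-centralizer. The missing link is the Connes cocycle relation $\sigma_s^{\tau_k}(y)=e^{isa_k}\sigma_s^{\hatphi}(y)e^{-isa_k}$ with $e^{\pm isa_k}\in VN(\Gamma)$: combining (1) with your bimodule observation shows $\widehat{T}$ commutes with $\sigma^{\tau_k}$ as well. In \cite{haagerupjungexu10reduction} the group $\sigma^{\tau_k}$ is periodic and $\bE_k$ is realized as an explicit average of $\sigma_s^{\tau_k}$ over a period, whence the commutation with $\widehat{T}$. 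You correctly flagged this interaction with $e^{-a_k}$ as the main obstacle; once you insert the cocycle identity the argument closes.
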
	
\textbf{Convention.} In the sequel, for a given map $T:\mathcal M \to \mathcal M$, we will denote, by the same symbol $ T $, all the maps
$ \widehat{T}^{(p)} $ and   their extensions to the $L_p$-spaces in the above setting, whenever no confusion can occur.

\medskip

Let $H_\varphi$ be the GNS completion of $\mathcal M$ with respect to  $\varphi$ (we make the convention that $\|x\|_{H_\varphi}^2 = \varphi(x^* x)$ for $x\in \mathcal M$). Note that $x\mapsto xD_\varphi ^{1/2}$ yields an isometric isomorphism from $H_\varphi$ to $L_2 (\cM ,\varphi)$. Assume that there is   an isometric isomorphism 
\begin{equation}\label{eq:isom u}
U:H_\varphi\to   \bigoplus_{i\in I} H_i
\end{equation}    where $I$ is an index set and $H_i$ is a Hilbert space for each $i\in I$. 
Let $m:=(m(i))_i$ be a bounded sequence   with $m(i)\in B(H_i)$. As in  (\ref{eq:def of multiplier+}),  we may define the multiplier 
\begin{equation} \label{eq:nontracialoperatormultiplier}
T_m : H_\varphi \to H_\varphi, \quad
U(T_m x ) = (m(i)(Ux)(i))_{i\in I},\quad x\in H_\varphi.
\end{equation}

Applying the reduction theorems quoted above, we obtain the maximal inequalities for the nontracial setting. The a.u. convergence can be adapted in the setting of Haagerup's $L_p$-spaces, usually called Jajte's \emph{(bilaterally) almost sure} (b.a.s. and a.s. for short) convergence \cite{jajte91aus},
for which we also refer to \cite[Section 7.4]{jungexu07ergodic}.
\begin{definition}
	(1)	Let $x_n, x\in L_p(\cM,\varphi)$ with $1\leq p<\infty$. The sequence  $(x_n)$ is said to converge almost surely (a.s. in short) to $x$ if for every $\varepsilon>0$ there is a projection $e\in \cM$ and a family $(a_{n,k})\subset \cM$ such that
	$$ \varphi(e^{\perp})<\varepsilon\add{and} x_n-x=\sum_{k\geq 1}a_{n,k}D^{\frac{1}{p}},  \qquad \lim_{n\to \infty} \|\sum_{k\geq 1}\left(a_{n,k}e\right)\|_\infty=0, $$
	where the two series converge in norm in $L_p(\cM,\varphi)$ and $\cM$, respectively.
	
	(2)	Let $x_n, x\in L_p(\cM,\varphi)$ with $1\leq p<\infty$. The sequence  $(x_n)$ is said to converge bilateral almost surely (b.a.s. in short) to $x$ if for every $\varepsilon>0$ there is a projection $e\in \cM$ and a family $(a_{n,k})\subset \cM$ such that
	$$ \varphi(e^{\perp})<\varepsilon\add{and} x_n-x=D^{\frac{1}{2p}}\sum_{k\geq 1}a_{n,k}D^{\frac{1}{2p}},  \qquad \lim_{n\to \infty} \|\sum_{k\geq 1}\left(ea_{n,k}e\right)\|_\infty=0, $$
	where the two series converge in norm in $L_p(\cM,\varphi)$ and $\cM$, respectively.
\end{definition}
As we mentioned at the beginning of this section,  the  space $L_p(\cM, \varphi)$ isometrically coincides with the  tracial noncommutative $L_p$-space if the state $\varphi$ is tracial. In this case, one can easily verify that Jajte's a.s. (resp. b.a.s) convergence recovers Lance's a.u. (resp. b.a.u.) convergence defined in Definition~\ref{def:auconvergence}.
\medskip

We keep the notation introduced previously in this section. The following is our main result in this section, which generalizes the results for \textbf{(A1)} in Theorem \ref{theorem:criterion1} and Theorem \ref{theorem:criterion2}. Those for \textbf{(A2)} can be dealt with in a similar manner, and we leave the details to interesting readers.
\begin{theorem}\label{theorem:criterion2+: Haagerup case}
	Let  $\ell:=(\ell(i))_{i\in I}$ be a  sequence   with   $\ell(i)\in B(H_i)$ and write  $S_t=T_{e^{-t\ell}}$ for $t\in \bR_+$.. For any ${N\in \bN}$, let $m_N:=(m_N(i))_{i\in I}$ be a bounded sequence with $m_N(i)\in B(H_i)$. Assume that the following conditions hold:
	
	\emph{(i)} $(S_t)_{t\in \bR_+}$ extends to a semigroup of unital completely positive $\varphi$-preserving  maps on $\cM$ and for any $t\in \bR_+$, $r\in \bR$,
	$$	S_{t}\circ \sigma_{r}=\sigma_{r}\circ S_{t},  \add{ } \varphi(S_t(x)^*y)=\varphi(x^*S_t(y))\qquad x, y\in \cM.$$ 
	
	\emph{(ii)} $(T_{m_N})_{N\in \mathbb N}$ extends to a family of selfadjoint maps on  $ \cM$ with $$\gamma:=\sup_N\|T_{m_N}: \cM\to \cM\|<\infty.$$
	
	\emph{(iii)} There exist $\alpha >0$ and $\beta >0$ such that for  all $i\in I$ we have
	\begin{equation}\label{eq:A1 condtion+}
	\|\id_{H_i}-m_N (i)\|_{B(H_i)} \leq \beta \frac{\|\ell(i)\|_{B(H_i)}^\alpha}{2^N},\quad \|m_N (i)\|_{B(H_i)}\leq \beta \frac{2^N}{\|\ell(i)\|_{B(H_i)}^\alpha}. 
	\end{equation}
	Then there is a constant $c>0$ depending only on $p,\alpha,\beta, \gamma$ such that  for all ${2\leq  p<\infty}$,
	\begin{equation}\label{eq:max non tracial}
	\|(T_{m_N} x)_N\|_{L_p(\cM;\ell_\infty)} \leq c \|x\|_p  \qquad  x\in \hlp{p}.
	\end{equation} 
	and $T_{m_N} x\to x$ a.s. (resp. b.a.s.) as $N\to\infty$ for all $x\in L_p(\cM)$ with $2< p< \infty$ (resp. $p=2$).
	
	If in addition the maps $(T_{m_N})_{N\in \mathbb N}$ are unital completely positive, symmetric and $\varphi$-preserving on $\cM$ and commute with the modular automorphism group $\sigma$, i.e.
	\begin{equation*}
	T_{m_N}\circ \sigma_r=\sigma_r\circ T_{m_N} \qquad r\in \bR,  N\in \bN,
	\end{equation*} 
	then the above maximal inequality \eqref{eq:max non tracial} also holds for all  $1<  p<\infty$ and ${T_{m_N} x\to x}$ b.a.s. as $N\to\infty$ for all $x\in L_p(\cM)$. 
\end{theorem}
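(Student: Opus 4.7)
The plan is to adapt the proof of Theorem~\ref{theorem:criterion1} and Theorem~\ref{theorem:criterion2} in its case \textbf{(A1)} to the Haagerup setting. The two parts require different strategies: the first part is handled by a direct $L_2$-argument combined with interpolation, while the second part uses the full strength of Haagerup's reduction method (Theorem~\ref{theorem:reduction theorem HJX} and Theorem~\ref{theorem:reduction theorem HJX-etension}) to reduce to the tracial setting already covered by Theorem~\ref{theorem:criterion2+: tracial noncentral case}.

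For the second part (stronger hypotheses), the strategy is reduction via $\cR = \cM \rtimes_\sigma \Gamma$. Since $(T_{m_N})$ and $(S_t)$ commute with $\sigma$, Theorem~\ref{theorem:reduction theorem HJX-etension} yields extensions $\widehat{T_{m_N}}$ and $\widehat{S_t}$ to $\cR$ that preserve $\hatphi$, commute with the conditional expectations $\bE_k$ onto $\cR_k$, and restrict to completely positive $\tau_k$-preserving symmetric maps on $(\cR_k,\tau_k)$. Under the natural isomorphism $L_2(\cR,\hatphi) \cong L_2(\cM,\varphi) \otimes \ell_2(\Gamma)$, these extended maps are operator-valued multipliers with respect to the induced isometry, with length function and symbols $\ell(i)\otimes\id$, $m_N(i)\otimes\id$ that still satisfy \eqref{eq:A1 condtion+}. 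On each finite von Neumann algebra $(\cR_k,\tau_k)$, Rota's dilation for $\widehat{S_t}$ holds automatically by Lemma~\ref{lem:eg rota}~(1), so Theorem~\ref{theorem:criterion2+: tracial noncentral case}~(2) yields the maximal inequality on $L_p(\cR_k,\tau_k)$ for $1<p<\infty$. Passing to the limit $k\to\infty$ via the density of $\bigcup_k L_p(\cR_k,\tau_k)$ and the contractivity of $\bE_k$ on $\hlpr{p}$, we lift the inequality to $\hlpr{p}$, and restricting along the positivity-preserving embedding $\hlp{p}\hookrightarrow\hlpr{p}$ concludes.

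For the first part (weaker hypotheses), $T_{m_N}$ need not commute with $\sigma$, so Theorem~\ref{theorem:reduction theorem HJX-etension} is not directly applicable. Instead, we argue at the $L_2$-level directly in $L_2(\cM,\varphi)\cong H_\varphi$. Following Section~\ref{sec:proof ot criterion1}, decompose $T_{m_N} = T_{\phi_N} + P_{2^{-N/\alpha}}$ where $P_t$ is the subordinate Poisson semigroup of $(S_t)$. Condition \textbf{(A1)} yields $\|\phi_N(i)\|_{B(H_i)} \lesssim_{\alpha,\beta} \frac{2^{N/2\alpha}\|\ell(i)\|^{1/2}}{(2^{N/2\alpha} + \|\ell(i)\|^{1/2})^2}$, so Proposition~\ref{prop:general case for p=2+} gives the $L_2(\cM;\ell_2^{cr})$-bound for $(T_{\phi_N}x)_N$, which by Proposition~\ref{claim: sup+ < CRp norm} yields an $L_2(\cM;\ell_\infty)$-bound. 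Meanwhile, since $(P_t)$ commutes with $\sigma$, the reduction argument of the previous paragraph applies to $P_t$ alone: Proposition~\ref{prop:maximal inequality of semigroup} lifts to give the maximal inequality for $P_t$ on $\hlp{p}$ for $1<p<\infty$. Complex interpolation between the $L_2$-bound for $T_{\phi_N}$ and the uniform $L_\infty$-bound $\gamma$, combined with the Poisson maximal inequality, produces the $L_p(\cM;\ell_\infty)$-bound for all $2\leq p<\infty$.

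The pointwise convergence statement follows from the maximal inequalities by the standard density argument of Proposition~\ref{prop:Phix-x in Lp M C0}, now in Jajte's a.s./b.a.s. sense: on the dense subspace of elements of the form $D_\varphi^{1/2p} a D_\varphi^{1/2p}$ with $a\in U^{-1}(C_c(I;\oplus_i H_i))$, one checks $T_{m_N}a\to a$ in $\hlp{p}$-norm using \textbf{(A1)} and compactness of the support as in \eqref{eq:limit M int T psi }. The principal technical obstacles lie in the first part: first, verifying that the $L_p(\cM;\ell_\infty)$-machinery (notably the interpolation of Lemma~\ref{theorem:interpolation l infty} and the duality of Proposition~\ref{prop: prop of LMinfty and LMl1}) carries over to Haagerup $L_p$-spaces; second, controlling constants in the Poisson maximal inequality through the reduction argument so that they remain uniform as $k\to\infty$; and third, reconciling Jajte's a.s./b.a.s.\ convergence with the maximal inequality via the kernel representation $x = \sum a_k D^{1/p}$, which requires a more delicate approximation than the tracial case.
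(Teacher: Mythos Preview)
Your proposal is correct and follows essentially the same route as the paper: a direct $L_2$/interpolation argument for $p\geq 2$, and Haagerup's reduction to the tracial finite subalgebras $(\cR_k,\tau_k)$ combined with Theorem~\ref{theorem:criterion2+: tracial noncentral case} for $1<p<2$ under the stronger hypotheses. The one technical step you gloss over, and which the paper makes explicit, is the passage between the Haagerup norm on $L_p(\cR_k,\hatphi|_{\cR_k})$ and the tracial norm on $L_p(\cR_k,\tau_k)$: this uses the commutation $e^{a_k/2p}\widehat{T}_{m_N}(z)e^{a_k/2p}=\widehat{T}_{m_N}(e^{a_k/2p}ze^{a_k/2p})$ (since $e^{a_k/2p}$ lies in the subalgebra generated by $1\rtimes\lambda(\Gamma)$) together with the identification \eqref{eq:haggeruptracial}, which is what makes the limit $k\to\infty$ go through cleanly.
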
	
The proof of \eqref{eq:max non tracial} for the case of $p\geq 2$ is  a \emph{mutatis mutandis} copy of the arguments in previous sections. It suffices to note that the proof of Proposition \ref{prop:general case for p=2} and Proposition \ref{prop:general case for p=2+} remains valid in the setting of Haagerup's $L_p$-spaces if $T_{m_N}$ is selfadjoint on $\cM$. We leave the details to interesting readers. The reason why the 
previous arguments do not adapt to the case of $p< 2$ is that the weak interpolation (Theorem \ref{theorem:interpolation of dirksen}) fails for Haagerup's $L_p$-spaces. So we will provide a proof for this case using the reduction theorems. On the other hand, we will only prove the maximal inequalities.  The implication from maximal inequalities to a.s. (resp. b.a.s.) convergences, in particular the analogue of Proposition \ref{prop:Phix-x in Lp M C0} (2), remains valid on $\hlp{p}$ if we replace the one sided weak type inequality \eqref{eq:one sided weak} by the strong type one on $L_p(\cM;\ell_\infty ^c)$ for $p>2$ (resp. on $L_p(\cM;\ell_\infty)$ for $1<p\leq 2$) by using  \cite[Lemma 7.10]{jungexu07ergodic}.
\begin{proof}[Proof of \eqref{eq:max non tracial} for $1<p<2$ and completely positive $T_{m_N}$]
	The operator $U$ in \eqref{eq:isom u} induces an isometry on $L_2 (\mathcal R _k , \tau_k)$ given by 
	$$
	\begin{array}{rccc}
	\widehat{U} _k:& L_2 (\mathcal R _k , \tau_k)&\to& \bigoplus_{i\in I} H_i \otimes L_2( VN(\Gamma), \tau_\Gamma(e^{-a_k} \,\cdot) )\\
	&x\rtimes \lambda(g)&\mapsto& U(x)\otimes \lambda(g) .
	\end{array}
	$$
	Indeed, $\widehat{U} _k$  is an isometry since for any finite sum $\sum_g x_g\rtimes \lambda(g)\in \cR$, we have
	\begin{align*}
	&\quad \|\widehat{U} _k(\sum_g x_g\rtimes \lambda(g))\|_{\bigoplus_{i\in I} H_i \otimes L_2( VN(\Gamma), \tau_\Gamma(e^{-a_k} \,\cdot)  )}^2\\
	&=\sum_{g,h}\varphi(x_g^*x_h)\tau_\Gamma(e^{-a_k}\lambda(h-g))
	=\sum_{g,h}\varphi(\sigma_{-g}(x_g^*x_h))\tau_\Gamma(e^{-a_k}\lambda(h-g)) \\
	&=\sum_{g,h}\hatphi \left( \sigma_{-g}(x_g^*x_h)\rtimes \lambda((h-g)e^{-a_k} \right)
	=\|\sum_g x_g\rtimes \lambda(g)\|_{L_2 (\mathcal R _k , \tau_k)}^2.
	\end{align*}
	Take the Hilbert subspaces $H_i ' \subset  H_i \otimes L_2( VN(\Gamma), \tau_\Gamma(e^{-a_k} \,\cdot)  )$ so that we have ${\mathrm{ran} (\widehat{U} _k) = \bigoplus_i H_i '}$. Then $\widehat{U} _k : L_2 (\mathcal R _k , \tau_k) \to \bigoplus_i H_i '$ becomes an isometric isomorphism.  
	For any $x\rtimes \lambda(g)\in \cR_k$, the element $(T_{m_N} x)\rtimes \lambda(g)=\widehat{T}_{m_N} (x\rtimes \lambda(g))$ also belongs to $ \cR_k$ since $\widehat{T}_{m_N}\circ \bE_k=\bE_k \circ \widehat{T}_{m_N}$ by Theorem \ref{theorem:reduction theorem HJX-etension}.	
	So $\widehat{U}_k((T_{m_N} x)\rtimes \lambda(g))$ is well-defined and moreover, by (iii), we have
	\begin{align*}
	\widehat{U}_k((T_{m_N} x)\rtimes \lambda(g)) 
	&= U(T_{m_N}(x))\otimes \lambda(g)
	=\left(  m U(x) \right) \otimes \lambda(g)
	\\
	&=(m\otimes \mathrm{id}) \widehat{U}_k(  x \rtimes \lambda(g)) .
	\end{align*}
	In particular $m_N(i)\otimes\mathrm{id}$ sends $H_i '$ into $H_i '$, and $\widehat{T}_{m_N} |_{\mathcal R _k}$ is an operator-valued multiplier in the sense of \eqref{eq:def of multiplier+} (recall that $(\cR_k, \tau_k)$ is tracial). It is straightforward to verify that $\widehat{T}_{m_N} |_{\mathcal R _k}$ is unital completely positive $\tau_k$-preserving on $\mathcal R _k$, and therefore it extends to a positive contraction on $L_p(\cR_k, \tau_k)$. Similarly, the extension  $\widehat{S_t}:=\widehat{T}_{e^{-t\ell}}$ also gives rise to a semigroup of unital completely positive maps on $\cR$. It is easy to check that $\widehat{S}_t$  is symmetric relative to $\widehat{\varphi}$. The restriction $\widehat{S_t}|_{\cR_k}$ is $\tau_k$-preserving and symmetric relative to $\tau_k$ since  $$\widehat{S}_{t} ((x\rtimes \lambda(g)) e^{-a_k}) = (S_t x) \rtimes \lambda(g) e^{-a_k} =(\widehat{S}_t (x\rtimes \lambda(g)) )e^{-a_k} $$ for all $x\in \mathcal M$ and $g\in \Gamma$. Thus applying Theorem~\ref{theorem:criterion2+: tracial noncentral case} to $\widehat{T}_{m_N} |_{\mathcal R _k}$, we obtain
	\begin{equation}\label{eq:maximal for tracial case Rk}
	\|{\sup_N}^+ \ \widehat{T}_{m_N}(x)\|_{L_p(\cR_k,\tau_k)}\leq c\|x\|_{L_p(\cR_k,\tau_k)},\quad 
	x\in L_p(\cR_k,\tau_k),
	\end{equation}
	where $c$ is a constant only depending on $\alpha, \beta, p$.
	
	%
	%
	
	In the following we consider  $x\in L_p(\cM,\varphi)_+$. Since $L_p(\cM, \varphi)$ can be naturally embedded into $L_p(\cR, \hatphi)$,   we regard $x$ as an element in $L_p(\cR, \hatphi)_+$.  
	By Theorem~\ref{theorem:reduction theorem HJX-etension}, we see that $ \widehat{T}_{m_N}\circ \bE_k = \bE_k \circ \widehat{T}_{m_N}$ and hence  $\widehat{T}_{m_N}^{(p)}\circ \bE_k^{(p)} = \bE_k^{(p)} \circ \widehat{T}_{m_N}^{(p)} $.  
	By Theorem \ref{theorem:reduction theorem HJX}, we have
	$$\lim _{k\to \infty}  \widehat{T}^{(p)}_{m_N}(\bE_k^{(p)}(x))=\lim _{k\to \infty}  \bE_k^{(p)}(\widehat{T}^{(p)}_{m_N}(x))={T}_{m_N}^{(p)}(x)   \quad \text{in} \quad L_p(\cR, \hatphi).$$
	Thus for any $M>0$,
	\begin{equation}\label{eq:sup martingale}
	\lim_{k\to \infty}\|{\Sup_{1\leq N\leq M}}\ \widehat{T}^{(p)}_{m_N}(\bE_k^{(p)}(x))\|_{L_p (\cR _k, \widehat \varphi |_{\cR _k})}=\|{\Sup_{1\leq N\leq M}}\ \widehat{T}_{m_N}^{(p)}(x)\|_{L_p(\cR, \hatphi)}.
	\end{equation} 
	Without loss of generality, we assume that $x=\ddp{y}$ with some $y\in \mathcal R_+$. By the correspondence in \eqref{eq:haggeruptracial}, we get 
	$$\|{\Sup_{1\leq N\leq M}}\ \widehat{T}^{(p)}_{m_N}(\bE_k^{(p)}(x))\|_{L_p (\cR _k, \widehat \varphi |_{\cR _k})}=\|{\Sup_{1\leq N\leq M}}\ \aap{\widehat{T}_{m_N}(\bE_k(y))}\|_{L_p(\cR_k, \tau_k)}.$$
	Note that $e^{\frac{a_k}{2p}}$ belongs to the subalgebra generated by $1\rtimes \lambda(\Gamma)$. 
	For any element of the form $z=\sum_{g\in \Gamma} z(g)\rtimes \lambda(g)$ in $ \cR_k$, we have
	\begin{equation*}
	e^{\frac{a_k}{2p}}\widehat{T}_{m_N}(z) e^{\frac{a_k}{2p}}=\sum_{g\in \Gamma} T_{m_N}(z(g))\rtimes (e^{\frac{a_k}{2p}}\lambda(g)e^{\frac{a_k}{2p}})=\widehat T_{m_N}({e^{\frac{a_k}{2p}}ze^{\frac{a_k}{2p}}}).
	\end{equation*}
	So the previous equality reads 
	$$\|{\Sup_{1\leq N\leq M}}\ \widehat{T}^{(p)}_{m_N}(\bE_k^{(p)}(x))\|_{L_p (\cR _k, \widehat \varphi |_{\cR _k})}=\|{\Sup_{1\leq N\leq M}}\ \widehat{T}_{m_N}(\aap{ \bE_k(y) })\|_{L_p(\cR_k, \tau_k)}.$$
	Together with \eqref{eq:maximal for tracial case Rk}, \eqref{eq:sup martingale} and \eqref{eq:haggeruptracial} we obtain
	\begin{align*}
	\|{\Sup_{1\leq N\leq M}}\ \widehat T_{m_N}^{(p)}(x)\|_{L_p(\cR, \hatphi)}&\leq \lim_{k\to\infty }\|\aap{ \bE_k(y) }\|_{L_p(\cR_k, \tau_k)}\\
	&= \lim_{k\to\infty }\|D_{\widehat{\varphi}}^{1/2p} \mathbb E _k (y) D_{\widehat{\varphi}}^{1/2p} \|_{L_p (\cR _k, \widehat \varphi |_{\cR _k})}
	\\
	&= \lim_{k\to\infty }\|\bE_k^{(p)}(x)\|_{L_p (\cR _k, \widehat \varphi |_{\cR _k})}\\ 
	&= \|x\|_{L_p(\cR, \hatphi)}.
	\end{align*} 
	The proof is complete.
\end{proof}	
\begin{remark}\label{rem:au on m}
	Lance's notion of a.u. convergence still makes sense for $p=\infty$ in the nontracial setting. The above theorem   also implies that $T_{m_N} x\to x$ { a.u. as }$N\to\infty$ for all $x\in  \cM $, according to \cite[Lemma 7.13]{jungexu07ergodic}.
\end{remark}

\chapter{Convergences of Fourier series on  quantum groups}\label{sec:multipliers on CQG}
In this chapter we will construct some summation methods satisfying the pointwise convergence for groups with suitable approximation properties. As in the previous chapter, only the framework of the form \eqref{eq:definition of multiplier operoter} (or the form \eqref{eq:def of multiplier+}) is involved in the essential part of our arguments.
Since the approximation properties of discrete quantum groups have drawn wide interest in recent years, we would like to present the work in a more general setting, that is, Woronowicz's \emph{compact quantum groups}. 

\section{Fourier multipliers on compact quantum groups}
Before proving the main results, let us collect some preliminary facts on Fourier multipliers on compact quantum groups. We refer to \cite{woronowicz87matrix,woronowicz98compact,timmermann08qgbook} for a complete description of compact quantum groups. In this paper, it suffices to recall that each compact quantum group $\mathbb G$ is an object corresponding to a distinguished von Neumann algebra denoted by $L_\infty (\mathbb G )$, a unital normal $*$-homomorphism $\Delta:L_\infty (\mathbb G ) \to L_\infty (\mathbb G ) \overline{\otimes} L_\infty (\mathbb G )$ (usually called the \emph{comultiplication}), and a normal faithful state $h:L_\infty (\mathbb G ) \to \mathbb C$ (usually called the \emph{Haar state}) with the following properties. First, the Haar state $h$ is invariant in the sense that
$$(h\otimes\id)\circ\Delta(x)=h(x)1=(\id\otimes h)\circ\Delta(x), \qquad   x\in L_\infty (\mathbb G ).$$
Second, a unitary $n\times n$ matrix $u=(u_{ij})_{i,j=1}^n$ with coefficients $u_{ij} \in  L_\infty (\mathbb G )$ is called an \emph{$n$-dimensional unitary representation} of $\bG$ if for any $1\leq i,j\leq n$ we have
$$\Delta(u_{ij})=\sum_{k=1}^nu_{ik}\otimes u_{kj}.$$
We denote by $\mathrm{Irr}(\bG)$ the collection of unitary equivalence classes of  irreducible  representations of $\bG$, and we fix a  representative $u^{(\pi)}$ on a Hilbert space $H_\pi$ for each  class $\pi\in \Irr(\bG)$ and denote by $d_\pi$ its dimension. In particular, we denote by $\mathbf{1}\in \Irr(\bG)$ the trivial representation, i.e. $u^{\mathbf{1}}=1_{\bG}$ with dimension 1.
Then the space $$\Pol(\bG)=\text{span}\{\uu{ij}{\pi}: \uu{ }{\pi}=(\uu{ij}{\pi})_{i,j=1}^{d_\pi} , \pi\in \mathrm{Irr}(\bG) \}$$
is a w*-dense $*$-subalgebra of $L_\infty (\mathbb G )$.  
We denote by  $L_p(\bG)$ the Haagerup noncommutative $L_p$-spaces associated with    $ (L_\infty(\bG),h)$. 
Last, there is a linear antihomomorphism $S$ on $\Pol(\bG)$, called the \textit{antipode} of $\bG$, determined by
$$S(\uu{ij}{\pi})=(\uu{ji}{\pi})^*\qquad \pi \in \Irr(\bG), 1\leq i, j \leq d_\pi.$$
The antipode $S$ has the polar decomposition
$S=R\circ \tau_{-i/2}= \tau_{-i/2}\circ R$
where $R$ is a $*$-antiautomorphism on  $L_\infty(\bG)$ and $(\tau_t)_{t\in \bR}$ is a one-parameter group  of $*$-automorphisms on $\Pol(\bG)$ (called the \emph{scaling group}). There exists a distinguished sequence of strictly positive matrices  $Q_\pi\in B(H_\pi)$ with $\pi\in \Irr(\bG)$ implementing the scaling group $(\tau_t)_{t\in \bR}$ and the modular automorphism group $(\sigma_t)_{t\in \bR}$ with respect to $h$ on  $L_\infty(\bG)$, and indeed for all $z\in\mathbb C$ we have (see for instance the computations in \cite[Section 2.1.2 and Section 3]{wang17sidon}),
\begin{equation}\label{eq:matrix q and mod gp}
	(\tau_z\otimes \mathrm{id})(u^{(\pi)})=Q_\pi ^{\mathrm{i}z}u^{(\pi)}Q_\pi ^{-\mathrm{i}z},\quad
	(\sigma_z\otimes \mathrm{id})(u^{(\pi)})=Q_\pi ^{\mathrm{i}z}u^{(\pi)}Q_\pi ^{\mathrm{i}z}. 
\end{equation} 
We say that $\bG$ is of \emph{Kac type} if $Q_\pi=\id_{H_\pi}$ for all $\pi\in \Irr(\bG)$. 
In other  words, $\bG$ is of Kac type if and only if the Haar state $h$ is tracial.

Denote by $\ell_{\infty}(\widehat{\bG})=\bigoplus_{\pi \in \Irr(\bG)} B(H_\pi)$ the direct sum of von Neumann algebras $B(H_\pi)$ and $c_c(\widehat{\bG})$ be the finite direct sum in $\bigoplus_{\pi \in \Irr(\bG)} B(H_\pi)$, i.e. $m\in c_c(\widehat{\bG}) $ if there are only finite many $\pi$ such that $m(\pi)\neq 0$. The notation $\widehat \bG$ used above in fact corresponds to the dual discrete quantum group of $\bG$ (see e.g.  \cite{van98alge}). We will not involve the detailed quantum group structure of $\widehat{\bG}$. 
For a linear functional $\varphi$ on $\Pol(\bG)$, we define the \emph{Fourier transform} by 
$$\cF(\varphi)(\pi)=(\varphi\otimes \id)(( u^{(\pi)})^*), \qquad \pi \in \Irr(\bG).$$
This induces the \textit{Fourier transform}  $\cF:   \Pol(\bG)  \to c_c(\widehat{\bG}) $, given by $$\cF(x)(\pi)=(h(\cdot x)\otimes \id)\left( (\uu{}{\pi})^*\right)\in B(H_\pi) , \qquad \pi \in \Irr(\bG) .$$ 
Note that $\cF:   \Pol(\bG)  \to c_c(\widehat{\bG}) $ is bijective. Obviously there exists a Hilbert space completion of $c_c(\widehat{\bG}) $, denoted by $\ell_2 (\widehat{\bG}) $, such that $\cF$ extends to an isometric isomorphism $\cF:   H_h \to \ell_2(\widehat{\bG}) $, where $H_h$ denotes the GNS completion of $L_\infty (\bG) $ with respect to $h$ as in Section \ref{sect:nontracial matrix} (see e.g. \cite{podlesworonowicz90quantum} and \cite[Proposition 3.2]{wang17sidon}), more precisely,
 \begin{equation}\label{eq:fourier isom}
 h(x^* x) = \sum_{\pi\in \Irr(\bG)} \mathrm{Tr} (Q_\pi)\mathrm{Tr} (Q_\pi (\cF (x) (\pi))^* \cF (x) (\pi)),\quad x\in H_h,
 \end{equation}	
where $\mathrm{Tr}$ denotes the usual (unnormalized) trace on matrices.
Then  $\cF$ is consistent with our framework in Section \ref{sect:nontracial matrix}.
For a symbol $m=(m(\pi))_{\pi}\in \ell_\infty(\widehat \bG)$, we can define a multiplier $T_m$ by \eqref{eq:nontracialoperatormultiplier}, i.e.
$$T_m(x)=\cF^{-1}(m\cdot \cF(x)),\quad x\in \Pol(\bG),$$
which extends to a bounded map on $H_h$.
This coincides with the multipliers considered in \cite{jungeneufangruan09replcqg,daws12cpmultiplier,wang17sidon}. By \eqref{eq:matrix q and mod gp} and the definition of $T_m$, we have (see e.g. \cite[Lemma 3.6]{wang17sidon}) 
\begin{equation}\label{eq:sigma m}
	\sigma_{ r}\circ T_{m} \circ \sigma_{-r} = T_{Q^{ir}mQ^{-ir}} , \quad r\in \mathbb R,\quad \text{where }Q=\oplus_\pi Q_\pi. 
\end{equation}

\begin{remark}\label{example: VN(G) as example of bG}	
	Let $\Gamma$ be a discrete group. 
	We may define a  comultiplication $\Delta$ on the group von Neumann algebra $\vN$ by
	$$\Delta(\lambda(g))=\lambda(g)\otimes\lambda(g), \qquad g\in \Gamma.$$
	The triple $(\vN, \Delta, \tau)$ carries a compact quantum group structure $\bG$ satisfying the aforementioned properties, where we take $L_\infty(\bG) =\vN $ and $h=\tau$. In this case we usually denote $\bG=\widehat{\Gamma}$. We remark that in our language of quantum groups, $\Gamma$ coincides with the dual discrete quantum group   $\widehat{\bG}$.
	The set of unitary equivalence classes of irreducible representations $\Irr(\bG)$ can be indexed by $\Gamma$, so that for every $g\in \Gamma$ the associated representation is of dimension $1$ and is given by  $u^{(g)}=\lambda(g)\in \vN$. Therefore, the set $I$ defined above becomes $\Gamma$, and the Fourier transform sends $\lambda(g)$ to $\delta_g$. 
	Hence, for any $m\in \ell_\infty(\Gamma)$, the associated multiplier is 
	$$T_m: \lambda(g) \mapsto   m(g) \lambda(g), \quad g\in  \Gamma .$$
	This notion coincides with the usual one on groups mentioned in the introduction.
\end{remark}

A straightforward computation shows the following proposition  (see for instance \cite[Section 3]{wang17sidon}).
\begin{prop}\label{prop: prop of Fourier transfrorm} Let $S$ be the antipode and $\Phi$  be a functional  on $\Pol(\bG)$.
	
	\emph{(1)} $\cF({\Phi\circ S^{-1}})(\pi)=(\Phi\otimes \id )(u^{(\pi)}).$
	
	\emph{(2)} $\cF({\Phi^*\circ S^{-1}})(\pi)=\left(\cF(\Phi)(\pi)\right)^*$ where $\Phi^*(x)=\overline{\Phi(x^*)}$ for any $x\in \Pol(\bG)$.
	
	\emph{(3)} $\cF((\id\otimes \Phi)\Delta(x))= \cF(\Phi\circ S^{-1}) \cdot \cF(x)$.
\end{prop}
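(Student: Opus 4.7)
The plan is to prove the three identities by a direct bookkeeping with the definitions of the Fourier transform, the antipode, and the comultiplication; as noted right before the statement, no analytic input is needed, which is why the computation is called straightforward.

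First I would dispatch (1). Starting from $\cF(\Phi\circ S^{-1})(\pi)=((\Phi\circ S^{-1})\otimes \id)((u^{(\pi)})^*)$, the defining identity $S(\uu{ij}{\pi})=(\uu{ji}{\pi})^*$ inverts to $S^{-1}((\uu{ij}{\pi})^*)=\uu{ji}{\pi}$, and reading off matrix entries gives $\cF(\Phi\circ S^{-1})(\pi)_{ij}=\Phi(\uu{ji}{\pi})$, which matches the $(i,j)$ entry of $(\Phi\otimes \id)(u^{(\pi)})$ under the matrix conventions in use. Part (2) is then immediate: using $\Phi^*(x)=\overline{\Phi(x^*)}$ together with (1), both sides evaluated at the $(i,j)$ entry reduce to $\overline{\Phi((\uu{ji}{\pi})^*)}$, the left side via the $S^{-1}$ computation above and the right side via the identity that the $(i,j)$ entry of $A^*$ is $\overline{A_{ji}}$.

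Part (3) is the main identity and is where the comultiplication enters. Applying $\Delta$ to a matrix coefficient gives $\Delta(\uu{ij}{\pi})=\sum_k \uu{ik}{\pi}\otimes \uu{kj}{\pi}$, so that $(\id\otimes\Phi)\Delta(\uu{ij}{\pi})=\sum_k \uu{ik}{\pi}\,\Phi(\uu{kj}{\pi})$ is literally the $(i,j)$ entry of the matrix product $u^{(\pi)}\cdot (\Phi\otimes\id)(u^{(\pi)})$. Applying $\cF$ and using the orthogonality of matrix coefficients under $h$ encoded in \eqref{eq:fourier isom} shows that the Fourier transform of $(\id\otimes \Phi)\Delta(\uu{ij}{\pi})$ at $\rho\in\Irr(\bG)$ vanishes unless $\rho=\pi$ and, at $\rho=\pi$, equals $\cF(\uu{ij}{\pi})(\pi)$ left-multiplied by $(\Phi\otimes\id)(u^{(\pi)})=\cF(\Phi\circ S^{-1})(\pi)$ by (1). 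Extending linearly to all $x\in\Pol(\bG)$ yields the claimed identity.

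There is no genuine obstacle here; the only subtlety is the bookkeeping forced by the fact that $\cF$ is defined via $(u^{(\pi)})^*$ rather than $u^{(\pi)}$, so transpositions and complex conjugations must be tracked carefully at each step to ensure conventions remain consistent throughout, in particular in identifying $(\Phi\otimes\id)(u^{(\pi)})$ with $\cF(\Phi\circ S^{-1})(\pi)$ in (1) and then reusing this identification in (3).
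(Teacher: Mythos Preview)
Your proposal is correct and follows exactly the direct matrix-coefficient computation that the paper calls ``straightforward'' and defers to \cite[Section 3]{wang17sidon}. One minor slip in your bookkeeping for (1): under the paper's conventions the $(i,j)$ entry of $((\Phi\circ S^{-1})\otimes\id)((u^{(\pi)})^*)$ is $(\Phi\circ S^{-1})((\uu{ji}{\pi})^*)=\Phi(\uu{ij}{\pi})$, not $\Phi(\uu{ji}{\pi})$, and this is precisely the $(i,j)$ entry of $(\Phi\otimes\id)(u^{(\pi)})$; your indices in the intermediate step are transposed, but the conclusion and the rest of the argument are unaffected.
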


\begin{remark}\label{remark:multiplier and state}
	Let $\Phi$ be a functional on $\Pol(\bG)$ such that $\cF(\Phi\circ S^{-1}) =m $.
	In other words, by Proposition~\ref{prop: prop of Fourier transfrorm}~(1),  $$m(\pi)=[\Phi(\uu{ij}{\pi})]_{ij}.$$  By Proposition~\ref{prop: prop of Fourier transfrorm}~(3), we have 
	\begin{equation}
		T_{m}(x)=\cF^{-1}\left(\cF( \Phi\circ S^{-1}) \cdot \cF(x)\right)=(\id\otimes \Phi)\Delta(x).
	\end{equation}
	Therefore, we have the following properties.
	
	(1) $T_m$ is a unital completely positive map on $\Pol(\bG)$ if and only if $\Phi$ is a state on $\Pol(\bG)$. 
	
	(2) $T_m$ is selfadjoint if and only if $\Phi^*=\Phi$. Moreover $m(\pi)^*=m(\pi)$ if and only if $\Phi^*\circ S=\Phi$.
	
	(3) We have
	\begin{align*}
		\|m(\pi)\|_{B(H_\pi)}&= \|\uu{ }{\pi}m(\pi)\|_{L_\infty(\bG)\otimes B(H_\pi)}
		= \left\|\left[ \sum_k  \uu{ik}{\pi} \Phi(\uu{kj}{\pi}) \right]_{ij}\right\|_{L_\infty(\bG)\otimes B(H_\pi)}\\&=\left\|\left[T_m(\uu{ij}{\pi})\right]_{ij}\right\|_{L_\infty(\bG)\otimes B(H_\pi)}\leq \|T_m\|_{cb}.
	\end{align*}
\end{remark}


\section[Maximal inequalities and convergence of Fourier series]{Maximal inequalities and pointwise convergence of Fourier series}
In view of our study in Section \ref{sect:nontracial matrix}, to study Fourier multipliers on compact quantum groups, it is essential to consider the case where $T_m$ commutes with the modular automorphism group $\sigma$.
\begin{proposition}\label{prop:exist multipliers commute with Q}
	Let $\Phi$ be a functional on $\Pol(\bG)$ with $\Phi^*=\Phi$ and write ${\varphi(\pi)=[\Phi(u_{ij}^{(\pi)})]_{i,j}}$ for $\pi\in\mathrm{Irr}(\bG)$.  Then the element  
	$$ m(\pi) =\lim_{a\to \infty}\frac{1}{2a}\int_{-a}^{a} Q_\pi^{ir}\tilde{\varphi}(\pi)Q_\pi^{-ir} dr,\quad
	\text{where } \tilde{\varphi}(\pi) = [\frac{1}{2}(\Phi+\Phi\circ R )(u_{ij}^{(\pi)})]_{i,j}$$ is well-defined in $B(H_\pi )$, and  satisfies:
	
	\emph{(i)} $m(\pi)$ is a selfadjoint matrix for any $\pi\in \Irr(\bG)$; 
	
	\emph{(ii)}  for any  $t\in \bR$, $\sigma_{t}\circ T_{m}=T_{m}\circ \sigma_{t}$; 
	
	\emph{(iii)} for any $\pi\in\Irr(\bG)$, we have 
	$$\|m(\pi)\|_{B(H_\pi)}\leq \|\varphi(\pi)\|_{B(H_\pi)},$$
	$$ \|\id_{H_\pi}-m(\pi)\|_{B(H_\pi)}\leq \|\id_{H_\pi}-\varphi(\pi)\|_{B(H_\pi)} ; $$
	
	\emph{(iv)} If $T_{\varphi}$ is unital completely positive on $\Pol(\bG)$,  so is $T_m$.
\end{proposition}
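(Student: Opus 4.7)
The plan is to first establish the limit defining $m(\pi)$ via the spectral decomposition of $Q_\pi$, then reduce all four assertions to one clean formula expressing $m(\pi)$ as the real part of an averaged matrix. First I would diagonalise $Q_\pi=\sum_k \lambda_k e_k$ with $\lambda_k>0$ and pairwise orthogonal spectral projections $e_k$, so that $r\mapsto Q_\pi^{ir}(\cdot)Q_\pi^{-ir}$ is a unitary action of $\bR$ on $B(H_\pi)$. The Riemann average $\frac{1}{2a}\int_{-a}^{a}Q_\pi^{ir}XQ_\pi^{-ir}\,dr$ converges in norm to $P(X):=\sum_k e_kXe_k$, the block-diagonal part of $X$ in the $Q_\pi$-eigenbasis; hence $P$ is a unital, $*$-preserving, completely positive, contractive projection onto the commutant of $Q_\pi$ in $B(H_\pi)$, and it commutes with left or right multiplication by any power $Q_\pi^z$.

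Next I would compute $[\Phi(R(u_{ij}^{(\pi)}))]_{i,j}$ explicitly. Using the polar decomposition $R=S\circ\tau_{i/2}$, the antipode formula $S(u_{kl}^{(\pi)})=(u_{lk}^{(\pi)})^{*}$, the scaling identity \eqref{eq:matrix q and mod gp}, and the hermiticity $\Phi(x^{*})=\overline{\Phi(x)}$ coming from $\Phi^{*}=\Phi$, a direct index computation yields
\[ [\Phi(R(u_{ij}^{(\pi)}))]_{i,j} \;=\; Q_\pi^{-1/2}\,\varphi(\pi)^{*}\,Q_\pi^{1/2}. \]
Plugging this into the definition of $\tilde{\varphi}(\pi)$ and applying $P$, and using that $P$ absorbs conjugation by $Q_\pi^{\pm 1/2}$ on its range, produces the key identity
\[ m(\pi) \;=\; \tfrac{1}{2}\bigl(n(\pi)+n(\pi)^{*}\bigr), \qquad n(\pi):=P(\varphi(\pi)). \]

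From this formula the four claims become short. Claim (i) is immediate since $m(\pi)$ is manifestly the real part of $n(\pi)$. Claim (ii) follows because $m(\pi)$ lies in the range of $P$, hence commutes with $Q_\pi$, so $Q^{ir}mQ^{-ir}=m$, and then \eqref{eq:sigma m} gives $\sigma_t\circ T_m=T_m\circ\sigma_t$. For (iii), the contractivity and unitality of $P$ give $\|n(\pi)\|\le\|\varphi(\pi)\|$ and $\|\id_{H_\pi}-n(\pi)\|=\|P(\id_{H_\pi}-\varphi(\pi))\|\le\|\id_{H_\pi}-\varphi(\pi)\|$; taking real parts only shrinks norms, which transfers both bounds from $n(\pi)$ to $m(\pi)$. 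For (iv), if $T_{\varphi}$ is UCP then $\Phi$ is a state by Remark~\ref{remark:multiplier and state}(1); since $R$ is a unit-preserving $*$-antiautomorphism, $\Phi\circ R$ is again a state, hence so is $\tilde{\Phi}$, and each $\tilde{\Phi}\circ\tau_r$ is a state because $\tau_r$ is a $*$-automorphism. The matrix-level convergence from step one then shows that the states $\Psi_a:=\frac{1}{2a}\int_{-a}^{a}\tilde{\Phi}\circ\tau_r\,dr$ converge pointwise on $\Pol(\bG)$ to a linear functional $\Psi$ whose Fourier coefficients are exactly $m$; since positivity and unitality pass to pointwise limits, $\Psi$ is a state and $T_m$ is UCP.

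The only real obstacle is the index-tracking in step two: getting the order of $R=S\circ\tau_{i/2}$ right and matching the triple sum $\sum_{k,l}(Q_\pi^{-1/2})_{ik}(u_{lk}^{(\pi)})^{*}(Q_\pi^{1/2})_{lj}$ to the matrix product $Q_\pi^{-1/2}\varphi(\pi)^{*}Q_\pi^{1/2}$ needs care because of the transposition introduced by the antipode. Once that identity is in place, the rest of the argument is essentially formal.
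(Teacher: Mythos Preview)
Your proof is correct and takes a genuinely different, more elementary route than the paper. The paper never computes $[\Phi(R(u_{ij}^{(\pi)}))]_{i,j}$ explicitly; instead it invokes an abstract $*$-antiautomorphism $\widehat{R}$ on $\ell_\infty(\widehat{\bG})$ satisfying $(R\otimes\widehat{R})\mathrm{U}=\mathrm{U}$ (due to Kustermans) to transfer the norm bounds from $\varphi$ to $\tilde{\varphi}$, and then, after reducing to $\Phi=\Phi\circ R$, obtains (i) via the functional identity $\Psi\circ S=\Psi$ and Proposition~\ref{prop: prop of Fourier transfrorm}. Your direct matrix computation $[\Phi\circ R(u_{ij}^{(\pi)})]_{i,j}=Q_\pi^{-1/2}\varphi(\pi)^{*}Q_\pi^{1/2}$ together with the block-diagonal projection $P$ collapses everything into the single formula $m(\pi)=\mathrm{Re}\bigl(P(\varphi(\pi))\bigr)$, from which (i) and (iii) are immediate. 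This is cleaner and entirely self-contained, at the modest cost of the index-tracking you flagged; the paper's route avoids that bookkeeping but imports an external structural result. For (ii) and (iv) the two arguments are essentially the same.
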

\begin{proof}
	The construction is implicitly given in the proof of \cite[Lemma 5.2]{casperaskalski15Haagerup} and \cite[Proposition 7.17]{dawsfimaskalskiwhit16haagerup}.

	The element $m$ is well-defined by the ergodic theorem since $B(H_\pi)$ is finite-dimensional.
	Note that there is a $*$-antiautomorphism $\widehat R$ of $\ell_\infty (\widehat{\mathbb G})$ with $\widehat{R}^2=\id$ such that $(R\otimes \widehat{R})\mathrm{U}=\mathrm{U}$  where 
	$\mathrm{U}=\oplus_{\pi\in \Irr(\bG)} u^{(\pi)}$ is regarded as an element in $L_\infty(\bG)\overline \otimes \ell_\infty(\widehat \bG)$ (see \cite[Proposition 7.2]{kustermans01LCQGuniversal}), and therefore for any functional $\Upsilon$ on $L_\infty(\bG)$, the following inequality holds
	\begin{align*}
		\|(\Upsilon \circ R\otimes \id)(\mathrm{U})-\mathbf{1}_{\ell_{\infty}(\widehat \bG)}\|_{\ell_{\infty}(\widehat \bG)} &\leq 
		\|(\Upsilon \otimes \widehat{R})(\mathrm{U})-\widehat R (\mathbf{1}_{\ell_{\infty}(\widehat \bG)})\|_{\ell_{\infty}(\widehat \bG)} \\
		&\leq \|(\Upsilon\otimes \id)(\mathrm{U})-\mathbf{1}_{\ell_{\infty}(\widehat \bG)}\|_{\ell_{\infty}(\widehat \bG)},  
	\end{align*} 
	and 
	$$\|(\Upsilon \circ R\otimes \id)(\mathrm{U})\|_{\ell_{\infty}(\widehat \bG)} \leq \|(\Upsilon\otimes \id)(\mathrm{U})\|_{\ell_{\infty}(\widehat \bG)}. $$
	In particular, for  $\pi \in \Irr(\bG)$, taking $\Upsilon(\uu{ij}{\alpha})=\delta_{\alpha\pi}\Phi(\uu{ij}{\pi})$ for all $\alpha\in \Irr(\bG)$, we get
	$$\|\widetilde{\varphi}(\pi)\|_{B(H_\pi)}\leq \|\varphi(\pi)\|_{B(H_\pi)},$$
	and taking $\Upsilon(\uu{ij}{\alpha})=\begin{cases}
	\Phi(\uu{ij}{\pi}),  &\text{ if } \alpha=\pi\\
	\delta_{ij},  &\text{ if } \alpha\neq\pi
	\end{cases}$,
	we get
	$$ \|\id_{H_\pi}-\widetilde \varphi(\pi)\|_{B(H_\pi)}\leq \|\id_{H_\pi}-\varphi(\pi)\|_{B(H_\pi)}, \quad \pi\in\Irr(\bG).$$
	Then (iii) follows from the definition of $m$. Also, by Remark \ref{remark:multiplier and state} we see that $T_{\tilde{\varphi}}$ is unital completely positive if   $T_\varphi$ is. So in the following part we will assume without loss of generality that $\varphi=\tilde \varphi$ and $\Phi = \Phi \circ R$. By \eqref{eq:sigma m} we have 
	
	\begin{equation}\label{eq:def of Tm' commut with Q}
		T_m =\lim_{a\to \infty}\frac{1}{2a}\int_{-a}^{a} \sigma_{ r}\circ T_{\varphi} \circ \sigma_{-r} dr ,
	\end{equation}	
	so we established the assertions (ii) and (iv). We consider 
	$$ \Psi (x)=\lim_{a\to\infty}\frac{1}{2a}\int_{-a}^{a }\Phi(\tau_r(x))dr ,\quad x\in \Pol(\bG).$$
	Then by \eqref{eq:matrix q and mod gp},
	\begin{align*}
		m(\pi) & = \lim_{a\to \infty}\frac{1}{2a}\int_{-a}^{a} Q_\pi^{ir}\varphi(\pi)Q_\pi^{-ir} dr
		=\lim_{a\to \infty}\frac{1}{2a}\int_{-a}^{a} Q_\pi^{ir}[(\Phi  \otimes\mathrm{id}) (u^{(\pi)})]Q_\pi^{-ir} dr\\
		&=\lim_{a\to \infty}\frac{1}{2a}\int_{-a}^{a} (\Phi\circ \tau_r \otimes\mathrm{id}) (u^{(\pi)}) dr = (\Psi \otimes \mathrm{id})(u^{(\pi)}).
	\end{align*} 
	Note that $\Psi$ is invariant under $\tau$ according to the ergodic theorem. Recall that $\Phi\circ R = \Phi$. So we have $\Psi\circ S = \Psi$.  By Proposition~\ref{prop: prop of Fourier transfrorm} and $\Phi=\Phi^*$, we get 
	$$m(\pi)=\cF(\Psi\circ S^{-1})(\pi)=\left(\cF(\Psi)(\pi)\right)^*=\left(\cF(\Psi\circ S^{-1})(\pi)\right)^*=m(\pi)^*.$$
	So we obtain (i).	
\end{proof}

We recall the following approximation properties of quantum groups introduced by De Commer, Freslon and Yamashita in \cite{decommerfreslon14CCAP}. For simplicity of exposition, \emph{we always assume in this paper that $\Irr(\bG)$ is countable}. The general cases can be dealt with by considering the collection of all finitely generated quantum subgroups of 
$\widehat{\bG}$.
\begin{definition}\label{def:ACPAP}
	Let $\bG$ be a compact quantum group. $\widehat{\bG}$ is said to have the \emph{almost completely positive approximation property} (ACPAP for short) if there are two sequences  $(\varphi_s)_{s\in \bN}\subset \ell_{\infty}(\widehat \bG)$ and $(\psi_k)_{k\in \bN}\subset c_c(\widehat \bG)$  such that
	
	{(1)} for any $s, k\in \bN$, $T_{\varphi_s}$ is a unital completely positive map on {$L_\infty(\bG)$}; 
	
	{(2)} for any $\pi\in \Irr(\bG)$, we have
	$$\lim _{s\to \infty}\|\id_{H_\pi}-\varphi_s(\pi)\|_{B(H_\pi)}=0\ \text{ and }\lim _{k\to \infty}\|\id_{H_\pi}-\psi_k(\pi)\|_{B(H_\pi)}=0 ; $$
	
	{(3)} for any $s\in \bN$ and $\varepsilon>0$, there is a $k=k(s, \varepsilon)$  such that $\|T_{\psi_k}-T_{\varphi_s}\|_{cb}\leq \varepsilon$.
	
	Moreover,  if for any $k\in \bN$ we may directly choose $T_{\psi_k}$ to be unital completely positive, then $\widehat{\bG}$ is said to be \emph{amenable} (or to have the \emph{completely positive approximation property}).
\end{definition}
For notational convenience and without loss of generality, in the sequel we will always set $\varphi_0(\pi)=\psi_0(\pi)=\delta_{\mathbf{1}}(\pi)$. Note that $\varphi_s(\mathbf{1})=1$ for all $s$, therefore we will always assume that $\psi_k(\mathbf{1})=1$ for all $k$. 
\begin{remark}\label{def: ACPAP+}
	Assume that  $\widehat{\bG}$ has the ACPAP. Then we may indeed find two sequences  $(\varphi_s)_{s\in \bN}\subset \ell_{\infty}(\widehat \bG)$, $(\psi_k)_{k\in \bN}\subset c_c(\widehat \bG)$  satisfying {(1)}-{(3)} in Definition~\ref{def:ACPAP} such that the following assertions also hold:
	
	{(4)} $T_{\psi_k}$ is selfadjoint for all $ k\in \bN$;
	
	{(5)} $\varphi_s$ and $\psi_k$ are selfadjoint matrices for all $s, k\in \bN$;
	
	{(6)} for any $t\in \bR$, $s\in \bN$, $k\in \bN$, $\sigma_t\circ T_{\phi_s}=T_{\phi_s} \circ \sigma_t$ and $\sigma_t\circ T_{\psi_k}=T_{\psi_k} \circ \sigma_t$.	
	
	Indeed,	for any  sequences  $(\varphi_s)_{s\in \bN}\subset \ell_{\infty}(\widehat \bG)$, $(\psi_k)_{k\in \bN}\subset c_c(\widehat \bG)$  satisfying {(1)}-{(3)} in Definition~\ref{def:ACPAP}, by Remark \ref{remark:multiplier and state} we see that the map $ x\mapsto (T_{\psi_k}(x^*))^*$ is a multiplier associated with the matrices $\widetilde{\psi}_k(\pi)=[{\Psi_k^*(u_{ij}^{(\pi)})}]_{ij}$, where $\Psi_k$ is the functional on $\Pol(\bG)$ so that $\Psi_k(u_{ij}^{(\pi)})=\psi_k(\pi)_{ij}$. Then $T_{(\psi_{k}+\widetilde{\psi}_k)/2}$ is selfadjoint. Note that $\lim _{k\to \infty}\|\id_{H_\pi}-\psi_k(\pi)\|_\infty= 0$ is equivalent to  $\lim_{k\to \infty} \Psi_k(\uu{ij}{\pi})\to \delta_{ij}$ pointwise. Thus $ (\psi_{k}+\widetilde{\psi}_k)/2$  satisfies  (2). Since the operators $T_{\varphi_s}$ are positive, ${T_{\varphi_s}(x)=\left( T_{\varphi_s}(x^*)\right)^* }$. Thus $( (\psi_{k}+\widetilde{\psi}_k)/2)_k$  satisfies (3).
	Therefore we may always assume that $T_{\psi_k}$ is selfadjoint, which means that $\Psi_k=\Psi_k^*$ by Remark~\ref{remark:multiplier and state}~(2). 		
	We  construct two new sequences $(\varphi_s^\prime)_{s\in \bN}\subset \ell_{\infty}(\widehat \bG) , (\psi_k^\prime)_{k\in \bN}\subset c_c(\widehat \bG)$ by the formulas given in Proposition~\ref{prop:exist multipliers commute with Q}, then they immediately satisfy (3) by \eqref{eq:def of Tm' commut with Q}. It is easy to see that $(\varphi_s^\prime)_{s\in \bN}$ and $(\psi_k^\prime)_{k\in \bN}$ satisfy (1) (2) (4)  (5) (6).
\end{remark}
\begin{remark}
	If $\mathbb G$ is of Kac type, the multipliers $T_{\varphi_s}$ and $T_{\psi_k}$ can be taken central by a simple averaging argument, that is,  $ \varphi_s(\pi) $ and $\psi_k(\pi)$ belong to $\bC\id_{H_\pi}$ for all $\pi\in \Pol(\bG)$. We refer to  \cite{krausruan99approximation,brannan17approximation} for details.
\end{remark}

\begin{lemma}\label{lemma:def of t j subsequnece for amenable group}
	For any $s\in \bN_+$,	$\varphi_s=(\varphi_s(\rho))_{\rho\in I}$ with $\varphi_s(\rho)\in B(H_\rho)$, where $(H_\rho)_\rho$ are Hilbert spaces and $I$ is an infinite countable set.  Let $(E_s)_{s\geq 1}$ be an  increasing sequence of finite  subsets of $I$ with $\cup_{s\geq 1} E_s=I$. If 
	$$\lim _{s\to \infty}\|\varphi_s(\rho)-\id_{H_\rho}\|_{B(H_\rho)}=0$$
	for any $\rho \in I$, then  there is a subsequence $(s_{_{N}})_{N\geq 1}$ of $\bN$  such that 
	\begin{equation}\label{eq:condition of s N subsequence}
		\|\id_{H_\rho}-\varphi_{s_{_{N+1}}}(\rho)\|_{B(H_\rho)}\leq 2^{-N}, \qquad   \rho\in E_{s_{_N}}.
	\end{equation}
\end{lemma}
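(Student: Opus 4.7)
The statement is essentially a diagonal selection argument, relying only on the two standing hypotheses: each $E_s$ is finite and, for every fixed $\rho \in I$, the sequence $\varphi_s(\rho)$ converges to $\mathrm{id}_{H_\rho}$ in $B(H_\rho)$.

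My plan is to construct $(s_N)_{N \geq 1}$ by recursion. I would first set $s_1 = 1$ (any fixed choice works, since \eqref{eq:condition of s N subsequence} puts no constraint on $s_1$). Then, given $s_N$, observe that $E_{s_N}$ is a finite subset of $I$. For each $\rho \in E_{s_N}$, the pointwise convergence assumption yields some threshold $s(\rho, N) \in \mathbb N$ such that $\|\mathrm{id}_{H_\rho} - \varphi_s(\rho)\|_{B(H_\rho)} \leq 2^{-N}$ for all $s \geq s(\rho,N)$. Since $E_{s_N}$ is finite, one can take the maximum and define
\[
s_{N+1} \;=\; \max\Bigl\{\, s_N + 1,\; \max_{\rho \in E_{s_N}} s(\rho,N) \,\Bigr\}.
\]
This guarantees $s_{N+1} > s_N$ (so the $s_N$'s form a strict subsequence of $\mathbb N$) and simultaneously that $\|\mathrm{id}_{H_\rho} - \varphi_{s_{N+1}}(\rho)\|_{B(H_\rho)} \leq 2^{-N}$ for every $\rho \in E_{s_N}$, which is exactly \eqref{eq:condition of s N subsequence}.

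There is essentially no obstacle here: the only thing one uses beyond the hypotheses is that a finite maximum of natural numbers is a natural number. The countability of $I$ and the exhaustion property $\bigcup_s E_s = I$ are not needed to produce $(s_N)_N$ itself; they will be used in subsequent applications of the lemma to ensure that the resulting estimate extends to all $\rho \in I$ in the limit. Therefore the proof should be no more than a few lines.
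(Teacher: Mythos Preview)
Your proof is correct and essentially identical to the paper's: the paper also sets $s_1=1$ and, at step $N$, uses pointwise convergence to pick thresholds $s_{N+1}(\rho)>s_N$ for each $\rho\in E_{s_N}$, then defines $s_{N+1}=\max\{s_{N+1}(\rho):\rho\in E_{s_N}\}$. The only cosmetic difference is that you include $s_N+1$ explicitly in the maximum to force strict increase, whereas the paper builds that into the choice of each $s_{N+1}(\rho)$.
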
	
\begin{proof}
	We will construct a  sequence $(s_{_{N}})_{N\in \bN}$ by induction.
	First we let ${s_1=1}$.  
	Assume that $(s_{{j}})_{j=0}^N$ has already been defined. For any $\rho\in E_{s_N}$,  we can find an $s_{_{N+1}}(\rho)>s_{_{N}}$ large enough, such that for any $s\geq s_{_{N+1}}(\rho)$,
	$$\|\id_{H_\rho}-\varphi_{s}(\rho)\|_{B(H_\rho)}\leq 2^{-N}.$$
	Since $E_{s_{_{N}}}$ is a finite set, we can set $s_{_{N+1}}=\max\{s_{_{N+1}}(\rho): \rho\in   E_{s_{_{N}}}\}.$ 
	Therefore  the proof is complete. 
\end{proof}
Then we may construct the semigroups and multipliers satisfying the assumptions of Theorem \ref{theorem:criterion2+: Haagerup case}.
\begin{theorem}\label{lemma: def of conditionally negative definite function on G}
	Assume that $\widehat{\bG}$ has the  ACPAP and let  $(\varphi_s)_{s\in \bN}$ and $(\psi_k)_{k\in \bN}$ be the corresponding sequences satisfying \emph{(1)-(6)} in Definition~\ref{def:ACPAP} and Remark~\ref{def: ACPAP+}. 
	Let  $(k_s)_{s\in \bN_+}$  be an increasing   subsequence of $\bN$ such that
	$\|T_{\varphi_s}-T_{\psi_{k_s}}\|_{cb}\leq \frac{1}{2^{s+1}}.$   Let $k_0=s_0=0$ and  $(s_{_N})_{N\in \bN_+}$ be a subsequence of $\bN$  such that \eqref{eq:condition of s N subsequence} holds with $E_{s}=\cup_{i=0}^{s}\supp \psi_{k_i}$. Define
	$$\ell(\pi)=\sum_{j\geq 0} \sqrt{2}^j \left( \id_{H_\pi}-\varphi_{s_{j}} (\pi)\right) .$$
	Then the following assertions hold.
	
	\emph{(1)} $\ell(\mathbf{1})=0$ and for any $\pi\neq \mathbf1$,		
	$$\|\ell(\pi)\|_{B(H_\pi)}\asymp \sqrt{2}^{J(\pi)}$$ where  $J(\pi):=\min \{j\in \bN: \pi\in E_{s_j} \}$.
	
	\emph{(2)} $S_t:x\mapsto \cF^{-1}\left(e^{-t\ell(\pi)}\cF(x)\right)$ is a semigroup of unital completely positive $h$-preserving   maps on $L_\infty(\bG)$ and for any $t\geq 0$, $r\in \bR$, 
	$$S_t\circ\sigma_r=\sigma_r\circ S_t \add{and}   h(S_t(x)^*y)=h(x^*S_t(y)), \qquad x, y\in L_\infty(\bG).$$

	\emph{(3)} Denote $m_N=\psi_{k_{s_N}}$. Then $(m_N)_{N\in \bN}$ satisfies that for any $\pi \in \Irr(\bG)$,
	\begin{equation*} 
		\|\id_{H_\pi}-m_N (\pi)\|_{B(H_\pi)}\lesssim \frac{\|\ell(\pi)\|_{B(H_\pi)}^2}{2^N},\quad \|m_N (\pi)\|_{B(H_\pi)}\lesssim  \frac{2^N}{\|\ell(\pi)\|_{B(H_\pi)}^2}. 
	\end{equation*}
	In particular, for any $2\leq p<\infty$,
	$$\|{\sup_{N\in \bN}}^+ T_{m_N} x\|_p\lesssim_p \|x\|_p, \qquad  x\in L_p(\bG).$$
	For all $x\in L_p(\bG)$ with $2< p<\infty$ (resp. $p=2$), $T_{m_N}(x)$ converges a.s. (resp. b.a.s.) to  $x$ as $N\to\infty$.
	
	Moreover, if $\widehat{\bG}$ is  amenable, then the  above results hold  for all $1<p<\infty$ (with the b.a.s. convergence for $p\leq 2$). If $\bG$ is of Kac type, all the convergences above are a.u. 
\end{theorem}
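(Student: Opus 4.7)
The plan is to verify the three claims in order, the third being a direct application of Theorem~\ref{theorem:criterion2+: Haagerup case} with $\alpha=2$.

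For (1), note $\ell(\mathbf 1)=0$ since the normalization $\varphi_s(\mathbf 1)=1$ forces every summand to vanish. For $\pi\neq\mathbf 1$, I would split the defining series at $j=J(\pi)$. The upper bound on the head uses the crude estimate $\|\id - \varphi_{s_j}(\pi)\|\leq 2$, valid because each $\varphi_{s_j}$ is a selfadjoint matrix with $\|\varphi_{s_j}(\pi)\|\leq \|T_{\varphi_{s_j}}\|_{cb}=1$ (property (5) of Remark~\ref{def: ACPAP+} and Remark~\ref{remark:multiplier and state}~(3)); the resulting geometric sum is of order $\sqrt 2^{J(\pi)}$. The tail uses the rapid decay $\|\id - \varphi_{s_j}(\pi)\|\leq 2^{-(j-1)}$ provided by \eqref{eq:condition of s N subsequence}, which applies since $\pi\in E_{s_{J(\pi)}}\subseteq E_{s_{j-1}}$ once $j-1\geq J(\pi)$, yielding a convergent sum bounded by $\sqrt 2^{-J(\pi)}$. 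For the lower bound, since each $\id - \varphi_{s_j}(\pi)\geq 0$ (a selfadjoint contraction is dominated by the identity), the sum dominates the single term $\sqrt 2^{J(\pi)-1}(\id - \varphi_{s_{J(\pi)-1}}(\pi))$. The key point is that $\pi\notin E_{s_{J(\pi)-1}}$ implies $\psi_{k_i}(\pi)=0$ for $i\leq s_{J(\pi)-1}$; together with $\|\varphi_s - \psi_{k_s}\|_\infty \leq 2^{-(s+1)}$ from Remark~\ref{remark:multiplier and state}~(3), this forces $\|\varphi_{s_{J(\pi)-1}}(\pi)\|\leq 1/2$ and hence $\|\id - \varphi_{s_{J(\pi)-1}}(\pi)\|\geq 1/2$, giving $\|\ell(\pi)\|\gtrsim \sqrt 2^{J(\pi)}$.

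For (2), the core step is establishing that $S_t$ is UCP. For each $j$,
\[ T_{e^{-t\sqrt 2^j (\id - \varphi_{s_j})}} = e^{-t\sqrt 2^j}\sum_{n\geq 0}\frac{(t\sqrt 2^j)^n}{n!} (T_{\varphi_{s_j}})^n \]
is a convex combination of compositions of UCP maps (recall $T_{\varphi\cdot\varphi'} = T_\varphi\circ T_{\varphi'}$ for the matrix product of symbols), hence UCP, so $\sqrt 2^j(\id -\varphi_{s_j})$ is CND in the sense of the Schoenberg correspondence for compact quantum groups (\cite{casperaskalski15Haagerup,dawsfimaskalskiwhit16haagerup}). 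Finite positive combinations remain CND via Trotter's product formula; passing to the infinite sum is legitimate because for each fixed $\pi$ the series $\ell_N(\pi)=\sum_{j\leq N}\sqrt 2^j(\id -\varphi_{s_j}(\pi))$ converges in $B(H_\pi)$ by the tail estimate above, so $e^{-t\ell_N(\pi)}\to e^{-t\ell(\pi)}$ in norm, and the corresponding multipliers converge strongly, yielding UCP. Unitality and $h$-preservation follow from $\ell(\mathbf 1)=0$ (so $m(\mathbf 1)=1$); symmetry with respect to $h$ on $H_h$ follows from $\ell(\pi)^*=\ell(\pi)$ via the Fourier isometry \eqref{eq:fourier isom}; commutativity with $\sigma_r$ follows from \eqref{eq:sigma m} combined with property (6) of Remark~\ref{def: ACPAP+}, which yields $[\varphi_{s_j}(\pi),Q_\pi^{ir}]=0$ and hence $[\ell(\pi),Q_\pi^{ir}]=0$.

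For (3), I would verify \eqref{eq:A1 condtion+} with $\alpha=2$, using $\|\ell(\pi)\|^2\asymp 2^{J(\pi)}$. If $J(\pi)\leq N-1$ then $\pi\in E_{s_{N-1}}$, so \eqref{eq:condition of s N subsequence} gives $\|\id -\varphi_{s_N}(\pi)\|\leq 2^{-(N-1)}$ and the ACPAP estimate gives $\|\varphi_{s_N}(\pi)-\psi_{k_{s_N}}(\pi)\|\leq 2^{-(s_N+1)}\leq 2^{-N}$ (since $s_N\geq N$), so $\|\id - m_N(\pi)\|\lesssim 2^{-N}\lesssim 2^{J(\pi)-N}$; if $J(\pi)\geq N$ the bound $2^{J(\pi)-N}\geq 1$ makes $\|\id - m_N(\pi)\|\leq 3$ automatic. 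Similarly, for $\|m_N(\pi)\|\lesssim 2^{N-J(\pi)}$: when $J(\pi)\leq N$ the right side is $\gtrsim 1$ while $\|m_N(\pi)\|\leq \|T_{\psi_{k_{s_N}}}\|_{cb}\leq 2$; when $J(\pi)>N$, $\pi\notin E_{s_N}$ so $\psi_{k_i}(\pi)=0$ for all $i\leq s_N$, giving $m_N(\pi)=0$. All remaining hypotheses of Theorem~\ref{theorem:criterion2+: Haagerup case} are furnished by Part~(2) together with selfadjointness of $T_{m_N}$ from property (4) of Remark~\ref{def: ACPAP+}, so the maximal inequality and a.s./b.a.s.~convergence for $p\geq 2$ follow. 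In the amenable case, $T_{m_N}$ is itself UCP, symmetric, $h$-preserving and $\sigma$-commuting, so the stronger conclusion of Theorem~\ref{theorem:criterion2+: Haagerup case} applies to yield the result for all $1<p<\infty$; in the Kac case, $h$ is tracial and the a.s./b.a.s.~notions coincide with the a.u.~notion (with the $p=\infty$ case handled by Remark~\ref{rem:au on m}).

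The main obstacle lies in Part (2): the infinite positive combination of CND generators must be shown to yield a CND generator, which requires balancing the analytic convergence of $\ell_N(\pi)\to \ell(\pi)$ against the Trotter approximation of the corresponding UCP semigroups, while simultaneously preserving symmetry and $\sigma$-covariance. The weights $\sqrt 2^j$ are chosen precisely so that (i) the tail from \eqref{eq:condition of s N subsequence} remains summable, giving the upper bound on $\|\ell(\pi)\|$, and (ii) the square of $\|\ell(\pi)\|$ in condition \eqref{eq:A1 condtion+} absorbs the geometric rate $2^{J(\pi)}$ dictated by the decay of $m_N$; any slower weight would break the first and any faster one would break the second.
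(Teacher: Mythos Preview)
Your proposal is correct and follows essentially the same architecture as the paper's proof for Parts (1) and (3). In Part (1) your lower bound is a slight simplification: you extract the single term at $j=J(\pi)-1$, whereas the paper keeps the whole head $\sum_{j=0}^{J(\pi)-1}$ and uses positivity of the summands together with $\psi_{k_{s_j}}(\pi)=0$ to bound it below; both yield $\gtrsim\sqrt 2^{J(\pi)}$.

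The one genuine difference is in Part (2). The paper does not go through the exponential series and Trotter product. Instead it introduces the functional $L$ on $\Pol(\bG)$ determined by $L(u_{ij}^{(\pi)})=\ell(\pi)_{ij}$, writes $L=\sum_{j\geq 0}\sqrt 2^j(\epsilon-\Phi_{s_j})$, and observes directly that $L(a^*a)=-\sum_{j}\sqrt 2^j\Phi_{s_j}(a^*a)\leq 0$ for $a\in\ker\epsilon$ since each $\Phi_{s_j}$ is a state. This makes $L$ a generating functional in the sense of \cite{dawsfimaskalskiwhit16haagerup}, and the Schoenberg correspondence there (their Lemma 7.14 and (7.4)) produces the convolution semigroup of states $\mu_t$ with $\mu_t(u_{ij}^{(\pi)})=(e^{-t\ell(\pi)})_{ij}$, hence $S_t$ is UCP via Remark \ref{remark:multiplier and state}. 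Your route---showing each factor is UCP via the exponential series, combining finitely many via Trotter, then passing to the limit---is also valid, but it requires extra care in tracking convergence at the level of maps on $L_\infty(\bG)$ (not just symbols on each $\pi$), whereas the paper's argument sidesteps this by working with states on $\Pol(\bG)$ and invoking the black-box correspondence once. Both arguments ultimately rest on the same Schoenberg-type theory you cite; the paper's packaging is just more economical.
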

\begin{proof}
	By Remark~\ref{def: ACPAP+} and \eqref{eq:sigma m}, we have $\ell(\pi)Q_\pi=Q_\pi\ell(\pi)$ for any $\pi \in \Irr(\bG)$ and $S_t\circ \sigma_r=\sigma_r\circ S_t$ for any $t\in \bR_+, r\in \bR$.
	
	Recall that for any $N$,  $T_{\varphi_{s_N} }$ is unital and in particular $\varphi_{s_N} (\textbf{1})=1$.
	As a consequence we get $\ell(\textbf{1})=0$. In the following we consider $\pi \neq \mathbf 1$ and estimate the quantity $\|\ell (\pi)\|_{B(H_\pi)}$. Recall that $k_0=s_0=0$ and $\varphi_0(\pi)=\psi_0(\pi)=\delta_{\mathbf{1}}(\pi)$, so $E_0=\{\mathbf{1}\}$, which implies $J(\pi)\geq 1$ if $\pi \neq \mathbf 1$. %
	By the definition of $J$,  we have $\pi\in E_{s_{J(\pi)}}\subset E_{s_{j-1}}$ if   $j\geq J(\pi)+1$. Recall that $T_{\varphi_s}$ is unital completely positive and hence $\|\varphi_s (\pi)\|_{B(H_\pi)}\leq 1$ by Remark \ref{remark:multiplier and state}.
	Therefore, by \eqref{eq:condition of s N subsequence}, we have 
	\begin{align*}
		\|\ell(\pi)\|_{B(H_\pi)}&\leq \sum_{j=0}^{J(\pi)} \sqrt{2}^j  \| \id_{H_\pi}-\varphi_{s_{j}} (\pi)\|_{B(H_\pi)}+\sum_{j\geq J(\pi)+1 } \sqrt{2}^j \| \id_{H_\pi}-\varphi_{s_{j}} (\pi)\|_{B(H_\pi)}\\
		&\leq\sum_{j=0}^{J(\pi)} 2\sqrt{2}^j+\sum_{j\geq J(\pi)+1 } \sqrt{2}^{j}\cdot 2^{-j}\lesssim \sqrt{2}^{J(\pi)}.
	\end{align*}
	For  $j\leq J(\pi)-1$, we have $\pi \notin E_{s_j}$  and hence $\psi_{k_{s_j}}(\pi)=0$. We also have $\id_{H_\pi}-\varphi_{s_j} (\pi)\geq 0$ for all $j$ since $\varphi_s  (\pi)$ is selfadjoint  and $\|\varphi_s (\pi)\|_{B(H_\pi)}\leq 1$ as mentioned previously. Also by Remark~\ref{remark:multiplier and state}, we have
	$$\|\varphi_s(\pi)-\psi_{k_s}(\pi)\|_
	{B(H_\pi)}\leq \|T_{\varphi_s}-T_{\psi_{k_s}}\|_{cb}\leq \frac{1}{2^{s+1}}.$$  So
	\begin{align*}
		\|\ell(\pi)\|_{B(H_\pi)}&= \|\sum_{j=0}^{J(\pi)-1} \sqrt{2}^j \left(  \id_{H_\pi}  -\varphi_{s_j}  (\pi)\right) +\sum_{j\geq J(\pi)} \sqrt{2}^j (\id_{H_\pi}-\varphi_{s_j}  (\pi))\|_{B(H_\pi)}\\
		&\geq \|\sum_{j=0}^{J(\pi)-1} \sqrt{2}^j \left(  \id_{H_\pi}  -\varphi_{s_j}  (\pi)\right)\|_{B(H_\pi)}\\
		&
		=\|\sum_{j=0}^{J(\pi)-1} \sqrt{2}^j \left(  \id_{H_\pi}  -\varphi_{s_j}  (\pi) +\psi_{k_{s_j}}  (\pi) \right)  \|_{B(H_\pi)}\\
		&\geq	\sum_{j=0}^{J(\pi)-1} \sqrt{2}^j \left(  \id_{H_\pi}-\frac{1}{2^{s_j+1}}\right) \gtrsim \sqrt{2}^{J(\pi)}.
	\end{align*}
	Hence $\ell$ is well defined and  (1) is verified.

	Recall that $\ell(\textbf{1})=0$. Therefore $S_t(1)=e^{-t\ell(\textbf{1})}1=1$.
	On the other hand, recall that $\varphi_s (\pi)$ is selfadjoint, which means that $\ell(\pi)$ is also selfadjoint. 
	Thus by \eqref{eq:fourier isom},
	\begin{align}
		\label{eq:show symmetric of multiplier}	h(S_t(x)^*y)	&=\sum_{\pi\in \Irr(\bG)} \Tr(Q_\pi)  \Tr\left(Q_\pi\left( e^{-t\ell(\pi)}\cF(x)(\pi)\right)^* \cF(y)(\pi)  \right)\\ 	\nonumber&=\sum_{\pi\in \Irr(\bG)}\Tr(Q_\pi)\Tr\left(Q_\pi\left(\cF(x)(\pi)\right)^* e^{-t\ell(\pi)} \cF(y)(\pi)  \right)\\
		\nonumber	& =h(x^*S_t(y)).
	\end{align}
	In particular,
	$$h(S_t(x))=h(S_t(1)x)=h(x). $$
	Now let us verify the complete  positivity of $S_t$. We define the functionals $\epsilon$, $L$ and $\Phi_s  $ on $\Pol(\bG)$ by
	$$\epsilon(\uu{ij}{\pi}) = \delta_{ij},\quad
	L(\uu{ij}{\pi}) = \ell(\pi)_{ij},\quad 
	\Phi_s (\uu{ij}{\pi}) = \varphi_s (\pi)_{ij}.$$
	Note that the functional $\epsilon$ is a $*$-homomorphism on $\Pol(\bG)$, usually called the counit. By Remark~\ref{remark:multiplier and state}, $\Phi_s $ are  states and 
	$$  L=\sum_{j\geq 0} \sqrt{2}^j \left( \epsilon -\Phi_{s_j} \right) $$
	with the convergence understood pointwise on $\Pol(\bG)$. In particular 
	$$L(a^* a)=-\sum_{j\geq 0} \sqrt{2}^j  \Phi_{s_j} (a^*a)\leq 0,\quad a\in\ker \epsilon .$$ This means that $L$ is a generating functional in the sense of \cite{dawsfimaskalskiwhit16haagerup} and there is a  state $\mu_t $ with $ \mu_t (\uu{ij}{\pi}) =(e^{-t\ell(\pi) })_{ij} $ for all $t$ and $\pi$ by    \cite[Lemma 7.14 and  Equality (7.4)]{dawsfimaskalskiwhit16haagerup}. So by Remark \ref{remark:multiplier and state}, $S_t = T_{e^{-t\ell}}$ is completely positive. 
	
	We take $m_N=\psi_{{k_{s_{_N}}}}$. (If $\widehat{	\bG}$ is amenable, we take $m_N=\psi_{{k_{s_{_N}}}}=\varphi_{s_N}$.) 
	If $\pi=\mathbf{1}$, then for any $N\in \bN$, $1-m_N(\mathbf{1})=0$  since $T_{\psi_{k_{s_N}}}$ is unital.
	Note that $s_{N}\geq N$. 
	If $1\leq J(\pi)\leq N-1$, i.e. $\pi\in E_{s_{N-1}}$, then
	\begin{align*}
		\|\id_{H_\pi}-m_{N}(\pi)\|_{B(H_\pi)}&\leq \|\id_{H_\pi}-\varphi_{s_N}(\pi)\|_{B(H_\pi)} + \|\varphi_{s_N}(\pi)-\psi_{{k_{s_{_N}}}}(\pi)\|_{B(H_\pi)}\\
		&\leq (2^{-N}+2^{-s_{_N}-1})\lesssim  2^{J(\pi)-N}\lesssim \frac{\|\ell(\pi)\|_{B(H_\pi)}^2}{2^{N}}.
	\end{align*}
	If $J(\pi)\geq N$, then
	$$\|\id_{H_\pi}-m_{N}(\pi)\|_{B(H_\pi)}\leq 1\leq 2^{J(\pi)-N}\lesssim \frac{\|\ell(\pi)\|_{B(H_\pi)}^2}{2^{N}} .$$
	On the other hand, $$\|m_{N}(\pi)\|_{B(H_\pi)}\leq \dsone_{[0, N]}(J(\pi))\leq 2^{N-J(\pi)}\lesssim \frac{2^N}{\|\ell(\pi)\|_{B(H_\pi)}^2}, \quad \pi\in \Irr(\bG).$$	
	A computation	similar to \eqref{eq:show symmetric of multiplier} yields that the map $T_{m_N}$ is symmetric and $h$-preserving for any $N$.  Applying Theorem~\ref{theorem:criterion2+: tracial noncentral case} and Theorem~\ref{theorem:criterion2+: Haagerup case}, we obtain the desired maximal inequalities and pointwise convergences.
\end{proof}

\begin{remark} \label{rmk:condtions for subsequence s N can be modified}
	The above proof indeed shows that Theorem~\ref{lemma: def of conditionally negative definite function on G} also holds for any subsequence
	$(s_{_{N}})_{N\in \bN_+}$ satisfying
	\begin{equation}\label{eq:condtions for subsequence s N can be modified}
		\|\id_{H_\rho}-\varphi_{s_{_{N+1}}}(\rho)\|_{B(H_\rho)}\leq 2^{J(\rho)-N}, \qquad   \rho\in E_{s_{_N}}
	\end{equation}
	where  $(E_s)_s$ are determined increasing finite sets and $J(\rho)=\min \{j\in \bN: \rho\in E_{s_{{j}}} \}$.
	This more general formulation will be useful in the next section.
\end{remark}

In particular,  we obtain the following pointwise convergence theorem in the  general setting of quantum groups.
\begin{corollary}\label{prop:general case for ACPAP case subsequence}
	\emph{(1)} Assume that $\widehat{\bG}$ has the ACPAP. Then $\widehat{\bG}$  admits a sequence of  completely contractive Fourier multipliers $(T_{m_N})_{N\in \mathbb N}$ on $L_\infty(\bG)$ so that $m_N$ are finitely supported and 
	for any $2\leq p<\infty$,
	$$\|{\sup_{N\in \bN}}^+ T_{m_N} x\|_p\lesssim_p \|x\|_p, \qquad  x\in L_p(\bG),$$
	and	$ T_{m_N} x$ converges to $x$ a.s. as $N\to \infty$ for all $x\in L_p (\bG)$ with $2< p< \infty$ and $ T_{m_N} x$ converges to $x$ b.a.s. (a.u. if $\bG$ is of Kac type)  as $N\to \infty$ for all $x\in L_2 (\bG)$.	
	
	\emph{(2)} Assume that $\widehat{\bG}$ is  amenable.  Then $\widehat{\bG}$ admits a sequence of unital completely positive Fourier multipliers $(T_{m_N})_{N\in \mathbb N}$ on $L_\infty(\bG)$ such that $m_N$ are finitely supported and for any $1<p<\infty$
	$$\|{\sup_{N\in \bN}}^+ T_{m_N} x\|_p\lesssim_p \|x\|_p, \qquad  x\in L_p(\bG),$$
	and $ T_{m_N} x$ converges to $x$ a.s. as $N\to \infty$ for all $x\in L_p (\bG)$ with $2< p< \infty$ and  $ T_{m_N} x$ converges to $x$ b.a.s. (a.u. if $\bG$ is of Kac type and $p=2$) as $N\to \infty$ for all $x\in L_p (\bG)$ with $1<p\leq 2$.
\end{corollary}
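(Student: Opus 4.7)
The plan is to deduce the corollary directly from Theorem~\ref{lemma: def of conditionally negative definite function on G}, which has already done all of the technical work; what remains is merely to extract from it a sequence of Fourier multipliers of the required form. Given the ACPAP of $\widehat{\bG}$, I will first fix sequences $(\varphi_s)_{s\in\bN}\subset \ell_\infty(\widehat\bG)$ and $(\psi_k)_{k\in\bN}\subset c_c(\widehat\bG)$ satisfying all of conditions (1)--(6) in Definition~\ref{def:ACPAP} and Remark~\ref{def: ACPAP+}, choose an increasing sequence $(k_s)_s$ with $\|T_{\varphi_s}-T_{\psi_{k_s}}\|_{cb}\leq 2^{-s-1}$, and then apply Lemma~\ref{lemma:def of t j subsequnece for amenable group} to the finite sets $E_s=\bigcup_{i=0}^s\supp\psi_{k_i}$ to produce a subsequence $(s_N)_N$ meeting the hypothesis \eqref{eq:condition of s N subsequence}. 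With $m_N:=\psi_{k_{s_N}}$, Theorem~\ref{lemma: def of conditionally negative definite function on G} immediately provides the finite support of $m_N$, the maximal inequality on $L_p(\bG)$ for all $2\leq p<\infty$, and the corresponding a.s./b.a.s.\ (or a.u., in the Kac case) convergence.

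For the complete contractivity assertion of part~(1), note that since $T_{\varphi_{s_N}}$ is unital completely positive with $\|T_{\varphi_{s_N}}\|_{cb}=1$, the estimate $\|T_{m_N}-T_{\varphi_{s_N}}\|_{cb}\leq 2^{-s_N-1}$ only yields $\|T_{m_N}\|_{cb}\leq 1+2^{-s_N-1}$. To achieve genuine cb-contractivity, I will replace $m_N$ by $\widetilde m_N:=(1+2^{-s_N-1})^{-1}m_N$; this clearly preserves finite support, gives $\|T_{\widetilde m_N}\|_{cb}\leq 1$, and since $(1+2^{-s_N-1})^{-1}\to 1$ as $N\to\infty$, neither the maximal inequality nor the almost-sure convergence to the identity is affected (for convergence, one writes $T_{\widetilde m_N}x-x=(1+2^{-s_N-1})^{-1}(T_{m_N}x-x)-2^{-s_N-1}(1+2^{-s_N-1})^{-1}x$ and applies the theorem to the first term).

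For part~(2), amenability of $\widehat{\bG}$ means that one can arrange the approximating sequence $(\psi_k)_{k\in\bN}\subset c_c(\widehat\bG)$ to consist of unital completely positive multipliers itself. In this situation I simply take $\varphi_s=\psi_s$ (so $k_s=s$) in the construction above, so that $m_N=\psi_{s_N}=\varphi_{s_N}$ is already ucp (hence a fortiori completely contractive) and finitely supported. The stronger conclusion of Theorem~\ref{lemma: def of conditionally negative definite function on G} for the amenable case then directly delivers the maximal inequality and pointwise convergence over the full range $1<p<\infty$, with b.a.s.\ (resp.\ a.u.) convergence for $1<p\leq 2$ as stated.

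There is no real obstacle here beyond verifying that the normalization trick in part~(1) preserves the maximal inequality; this is immediate from the triangle inequality in $L_p(\cM;\ell_\infty)$ together with the fact that $\|(x)_N\|_{L_p(\cM;\ell_\infty)}=\|x\|_p$ for a constant sequence. All other content of the corollary is an extraction of conclusions already packaged in Theorem~\ref{lemma: def of conditionally negative definite function on G}.
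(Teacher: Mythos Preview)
Your proposal is correct and follows the same route as the paper, which treats the corollary as an immediate consequence of Theorem~\ref{lemma: def of conditionally negative definite function on G} and gives no separate argument. You have in fact been more careful than the paper on one point: the multipliers $m_N=\psi_{k_{s_N}}$ produced by Theorem~\ref{lemma: def of conditionally negative definite function on G} only satisfy $\|T_{m_N}\|_{cb}\leq 1+2^{-s_N-1}$, so the ``completely contractive'' claim in part~(1) is not literally delivered by the theorem; your normalization $\widetilde m_N=(1+2^{-s_N-1})^{-1}m_N$ is the natural and correct fix, and your verification that it preserves both the maximal inequality and the pointwise convergence is sound.
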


Moreover, we have the following a.s. convergence of Fourier series of Dirichlet type on $L_2(\bG)$. Let  $p_\pi$ be the projection from $\Pol(\bG)$ onto $\{\uu{ij}{\pi}: 1\leq i,j\leq d_\pi  \}$. It is easy to see that  $p_\pi$ can  be extended to   an orthogonal projection on $L_2(\bG)$.
\begin{proposition}\label{prop:dirichlet}
	Let $\widehat{\bG}$ be a discrete quantum group with the ACPAP. Then there exists an increasing sequence of finite  subsets $(K_N)_N\subset \Irr(\bG)$ such that the maps
	$x\mapsto \sum_{\pi\in K_N} p_\pi(x)$
	is of strong type $(2, 2)$.
	Moreover, for all $x\in L_2 (\mathbb G)$,
	$$\sum_{\pi\in K_N} p_\pi(x) \to x \text{ b.a.s. (a.u. if $\bG$ is of Kac type) as }N\to\infty.$$ 
\end{proposition}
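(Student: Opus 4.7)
The strategy is to reduce the Dirichlet partial sums to the sequence of finitely supported multipliers $T_{m_N}$ constructed in Theorem~\ref{lemma: def of conditionally negative definite function on G}, by choosing the index sets $K_N$ so that the difference $D_{K_N}-T_{m_N}$ (where $D_{K_N}x=\sum_{\pi\in K_N}p_\pi(x)$) is controlled by a summable column square function. Concretely, I take the sequences $(\varphi_s)$, $(\psi_k)$, $(k_s)$, $(s_N)$ and $m_N=\psi_{k_{s_N}}$ provided by Theorem~\ref{lemma: def of conditionally negative definite function on G}, and set
$$K_N=E_{s_N}=\bigcup_{i\leq s_N}\supp\psi_{k_i}.$$
By construction $(K_N)_N$ is an increasing sequence of finite subsets of $\Irr(\bG)$, $\supp m_N\subset K_N$, and $\cup_N K_N=\Irr(\bG)$ since $\psi_{k_i}(\pi)\to\id_{H_\pi}$ pointwise forces each $\pi$ to lie in $\supp\psi_{k_i}$ for some $i$.

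\textbf{Key $L_2$-estimate.} Write $y_N=D_{K_N}x-T_{m_N}x$. Since $T_{m_N}$ preserves each isotypical component $p_\pi(L_2(\bG))$ (acting by the matrix $m_N(\pi)$ via $\cF$) and since $m_N$ vanishes outside $K_N$, the orthogonality of these components in $L_2(\bG)$ together with \eqref{eq:fourier isom} yields
$$\|y_N\|_2^2=\sum_{\pi\in K_N}\bigl\|(\id_{H_\pi}-m_N(\pi))\cdot p_\pi(x)\bigr\|_2^2\leq\sum_{\pi\in K_N}\|\id_{H_\pi}-m_N(\pi)\|_{B(H_\pi)}^2\,\|p_\pi(x)\|_2^2.$$
Combining parts (1) and (3) of Theorem~\ref{lemma: def of conditionally negative definite function on G} gives $\|\id_{H_\pi}-m_N(\pi)\|_{B(H_\pi)}\lesssim 2^{J(\pi)-N}$ for $\pi\in E_{s_N}$, where $J(\pi)=\min\{j:\pi\in E_{s_j}\}\leq N$. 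Summing over $N$ and applying Fubini,
$$\sum_{N\geq 0}\|y_N\|_2^2\lesssim\sum_\pi\|p_\pi(x)\|_2^2\sum_{N\geq J(\pi)}4^{J(\pi)-N}\lesssim\|x\|_2^2.$$
Since $\|(y_N)\|_{L_2(\bG;\ell_2^c)}^2=\sum_N\|y_N\|_2^2$, we conclude $(y_N)_N\in L_2(\bG;\ell_2^c)$, and approximation by finite truncations shows in fact $(y_N)_N\in L_2(\bG;c_0^c)$.

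\textbf{Conclusion.} The strong type $(2,2)$ maximal inequality follows from the triangle inequality
$$\|{\sup_N}^+D_{K_N}x\|_2\leq\|{\sup_N}^+y_N\|_2+\|{\sup_N}^+T_{m_N}x\|_2\lesssim\|(y_N)_N\|_{L_2(\bG;\ell_2^c)}+\|x\|_2\lesssim\|x\|_2,$$
where Proposition~\ref{claim: sup+ < CRp norm} handles the first term and Theorem~\ref{lemma: def of conditionally negative definite function on G}(3) handles the second. For the pointwise convergence, Lemma~\ref{lemma:Lp M c0 imply b.a.u}(2) (and its Haagerup analogue via \cite[Section~7.4]{jungexu07ergodic}) implies $y_N\to 0$ a.u./a.s., which combined with $T_{m_N}x\to x$ b.a.s.\ (resp.\ a.u.\ in the Kac case) from Theorem~\ref{lemma: def of conditionally negative definite function on G}(3) gives $D_{K_N}x\to x$ b.a.s., and a.u.\ if $\bG$ is of Kac type.

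\textbf{Main obstacle.} The only subtle point is checking that the $\ell_2^c$-to-$c_0^c$-to-almost-sure-convergence machinery transfers cleanly to the Haagerup $L_p$-space setting when $\bG$ is not of Kac type; this is addressed by the reduction framework of Subsection~\ref{sect:nontracial matrix} and the Jajte-type implications in \cite[Section~7.4]{jungexu07ergodic}. The algebraic heart of the argument is simply isolating the geometric decay rate $2^{J(\pi)-N}$ built into the length function $\ell$ of Theorem~\ref{lemma: def of conditionally negative definite function on G}, which makes the series $\sum_N 4^{J(\pi)-N}$ telescope.
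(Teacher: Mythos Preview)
Your proof is correct and complete. The approach, however, differs from the paper's in two ways.

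First, the comparison operator: the paper compares $D_N$ to the subordinate Poisson semigroup, writing $\phi_N(\pi)=\dsone_{K_N}(\pi)-e^{-\ell(\pi)^{1/2}/2^{N/4}}$, and then verifies the lacunary symbol condition of Proposition~\ref{prop:general case for p=2} (namely $\|\phi_N(\pi)\|\lesssim \|\ell(\pi)\|^{1/2}2^{N/4}/(2^{N/4}+\|\ell(\pi)\|^{1/2})^2$) to obtain the $L_2(\ell_\infty)$ and $L_2(\ell_\infty^c)$ bounds. You instead compare to $T_{m_N}$ and bypass Proposition~\ref{prop:general case for p=2} entirely, using direct Plancherel summability $\sum_N\|y_N\|_2^2\lesssim\|x\|_2^2$ together with Proposition~\ref{claim: sup+ < CRp norm}. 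Your route is more elementary on this front.

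Second, the pointwise convergence argument: the paper observes that $D_Nx=x$ eventually for every $x\in\Pol(\bG)$ (since $\cup_N K_N=\Irr(\bG)$), so convergence holds trivially on a dense set, and then invokes the maximal inequality plus the density principle of Proposition~\ref{prop:Phix-x in Lp M C0}~(2) (or its nontracial analogue). You instead split the convergence into two pieces, feeding $(y_N)\in L_2(\bG;c_0^c)$ into Lemma~\ref{lemma:Lp M c0 imply b.a.u}~(2) and importing the convergence of $T_{m_N}x$ from Theorem~\ref{lemma: def of conditionally negative definite function on G}~(3). This makes your argument depend on the full strength of Theorem~\ref{lemma: def of conditionally negative definite function on G}~(3), whereas the paper's argument only needs the maximal inequality for the semigroup; on the other hand, your route avoids re-running the density argument. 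Both are valid, and the nontracial caveat you flag is exactly the same one the paper handles via the remarks following Theorem~\ref{theorem:criterion2+: Haagerup case}.
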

\begin{proof}
	Let $\ell$, $(k_s)_{s}$, $(s_N)_N$, $(E_s)_s$ be given by Theorem~\ref{lemma: def of conditionally negative definite function on G}.
	We show that $K_N=E_{s_{_N}}$ satisfies this proposition.
	Let $N\in \bN$ and $D_N=T_{\dsone_{K_N}}$ be the multiplier associated with the characteristic function $$\dsone_{K_N}(\pi)=\begin{cases}
	\id_{H_\pi}, &\text{ if } \pi \in K_N;\\
	0,& \text{ otherwise. } 
	\end{cases}$$
Define	$$\phi_N(\pi)=\dsone_{K_N}(\pi)-e^{-\frac{\ell(\pi)^{1/2}}{2^{N/4}}}.$$
For any $\pi \in K_N$,  i.e. $J(\pi)\leq N$, we have 
$$\|\phi_N(\pi)\|_{B(H_\pi)}\lesssim \frac{\|\ell(\pi)\|_{B(H_\pi)}^{1/2}}{2^{N/4}}\lesssim \frac{\|\ell(\pi)\|_{B(H_\pi)}^{1/2}2^{N/4}}{(2^{N/4}+\|\ell(\pi)\|_{B(H_\pi)}^{1/2})^2},$$
where the last inequality follows from the fact that $\frac{\|\ell(\pi)\|_{B(H_\pi)}}{\sqrt2^N}\lesssim 1$.	
Also, for any $\pi \notin K_N$, i.e. $J(\pi)>N$, we have
$$\|\phi_N(\pi)\|_{B(H_\pi)}\lesssim \frac{2^{N/4}}{\|\ell(\pi)\|_{B(H_\pi)}^{1/2}}\lesssim \frac{\|\ell(\pi)\|_{B(H_\pi)}^{1/2}2^{N/4}}{(2^{N/4}+\|\ell(\pi)\|_{B(H_\pi)}^{1/2})^2}.$$
As mentioned previously,  Proposition~\ref{prop:general case for p=2} remains valid for the nontracial setting. Together with Proposition~\ref{claim: sup+ < CRp norm},
we get 
$$\|(T_{\phi_N}(x))_N\|_{L_2(\bG;\ell_{\infty})}\lesssim \|x\|_2,\quad \|(T_{\phi_N}(x))_N\|_{L_2(\bG;\ell_{\infty}^c)}\lesssim \|x\|_2, \quad   x\in L_2(\bG).$$
Recall that by the choice of $(E_s)_s$, for any finite subset $F\subset \mathrm{Irr} (\mathbb G)$ there exists $M\geq 1$ with ${F\subset K_N}$ for all $N\geq M$.  Hence for any $x\in \Pol(\bG)$, there is an index $M$ large enough such that  for any $N\geq M$, we have $D_N(x)=x$, and in particular $D_N(x)\to x$ a.s. Then arguing as in Subsection \ref{sec:proof ot criterion1}, by Proposition \ref{prop:Phix-x in Lp M C0} (2) (or its nontracial analogue for $L_2(\bG;\ell_{\infty})$ and $L_2(\bG;\ell_{\infty}^c)$ mentioned after Theorem \ref{theorem:criterion2+: Haagerup case}) and the density of $\Pol(\bG)$, we obtain the desired pointwise convergence of $D_N(x)$ for any $x\in L_2(\bG)$ as $N\to \infty$.
\end{proof}

Note that we also have the corresponding a.u. convergence on $L_\infty(\bG)$ in all the previous results by Remark \ref{rem:au on m}.

\begin{remark}
Our results also provide a general abstract answer to the classical pointwise convergence problems. Indeed, if we take $\mathbb G = \mathbb Z ^d$, then $L_p(\mathbb G)$ coincides with $L_p (\mathbb T ^d)$ and Corollary~\ref{prop:general case for ACPAP case subsequence} (1) amounts to the following fact:
Let $(\Phi_N)_{N\in \bN} \subset L_1 (\mathbb T ^d)$ be an arbitrary sequence of positive trigonometric polynomials  with $\lim_{N}\|\Phi_N *f - f\|_1 =0$ for all $f\in L_1 (\mathbb T ^d)$. Then there exists a subsequence $(N_k)_{k \in \mathbb{N}}$ such that for all $1<p\leq \infty$ and all $f\in L_p (\mathbb T ^d)$, we have
$$\lim_{k\to \infty}\Phi_{N_k} * f =f\text{ a.e.} $$  
Note that when reduced to this classical setting, our method is still novel, which unavoidably involves the ergodic theory and the bootstrap method in Theorem~\ref{theorem:criterion2} as well as the genuinely abstract Markov semigroups in Theorem~\ref{lemma: def of conditionally negative definite function on G}. It seems unclear how to prove this result simply using the classical methods based on Hardy-Littlewood maximal functions. Indeed, as we will illustrate in a forthcoming paper \cite{hongwangwang21inprogress}, we will see some concrete new examples of approximate identities whose kernels seem intricate to be dominated  by  Hardy-Littlewood maximal functions. 

For the Dirichlet means studied in Proposition~\ref{prop:dirichlet}, the corresponding classical counterpart is much easier. One may easily check from the proof that we may take $K_N$ to be ${ \{j\in\mathbb Z^d:\sqrt{|j_1|^2+\dotsm+|j_d|^2}\leq 2^N \}}$ in Proposition~\ref{prop:dirichlet} and we recover the usual pointwise convergence of lacunary Dirichlet means of Fourier series on $L_2 (\mathbb T ^d)$. 
\end{remark}	


\chapter{More concrete examples}\label{sec:more ex}
In this last chapter, we apply our theorems to various explicit examples of multipliers on noncommutative $L_p$-spaces. In particular, we would like to generalize several typical classical pointwise summation methods to the noncommutative setting, such as Fej\'{e}r means and Bochner-Riesz means. In the last section of the chapter we will also discuss briefly the dimension free bounds of noncommutative Hardy-Littlewood maximal operators. 
\section{Generalized Fej\'{e}r means}
\label{sec:fejer nc}
Our first class of examples generalizes the Fej\'{e}r means. Note that the  usual Fej\'{e}r means on the $d$-dimensional torus correspond to the symbols
$$m_N(\mathbf k)=\prod_{i=1}^d (1-\frac{|k_i|}{2N+1})\dsone_{[-2N, 2N]}(k_i),\quad \mathbf k \coloneqq (k_1, k_2, \cdots, k_d) \in \bZ^d.$$
Setting $K_N=[-N, N]^d\cap \bZ^d$, we may rewrite the above symbol as
$$m_N(\mathbf k) =\frac{|K_N\cap (\mathbf k + K_N)|}{|K_N|}.$$
The quantity on the right hand side commonly appears in the study of the geometry of amenable groups. This leads us to consider a number of similar summation methods of noncommutative Fourier series. In particular, we will present some results for amenable groups, and coamenable compact quantum groups.

\subsection{Case of nilpotent groups  and amenable groups}\label{subsec:F-multiplier on G}
Let  $\Gamma$ be a discrete amenable group.
The amenability yields the existence of a   F\o{}lner sequence of $\Gamma$, that is, we may find a sequence $(K_N)_{N\in \bN}$ of subsets of $\Gamma$ such that
$$ \lim _{N\to \infty} \frac{|K_N\cap gK_N|}{|K_N|}=1, \qquad g\in \Gamma.$$ For convenience we set $K_0=\{e\}$. We define a sequence of multipliers $(m_N)_{N\in \bN}$ by 
\begin{equation}\label{eq:symbol folner}
	m_N(g)=\frac{|K_N\cap gK_N|}{|K_N|}.
\end{equation}

It is easy to see that $m_N$ is finitely supported, indeed $\supp m_N=K_N\cdot K_N^{-1}$.
By the F\o{}lner condition, we have $m_N\to 1$ pointwise.
For any $g\in \Gamma$, we have
$$m_N(g)=\langle\lambda(g) \frac{\dsone_{K_N}}{|K_N|^{1/2}},  \frac{\dsone_{K_N}}{|K_N|^{1/2}}\rangle_{\ell_2(\Gamma)}.$$
As a consequence $m_N$ is  positive definite and therefore $T_{m_N}$ is  unital completely positive on $VN(\Gamma)$ for all $N\in \bN$ (see e.g. \cite[Theorem  2.5.11]{brownozawa08bookC*}). Note that $T_{m_N}$ is also $\tau$-preserving. 
In particular, by Theorem~\ref{lemma: def of conditionally negative definite function on G}, we have the following results.
\begin{cor}
Let $\Gamma$ be a discrete amenable group with a F\o{}lner sequence $(K_N)_N$ and  $(m_N)_{N\in \bN}$ a sequence of multipliers given by \eqref{eq:symbol folner}. Then
there exists a subsequence $(N_j)_{j\in \bN}$, such that for all $1<p<\infty$ and all $x\in L_p(\vN)$, 
$$\|{\sup_{j\in \bN}}^+ T_{m_{N_j}}x\|_p\lesssim_p \|x\|_p \add{and} T_{m_{N_j}}x \to x \text{ b.a.u. (a.u. if $p\geq 2$) as } j\to \infty.$$
\end{cor}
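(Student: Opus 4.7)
The plan is to deduce this corollary as a direct specialization of Theorem \ref{lemma: def of conditionally negative definite function on G} (together with Lemma \ref{lemma:def of t j subsequnece for amenable group} and Remark \ref{rmk:condtions for subsequence s N can be modified}) to the case $\widehat{\bG}=\Gamma$, which is a discrete amenable quantum group of Kac type. The whole point is that the F\o lner multipliers $(m_N)$ already furnish a concrete realization of the ACPAP sequences, so essentially no new work is needed beyond checking hypotheses.

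First, I would verify that $(m_N)$ fits the framework of Remark \ref{def: ACPAP+} as a single sequence playing the role of both $(\varphi_s)$ and $(\psi_k)$: each $T_{m_N}$ is unital completely positive on $VN(\Gamma)$, the symbol $m_N$ is finitely supported (with $\supp m_N = K_N K_N^{-1}$), and $m_N\to 1$ pointwise by the F\o lner condition; reality of $m_N$ together with the identity $m_N(g^{-1}) = |K_N\cap g^{-1}K_N|/|K_N| = m_N(g)$ implies $m_N$ is a selfadjoint element of $\ell_\infty(\Gamma)$, and since $VN(\Gamma)$ is tracial the commutation with the modular group is automatic.

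Next, I would apply Lemma \ref{lemma:def of t j subsequnece for amenable group} to the family $(\varphi_s)=(m_s)_{s\in\bN}$ with any increasing exhaustion $(E_s)$ of $\Gamma$ by finite sets (for instance $E_s = K_s K_s^{-1}$) to extract a subsequence $(N_j)_{j\in\bN}$ such that $|1-m_{N_{j+1}}(g)| \leq 2^{-j}$ for all $g\in E_{N_j}$. As noted in Remark \ref{rmk:condtions for subsequence s N can be modified}, this is exactly the data needed to run the construction of Theorem \ref{lemma: def of conditionally negative definite function on G} with $\varphi_{s_j}=\psi_{k_{s_j}} = m_{N_j}$ in the amenable setting; the auxiliary parameters $k_s$ disappear because, by amenability, one may take $\psi_k=\varphi_k$ directly, so the error terms $\|T_{\varphi_s}-T_{\psi_{k_s}}\|_{cb}$ simply vanish.

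Theorem \ref{lemma: def of conditionally negative definite function on G} then produces a conditionally negative length $\ell$ on $\Gamma$, a symmetric Markov semigroup $(S_t)_t$ with symbol $e^{-t\ell}$, and the two-sided comparison $|1-m_{N_j}(g)|\lesssim \ell(g)^2/2^j$, $|m_{N_j}(g)|\lesssim 2^j/\ell(g)^2$. This is precisely condition \textbf{(A1)} with $\alpha=2$, and since $\Gamma$ is of Kac type (so the $L_p(\vN)$ are the tracial $L_p$-spaces and every convergence b.a.s.\ becomes b.a.u./a.u.) Corollary \ref{prop:general case for ACPAP case subsequence}~(2) applied to this setting, equivalently Theorem \ref{theorem:criterion2} together with Proposition~\ref{prop:Phix-x in Lp M C0}, yields the maximal inequality $\|\sup_{j}^+ T_{m_{N_j}}x\|_p \lesssim_p \|x\|_p$ and the a.u.\ convergence $T_{m_{N_j}} x \to x$ for all $1<p<\infty$.

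There is no real obstacle: the content has already been extracted in the preceding chapter, and the only point requiring mild care is matching the F\o lner multipliers to the self-adjointness/positivity and exhaustion conditions in Definition \ref{def:ACPAP} and Remark \ref{def: ACPAP+}, which is straightforward in the tracial group case.
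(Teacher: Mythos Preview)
Your proposal is correct and follows essentially the same approach as the paper: the paper's proof is literally the one-line remark ``In particular, by Theorem~\ref{lemma: def of conditionally negative definite function on G}, we have the following results,'' after having noted just before that $m_N$ is finitely supported, positive definite (hence $T_{m_N}$ is unital completely positive), and $m_N\to 1$ pointwise. Your write-up simply unpacks why Theorem~\ref{lemma: def of conditionally negative definite function on G} applies in the amenable group case with $\varphi_s=\psi_s=m_s$, which is exactly the intended mechanism; the only cosmetic point is that your exhaustion should be $E_s=\bigcup_{i\le s} K_iK_i^{-1}$ (to guarantee it is increasing), but this changes nothing substantive.
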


In the following, we would like to  give a refined study in the case of nilpotent groups.
First we consider  a $2$-step (or $1$-step) nilpotent group $\Gamma$  generated by a finite set  $S$. We assume that $e\in S$ and  $S=S^{-1}$. 
Due to  \cite{stoll982step}, we have the following estimates:
\begin{equation}\label{eq:volume of Ball and sphere: nilpotent group}
	\beta^{-1} N^d\leq |S^N|\leq \beta N^d \add{and} \beta^{-1} N^{d-1}\leq |S^N \backslash S^{N-1}| \leq \beta N^{d-1},
\end{equation}
where $d\geq1$ is called the \emph{degree} of $\Gamma$, and $\beta<\infty$ is a positive constant depending only on $\Gamma$ and $S$.

For an element $g\in \Gamma$, denote by $|g|$  the word length   of $g$ with respect to the generator set $S$, i.e. $|g|=\min\{k: g\in S^k\}$.   
Let $(m_N)_N$ be a sequence of symbols given by (\ref{eq:symbol folner}) with $K_N=S^N$. 
By $(\ref{eq:volume of Ball and sphere: nilpotent group})$, we have
\begin{equation}\label{eq:general condition 1-varphi n on G}
	1-m_N(g)=\frac{|gS^N\backslash S^N|}{|S^N|}\leq \frac{|S^{N+|g|}\backslash S^N|}{|S^N|}
	\leq \beta\frac{\sum_{i=N+1}^{N+|g|} i^{d-1}}{N^d}
	\lesssim \beta\frac{|g|}{N}, \qquad g\in \Gamma.
\end{equation}
In particular, this shows that $(S^N)_N$ is a F\o{}lner sequence.
On the other hand, 
\begin{equation}\label{eq:general condition varphi n on G}
	|m_N(g)|\leq \dsone_{[0, 2N]}(|g|)\leq 2 \frac{N}{|g|}, \qquad   \ g\in \Gamma.
\end{equation}
Set $J(g)=\min\{j\in\bN: g\in S^{2^{j+1}}\}$. In other words, $J(g)$ is the unique integer with ${ 2^{J(g)}<|g|\leq 2^{J(g)+1}}$.
We have 
$$|1-m_{2^j}(g)|\leq \beta\frac{|g|}{2^j}\leq 2\beta 2^{J(g)-j}.$$
This shows that the  subsequence $(2^j)_{j\in \bN}$ satisfies the inequality in  Remark~\ref{rmk:condtions for subsequence s N can be modified}.
Define $$\ell(g)=\sum_{j\geq 0} \sqrt{2}^j |1-m_{2^j}(g)|.$$  
By Theorem~\ref{lemma: def of conditionally negative definite
	function on G}, for any $g\neq e$, $$\ell(g)\asymp \sqrt{2}^{J(g)}\asymp \sqrt{|g|},$$
and  $(S_t)_{t\in \bR_+}:\lambda(g)\mapsto e^{-t\ell(g)}\lambda(g)$ is a semigroup of unital completely positive trace preserving and symmetric maps. We remark that there are other natural choices of conditionally negative definite  functions with polynomial growth (see e.g. \cite{ciprianisauvageot16negative}), but our above construction is self-contained and useful for the further purpose.  Moreover, for any $t\in \bR_+$,  $S_t$ satisfies Rota's dilation property according to Lemma~\ref{lem:eg rota}.
Inequalities (\ref{eq:general condition 1-varphi n on G}) and (\ref{eq:general condition varphi n on G}) can be written as
$$|1-m_N(g)|\lesssim \beta\frac{\ell(g)^2}{N} \add{and}  |m_N(g)|\lesssim \beta \frac{N}{\ell(g)^2}.$$
Moreover, 
$$m_N(g)=\frac{\dsone_{S^N}*\dsone_{S^N}(g)}{|S^{N}|}.$$
By (\ref{eq:volume of Ball and sphere: nilpotent group}), we obtain
	\begin{align*}
	&\quad\ |m_{N+1}(g)-m_N(g)|\\
	& =\left|\frac{1}{|S^{N+1}|}  { \sum_{\gamma\in\Gamma}\dsone_{S^{N+1}}(\gamma)\dsone_{S^{N+1}}(g^{-1}\gamma)}-\frac{1}{|S^{N}|}( { \sum_{\gamma\in\Gamma}\dsone_{S^{N}}(\gamma)\dsone_{S^{N}}(g^{-1}\gamma) })\right| \\
	&\leq  \frac{1}{|S^{N+1}|} \left({ \sum_{\gamma\in\Gamma}\dsone_{S^{N}}(\gamma) \dsone_{S^{N+1} \setminus S^N}(g^{-1}\gamma)  }+ \sum_{\gamma\in\Gamma}\dsone_{S^{N+1}\setminus S^N}(\gamma) \dsone_{S^{N+1}}(g^{-1}\gamma) \right)\\
	&\qquad +\left|\left( \frac{1}{|S^{N+1}|} -\frac{1}{|S_N|}\right) \sum_{\gamma\in\Gamma}\dsone_{S^{N}}(\gamma)\dsone_{S^{N}}(g^{-1}\gamma)\right|\\
	&\leq 3\frac{|S^{N+1}\backslash S^N|}{|S^{N+1}|}\lesssim \beta ^2\frac{1}{N+1}.
\end{align*}
Therefore $m_N$ satisfies \eqref{eq:criterion a2}. Applying Theorem~\ref{theorem:criterion2}, we have the following corollary.
\begin{corollary}\label{cor:maximal inequlity for step 2}
	Let $\Gamma$ be  a $2$-step (or $1$-step) nilpotent group generated by a finite  symmetric set  $S$. Define $m_N(g)=\frac{|S^N\cap gS^N|}{|S^N|}$. Then
	
	\emph{(1)} $(T_{m_{2^j}})_{j\in \bN}$ is of strong type $(p, p)$ for all $ 1<p<\infty$. Moreover, for any $x\in L_p(\vN)$ with $1<p<\infty$, $T_{m_{2^j}}(x)$ converges b.a.u. to  $ x$ as $j\to \infty$ and for $2 \leq p < \infty $ the b.a.u. convergence can be strengthened to a.u. convergence. 
	
	\emph{(2)} $(T_{m_N})_{N\in \bN}$ is of strong type $(p, p)$ for all $ \frac{3}{2}<p<\infty.$ Moreover, for any $x\in L_p(\vN)$ with $3/2<p<\infty$, $T_{m_N}(x)$ converges b.a.u.     to  $ x$ as $N\to \infty$, and for $2 \leq p < \infty $ the b.a.u. convergence can be strengthened to a.u. convergence.
\end{corollary}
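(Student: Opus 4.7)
The proof is essentially a direct application of the criteria developed in Chapter~\ref{sec:maximal}, combined with the estimates for $\ell$, $m_N$, and $m_{N+1} - m_N$ that have been assembled in the discussion preceding the statement. The plan is simply to verify that the hypotheses of Theorem~\ref{theorem:criterion2} (respectively Remark~\ref{rmk:B2}) are met, with $\alpha = 2$ throughout.

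First I would check the structural hypotheses that are common to both parts. Each $T_{m_N}$ extends to a unital completely positive $\tau$-preserving map on $\vN$ since $m_N$ is a positive definite function (as observed in Section~\ref{subsec:F-multiplier on G} via $m_N(g) = \langle \lambda(g) |S^N|^{-1/2}\mathds{1}_{S^N}, |S^N|^{-1/2}\mathds{1}_{S^N}\rangle$). Symmetry of $T_{m_N}$ follows from the fact that $m_N$ is real-valued and $m_N(g^{-1}) = m_N(g)$ (since $|g^{-1}S^N\cap S^N| = |S^N \cap gS^N|$). As already recorded, the semigroup $(S_t)_{t\in\bR_+}$ generated by $-\ell$ consists of unital completely positive trace-preserving symmetric maps, and by Lemma~\ref{lem:eg rota}~(1), each $S_t$ satisfies Rota's dilation property since $\vN$ is finite.

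For part~(1), I would verify condition \textbf{(A1)} for the lacunary subsequence $(m_{2^j})_{j\in\bN}$ with $\alpha = 2$: the two estimates $|1 - m_N(g)| \lesssim \beta \ell(g)^2 / N$ and $|m_N(g)| \lesssim \beta N / \ell(g)^2$ just established specialize at $N = 2^j$ to exactly the two inequalities in \eqref{eq:criterion a1}. Theorem~\ref{theorem:criterion2}~(1) then yields the strong type $(p,p)$ estimate for all $1 < p < \infty$, together with almost uniform convergence.

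For part~(2), I would invoke Remark~\ref{rmk:B2}: we have established all three inequalities in \eqref{eq:criterion a2} with $\alpha = 2$, namely the two from part~(1) together with $|m_{N+1}(g) - m_N(g)| \lesssim \beta^2 / (N+1)$. As explained in that remark, the piecewise linear interpolation of $(m_N)$ produces a family $(\tilde m_t)_{t\in \bR_+}$ satisfying \textbf{(A2)} with $\eta = 1$. Applying Theorem~\ref{theorem:criterion2}~(2) then gives strong type $(p,p)$ for $1 + \frac{1}{2\eta} = \frac{3}{2} < p < \infty$, together with a.u. convergence of $T_{\tilde m_t} x$ to $x$ as $t\to\infty$, from which the statement for the original sequence $(T_{m_N} x)_N$ follows at once. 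No serious obstacle is anticipated: all of the analytic work has been isolated in Theorem~\ref{theorem:criterion2}, and the only substantive input specific to nilpotent groups is the growth estimate \eqref{eq:volume of Ball and sphere: nilpotent group} of Stoll, which has already been absorbed into the verification of the symbol conditions.
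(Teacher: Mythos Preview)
Your proposal is correct and follows essentially the same route as the paper: after verifying the structural hypotheses (complete positivity, symmetry, Rota dilation), the paper simply notes that the three estimates assembled in the preceding discussion show $m_N$ satisfies \eqref{eq:criterion a2}, and then invokes Theorem~\ref{theorem:criterion2}. Your account is slightly more explicit in separating which condition (\textbf{(A1)} versus the difference condition of Remark~\ref{rmk:B2}) drives which part, but the content is the same.
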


Let us give some remarks on the case of general groups with polynomial growth. Indeed, it is conjectured in \cite{breuillard14poly} that (\ref{eq:volume of Ball and sphere: nilpotent group}) remains true for general groups with polynomial growth. If the conjecture has a positive answer, then  the above corollary still holds in  this general setting by the same arguments. Moreover a partial result was given in \cite{breuillardledoone13nilpotent} for a general $r$-step  nilpotent group $\Gamma$  generated by  a finite set $S$. It asserts that 
\begin{equation}\label{eq:volume of Sphere: nilpotent group step r}
	\beta^{-1}	N^{d-1}\leq |S^N\backslash S^{N-1}|\leq \beta N^{d-\frac{2}{3r}},
\end{equation}
where $\beta$ is a constant depending only on $\Gamma$ and $S$. 
Therefore, as in the arguments for  the case of $2$-step nilpotent groups, we have 
$$1-m_N(g)\leq \frac{|S^{N+|g|}\backslash S^N|}{|S^N|}\lesssim \beta \frac{|g|}{N^{\frac{2}{3r}}}.$$
Let $k=k(r)$ be the minimum integer with $k\geq\frac{3r}{2}$.
Then $N_j(r)=2^{k+k^2+\cdots k^j}$ satisfies Lemma~\ref{lemma:def of t j subsequnece for amenable group}.
Indeed,  set $J(g)=\min \{j\in \bN: g\in E_{N_j(r)} \}$, that is, the integer $J(g)$ satisfying $2N_{J(g)-1}(r)<|g|\leq 2N_{J(g)}(r)$. 
Then for any  $g\in E_{N_j(r)}$, i.e. $J(g)\leq j$, we have
\begin{align*}
	|1-m_{N_{j+1}(r)}(g)|&\lesssim \beta\frac{|g|}{N_{j+1}(r)^{\frac{2}{3r}}}\lesssim 
	\beta \frac{2^{k+k^2+\cdots k^{J(g)}}}{2^{\frac{2}{3r} (k+k^2+\cdots k^{j+1})}}\lesssim 
	\beta \frac{2^{k+k^2+\cdots k^{J(g)}}}{2^{\frac{1}{k} (k+k^2+\cdots k^{j+1})}}\\& \lesssim \beta 2^{-\left( k^{J(g)+1}+\cdots k^{j+1}\right)}\lesssim \beta 2^{J(g)-j }.
\end{align*}
Therefore, by Theorem~\ref{lemma: def of conditionally negative definite function on G} and Remark~\ref{rmk:condtions for subsequence s N can be modified}, we get a conditionally negative definite function $\ell$ on $\Gamma$ such that 
$$|1-m_{N_{j+1}(r)}(g)|\lesssim \beta \frac{\ell(g)^2}{2^j}\add{and}|m_{N_{j+1}(r)}(g)|\lesssim \beta\frac{2^j}{\ell(g)^2}.$$
By Theorem~\ref{theorem:criterion2} (1), we have
\begin{corollary}
	Let $\Gamma$ be  a $r$-step nilpotent group  generated by a finite symmetric set of elements $S$. Let $m_N$, $k(r)$ and $N_j(r)$ be defined above. Then for all $ \ 1<p<\infty$,
	$(T_{m_{N_j(r)}})_{j\in \bN}$ is of strong type $(p, p)$ and for any $x\in L_p(\vN)$ with $1<p<\infty$, $T_{m_{N_j(r)}}(x)$ converges b.a.u. to  $x$ as $j\to \infty$ and for $2 \leq p < \infty $ the b.a.u. convergence can be strengthened to a.u. convergence.
\end{corollary}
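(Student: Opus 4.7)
The strategy is to reduce the statement to an application of Theorem~\ref{theorem:criterion2}~(1) by constructing a suitable conditionally negative definite function $\ell$ on $\Gamma$ via Theorem~\ref{lemma: def of conditionally negative definite function on G} (as extended by Remark~\ref{rmk:condtions for subsequence s N can be modified}) and verifying that the symbols $m_{N_j(r)}$ satisfy \textbf{(A1)} relative to this $\ell$. First I will record the basic properties of the multipliers: each $m_N$ is positive definite (since $m_N(g)=\langle\lambda(g)|K_N|^{-1/2}\dsone_{K_N}, |K_N|^{-1/2}\dsone_{K_N}\rangle$ with $K_N=S^N$), finitely supported in $S^N\cdot S^{-N}=S^{2N}$, real-valued and satisfies $m_N(e)=1$, so $T_{m_N}$ is a unital completely positive, trace-preserving, symmetric contraction on $VN(\Gamma)$. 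Since $VN(\Gamma)$ is a finite von Neumann algebra, Rota's dilation property needed in Theorem~\ref{theorem:criterion2}~(1) is automatic by Lemma~\ref{lem:eg rota}~(1).

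Next I will invoke the Breuillard--Le Donne bound \eqref{eq:volume of Sphere: nilpotent group step r} to derive, exactly as in the paragraph preceding the corollary, the key decay estimates
\[
|1-m_N(g)|\ \lesssim\ \beta\,\frac{|g|}{N^{2/(3r)}},\qquad |m_N(g)|\ \le\ \dsone_{[0,2N]}(|g|)\ \le\ 2\,\frac{N}{|g|}.
\]
Setting $E_{N}=S^{2N}=\supp m_N$ and $J(g)=\min\{j\in\bN:g\in E_{N_j(r)}\}$, the arithmetic $k=\lceil 3r/2\rceil\ge 3r/2$ is precisely what makes the geometric sum telescope: I will check that
\[
\frac{|g|}{N_{j+1}(r)^{2/(3r)}}\ \lesssim\ 2^{(k+\cdots+k^{J(g)})-\frac{1}{k}(k+\cdots+k^{j+1})}\ \lesssim\ 2^{J(g)-j},\qquad g\in E_{N_j(r)},
\]
so that $|1-m_{N_{j+1}(r)}(g)|\lesssim \beta\,2^{J(g)-j}$. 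This is exactly the hypothesis \eqref{eq:condtions for subsequence s N can be modified} of Remark~\ref{rmk:condtions for subsequence s N can be modified}.

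At this point Theorem~\ref{lemma: def of conditionally negative definite function on G} (as reinterpreted by Remark~\ref{rmk:condtions for subsequence s N can be modified}) supplies a conditionally negative definite function $\ell:\Gamma\to[0,\infty)$ with $\ell(g)\asymp \sqrt{2}^{J(g)}$ for $g\ne e$, and a semigroup $S_t=T_{e^{-t\ell}}$ of unital completely positive, trace-preserving, symmetric maps. The two estimates above translate, after squaring the $\ell$-comparison, into
\[
|1-m_{N_{j+1}(r)}(g)|\ \lesssim_\beta\ \frac{\ell(g)^2}{2^{j}},\qquad |m_{N_{j+1}(r)}(g)|\ \lesssim_\beta\ \frac{2^{j}}{\ell(g)^2},
\]
which is precisely condition \textbf{(A1)} with $\alpha=2$. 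Applying Theorem~\ref{theorem:criterion2}~(1) yields the maximal inequality $\|\sup^+_j T_{m_{N_j(r)}}x\|_p\lesssim_p\|x\|_p$ for all $1<p<\infty$ together with the a.u.\ convergence $T_{m_{N_j(r)}}x\to x$ for all $x\in L_p(VN(\Gamma))$.

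The only nontrivial step is the choice of $k=\lceil 3r/2\rceil$ and the ensuing geometric telescoping of the exponents $k+k^2+\cdots+k^j$; this is where the $2/(3r)$ loss in the Breuillard--Le Donne sphere estimate is absorbed. Everything else is a direct transcription of the $2$-step argument given in Subsection~\ref{subsec:F-multiplier on G}, with $N$ replaced by the lacunary sequence $N_j(r)$. I do not expect any new analytic difficulty beyond this bookkeeping.
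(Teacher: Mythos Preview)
Your proposal is correct and follows essentially the same route as the paper: you verify the lacunary estimate $|1-m_{N_{j+1}(r)}(g)|\lesssim_\beta 2^{J(g)-j}$ via the Breuillard--Le Donne sphere bound and the choice $k=\lceil 3r/2\rceil$, feed this into Theorem~\ref{lemma: def of conditionally negative definite function on G}/Remark~\ref{rmk:condtions for subsequence s N can be modified} to produce $\ell$ with $\ell(g)\asymp\sqrt{2}^{J(g)}$, and then invoke Theorem~\ref{theorem:criterion2}(1). This is exactly the paper's argument, with the only difference being that you spell out a few background verifications (positive definiteness of $m_N$, Rota's dilation) that the paper takes for granted from the earlier $2$-step discussion.
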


\begin{remark}
As we mentioned in the beginning of this section, the symbols of classical Fej\'er means in Euclidean spaces correspond to averages along F\o{}lner sequences of cubes. 
From the viewpoint of geometric theory of amenable groups, even in this Euclidean setting it is still natural to  consider  other F\o{}lner sets besides cubes, such as balls and rectangles. 	We will carry out this in a forthcoming paper.
\end{remark}

\subsection{Case of amenable discrete quantum groups}\label{subsec:F multiplier on bG}
Let $\bG$ be a   compact quantum group of Kac type.  As before, we assume that  $\Irr(\bG)$ is countable for convenience. We keep the notation introduced in Chapter~\ref{sec:multipliers on CQG}. We may study the F\o{}lner sequences and the corresponding multipliers in this quantum setting as follows.

For any  $\alpha, \beta\in \Irr(\bG)$, we use the notation $\alpha \otimes \beta$ denote the unitary representation $u^{(\alpha)}\ot u^{(\beta)}$,  where $u^{(\alpha)}\ot u^{(\beta)}$ refers to the tensor product of representations $u^{(\alpha)}$ and $ u^{(\beta)}$, which is a representation of the form $(u^{(\alpha)}_{ij}u^{(\beta)}_{kl})_{i,j,k,l}$. Recall that any unitary representation could be decomposed into irreducible representations, which could be expressed as following 
\begin{equation*} 
	u^{(\alpha)}\ot u^{(\beta)}=\oplus_{\gamma \in \Irr(\mathbb{G})}N_{\alpha\beta}^\gamma u^{(\gamma)},
\end{equation*}
 where $N_{\alpha\beta}^\gamma\in \bN$ means the number of  representation $u^{(\gamma)}$ in $u^{(\alpha)}\ot u^{(\beta)}$. Denote by $\overline{\alpha}$  the equivalent  class of the representation $((\uu{ij}{\alpha})^*)_{ij}$. We have the following Frobenius reciprocity law (see \cite{woronowicz87matrix}, \cite[Example 2.3]{kyed08coamenable})
\begin{equation}\label{eq:rule of N alpha beta gamma}
	N_{\alpha\beta}^{\gamma}=N^{\alpha}_{\gamma\overline{\beta}}=N^{\beta}_{\overline{\alpha}\gamma}
\end{equation}
for all $\alpha, \beta, \gamma \in \Irr(\bG)$.
We write $\gamma \in \alpha \otimes \beta$ if $N_{\alpha\beta}^\gamma>0$. 
The \textit{weighted cardinality} of a finite subset $F\subset \Irr(\mathbb{G})$ is defined to be
$$|F|_w=\sum_{\alpha\in F}d_\alpha^2,$$
where we recall that $d_\alpha$ denotes the dimension of the representation $\alpha$. 
On the other hand, for a finite subset $F\subset \Irr(\mathbb{G})$ and a representation $\pi\in \Irr(\mathbb{G}) $, the \emph{boundary of $F$ related to $\pi$} is defined by
\begin{align*}
	\partial_\pi F&=\{\alpha \in F: \exists \beta \in F^c, \beta \in \alpha \otimes \pi\}  \cup \{\beta \in F^c: \exists \alpha \in F, \alpha \in  \beta \otimes \pi\}.
\end{align*}
Kyed \cite{kyed08coamenable} proved that there exists a sequence of finite subsets ${(K_n)_{n\in \bN}\subset \Irr(\bG)}$ such that for any $\pi \in \Irr(\bG)$,
\begin{equation}\label{eq:folner cond q}
	\frac{|\partial_\pi K_n|_w}{|K_n|_w}\to 0\add{as} n\to \infty,
\end{equation} 
as soon as $\mathbb G$ is coamenable. Note that the coamenability of $\mathbb G$ is nothing but a property equivalent to the  amenability of the discrete quantum group $\widehat{\mathbb G}$ (see e.g. \cite{brannan17approximation}). The above sequence $(K_n)_{n\in \bB}$ is called a \emph{F\o{}lner sequence}. We associate a sequence of multipliers
\begin{equation}\label{eq:def:F-multiplier function of compact group}
	\varphi_{n}(\pi)=\frac{\sum_{\alpha , \beta \in K_{n}} N_{\bar{\alpha}\beta}^{\pi}d_\alpha d_\beta}{d_\pi (\sum_{\xi \in K_{n}}d_\xi ^2)},\quad \pi\in \mathrm{Irr} (\mathbb G).
\end{equation}
It is easy to see that if $\widehat{\mathbb G} = \Gamma$ for a discrete group $\Gamma$, then the above function coincides with the symbol introduced in \eqref{eq:symbol folner}.
\begin{lemma}
	\emph{(1)} The maps	$T_{\varphi_n}$ are  unital completely positive on  $L_\infty(\bG)$ for all $n\in \bN$.
	
	\emph{(2)} The functions $\varphi_n $ converge  to $1$ pointwise.
\end{lemma}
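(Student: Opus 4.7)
For assertion (1), the plan is to realize $T_{\varphi_n}$ as the Fourier multiplier associated with a vector state on $\Pol(\bG)$, which is automatically unital completely positive by Remark~\ref{remark:multiplier and state}~(1). For each $\alpha\in\Irr(\bG)$, consider the character $\chi_\alpha:=\sum_i\uu{ii}{\alpha}\in\Pol(\bG)$; since $\bG$ is of Kac type, the Schur orthogonality relations give $h(\chi_\alpha^*\chi_\beta)=\delta_{\alpha\beta}$ together with the basis-independent identity $\chi_\alpha^*=\chi_{\bar\alpha}$. I define
$$
\omega_n:=\frac{1}{\sqrt{|K_n|_w}}\sum_{\alpha\in K_n}d_\alpha\,\chi_\alpha\in\Pol(\bG),
$$
so that $\|\omega_n\|_2=1$; then $\Phi_n(x):=h(\omega_n^*x\omega_n)$ is a state on $\Pol(\bG)$.

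The next step is to verify that the symbol of the multiplier $x\mapsto(\id\otimes\Phi_n)\Delta(x)$ coincides with $\varphi_n(\pi)\id_{H_\pi}$. By traciality of $h$, $\Phi_n(\uu{ij}{\pi})=h(\uu{ij}{\pi}\omega_n\omega_n^*)$. Expanding $\omega_n\omega_n^*$ via the fusion rule $\chi_\alpha\chi_{\bar\beta}=\sum_\gamma N_{\alpha\bar\beta}^\gamma\chi_\gamma$, together with the elementary identity $h(\uu{ij}{\pi}\chi_\gamma)=\delta_{ij}\delta_{\gamma,\bar\pi}/d_\pi$ obtained from Schur orthogonality applied to $\chi_{\bar\pi}=\chi_\pi^*$, I arrive at
$$
\Phi_n(\uu{ij}{\pi})=\frac{\delta_{ij}}{d_\pi|K_n|_w}\sum_{\alpha,\beta\in K_n}d_\alpha d_\beta\, N_{\alpha\bar\beta}^{\bar\pi}.
$$
The conjugation symmetry $N_{\alpha\bar\beta}^{\bar\pi}=N_{\beta\bar\alpha}^{\pi}$ (coming from $\overline{u^{(\alpha)}\otimes u^{(\bar\beta)}}=u^{(\beta)}\otimes u^{(\bar\alpha)}$), followed by the relabeling $\alpha\leftrightarrow\beta$ in the sum, identifies this with $\varphi_n(\pi)\delta_{ij}$. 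Applying Remark~\ref{remark:multiplier and state}~(1) to the state $\Phi_n$ then gives that $T_{\varphi_n}$ is unital completely positive.

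For (2), the key is to rewrite $1-\varphi_n(\pi)$ via Frobenius reciprocity. From \eqref{eq:rule of N alpha beta gamma} we have $N_{\bar\alpha\beta}^\pi=N_{\alpha\pi}^\beta$, and the decomposition of $\alpha\otimes\pi$ yields $\sum_{\beta\in\Irr(\bG)}d_\beta N_{\alpha\pi}^\beta=d_\alpha d_\pi$. Substituting into \eqref{eq:def:F-multiplier function of compact group} and separating the contributions with $\beta\in K_n$ and $\beta\in K_n^c$ gives
$$
1-\varphi_n(\pi)=\frac{1}{d_\pi|K_n|_w}\sum_{\alpha\in K_n}\sum_{\beta\in K_n^c}d_\alpha d_\beta\,N_{\alpha\pi}^\beta.
$$
Every nonzero summand forces $\beta\in\alpha\otimes\pi$ with $\alpha\in K_n$ and $\beta\in K_n^c$, so $\alpha\in\partial_\pi K_n$ by the definition of the boundary. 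Reusing $\sum_\beta d_\beta N_{\alpha\pi}^\beta=d_\alpha d_\pi$ to bound the inner sum yields $0\le 1-\varphi_n(\pi)\le|\partial_\pi K_n|_w/|K_n|_w$, which tends to $0$ by the F\o{}lner condition \eqref{eq:folner cond q}.

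The main obstacle is the bookkeeping for conjugate representations in step (1): one must carefully apply the identities in \eqref{eq:rule of N alpha beta gamma} together with the involution $\alpha\mapsto\bar\alpha$ to recognize the sum produced by the symbol computation as the numerator in \eqref{eq:def:F-multiplier function of compact group}. Once the combinatorial identity $\sum_{\alpha,\beta\in K_n}d_\alpha d_\beta N_{\alpha\bar\beta}^{\bar\pi}=\sum_{\alpha,\beta\in K_n}d_\alpha d_\beta N_{\bar\alpha\beta}^\pi$ is checked via Frobenius reciprocity and relabeling, both assertions reduce to short applications of the fusion rules.
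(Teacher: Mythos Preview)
Your argument for (2) is correct and is exactly the paper's proof: rewrite the numerator of $\varphi_n$ via Frobenius reciprocity as $\sum_{\alpha\in K_n}d_\alpha\sum_\beta d_\beta N_{\alpha\pi}^\beta$ and bound the defect $1-\varphi_n(\pi)$ by $|\partial_\pi K_n|_w/|K_n|_w$.

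For (1), your strategy of realizing the multiplier through a vector state is correct in spirit and in fact more direct than the paper's route (the paper instead writes $T_{\varphi_n}$ as $(h\otimes\id)$ applied to $(\chi_n\otimes 1)\,(\bE\otimes\id)\Delta(\cdot)\,(\chi_n^*\otimes 1)$, using the conditional expectation $\bE$ onto the character subalgebra). However, the specific state you chose gives the wrong symbol. With $\Phi_n(x)=h(\omega_n^*x\omega_n)$ and your computation one obtains
\[
\Phi_n(\uu{ij}{\pi})=\frac{\delta_{ij}}{d_\pi|K_n|_w}\sum_{\alpha,\beta\in K_n}d_\alpha d_\beta\,N_{\alpha\bar\beta}^{\bar\pi},
\]
and the ``combinatorial identity'' $\sum_{\alpha,\beta\in K_n}d_\alpha d_\beta N_{\alpha\bar\beta}^{\bar\pi}=\sum_{\alpha,\beta\in K_n}d_\alpha d_\beta N_{\bar\alpha\beta}^{\pi}$ that you invoke is \emph{false} in general. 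A concrete counterexample already occurs for group duals $\bG=\widehat\Gamma$: take $\Gamma=\bF_2=\langle a,b\rangle$, $K=\{e,a,b\}$ and $\pi=ab^{-1}$. Then $\sum_{\alpha,\beta\in K}N_{\bar\alpha\beta}^{\pi}=|\{(\alpha,\beta)\in K^2:\alpha^{-1}\beta=ab^{-1}\}|=0$, while $\sum_{\alpha,\beta\in K}N_{\alpha\bar\beta}^{\bar\pi}=|\{(\alpha,\beta)\in K^2:\alpha\beta^{-1}=ba^{-1}\}|=1$ (from $(\alpha,\beta)=(b,a)$). So your $\Phi_n$ produces a genuinely different (still positive definite) symbol $\tilde\varphi_n\neq\varphi_n$.

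The fix is a one-character change: take instead $\Phi_n'(x)=h(\omega_n\,x\,\omega_n^*)$, which is again a state since $h(\omega_n\omega_n^*)=h(\omega_n^*\omega_n)=1$ by traciality. Repeating your computation with $\omega_n^*\omega_n$ in place of $\omega_n\omega_n^*$ gives
\[
\Phi_n'(\uu{ij}{\pi})=\frac{\delta_{ij}}{d_\pi|K_n|_w}\sum_{\alpha,\beta\in K_n}d_\alpha d_\beta\,N_{\bar\alpha\beta}^{\bar\pi},
\]
and now the conjugation symmetry $N_{\bar\alpha\beta}^{\bar\pi}=N_{\bar\beta\alpha}^{\pi}$ together with the relabeling $\alpha\leftrightarrow\beta$ genuinely yields $\varphi_n(\pi)\delta_{ij}$. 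With this correction your proof of (1) is complete and arguably cleaner than the paper's, since it avoids the conditional expectation $\bE$.
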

\begin{proof}
	(1) It is obvious that $\varphi_n(\textbf{1})=1$ and therefore $T_{\varphi_n}$ is unital.
	
	Denote by $\chi(\pi)=\sum_i u_{ii}^\pi \in \mathrm{Pol}(\bG)$ the \textit{character} of $\pi$. We have for any $\alpha, \beta \in \Irr(\bG)$,
	$$h(\chi(\bar{\beta})\chi(\alpha))=\delta_{\alpha\beta}1,\qquad\chi(\alpha)^*=\chi(\bar{\alpha})\add{and}\chi(\alpha)\chi(\beta)=\sum_{\gamma\in \Irr(\bG)} N^{\gamma}_{\alpha\beta}\chi(\gamma).$$ We write $$\Pol_0(\bG)=\text{span}\{\chi(\pi): \pi \in \Irr(\bG) \} $$ and let $\cA_0$ be the w*-closure of $\Pol_0(\bG)$ in $L_\infty(\bG)$. Let $\bE:L_\infty(\bG)\to \cA_0$ be the canonical conditional expectation  preserving the Haar state $h$. Recall that we have assumed that $\mathbb G$ is of Kac type. It is well-known that the conditional expectation $\mathbb E$ can be given by the following explicit formula (see e.g. the proof of \cite[Lemma 6.3]{wang17sidon})
	$$\bE(\uu{ij}{\pi})=\frac{\delta_{ij}}{d_\pi}\chi(\pi),\quad \pi\in \mathrm{Irr} (\mathbb G).$$ 
	Set 
	$$\chi_n=\frac{1}{|K_n|_{ _w}^{1/2}}\sum_{\alpha \in K_n}d_\alpha\chi(\alpha)\in \Pol(\bG)_0.$$
	Then, we have 
	$$T_{\varphi_n}(x)=(h\otimes \id)\left[ (\chi_n^*\otimes 1)\cdot[(\bE\otimes \id)\circ\Delta(x)]\cdot(\chi_n\otimes 1)\right],\quad x\in  L_\infty(\bG) . $$
	Indeed, by linearity and normality, we only need to check the equality for the case $x=\uu{ij}{\pi}$. In this case we see that
	\begin{align*}
		&(h\otimes \id)\left[ (\chi_n\otimes 1)\cdot[(\bE\otimes \id)\circ\Delta(\uu{ij}{\pi})]\cdot(\chi_n^*\otimes 1)\right]\\
		=&\frac{\sum_{\alpha, \beta \in K_n} d_\alpha d_\beta \cdot h\left( \chi({\alpha})\chi(\pi)\chi(\bar{\beta}) \right)}{d_\pi |K_n|_{ _w}}\uu{ij}{\pi}\\
		=&\frac{\sum_{\alpha, \beta \in K_n} d_\alpha d_\beta N_{{\alpha}\pi}^{{\beta}}}{d_\pi |K_n|_{ _w}}\uu{ij}{\pi}\\
		=&T_{\varphi_n}(\uu{ij}{\pi}).
	\end{align*}
	Since $\bE$ and $\Delta$ are completely positive, so is $T_{\varphi_n}$.
	
	(2) The support of $\varphi_n$ is given by
	\begin{equation}\label{eq:sup of F mul on bG}
		\Lambda_n=\{\pi \in \Irr(\mathbb{G}): \exists \alpha , \beta \in K_n \text{ such that }\pi \in \bar{\alpha} \otimes \beta\}.
	\end{equation}
	We denote $ \Theta_\pi^n=\{\alpha \in K_n: \forall\  \beta \in K_n^c, N_{\alpha\pi} ^\beta=0  \}\subset K_n $. Note that $N_{\alpha\pi} ^\beta=N_{\bar{\alpha}\beta}^\pi$ and that $\alpha\in \Theta_\pi^n$ implies $\sum_{\beta\in K_n} N^\beta_{\alpha \pi}d_\beta=d_\alpha d_\pi$ by the choice of $N^\beta_{\alpha \pi}$.  Then
	$$\varphi_{n}(\pi)\geq \frac{\sum_{\alpha \in\Theta_\pi^n} d_\alpha \left( \sum_{\beta \in K_{n}} N_{\alpha\pi}^{\beta} d_\beta\right) }{d_\pi (\sum_{\xi \in K_{n}}d_\xi ^2)}=
	\frac{ \sum_{\alpha \in\Theta_\pi^n} d_\alpha ^2}{\sum_{\xi \in K_{n}}d_\xi ^2}  = \frac{|\Theta_\pi^n|_w}{|K_{n}|_w}
	.$$
	Therefore,
	$$
	1-\varphi_{n}(\pi)\leq \frac{|\{\alpha\in K_{n}: \exists \beta \in K_{n}^c \text{ such that } \beta \in \alpha \otimes \pi\}|_w}{|K_{n}|_w}
	\leq \frac{|\partial_{\pi} (K_{n})|_w}{|K_{n}|_w}.
	$$
	By the F\o{}lner condition \eqref{eq:folner cond q}, $\varphi_n\to 1$ pointwise.
\end{proof}
Therefore, by Theorem~\ref{lemma: def of conditionally negative definite function on G} we get the following result.
\begin{corollary}\label{cor:F multiplier for CQG}
	Assume that $\mathbb G $ is of Kac type and that $\widehat{\mathbb G}$ is  amenable. Let $(K_n)_{n\in \bN}\subset \mathrm{Irr} (\mathbb G)$ be a    F\o{}lner sequence and $\varphi_n$ be the symbols given by  (\ref{eq:def:F-multiplier function of compact group}). Then  there is a subsequence $(n_j)_{j\in \bN}$ such that $(T_{\varphi_{n_j}})_j$ is of strong type $(p, p)$ for any $1<p< \infty$. Moreover for all $x\in L_p(\bG)$ with $1<p<\infty$, $T_{\varphi_{n_j}}(x)$ converges b.a.u. to  $ x$ as $j\to \infty$, and for $2 \leq p < \infty $ the b.a.u. convergence can be strengthened to a.u. convergence. 
\end{corollary}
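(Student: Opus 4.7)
The plan is to realize $(\varphi_n)_n$ as an ACPAP approximating sequence for $\widehat{\bG}$ in the sense of Definition~\ref{def:ACPAP} and then invoke Theorem~\ref{lemma: def of conditionally negative definite function on G} directly. Since $\widehat{\bG}$ is amenable, we may take $\psi_k = \varphi_k$ and $k_s = s$, so that the cb-approximation requirement (3) in Definition~\ref{def:ACPAP} is trivially satisfied. The preceding lemma gives the two other structural inputs: each $T_{\varphi_n}$ is unital completely positive on $L_\infty(\bG)$ (condition (1)) and $\varphi_n(\pi)\to \id_{H_\pi}$ pointwise by the F\o lner estimate (condition (2)). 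Moreover, from the computation of $\Lambda_n$ in \eqref{eq:sup of F mul on bG}, each $\varphi_n$ lies in $c_c(\widehat{\bG})$.

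Next I would check that the ``enhanced'' conditions (4)--(6) of Remark~\ref{def: ACPAP+}, required for the hypothesis of Theorem~\ref{lemma: def of conditionally negative definite function on G}, come for free in our setting. Since $\bG$ is of Kac type, the modular group is trivial, so (6) holds automatically; the explicit formula \eqref{eq:def:F-multiplier function of compact group} shows that $\varphi_n(\pi)$ is a real scalar multiple of $\id_{H_\pi}$, which gives (5); and the complete positivity of $T_{\varphi_n}$ already implies the selfadjointness required by (4). Thus no symmetrization is needed and the subsequence we will extract really is a subsequence of the original $(\varphi_n)$.

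With all hypotheses verified, I would apply Lemma~\ref{lemma:def of t j subsequnece for amenable group} to the exhausting family $E_s = \bigcup_{i\leq s}\mathrm{supp}\,\varphi_i$ to extract a subsequence $(n_j)_{j\in\bN}$ satisfying the convergence-rate condition \eqref{eq:condition of s N subsequence}. Feeding this into Theorem~\ref{lemma: def of conditionally negative definite function on G} (with $\psi = \varphi$) produces the conditionally negative definite length function
\[
\ell(\pi) \;=\; \sum_{j\geq 0} \sqrt{2}^{\,j}\bigl(\id_{H_\pi} - \varphi_{n_j}(\pi)\bigr),
\]
the associated unital completely positive symmetric Markov semigroup $S_t = T_{e^{-t\ell}}$, and the comparison estimates $\|\id-\varphi_{n_j}(\pi)\|\lesssim \|\ell(\pi)\|^{2}/2^{j}$ and $\|\varphi_{n_j}(\pi)\|\lesssim 2^{j}/\|\ell(\pi)\|^{2}$. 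The ``amenable + Kac'' clause of that theorem then delivers the maximal inequality $\|{\sup_{j}}^{+} T_{\varphi_{n_j}} x\|_p\lesssim_p \|x\|_p$ for all $1<p<\infty$, together with the a.u. convergence $T_{\varphi_{n_j}}(x)\to x$ for every $x\in L_p(\bG)$.

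Because every piece of the argument is a matching of hypotheses rather than a new estimate, there is no substantial obstacle; the one point that is worth isolating is the verification that, in the Kac case, centrality of the F\o lner symbols automatically gives selfadjointness and $\sigma$-invariance, so that Theorem~\ref{lemma: def of conditionally negative definite function on G} applies to $(\varphi_n)$ itself without any preliminary averaging over the scaling or modular groups.
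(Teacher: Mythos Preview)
Your proposal is correct and follows exactly the paper's approach: the paper simply states that the corollary follows from Theorem~\ref{lemma: def of conditionally negative definite function on G}, relying on the preceding lemma for conditions (1)--(2) of Definition~\ref{def:ACPAP}, and your writeup just makes explicit the routine verifications (finite support, selfadjointness, $\sigma$-invariance in the Kac case) that allow one to take $\psi_k=\varphi_k$ and invoke that theorem without any preliminary averaging.
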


	\begin{remark}
	Our method applies to Fourier series of non-abelian compact groups as well and yields general pointwise convergence theorem without using the Lie algebraic structure, which is a new approach compared to previous works.  We will carry out this in a forthcoming paper.
	\end{remark}


\section[Smooth radial multipliers on some hyperbolic groups]{Smooth radial multipliers on some hyperbolic groups and free semicircular systems}
Note that the symbols of classical Fej\'{e}r means are not smooth, and it is natural to consider some smooth variants such as Bochner-Riesz means. In this section we will consider the pointwise convergence of Bochner-Riesz means in the noncommutative setting. Also, the smooth radial completely positive multipliers on free groups are fully classified in \cite{haagerupknudby15LKfreegroups}, so we will also discuss the corresponding pointwise convergence problem in the second part of this section.
\subsection{Bochner-Riesz means on hyperbolic groups and free semicircular systems}\label{subsec:hyperbolic}
In this subsection we briefly discuss a noncommutative analogue of Bochner-Riesz means for the setting of hyperbolic groups. We refer to \cite{ghysdelaharpe90sur,gromov87hyperbolic}  for a complete description of hyperbolic groups. We merely remind that all hyperbolic groups are weakly amenable and the completely bounded radial Fourier multipliers have been characterized in \cite{ozawa08weak,meidelasalle17cbofheatsemigroups}. In particular, we denote by $|\  |$ the usual word length function on a hyperbolic group $\Gamma$, then the Fourier multipliers
$$ B_N^\delta (x) = \sum_{g\in \Gamma : |g|\leq N} \left(1-\frac{|g|^2}{N^2}\right)^\delta \hat x (g) \lambda(g),\quad x\in  \vN $$
define a family of completely bounded maps on $\vN$ with $\sup_N \|B_N^\delta \|_{cb} <\infty$ as soon as $\real (\delta) >1$ (see \cite[Example 3.4]{meidelasalle17cbofheatsemigroups}). Let
$$b_N^\delta(g)=(1-\frac{|g|^2}{N^2})^\delta \dsone_{[0, N]}(|g|) $$ 
be the corresponding symbols of the maps $B_N^\delta$.
It is easy to check that for real parameters $\delta>1$,
\begin{equation}\label{eq:symbolconditionbochnerriesz}
|1-b_N^\delta(g)|\leq c \frac{|g|}{N},\quad  |b_N^\delta(g)|\leq c\frac{N}{|g|},\quad  |b_{N+1}^\delta(g) - b_N^\delta(g)|\leq c\frac{1}{N} ,\quad g\in\Gamma	
\end{equation}
for some constant $c>0$. We are interested in the case where the word length function $|\  |$ is conditionally negative definite. This is the case if $\Gamma$ is a non-abelian free group or a hyperbolic Coxeter group. With the help of Theorem~\ref{theorem:criterion1}, we may deduce the following result, which can be viewed as the noncommutative analogue of Stein's theorem \cite{stein58localization}. 
See also \cite{chenxuyin13harmonic} for results about Bochner-Riesz means on quantum tori.
\begin{theorem}\label{thm:bochnerrieszhyperbolic}
	Let $2\leq p\leq\infty$ and $\Gamma$ and $B_N^\delta$ be as above with real parameter $\delta>1-\frac2p$. Assume additionally that the word length function $|\  |$ is conditionally negative definite.  Then there exists a constant $c_{p,\delta}$ such that 
	$$\|{\sup_{N\in \bN}}^+B_N^\delta( x)\|_p\leq c_{p,\delta}\|x\|_p,\qquad x\in L_p(\vN),$$
	and for any $x\in L_p(\vN)$, $B_N^\delta(x)$ converges a.u. to  $x$ as $N\to \infty$.
\end{theorem}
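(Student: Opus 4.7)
My plan is to apply Remark~\ref{rmk:B2} with the conditionally negative definite length function $\ell(g)=|g|$ and exponent $\alpha=1$. By Schoenberg's theorem the family $(S_t)_{t\ge 0}$ given by $S_t\lambda(g)=e^{-t|g|}\lambda(g)$ is a semigroup of unital completely positive trace-preserving and symmetric maps on $\vN$, and Rota's dilation property holds for each $S_t$ by Lemma~\ref{lem:eg rota}~(1) since $\vN$ is finite. The three estimates in \eqref{eq:symbolconditionbochnerriesz} are exactly the hypotheses \eqref{eq:criterion a2} with $\alpha=1$ and $\beta=c(\delta)$, so all assumptions of Remark~\ref{rmk:B2} are in place except possibly the uniform $\vN$-bound $\gamma=\sup_N\|T_{b_N^\delta}:\vN\to\vN\|$.

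For $\delta>1$ the result of Mei--de la Salle quoted in the statement provides $\gamma<\infty$, so Remark~\ref{rmk:B2} immediately yields the maximal inequality on $L_p(\vN)$ for every $2\le p<\infty$; the $p=\infty$ case follows by applying the conclusion to any finite $p'>2$ (using $\delta>1>1-2/p'$). For $\delta\in(1-2/p,1]$ with $p>2$ no uniform $\vN$-bound is available, so I would instead interpolate via Stein's theorem for analytic families. Set $\tilde\phi_N^z:=b_N^z-\exp(-\sqrt{|\cdot|}/2^{N/2})$ and view $\mathbb T^z:x\mapsto(T_{\tilde\phi_N^z}x)_N$ as a linear operator valued in $L_p(\vN;\ell_\infty)$, which is a complex interpolation scale by Proposition~\ref{prop: prop of LMinfty and LMl1}~(3). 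For a given $\delta>1-2/p$ choose $\delta_0\in(0,\delta)$ small and $\delta_1>1$ with $\delta=(2/p)\delta_0+(1-2/p)\delta_1$. On $\mathrm{Re}(z)=\delta_0$ the symbols satisfy
\[
|\tilde\phi_N^z(g)|\lesssim c(|z|)\frac{2^{N/2}\sqrt{|g|}}{(2^{N/2}+\sqrt{|g|})^2}
\]
with polynomial growth in $|\mathrm{Im}(z)|$, so Proposition~\ref{prop:general case for p=2} together with Proposition~\ref{claim: sup+ < CRp norm} supplies an $L_2\to L_2(\vN;\ell_\infty)$ bound of admissible order. On $\mathrm{Re}(z)=\delta_1>1$ the Mei--de la Salle cb-bound combined with the trivial $\vN$-boundedness of $(P_t)$ gives an $L_\infty\to L_\infty(\vN;\ell_\infty)$ bound, again with polynomial growth in $|\mathrm{Im}(z)|$. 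Stein's theorem for analytic families then produces the $L_p\to L_p(\vN;\ell_\infty)$ bound at $\mathrm{Re}(z)=\delta$ (with $\theta=1-2/p$), and adding back the Poisson semigroup bound from Proposition~\ref{prop:maximal inequality of semigroup} delivers the desired maximal inequality for $(T_{b_N^\delta})_N$.

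For the a.u.\ convergence I would run the same interpolation with the column version $L_p(\vN;\ell_\infty^c)$ to obtain a strong type bound in $L_p(\vN;\ell_\infty^c)$, which implies the one-sided weak type bound \eqref{eq:one sided weak}; then Proposition~\ref{prop:Phix-x in Lp M C0}~(2) applies on the norm-dense span of $\{\lambda(g):g\in\Gamma\}$, where $T_{b_N^\delta}(x)\to x$ is immediate because $b_N^\delta(g)\to 1$ pointwise. The hard part will be the Stein interpolation step: one must verify admissible growth in $|\mathrm{Im}(z)|$ of the norms on both lines of the strip, and check that $z\mapsto\mathbb T^z$ is analytic in the sense required by Stein's theorem, which reduces to the strong-operator analyticity of each $z\mapsto T_{\tilde\phi_N^z}$ via the explicit formula $b_N^z(g)=e^{z\log(1-|g|^2/N^2)}\mathbf{1}_{[0,N)}(|g|)$ on its finite support.
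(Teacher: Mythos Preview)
Your overall architecture --- Stein complex interpolation between an $L_2$ endpoint valid for all $\delta>0$ and an $L_\infty$ endpoint valid for $\real(\delta)>1$ --- is exactly the paper's strategy, and your treatment of the $L_\infty$ endpoint via Remark~\ref{rmk:B2} (equivalently Theorem~\ref{theorem:criterion1}) is correct and matches the paper's Step~1. The gap is in your $L_2$ endpoint.

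You invoke Proposition~\ref{prop:general case for p=2} for the symbols $\tilde\phi_N^z$, but that proposition requires $\sum_{N}|\tilde\phi_N^z(g)|^2$ to be uniformly bounded, which is designed for \emph{lacunary} indices $a^N$. Here $N$ runs over all of $\bN$, and the best pointwise bound you can extract is $|\tilde\phi_N^z(g)|\lesssim c(|z|)\min(\sqrt{|g|/N},\sqrt{N/|g|})$, whose square summed over $N\in\bN$ diverges (about $|g|$ many terms of size $\asymp 1$ near $N\approx|g|$). Your writing ``$2^{N/2}$'' in the Poisson subtraction is a symptom of this confusion between the lacunary framework \textbf{(A1)} and the full-sequence framework \eqref{eq:criterion a2}. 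The natural fix would be Lemma~\ref{lemma:the condition for whole sequence, p=2}, but that requires the derivative coefficients $b_j$ to be finite; for $\real(z)=\delta_0<1$ one has $t\,\partial_t b_t^z(g)=2z(1-|g|^2/t^2)^{z-1}|g|^2/t^2$, which blows up as $|g|/t\to 1^-$, so $b_0=\infty$ and the lemma does not apply either.

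The paper's Step~2 handles precisely this difficulty: it uses Stein's integral identity
\[
B_R^{\alpha+\beta}=C_{\alpha,\beta}R^{-2(\alpha+\beta)}\int_0^R (R^2-t^2)^{\beta-1}t^{2\alpha+1}B_t^\alpha\,dt
\]
to trade $B_R^\delta$ for an average of $B_t^\alpha$ with $\alpha>-\tfrac12$, then controls the resulting maximal operator by the $L_2$-bounded square function $G^\alpha(x)=\bigl(\int_0^\infty |B_r^{\alpha+1}(x)-B_r^\alpha(x)|^2\frac{dr}{r}\bigr)^{1/2}$ (verified directly via Plancherel) together with an iteration $\|{\sup_R}^+ M_R^\alpha x\|_2\lesssim\|{\sup_R}^+ M_R^{\alpha+1} x\|_2+\|x\|_2$ that bootstraps back to the range $\alpha>1$ already handled in Step~1. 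This square-function-and-iteration argument is the substantive content your proposal is missing; once it is in place, the Stein interpolation and the a.u.\ convergence via Proposition~\ref{prop:Phix-x in Lp M C0}\,(2) proceed as you outline.
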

To show the maximal inequalities, we will follow Stein's strategy based on square function inequalities and interpolation. However, in order to interpolate, we have to make use of Theorem~\ref{theorem:criterion1} to get the maximal inequalities for large $\delta$ ($\delta>1$ in the present setting). This idea, which depends finally on ergodic theory, is quite different from Stein's one.

\begin{proof}
For convenience of presentation, we will allow $N$ to take any positive real numbers: we will prove that for $2\leq p\leq\infty$ and $\delta>1-\frac2p$,
$$\|{\sup_{R>0}}^+B_R^\delta( x)\|_p\leq c_{p,\delta}\|x\|_p,\qquad x\in L_p(\vN).$$

Step1. For $\delta\in\mathbb C$ with $\real (\delta)>1$ and $2\leq p\leq\infty$, we have
\begin{align}\label{p for inter}\|{\sup_{R>0}}^+B_R^\delta( x)\|_p\lesssim\|x\|_p,\qquad x\in L_p(\vN).\end{align}
Choose $\alpha>0$ and $\beta\in\mathbb C$ such that $\real (\delta)>\alpha>1$ and $\delta=\alpha+\beta$. One gets the identity
\begin{align}\label{identity}
B^\delta_R=C_{\alpha,\beta}R^{-2\delta}\int^R_0(R^2-t^2)^{\beta-1}t^{2\alpha+1}B^\alpha_t dt,
\end{align}
where $C_{\alpha,\beta}=\frac{2\Gamma(\alpha+\beta+1)}{\Gamma(\alpha+1)\Gamma(\beta)}$. Let $(R_n)$ be a sequence in $(0,\infty)$ and take an element ${(y_n)\in L_{p'}{(\vN;\ell_1)}}$ with norm not bigger than 1. Then applying Theorem~\ref{theorem:criterion1} and \eqref{eq:symbolconditionbochnerriesz}, we obtain that for any $x\in L_p(\vN)$,
\begin{align*}
\left|\tau\left(\sum_n B^\delta_{R_n}(x)y_n\right)\right|&=|C_{\alpha,\beta}|\left|\sum_nR^{-2\delta}_n\int^{R_n}_0(R^2_n-t^2)^{\beta-1}t^{2\delta+1}\tau(B^\alpha_t(x)y_n)dt\right|\\
&\leq|C_{\alpha,\beta}|\int^{1}_0|(1-t^2)^{\beta-1}t^{2\delta+1}|\left|\tau(B^\alpha_{tR_n}(x)y_n)\right|dt\\
&\leq|C_{\alpha,\beta}|\int^{1}_0|(1-t^2)^{\beta-1}t^{2\delta+1}|dt\|{\sup_{R>0}}^+B_R^\alpha( x)\|_p\lesssim\|x\|_p.
\end{align*}
By the duality in Proposition \ref{prop: prop of LMinfty and LMl1} (1), we then deduce the desired maximal inequality in this step.

Step 2. For $\delta>0$, we have
\begin{align}\label{2 for inter}\|{\sup_{R>0}}^+B_R^\delta( x)\|_2\lesssim\|x\|_2,\qquad x\in L_2(\vN).\end{align}
We first  assume $\delta>1/2$. Choose $\alpha>-1/2$ and $\beta>1$ such that $\delta=\alpha+\beta$. By  \eqref{identity} we get
\begin{align*}
B^{\alpha+\beta}_R&=-C_{\alpha,\beta}R^{-2(\alpha+\beta)}\int^R_0\left(\int^t_0B^\alpha_rdr\right)\frac{d}{dt}[(R^2-t^2)^{\beta-1}t^{2\alpha+1}]dt\\
&=C_{\alpha,\beta}\int^1_0\varphi(t)M^\alpha_{Rt}dt,
\end{align*}  
where 
$$M^\delta_t=\frac{1}{t}\int^t_0B^\alpha_rdr$$
and
$$\varphi(t)=2(\beta-1)(1-t^2)^{\beta-2}t^{2\alpha+3}-(2\alpha+1)(1-t^2)^{\beta-1}t^{2\alpha+1}$$
which is absolutely integrable over $[0,1]$. 
Fix $x\in L_2(\vN)$ and let $(R_n)$ be a sequence in $(0,\infty)$ and $(y_n)\in L_{2}{(\vN;\ell_1)}$ with norm not bigger than 1. 
We get by the previous identity and the duality
\begin{align*}
\left|\tau(\sum_n B^\delta_{R_n}(x)y_n)\right|&=|C_{\alpha,\beta}|\left|\tau(\sum_n\left(\int^1_0\varphi(t)M^\alpha_{R_nt}(x)dt\right)y_n)\right|\\
&\leq |C_{\alpha,\beta}|\int^1_0|\varphi(t)|\left|\tau(\sum_nM^\alpha_{R_nt}(x)y_n)\right|dt\\
&\lesssim  \|{\sup_{R>0}}^+M^\alpha_{R}(x)\|_2.
\end{align*}
By the arbitrariness of $(R_n)$ and $(y_n)$, we obtain
$$\|{\sup_{R>0}}^+B^\delta_{R}(x)\|_2\lesssim \|{\sup_{R>0}}^+M^\alpha_{R}(x)\|_2.$$
Next we have to show
\begin{align}\label{average}
\|{\sup_{R>0}}^+M^\alpha_{R}(x)\|_2\lesssim\|x\|_2
\end{align}
for $\alpha>-1/2$. 
We use again the duality to obtain
\begin{align*}
\left|\tau(\sum_nM^\alpha_{R_n}(x)y_n)\right|&\leq  \left|\tau(\sum_nM^{\alpha+1}_{R_n}(x)y_n)\right|+ \left|\tau(\sum_n(M^{\alpha+1}_{R_n}(x)-M^\alpha_{R_n}(x))y_n)\right|\\
&\lesssim \|{\sup_{R>0}}^+M^{\alpha+1}_{R}(x)\|_2+ \|{\sup_{n}}^+G^\alpha_{R_n}(x)\|_2,
\end{align*}
where $G^\alpha_R(x)=M^{\alpha+1}_{R}(x)-M^\alpha_{R}(x)$.
By Lemma \ref{theorem:interpolation l infty}, we have
\begin{align*}
\|{\sup_{n}}^+G^\alpha_{R_n}(x)\|_2&\leq \|{\sup_{n}}^+|G^\alpha_{R_n}(x)|^2\|^{\frac14}_1 \|{\sup_{n}}^+|(G^\alpha_{R_n}(x))^*|^2\|^{\frac14}_1.
\end{align*}
Note that 
\begin{align*}
|G^\alpha_R(x)|^2&=\left|\frac{1}{R}\int^R_0(B^{\alpha+1}_r(x)-B^{\alpha}_r(x))dr\right|^2\\
&\leq \int^R_0|B^{\alpha+1}_r(x)-B^{\alpha}_r(x)|^2\frac{dr}{R}\\
&\leq \int^\infty_0|B^{\alpha+1}_r(x)-B^{\alpha}_r(x)|^2\frac{dr}{r}=:(G^\alpha(x))^2.
\end{align*}
Similarly, set
$$G^\alpha_*(x)=\left(\int^\infty_0|(B^{\alpha+1}_r(x)-B^{\alpha}_r(x))^*|^2\frac{dr}{r}\right)^{\frac12}.$$
Therefore, we get 
$$\|{\sup_{R>0}}^+M^{\alpha}_{R}(x)\|_2\leq \|{\sup_{R>0}}^+M^{\alpha+1}_{R}(x)\|_2+\|G^\alpha(x)\|_2^{\frac12}\|G_*^\alpha(x)\|_2^{\frac12}.$$
We claim that for $\alpha>-1/2$,
$$\|G^\alpha(x)\|_2=\|G^\alpha_*(x)\|_2\lesssim \|x\|_2.$$
The first identity is trivial; on the other hand, by Plancherel's theorem 
\begin{align*}
\|G^\alpha(x)\|_2^2&=\int^\infty_0\tau(|B^{\alpha+1}_r(x)-B^{\alpha}_r(x)|^2)\frac{dr}{r}\\
&=\int^\infty_0\sum_{|g|\leq R}\left|\left(1-\frac{|g|^2}{R^2}\right)^{\alpha+1}-\left(1-\frac{|g|^2}{R^2}\right)^{\alpha}\right|^2|\hat{x}(g)|^2\frac{dr}{r}\\
&=\sum_{g\neq e}|\hat{x}(g)|^2\int^\infty_{|g|}\frac{|g|^4}{r^4}\left(1-\frac{|g|^2}{r^2}\right)^\alpha\frac{dr}{r}\lesssim \|x\|_2
\end{align*}
since when $\alpha>-1/2$,
$$\int^\infty_{|g|}\frac{|g|^4}{r^4}\left(1-\frac{|g|^2}{r^2}\right)^\alpha\frac{dr}{r}=\int^\infty_1r^{-5}(1-r^2)^{2\alpha}<\infty.$$ 
As a result,
$$\|{\sup_{R>0}}^+M^{\alpha}_{R}(x)\|_2\lesssim \|{\sup_{R>0}}^+M^{\alpha+1}_{R}(x)\|_2+\|x\|_2,$$
which yields \eqref{average} by iteration
$$\|{\sup_{R>0}}^+M^{\alpha}_{R}(x)\|_2\lesssim \|{\sup_{R>0}}^+M^{\alpha+2}_{R}(x)\|_2+\|x\|_2\lesssim\|x\|_2$$
where we have used Step 1. And thus we obtain \eqref{2 for inter} for $\delta>1/2$.

We now handle the general case of $\delta>0$. Let $\alpha>-1/2$ and $\beta>1/2$ such that $\delta=\alpha+\beta$. Then using \eqref{identity}, we have
\begin{align*}
B^{\alpha+\beta}_R-\frac{C_{\alpha,\beta}}{C_{\alpha+1,\beta}}B^{\alpha+\beta+1}_R&=C_{\alpha,\beta}R^{-2(\alpha+\beta)}\Big[\int^R_0(R^2-t^2)^{\beta-1}t^{2\alpha+1}B^\alpha_tdt\\
&\quad-R^{-2}\int^R_0(R^2-t^2)^{\beta-1}t^{2\alpha+3}B^{\alpha+1}_t dt\Big]\\
&=C_{\alpha,\beta}R^{-2(\alpha+\beta)}\Big[\int^R_0(R^2-t^2)^{\beta-1}t^{2\alpha+1}(B^\alpha_t-B^{\alpha+1}_t)dt\\
&\quad+\int^R_0(R^2-t^2)^{\beta-1}t^{2\alpha+1}(1-R^{-2}t^2)B^{\alpha+1}_t dt\Big]\\
&:=A_R+B_R.
\end{align*}
Let us first deal with the first part. Using Lemma \ref{theorem:interpolation l infty}, we have
\begin{align*}
\|{\sup_{n}}^+A_{R_n}(x)\|_2&\leq \|{\sup_{n}}^+|A_{R_n}(x)|^2\|^{\frac14}_1 \|{\sup_{n}}^+|(A_{R_n}(x))^*|^2\|^{\frac14}_1.
\end{align*}
Observe that
\begin{align*}
|A_R(x)|&=|C_{\alpha,\beta}|R^{-2(\alpha+\beta)}\Big|\int^R_0(R^2-t^2)^{\beta-1}t^{2\alpha+1}(B^\alpha_t(x)-B^{\alpha+1}_t(x)) dt\Big|\\
&\leq|C_{\alpha,\beta}|R^{-2(\alpha+\beta)}\Big(\int^R_0|(R^2-t^2)^{\beta-1}t^{2\alpha+1}|^2dt\Big)^{\frac12}\\
&\quad\times\Big(\int^R_0|B^\alpha_t(x)-B^{\alpha+1}_t(x)|^2dt\Big)^{\frac12}\lesssim G^\alpha(x),
\end{align*}
since that $\beta>1/2$ yields
$$R^{1-4(\alpha+\beta)}\int^R_0|(R^2-t^2)^{\beta-1}t^{2\alpha+1}|^2dt=\int^1_0|(1-t^2)^{\beta-1}t^{2\alpha+1}|^2dt<\infty.$$
Similarly,
$$|(A_R(x))^*|\lesssim G_*^\alpha(x),$$
and thus
\begin{align*}
\|{\sup_{R>0}}^+A_{R}(x)\|_2\lesssim \|G^\alpha(x)\|^\frac{1}{2}_2 \|G_*^\alpha(x)\|^\frac{1}{2}_2= \|G^\alpha(x)\|_2\lesssim\|x\|_2.
\end{align*}
Now we handle $B_R$. Because
\begin{align*}
B_R&=C_{\alpha,\beta}R^{-2(\alpha+\beta)}\int^R_0(R^2-t^2)^{\beta-1}t^{2\alpha+1}(1-R^{-2}t^2)B^{\alpha+1}_t dt\\
&=C_{\alpha,\beta}R^{-2(\alpha+\beta)-2}\int^R_0(R^2-t^2)^{\beta}t^{2\alpha+1}B^{\alpha+1}_t dt
\end{align*}
and $\beta>1/2$, this term can be estimated as $B^\delta_R$ in the case of $\delta>1/2$. Therefore we get
\begin{align*}
\|{\sup_{R>0}}^+B_{R}(x)\|_2\lesssim\|x\|_2.
\end{align*}
So we conclude
\begin{align*}
\|{\sup_{R>0}}^+B^{\alpha+\beta}_{R}(x)\|_2&\leq \frac{|C_{\alpha,\beta}|}{|C_{\alpha+1,\beta}|}\|{\sup_{R>0}}^+B^{\alpha+\beta+1}_{R}(x)\|_2\\
&\quad\|{\sup_{R>0}}^+A_{R}(x)\|_2+\|{\sup_{R>0}}^+B_{R}(x)\|_2\\
&\lesssim\|x\|_2,
\end{align*}
which finishes the proof of Step 2.

Step 3. The result in the case $p=\infty$ is contained in Theorem~\ref{theorem:criterion1}, while the one in the case of $p=2$ is given by Step 2. Then for $2<p<\infty$, the desired inequality follows from Stein's complex interpolation. This interpolation is now well-known also in the noncommutative framework, we omit it here. Therefore, we conclude the proof of the whole theorem.
\end{proof}

\begin{remark}
	The pointwise convergence of the above Bochner-Riesz means  for  ${1<p<2}$  seems  more delicate.
	However, we can still  construct some finitely supported multipliers satisfying the pointwise convergence in this case. 
	For any $n\in \bN$, we define a multiplier $p_n$ on $\vN$  by
	$$p_n(x)=\sum_{|g|=n}\hat{x}(g)\lambda(g), \qquad x\in \vN,$$
	which is the projection onto the subspace $\mathrm{span}\{\lambda(g): |g|=n\}$.
	Ozawa \cite{ozawa08weak} showed that these operators satisfy  the following estimate
	\begin{equation}\label{eq:character n cb bounded}
		\|p_n: \vN\to \vN\|\leq \beta(n+1)
	\end{equation}
	where $\beta$ is a positive constant independent of $n$.
	For any $N\in \bN$, we set $$m_N(g)=\dsone_{[0, N^2]}(|g|) e^{-\frac{|g|}{N}}, \qquad g\in \Gamma.$$  
	On the other hand, for $t\in \bR_+$, we consider the multiplier $S_t: \lambda(g)\mapsto e^{-t|g|}\lambda(g)$. By  assumption, $(S_t)_{t\in \bR_+}$ is a semigroup of  unital completely positive trace preserving and symmetric maps.
	We write
	\begin{align*}
		e^{-\frac{|g|}{N}}-m_{N}(g)=\dsone_{[N^2+1,\infty)}(|g|) e^{-\frac{|g|}{N}}=\sum_{r=N^2+1}^\infty e^{-\frac{r}{N}}\dsone_{|g|=r}.
	\end{align*}
	Hence, by the estimate (\ref{eq:character n cb bounded}), we have
	\begin{align*}
	\|S_{1/N}-T_{m_{N}}:VN(\Gamma)\to VN(\Gamma)\|&\leq \sum_{r=N^2+1}^\infty \|e^{-\frac{r}{N}}p_r\|	\lesssim \beta\sum_{r=N^2+1}^\infty \frac{N^6}{r^6}(r+1)  
	\\
	&   \lesssim  \beta\frac{1}{(N+1)^2}.		
	\end{align*}By duality and interpolation, for any $1\leq p\leq \infty$, we have 
	$$\|S_{1/N}-T_{m_N}:L_p(\vN)\to L_p(\vN)\|\lesssim  \beta\frac{1}{N^2}.$$
	By Proposition~\ref{prop:maximal inequality of semigroup}, $S_{1/N}$ is of strong type $(p, p)$ for any $1<p<\infty$.
	Hence for any selfadjoint element $x\in L_p(\vN)$ with $1<p<\infty$,
	\begin{align*}
		\|{\sup_{N\in \bN}}^+ T_{m_N}(x)\|_p&\leq \|{\sup_{N\in \bN}}^+S_{1/N}(x)\|_p+\|{\sup_{N\in \bN}}^+ (S_{1/N}-T_{m_N})(x)\|_p\\
		&\lesssim_{\beta,p} \|x\|_p+ \sum_{N\geq 0} \frac{1}{(N+1)^2}\|x\|_p\\
		&\lesssim_{\beta,p} \|x\|_p.
	\end{align*}
	Similarly, for any $2\leq p<\infty$, we have 
	$$\|(T_{m_N}(x))_N\|_{L_p(\cM;\ell_\infty^c)}\lesssim_{\beta, p}  \|x\|_p.$$
	Then by Proposition~\ref{prop:Phix-x in Lp M C0}  it is easy to check that $T_{m_N}x$ converges a.u. to $x$ as $N\to \infty$ for    $x\in L_p(\vN)$ with $2\leq  p<\infty$ and   converges b.a.u.  for      $1<  p<2$. Note that we may also obtain the weak type $(1,1)$ estimate and b.a.u. convergence on $L_1(VN(\Gamma))$ for multipliers $\lambda(g) \mapsto \dsone_{[0,N]}(|g|) e^{-\sqrt{|g|/N}}\lambda(g)$ by the same argument, since the subordinated Poisson semigroup $\lambda(g)\mapsto e^{-t\sqrt{|g|}}\lambda(g)$ is of weak type $(1,1)$ according to \cite[Remark 4.7]{jungexu07ergodic}.
	%
	
	The above arguments work for all groups with the rapid decay property with respect to a conditionally negative definite length function. 
\end{remark}

Let us present briefly some similar results in the setting of free semicircular systems (\cite{voiculescu85sym}) as mentioned in Example \ref{eg:more vna} (3). Let $H_{\mathbb R}$ be a real Hilbert spaces with an orthonormal basis $(e_i)_{i\in I}$ and $H_{\mathbb C}$ be its complexification. We consider the free Fock spaces defined by the Hilbertian direct sum
$$\mathcal F _0 (H_\mathbb{C}) = \oplus_{n\geq 0} H_\mathbb{C}^{\otimes n}$$
and the creation operators
$$l^*(f):\mathcal F _0 (H_\mathbb{C}) \to \mathcal F _0 (H_\mathbb{C}),\quad \xi\mapsto f\otimes \xi.$$
The adjoint of a creation operator $l^*(f)$ is denoted by $l(f)$. Let $\mathit{\Omega}$ denote the unit element in $H_\mathbb{C}^{\otimes 0} = \mathbb C$. By definition the free Gaussian von Neumann algebra $\Gamma_0 (H_{\mathbb R})$ is the von Neumann subalgebra generated by $W(f)= l(f)+l^*(f)$ for $f\in H_\mathbb{R}$ in $B(\mathcal F _0 (H_\mathbb{C}))	$. This algebra admits a natural faithful normal tracial state 
$$\tau(x) = \langle x\mathit{\Omega},\mathit{\Omega} \rangle_{\mathcal F _0 (H_\mathbb{C})},\qquad x\in \Gamma_0 (H_{\mathbb R}).$$
It is well known that each $\xi \in H_\mathbb{C}^{\otimes n} $ corresponds to an element $W(\xi)\in \Gamma_0 (H_{\mathbb R})$ such that $W(\xi)\mathit{\Omega} = \xi$. The semigroup of maps
$$S_t:{W(\xi)\mapsto e^{-tn}W(\xi)} ,\quad \xi\in H_\mathbb{C}^{\otimes n},\,n\in \mathbb N$$
extends to a semigroup of  unital completely positive $\tau$-preserving and symmetric maps on $\Gamma_0 (H_{\mathbb R})$, which is called the free Ornstein-Uhlenbeck semigroup (see e.g. \cite{biane97hc}). As before, we may consider the radial multipliers
$$ B_N^\delta (W(\xi)) =   \left(1-\frac{n^2}{N^2}\right)^\delta \dsone_{[0,N]}(n) W(\xi),\quad \xi\in H_\mathbb{C}^{\otimes n},\,n\in \mathbb N.$$ 
Note that by \cite[Example 3.4, Equality (4)]{meidelasalle17cbofheatsemigroups}, the trace norm of the Hankel matrix associated with the symbols of $ B_N^\delta$ is uniformly bounded when ${\real (\delta)>1}$, thus by \cite[Theorem 3.5 and Proposition 3.3]{houdayerricad11freeaw}, $ B_N^\delta$ defines a family of completely bounded maps on $\Gamma_0 (H_{\mathbb R})$ with $\sup_N \|B_N^\delta \|_{cb} <\infty$ as soon as $\real (\delta) >1$. Then repeating the proof of Theorem \ref{thm:bochnerrieszhyperbolic}, we obtain the following analogous result.
	\begin{theorem}
		Let $2\leq p\leq\infty$ and $\delta>1-\frac2p$. Then there exists a constant $c_{p,\delta}$ such that 
		$$\|{\sup_{N\in \bN}}^+B_N^\delta( x)\|_p\leq c_{p,\delta}\|x\|_p,\qquad x\in L_p(\Gamma_0 (H_{\mathbb R}),\tau),$$
		and for any $x\in L_p(\Gamma_0 (H_{\mathbb R}),\tau)$, $B_N^\delta(x)$ converges a.u. to  $x$ as $N\to \infty$.
	\end{theorem}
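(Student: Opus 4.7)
The plan is to follow the three-step argument already established for Theorem~\ref{thm:bochnerrieszhyperbolic}, transporting it to the free-Gaussian setting where the length function $|g|$ on $\Gamma$ is replaced by the degree $n$ on $\mathcal F_0(H_{\mathbb C})$ and the length-heat semigroup $\lambda(g)\mapsto e^{-t|g|}\lambda(g)$ is replaced by the free Ornstein--Uhlenbeck semigroup $S_t:W(\xi)\mapsto e^{-tn}W(\xi)$ for $\xi\in H_{\mathbb C}^{\otimes n}$. The latter is a semigroup of unital completely positive trace-preserving and symmetric maps, so the framework of Theorem~\ref{theorem:criterion1} applies with $\ell(n)=n$, $\alpha=1$, and $\Omega=\mathbb N$ equipped with the natural isometric identification $U:L_2(\Gamma_0(H_{\mathbb R}),\tau)\to\bigoplus_{n\geq 0}H_{\mathbb C}^{\otimes n}$.

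First, for $\operatorname{Re}(\delta)>1$, the symbols $b_N^\delta(n)=(1-n^2/N^2)^\delta\mathbf 1_{[0,N]}(n)$ verify the analogues of \eqref{eq:symbolconditionbochnerriesz}, namely $|1-b_N^\delta(n)|\lesssim n/N$ and $|b_N^\delta(n)|\lesssim N/n$, while the assumed uniform complete boundedness of $B_N^\delta$ supplies the required $\gamma<\infty$. Hence Theorem~\ref{theorem:criterion1}~(1) (applied along the dyadic subsequence and then promoted to the continuous family exactly as in Step 1 of the proof of Theorem~\ref{thm:bochnerrieszhyperbolic}, via the identity
\begin{equation*}
B_R^\delta=C_{\alpha,\beta}R^{-2\delta}\int_0^R (R^2-t^2)^{\beta-1}t^{2\alpha+1}B_t^\alpha\,dt
\end{equation*}
and the duality in Proposition~\ref{prop: prop of LMinfty and LMl1}~(1)) gives the maximal inequality on $L_p$ for every $2\leq p\leq\infty$ when $\operatorname{Re}(\delta)>1$.

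Second, for all $\delta>0$ I would prove the $L_2$ maximal bound by copying verbatim Step~2 of the proof of Theorem~\ref{thm:bochnerrieszhyperbolic}. The only noncommutative input used there is Plancherel's identity, the interpolation Lemma~\ref{theorem:interpolation l infty}, and the $L_p$ boundedness for $\operatorname{Re}(\delta)>1$ obtained in the first step; all three remain valid in $L_2(\Gamma_0(H_{\mathbb R}),\tau)$ after replacing $\sum_{g\in\Gamma}|\widehat x(g)|^2$ by $\sum_n\|U(x)(n)\|_{H_{\mathbb C}^{\otimes n}}^2$ in the square-function computation, because the projections onto $H_{\mathbb C}^{\otimes n}$ play exactly the same orthogonal role as the projections $p_n$ on $VN(\Gamma)$. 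Once both endpoints $p=\infty$ (Step~1) and $p=2$ (Step~2) are in hand, Stein's complex interpolation along the analytic family $\delta\mapsto B_N^\delta$ yields the desired maximal inequality for every $2<p<\infty$ and $\delta>1-2/p$.

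Finally, the a.u.\ convergence follows from the maximal inequality together with pointwise convergence on the dense subspace $U^{-1}(C_c(\mathbb N;H_{\mathbb C}^{\otimes\bullet}))$ (for which $B_N^\delta x=x$ as soon as $N$ exceeds the largest degree in the support of $Ux$), combined with Proposition~\ref{prop:Phix-x in Lp M C0}~(2); as in Subsection~\ref{sec:proof ot criterion1}, the one-sided weak-type estimate needed there is provided by the strong-type $(p,p)$ bound on $L_p(\cM;\ell_\infty^c)$, which one also reads off from the same interpolation. The main obstacle is the same one as in the proof of Theorem~\ref{thm:bochnerrieszhyperbolic}: establishing the $L_2$ square-function estimate for $0<\delta\leq 1/2$, where iteration on $\alpha$ must be combined with the fractional-integration identity above; here all the needed ingredients transfer because the free O--U semigroup has the same spectral structure as the length-heat semigroup, and no Rota dilation is required since we are in the range $p\geq 2$.
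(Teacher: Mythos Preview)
Your proposal is correct and follows essentially the same approach as the paper, which simply states that the proof of Theorem~\ref{thm:bochnerrieszhyperbolic} carries over verbatim to the free-Gaussian setting with the free Ornstein--Uhlenbeck semigroup replacing the length-heat semigroup. One small imprecision: in Step~1 you should invoke Theorem~\ref{theorem:criterion1}~(2) (via Remark~\ref{rmk:B2}, since the symbols satisfy \eqref{eq:criterion a2}) to obtain directly the continuous maximal bound $\|{\sup_{R>0}}^{+}B_R^{\alpha}x\|_p\lesssim\|x\|_p$ for real $\alpha>1$; the integral identity and duality are then used, exactly as in the paper, to pass from real $\alpha$ to complex $\delta$ with $\operatorname{Re}(\delta)>1$, not to promote from a dyadic to a continuous family.
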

	
\subsection{Smooth positive definite radial kernels on free groups}\label{sec:radial free}
Using our main results, we may  provide a wide class of  completely positive smooth multipliers on free groups satisfying the pointwise convergence apart from Poisson semigroups. 
To this end we will need the following characterization of radial positive definite functions on free groups. In the following $\mathbb F _d$ will denote the free group with $d$ generators ($2\leq d\leq \infty$).
\begin{theorem}[{\cite[Theorem 1.1]{haagerupknudby15LKfreegroups} and \cite[Theorem 1.2]{vergara2019positive}}]\label{theorem:vergera}
	Let $\nu$ be a positive Borel measure on $[-1, 1]$. Define  a function $\varphi$ on $\bN$ by 
	$$\varphi(k)=\int_{-1}^{1}x^kd\nu(x), \qquad k\in \bN.$$
	Then $\dot{\varphi}(g):=\varphi(|g|)$ is a  positive definite function on $\bF_\infty$, where $|\  |$ is the word length function.
\end{theorem}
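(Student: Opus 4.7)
My plan is to reduce the theorem to the positive definiteness of the single-parameter family $g\mapsto x^{|g|}$ on $\mathbb{F}_\infty$ for each fixed $x\in[-1,1]$, and then to handle that family by combining Haagerup's classical result with a simple sign-character trick.

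First I would observe that the class of positive definite functions on any group is closed under nonnegative linear combinations and under pointwise limits, and in particular under integration against a positive Borel measure (this follows immediately from the definition by testing against finitely many group elements and using Fubini/dominated convergence). Thus if I can prove that for every fixed $x\in[-1,1]$ the function $\psi_x(g):=x^{|g|}$ is positive definite on $\mathbb{F}_\infty$, then
\[
\dot\varphi(g)=\int_{-1}^{1}\psi_x(g)\,d\nu(x)
\]
is automatically positive definite as an integral of positive definite functions against the positive measure $\nu$.

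Next I would split into two regimes. For $x\in[0,1]$, positive definiteness of $g\mapsto x^{|g|}$ is Haagerup's classical theorem: the word length $|\cdot|$ is conditionally negative definite on $\mathbb{F}_\infty$, so by Schoenberg's theorem $g\mapsto e^{-t|g|}$ is positive definite for every $t\geq 0$; taking $t=-\log x$ covers $x\in(0,1]$, and the case $x=0$ reduces to $\delta_{e}$, which is trivially positive definite. For $x\in[-1,0)$ I would write $x=-r$ with $r\in(0,1]$ and use the character $\varepsilon\colon \mathbb{F}_\infty\to\{\pm1\}$ obtained by sending every free generator to $-1$. Since reducing a word $a_{i_1}^{\epsilon_1}\cdots a_{i_k}^{\epsilon_k}$ removes pairs of the form $a a^{-1}$ (each contributing a factor $1$), the value of $\varepsilon$ on a reduced word equals $(-1)^{|g|}$, so $\varepsilon(g)=(-1)^{|g|}$ is a group homomorphism into $\{\pm1\}$, hence a unitary character, hence positive definite. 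The pointwise product of two positive definite functions is positive definite (it corresponds to a tensor product of the underlying GNS representations), so
\[
x^{|g|}=(-1)^{|g|}\,r^{|g|}=\varepsilon(g)\,r^{|g|}
\]
is positive definite, completing the second regime.

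The only genuinely nontrivial ingredient is the classical fact that $|\cdot|$ is conditionally negative definite on the free group, equivalently that $g\mapsto r^{|g|}$ is positive definite for $0\leq r\leq 1$; I would simply cite Haagerup's theorem for this, since nothing in the statement asks me to reprove it. Apart from that, the argument is essentially bookkeeping: verify the parity calculation that turns $(-1)^{|g|}$ into a character, and invoke the standard stability properties of positive definite functions (products, nonnegative integrals) to package everything together.
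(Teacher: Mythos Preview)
The paper does not prove this theorem; it is quoted verbatim from \cite{haagerupknudby15LKfreegroups} and \cite{vergara2019positive} and used as a black box in the proof of Proposition~\ref{cor:p.d. multipliers on F infty}. Your argument is correct and supplies a self-contained proof of the stated direction: the reduction to the one-parameter family $g\mapsto x^{|g|}$ via integration against $\nu$ is routine, Haagerup's theorem handles $x\in[0,1]$, and the parity character $\varepsilon(g)=(-1)^{|g|}$ (which is indeed a well-defined homomorphism since free reductions cancel letters in pairs) combined with the Schur product property handles $x\in[-1,0)$.

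The cited references actually establish the stronger converse---every radial positive definite function on $\mathbb F_\infty$ admits such an integral representation---which is a genuine characterization theorem and requires substantially more work. For the direction quoted and used in the paper, your elementary route is the standard one and is entirely adequate.
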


Then we get the following proposition. Note that if $\nu$ is the Dirac measure on $0$ in this proposition, then this statement amounts to the almost uniform convergence of Poisson semigroups on $VN(\mathbb F _d)$ proved in \cite{jungexu07ergodic}.
\begin{prop}\label{cor:p.d. multipliers on F infty}
	Let $\nu$ be an arbitrary positive Borel measure supported on $[-1, 1]$ with $\nu([-1,1])=1$ and write $d\nu_t (x)=  d\nu ( tx)$ for all $t>0$.
	For any $t>0$, set $$m_t( g )=\int_{\mathbb R} x^{|g|} d\nu_t (x-e^{-\frac{2}{t}}),  \qquad g\in\mathbb F _d,$$
	where  $|\  |$ is the usual word length function.
	Then there exist an absolute positive number $t_0>0$ and  a constant $c>0$ such that
	for all $1<p<\infty $ and all ${x\in L_p (VN(\mathbb F _d))}$,
	$$\|(T_{m_t} x)_{t\geq t_0}\|_{L_p(\cM;\ell_\infty)} \leq c \|x\|_p  \quad\text{and}\quad  T_{m_t} x\to x \text{ b.a.u. (a.u. if $p\geq 2$) as }t\to\infty.$$
\end{prop}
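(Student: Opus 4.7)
The plan is to identify Proposition 5.2.3 as a concrete application of Theorem~\ref{theorem:criterion2}~(2) applied to the semigroup $S_t : \lambda(g) \mapsto e^{-t|g|}\lambda(g)$ and to verify condition \textbf{(A2)} for the family $(m_t)_{t \geq t_0}$ with $\alpha = 1$ and arbitrary order of differentiability $\eta$.

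First I would rewrite the symbol as $m_t(g) = \int_{-1}^{1}(y/t + e^{-2/t})^{|g|}\,d\nu(y)$ and observe that $m_t$ equals $\int z^{|g|}\,d\tilde\nu_t(z)$, where $\tilde\nu_t$ is the pushforward of $\nu$ under $y\mapsto y/t + e^{-2/t}$. Since $1/t + e^{-2/t} < 1$ once $t$ exceeds a small absolute constant $t_0$, this pushforward is a probability measure supported in $[-1,1]$, and Theorem~\ref{theorem:vergera} of Haagerup--Knudby--Vergara then identifies $m_t$ as a radial positive definite function on $\mathbb F_d$ with $m_t(e)=1$. Consequently $T_{m_t}$ is unital completely positive and trace preserving on $VN(\mathbb F_d)$; real-valuedness of $m_t$ together with $m_t(g^{-1})=m_t(g)$ yields symmetry. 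For the underlying length function I would take $\ell(g)=|g|$, which is conditionally negative definite by Haagerup's classical result, so $S_t\lambda(g)=e^{-t|g|}\lambda(g)$ is a semigroup of unital completely positive trace preserving symmetric maps. Since $VN(\mathbb F_d)$ is finite, Lemma~\ref{lem:eg rota}~(1) provides Rota's dilation property for each $S_t$.

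The core technical step is verifying \textbf{(A2)} for $t \geq t_0$. The first estimate $|1-m_t(g)|\lesssim |g|/t$ follows immediately from the pointwise bound $|1-z^n|\leq n|1-z|$ for $z\in[-1,1]$, applied to $z = y/t + e^{-2/t}$ whose distance to $1$ is at most $3/t$. For the second estimate, when $t$ is large one has $y/t+e^{-2/t}\leq 1-c/t$ uniformly in $y\in[-1,1]$ for some absolute $c>0$, hence $|m_t(g)|\leq e^{-c|g|/t}$, and the elementary inequality $xe^{-x}\lesssim 1$ yields $|m_t(g)|\lesssim t/|g|$. For the derivative bounds I would write $m_t(g)=\int u(y,t)^{|g|}\,d\nu(y)$ with $u(y,t)=y/t+e^{-2/t}$ and apply Fa\`a di Bruno (equivalently, the Bell-polynomial expansion):
\begin{equation*}
\partial_t^k(u^n) = \sum_{m=1}^{k} n(n-1)\cdots(n-m+1)\, u^{n-m}\, B_{k,m}(\partial_t u,\partial_t^2 u,\ldots).
\end{equation*}
Since $|\partial_t^j u(y,t)|\lesssim_j t^{-(j+1)}$ for $t\geq t_0$, each Bell polynomial $B_{k,m}$ is controlled by $t^{-(k+m)}$, and bounding $n^m u^{n-m}=(n/t)^m u^{n-m}t^m$ by $(n/t)^m e^{-c(n-m)/t}t^m\lesssim_k t^m$ via the elementary fact that $x\mapsto x^m e^{-cx}$ is bounded, one obtains $|\partial_t^k m_t(g)|\lesssim_k t^{-k}$ uniformly in $g\in\mathbb F_d$ and $t\geq t_0$.

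Once \textbf{(A2)} is verified for every $\eta\in\mathbb N_+$, a short re-indexation argument (in the spirit of Remark~\ref{rmk: A1 prime and change index set N to Z}: replacing $m_t$ by $\tilde m_s=m_{t_0 s}$ and $\ell$ by an appropriate rescaling, extending constantly for $s<1$) places us exactly in the framework of Theorem~\ref{theorem:criterion2}~(2). For any given $1<p<\infty$, choosing $\eta$ with $1+\tfrac{1}{2\eta}<p$ produces the maximal inequality $\|{\sup_{t\geq t_0}}^+T_{m_t}x\|_p\lesssim_p\|x\|_p$ and a.u.\ convergence $T_{m_t}x\to x$ for all $x\in L_p(VN(\mathbb F_d))$. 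The main obstacle I foresee is purely bookkeeping: pinning down the constants in the Fa\`a di Bruno expansion uniformly in $g$ and $t$ so that the $|g|^m$ growth in the combinatorial factors is genuinely absorbed by $u^{n-m}$ rather than by $m_t$ itself; all other ingredients are by now routine applications of the criteria proved in Chapter~\ref{sec:maximal}.
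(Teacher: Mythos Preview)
Your proposal is correct and follows essentially the same route as the paper: the substitution $m_t(g)=\int_{-1}^1(y/t+e^{-2/t})^{|g|}\,d\nu(y)$, positive definiteness via Theorem~\ref{theorem:vergera}, the elementary bounds $|1-m_t(g)|\lesssim |g|/t$ and $|m_t(g)|\lesssim t/|g|$, and the Fa\`a di Bruno expansion to control $|\partial_t^k m_t(g)|\lesssim_k t^{-k}$, followed by an application of Theorem~\ref{theorem:criterion2}~(2) with $\eta$ chosen depending on $p$. The only cosmetic differences are that the paper writes out the partition form of Fa\`a di Bruno rather than the Bell-polynomial formulation and obtains the bound on $|1-m_t(g)|$ via $e^{-2/t}-1/t\geq e^{-4/t}$ rather than your inequality $|1-z^n|\leq n|1-z|$; the bookkeeping you flag as the main obstacle is handled exactly as you anticipate.
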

\begin{proof}
	Using integration by substitution with $y= t(x-e^{-\frac{2}{t}})$, we have
	\begin{equation*}
		m_t( g )=\int_{\mathbb R} x^{|g|} d\nu_t (x-e^{-\frac{2}{t}}) =\int_{-1}^1 \left(\frac{y}{ t}+e^{-\frac{2}{t}} \right)^{|g|} d\nu(y).
	\end{equation*}	
	By some fundamental analysis, we can find a number $t_0$ large enough such that for any $t\geq t_0$,
	$$e^{-\frac{2}{t}}-\frac{1}{ t}\geq e^{-\frac{4}{t}}\add{and} \frac{1}{ t}+e^{-\frac{2}{t}}<e^{-\frac{2}{3t}}.$$
	Then for  any $t\geq t_0$, 
	\begin{equation}\label{eq:condition1 for p.d. on free group}
		|m_t(g)|\leq\left(\frac{1}{ t}+e^{-\frac{2}{t}}\right)^{|g|}\leq e^{-\frac{2|g|}{3t}} \lesssim \frac{t}{|g|}, 
	\end{equation}
	$$|1-m_t(g)|\leq 1- \left( -\frac{1}{ t}+ e^{-\frac{2}{t}}\right) ^{|g|}\leq\left( 1- e^{-\frac{4|g|}{t}}\right) \lesssim \frac{|g|}{t}. $$
	Moreover we note that
	\begin{align*}
		\frac{d^v}{ dt ^v} m_t(|g|)=\int_{-1}^{1}\frac{d^v}{ dt ^v}\left(\frac{y}{ t}+e^{-\frac{2}{t}} \right)^{|g|}  d\nu(y).
	\end{align*}
	Set $f(t)=\frac{y}{ t}+e^{-\frac{2}{t}}$. By a straightforward computation,  
	\begin{equation}\label{eq:high div for f(t)}
	|\frac{d^v}{d t^v} f(t)|\lesssim_{v}  \frac{1}{ t^{v+1}}.
	\end{equation}
	Recall the Fa\'a di Bruno formula:
	$$\frac{d^v}{dt^v} F(f(x))=\sum_{P \in \mathscr{P} (v)} F^{(|P|)}(f(t))\cdot\prod_{B\in P} f^{(|B|)}(t)  $$
	where  $\mathscr P (v)$ is the set of all partitions of  $\{1, \cdots, v\}$,  $B\in P$ means that $B$ is a block of the partition $P$, and $|B|$ denotes the size of the block $B$ and $|P|$ means the number of blocks. 
	Similar as \eqref{eq:condition1 for p.d. on free group}, $|f(t)|\leq e^{-\frac{2}{3t}}$. Then using the Fa\'a di Bruno  formula and  \eqref{eq:high div for f(t)},  we see that for any $v\geq 1$ and $y\in[-1,1]$,
	\begin{align*}
		&\quad\ \frac{d^v}{ dt ^v}\left(\frac{y}{ t}+e^{-\frac{2}{t}} \right)^{k}\\&=\sum_{P \in \mathscr P (v), |P|\leq \min\{v,k\}} k(k-1)\cdot (k-|P|+1)f(t)^{k-|P|}\cdot\prod_{B\in P} f^{(|B|)}(t) \\
		&\lesssim_v \sum_{P \in \mathscr P (v), |P|\leq \min\{v,k\}} k(k-1)\cdot (k-|P|+1) e^{-\frac{2(k-|P|)}{3t}} \cdot\prod_{B\in P} \frac{1}{ t^{|B|+1}}\\
		&\lesssim_v \sum_{P \in \mathscr P (v), |P|\leq \min\{v,k\}} k(k-1)\cdot (k-|P|+1)\frac{t^{|P|}}{(k-|P|)^{|P|}} \cdot  \frac{1}{t^{v+|P|}}\\
		&\lesssim_v  \sum_{P \in \mathscr P (v)} 2^{|P|} \frac{1}{t^v} \lesssim_v \frac{1}{t^v}.
	\end{align*}
	Therefore, $(m_t)_{t\geq t_0}$ satisfies \eqref{eq:criterion a3} in \textbf{(A2)}.
	
	Note that $\mathbb F _d$ is a subgroup of $\mathbb F_\infty$, and $\nu_t(\cdot -e^{-t/2})$ is supported in $[-1,1]$ for large $t>0$. So $m_t $  is positive definite on $\mathbb F _d$ by Theorem~\ref{theorem:vergera} for $t\geq t_0$. Also, for any $t\geq t_0$
	$T_{m_t}(1)=1$ since  $\nu([-1,1])=1$. Thus for any $t\geq t_0$, $T_{m_t}$ is unital completely positive and it extends to a contraction on $L_p(VN(\bF_d))$. Moreover, the natural length function $|\  |$ is conditionally negative definite. On the other hand, for all $1<p<\infty$  we can always find a positive integer $\eta$ depending on $p$  such that   $1+\frac{1}{2\eta}<p$. Then the proof is complete by  applying Theorem~\ref{theorem:criterion2}. 
\end{proof}

We remark that the totally same argument applies to many other examples of groups acting on homogeneous trees.
	
\section[Noncommutative Hardy-Littlewood maximal operators]{Dimension free bounds of noncommutative Hardy-Littlewood maximal operators}\label{sec:dimension free}
Our results in particular apply to the problem of dimension free estimates for Hardy-Littlewood maximal operators. 
Let $B$ be a symmetric convex body in $\bR^d$. We consider the  associated averages
$$\Phi _t(f)(x)=\frac{1}{\mu(B)}\int_{B} f(x-\frac{y}{t})dy,\quad f\in L_p(\mathbb R ^d), \ x\in \bR^d ,\  t>0.$$ 

Let $\cN$ be a semifinite von Neumann algebra equipped with a normal semifinite trace $\nu$ and $L_p(\bR^d;L_p(\cN))$ be the Bochner $L_p(\cN)$-valued $L_p$-spaces. 
Recall that we may view the space $L_p(\bR^d;L_p(\cN))$ as a noncommutative $L_p$-space associated with the von Neumann algebra $L_\infty (\bR^d) \overline{\otimes} \cN$: for any $1\leq p<\infty$,
$$L_p(L_\infty (\bR^d) \overline{\otimes} \cN, \int {\otimes} \nu)\cong L_p(\bR^d; L_p(\cN)).$$
We could then extend Bourgain's results for the corresponding Hardy-Littlewood maximal operators on noncommutative $L_p$-spaces  $\LpR$.

\begin{theorem}\label{cor:dimension free of HL maximal inequality in text}
	Let $B$ be a  symmetric convex body in $\bR^d$ and $\cN$ a semifinite von Neumann algebra. Define $\Phi_r :\LpR\to\LpR$ by $$\Phi _r(f)(x)=\frac{1}{\mu(B)}\int_{B} f(x-ry)dy.$$
	Then there exist constants $C_p>0$ independent of $d$ and $B$ such that the following holds:
	
	\emph{(1)} For any $1<p< \infty$,
	$$\|{\sup_{j\in \bZ}}^+ \Phi _{2^j}(f)\|_p\leq C_p\|f\|_p, \qquad  f\in \LpR.$$
	
	\emph{(2)} For any $\frac{3}{2}<p< \infty$,
	$$\|{\sup_{r> 0}}^+ \Phi _{r}(f)\|_p\leq C_p\|f\|_p, \qquad  f\in \LpR.$$	

	\emph{(3)} If $B$ is the $\ell_q$-ball $\{(x_i)_{i=1}^d: \sum_{i=1}^d |x_i|^q\leq 1\}$  with  $q\in 2\bN$, then for any $1<p<\infty$,
	$$\|{\sup_{r> 0}}^+ \Phi _{r}(f)\|_p\leq C_p\|f\|_p, \qquad  f\in \LpR.$$	
\end{theorem}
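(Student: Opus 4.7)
The approach is to recast $\Phi_r$ as a Fourier multiplier on $\cM = L_\infty(\bR^d)\overline{\otimes}\cN$ and invoke the criteria from Chapter~\ref{sec:maximal} in their completely bounded form (Remark~\ref{rk:mtheorem for cb}). One identifies $L_p(\bR^d;L_p(\cN))$ with $L_p(\cM)$, takes the isometry $\cU = \cF\otimes\mathrm{id}_{L_2(\cN)}$ with $\cF$ the Euclidean Fourier transform, and observes $\Phi_r = T_{m_r}\otimes\mathrm{id}_{\cN}$ with symbol
\[ m_r(\xi) = \phi(r\xi),\qquad \phi(\xi):=\frac{1}{\mu(B)}\widehat{\dsone_B}(\xi). \]
Since $T_{m_r}$ is convolution with a symmetric probability measure it is unital, positive, selfadjoint and Lebesgue-preserving; tensoring with $\mathrm{id}_{\cN}$ therefore gives a unital completely positive symmetric contraction on every $L_p(\cM)$. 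For the auxiliary symmetric Markov semigroup I would take the Gaussian semigroup $S_t = T_{e^{-t\ell_B}}\otimes\mathrm{id}_{\cN}$ attached to a suitable positive definite quadratic form $\ell_B(\xi)=\langle M_B\xi,\xi\rangle$; commutativity of $L_\infty(\bR^d)$ forces Rota's dilation property automatically by Lemma~\ref{lem:eg rota}~(2).

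The heart of the argument is \emph{Bourgain's dimension-free estimate}: for every symmetric convex body $B$ in $\bR^d$ there exist a positive definite quadratic form $\ell_B$ on $\bR^d$ and an absolute constant $C>0$ (independent of $d$ and $B$) such that
\[ |1-\phi(\xi)|\leq C\,\ell_B(\xi)^{1/2},\qquad |\phi(\xi)|\leq \frac{C}{\ell_B(\xi)^{1/2}}. \]
After the reparametrization $t=1/r$, setting $\widetilde m_t(\xi):=\phi(\xi/t)$, this reads
\[ |1-\widetilde m_t(\xi)|\leq \frac{C\,\ell_B(\xi)^{1/2}}{t},\qquad |\widetilde m_t(\xi)|\leq \frac{Ct}{\ell_B(\xi)^{1/2}}, \]
which is exactly the hypothesis \textbf{(A1)} with $\alpha=1/2$ and $\beta=C$. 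Applying the cb version of Theorem~\ref{theorem:criterion2}~(1), together with Remark~\ref{rmk: A1 prime and change index set N to Z}~(2) to switch the index set from $\bN$ to $\bZ$, yields part (1) with constants depending only on $p$ and $C$, which is the claimed dimension-free bound.

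For part (2) it remains to verify \textbf{(A2)} for the continuous family $t\mapsto\widetilde m_t$. Differentiating $\widetilde m_t(\xi)=\phi(\xi/t)$ in $t$ gives $\partial_t\widetilde m_t(\xi) = -t^{-1}(\xi/t)\cdot\nabla\phi(\xi/t)$, and integration by parts in $\widehat{\dsone_B}$ combined with Bourgain-type convex body estimates yields a dimension-free bound $|\partial_t\widetilde m_t(\xi)|\leq \beta/t$, so \textbf{(A2)} holds with $\eta=1$; Theorem~\ref{theorem:criterion2}~(2) then gives the bound for $p>1+1/(2\eta)=3/2$. For part (3), when $B$ is an $\ell_q$-ball with $q\in 2\bN$, the polynomial structure of $|x|_q^q=\sum_i x_i^q$ allows much stronger smoothness estimates on $\phi$ (this is where the parity of $q$ is essential): one obtains dimension-free bounds $|\partial_t^k\widetilde m_t(\xi)|\leq c_k/t^k$ for every $k\in\bN$, so \textbf{(A2)} holds with arbitrary $\eta$, and letting $\eta\to\infty$ in Theorem~\ref{theorem:criterion2}~(2) covers the full range $p>1$.

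The main obstacle is the dimension-free scalar input itself: Bourgain's estimates on $\phi$ and $1-\phi$ relative to the quadratic form $\ell_B$, and in the $\ell_q$-ball case their higher-order refinement. Once these are imported from classical harmonic analysis the noncommutative step is essentially mechanical, because Theorems~\ref{theorem:criterion1} and~\ref{theorem:criterion2} were formulated with constants depending only on $p,\alpha,\beta$ (and the uniform bound $\gamma$, which here equals $1$ by positivity and unitality), so no dimension factor can creep in from the noncommutative machinery. The completely bounded extension in Remark~\ref{rk:mtheorem for cb} is crucial here, as it allows us to tensor with an arbitrary semifinite $\cN$ while keeping all constants dimension-free and thus to pass from the scalar dimension-free estimate to the genuinely noncommutative conclusion.
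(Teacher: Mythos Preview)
Your proposal is correct and follows essentially the same approach as the paper: both recast $\Phi_r$ as a Fourier multiplier, invoke Bourgain's dimension-free symbol estimates (the paper does this by first placing $B$ in isotropic position with constant $L$, which amounts to your choice $\ell_B(\xi)=L^2|\xi|^2$), verify \textbf{(A1)}/\textbf{(A2)} with $\eta=1$ for the general convex body, and for the $\ell_q$-balls with $q\in 2\bN$ compute the higher $t$-derivatives and cite Bourgain's bounds on $\int e^{-2\pi i\langle x,\zeta\rangle}\langle x,\zeta\rangle^k\dsone_B\,dx$ to obtain \textbf{(A2)} for arbitrary $\eta$. The only cosmetic difference is that the paper phrases the auxiliary semigroup as the ``modified Poisson semigroup $(P_{tL})_t$'' rather than a Gaussian semigroup, but this is the same object up to your $\alpha=1/2$ and the subordination in the framework of Chapter~\ref{sec:maximal}.
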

\begin{proof}
	Without loss of generality, we assume $\mu(B)=1$.	Let
	$$m_t(\xi) =t^d\widehat{\dsone_{t^{-1}B}}(\xi)=\widehat{\dsone_{B}}(\xi/t)$$ be the Fourier transform of the kernel of the above operator $\Phi _t$. Then we may view $\Phi _t$ as the Fourier multiplier so that $\widehat{\Phi _t f} = m_t \hat f $.
	
	Since the bound of a maximal operator is invariant under invertible linear transforms, thus by a transform, we can assume that $B$ is in the isotropic position with isotropic constant $L=L(B)$ (see Lemma 2 in \cite{bourgain86onhigh}) and $\widehat{	\dsone_{B}}$ satisfies the following estimates (see the computations in Section 4 of \cite{bourgain86onhigh}):  there exists a constant $c$  independent of $d$ and $B$ such that
	\begin{align*}
		|\widehat{	\dsone_{B}}(\xi)|\leq c\frac{1}{|\xi|L}, \quad 	|1-\widehat{	\dsone_{B}}(\xi)|\leq cL|\xi|,\quad
		|\langle \nabla \widehat{ \dsone_{B}}(\xi), \xi  \rangle|\leq c, \quad \xi \in \bR^d.
	\end{align*}
	On the other hand, setting $\zeta=\xi/t=(\xi_l/t)_{l=1}^d$, we have 
	$$\frac{d}{dt}m_t(\xi)=\frac{d }{dt}\widehat{	\dsone_{B}}(\zeta)=\sum_{l=1}^d \frac{\partial \zeta_l}{\partial t}\cdot \frac{\partial \widehat{\dsone_{B}}(\zeta)}{\partial \zeta_l}
	=-\sum_{l=1}^d \frac{ \zeta_l}{ t} \partial_l \widehat{\dsone_{B}}(\zeta)=-\frac{1}{t}\langle \nabla \widehat{ \dsone_{B}}(\zeta), \zeta \rangle.$$
	Therefore, $(m_t)_t$ satisfies \textbf{(A2)} with $\eta=1$. Then, applying Theorem~\ref{theorem:criterion2},  Remark~\ref{rmk: A1 prime and change index set N to Z}~(2), Remark \ref{rk:mtheorem for cb}, and  Lemma~\ref{lem:eg rota}~(2) to the modified Poisson semigroup $(P_{tL})_{t\in\mathbb R_+}$ on $\mathbb R ^d$, we immediately get the desired assertions (1) and (2) of Theorem \ref{cor:dimension free of HL maximal inequality in text}.

	We use the similar arguments for higher derivatives: 
	\begin{align*}
		\frac{d^2}{dt^2}\dsone_B(\zeta)&=
		\sum_{l=1}^d\frac{\partial \zeta_l}{\partial t}\cdot \frac{\partial}{\partial \zeta_l}\left(\sum_{l=1}^d\frac{\partial \zeta_l}{\partial t}\cdot \frac{\partial \dsone_B(\zeta)}{\partial \zeta_l} \right)
		=\sum_{l=1}^d\frac{ \zeta_l}{ t}\cdot \frac{\partial}{\partial \zeta_l}\left(\sum_{l=1}^d\frac{ \zeta_l}{ t}\cdot \frac{\partial \dsone_B(\zeta)}{\partial \zeta_l} \right)  
	\\
		&	=\frac{1}{t^2}\left( \sum_{l=1}^d\zeta_{l} \partial_{l}\left( \dsone_B(\zeta)\right) + \sum_{l_1, l_2=1}^d\zeta_{l_1} \zeta_{l_2}\partial_{l_1}\partial_{l_2}\left( \dsone_B(\zeta)\right) \right) ,
	\end{align*}
	and
	\begin{align*}
		  \frac{d^3}{dt^3}\big(\dsone_B(\zeta)\big) 
		&=\frac{1}{t^3}\Bigg( \sum_{l=1}^d\zeta_{l} \partial_{l}\big(\dsone_B(\zeta)\big)+2 \sum_{l_1, l_2=1}^d\zeta_{l_1} \zeta_{l_2}\partial_{l_1}\partial_{l_2}\big(\dsone_B(\zeta)\big)\\
		&\qquad\qquad
		+\sum_{l_1, l_2,l_3=1}^d\zeta_{l_1} \zeta_{l_2}\zeta_{l_3}\partial_{l_1}\partial_{l_2}\partial_{l_3}\big(\dsone_B(\zeta)\big) \Bigg) .
	\end{align*}
	Repeating this process,  we get
	$$\frac{d^v}{dt^v}\big(\dsone_B(\zeta)\big) =\frac{1}{t^v}\sum_{k=1}^v\left(c_k \sum_{l_1, l_2, \dots, l_k=1}^d \zeta_{l_1} \zeta_{l_2}\cdots \zeta_{l_k}\partial_{l_1}\partial_{l_2}\cdots \partial_{l_k}\right)\big(\dsone_B(\zeta)\big) $$
	where $c_k$ are constants only depending  on $k$. Recall that $$\partial_{l_1}\partial_{l_2}\cdots \partial_{l_k} \widehat{	\dsone_{B}}(\zeta)=\left((-2\pi i)^k x_{l_1}x_{l_2}\cdots x_{l_k}\dsone_B \right)^{\widehat{	\quad}} (\zeta).$$
	Then 
	\begin{align*}
		\left|\frac{d^v}{dt^v}m_t(\xi)\right|&
		=\frac{1}{t^v}\left|\sum_{k=1}^v \left(c_k \sum_{l_1, l_2, \dots, l_k=1}^d \zeta_{l_1} \zeta_{l_2}\cdots \zeta_{l_k}\partial_{l_1}\partial_{l_2}\cdots \partial_{l_k}\widehat{	\dsone_{B}}(\zeta)\right)\right|\\
		&\lesssim_v\frac{1}{t^v}\left|\sum_{k=1}^v\left(  \sum_{l_1, l_2 \cdots, l_k=1}^d  \zeta_{l_1}\zeta_{l_2}\cdots \zeta_{l_k}  \left((2\pi i)^k x_{l_1}x_{l_2}\cdots x_{l_k}{\dsone_{B}}(x)\right) ^{\widehat{\quad  }}(\zeta) \right)  \right|\\
		&\lesssim_v\frac{1}{t^v} \sum_{k=1}^v\left|\int_{\bR^d} e^{-2\pi i \langle x, \zeta\rangle} \langle x, \zeta \rangle^k\dsone_{B}(x)dx\right|.
	\end{align*}
	In particular, if  $B$ is the $\ell_q$-ball  with  $q\in 2\bN$, the computations in \cite{bourgain87ondimensionfree}  assert that for any $k\geq 0$ there exists a constant $c_k^\prime$ independent of $d$ such that 
	$$\left|\int_{\bR^d} e^{-2\pi i \langle x, \zeta\rangle} \langle x, \zeta \rangle^k\dsone_{B}(x)dx\right|\leq c_k^\prime.$$	 
	Hence for $\ell_q$-ball  with  $q\in 2\bN$, $(m_t)_{t\in\mathbb R_+}$ satisfies \textbf{(A2)} for any $\eta> 0$.
	For any ${1<p\leq 2}$, choosing an $\eta$ large enough (depending on $p$) and applying  Theorem~\ref{theorem:criterion2}~(2), Remark \ref{rk:mtheorem for cb} and  Lemma~\ref{lem:eg rota}~(2), we obtain Theorem \ref{cor:dimension free of HL maximal inequality in text} (3).
\end{proof}

\begin{remark}
	At the moment of writing, it seems that our approach is not enough to establish dimension free bounds for other $\ell_q$-balls with $q\in[1,\infty]\setminus 2\bN$ and new ideas are certainly required. The corresponding results in the classical setting are given by M\"uller \cite{muller90maximal}   (for $q\in[1,\infty)\setminus 2\bN$) and Bourgain \cite{bourgain14HLcube} (for $q=\infty$). From Theorem \ref{cor:dimension free of HL maximal inequality in text}, it is naturally conjectured that the noncommutative analogues of their results should still hold.
\end{remark}


\backmatter
\newcommand{\etalchar}[1]{$^{#1}$}
\providecommand{\bysame}{\leavevmode\hbox to3em{\hrulefill}\thinspace}
\providecommand{\MR}{\relax\ifhmode\unskip\space\fi MR }
\providecommand{\MRhref}[2]{%
	\href{http://www.ams.org/mathscinet-getitem?mr=#1}{#2}
}
\providecommand{\href}[2]{#2}


\end{document}